\documentclass[a4paper,11pt,reqno]{amsart}

\usepackage{amsfonts}
\usepackage{amssymb}
\usepackage{amscd}
\usepackage{amsthm}
\usepackage{a4wide}
\usepackage{mathrsfs}
\usepackage[applemac]{inputenc}
\usepackage{amsmath}
\usepackage{amssymb}
\usepackage{amsthm}
\usepackage{textcomp}
\usepackage{graphicx}
\usepackage{enumerate}
\usepackage{mathrsfs}
\usepackage{frcursive}
\usepackage{tikz}
\usepackage[cyr]{aeguill}
\usepackage{xspace}
\usepackage{hyperref}
\usepackage{appendix}

\newtheorem{defn}{Definition}[section]
\newtheorem{lemma}[defn]{Lemma}
\newtheorem{prop}[defn]{Proposition}
\newtheorem{theo}[defn]{Theorem}
\newtheorem{coro}[defn]{Corollary}

\newtheorem{claim}{Claim}
\newtheorem{rk}[defn]{Remark}

\def\Ric{\mathop{\rm Ric}\nolimits}
\def\R{\mathop{\rm R}\nolimits}

\def\Rm{\mathop{\rm Rm}\nolimits}
\def\tr{\mathop{\rm tr}\nolimits}

\def\vol{\mathop{\rm vol}\nolimits}
\def\eucl{\mathop{\rm eucl}\nolimits}

\def\vol{\mathop{\rm Vol}\nolimits}

\def\inj{\mathop{\rm inj}\nolimits}

\def\div{\mathop{\rm div}\nolimits}

\def\A{\mathop{\rm A}\nolimits}

\def\AVR{\mathop{\rm AVR}\nolimits}

\def\cat#1{{\mathfrak{#1}}}

\def\Li{\mathop{\rm \mathscr{L}}\nolimits}

\def\Diff{\mathop{\rm Diff}\nolimits}

\def\Cod{\mathop{\rm Cod}\nolimits}
\def\Ric{\mathop{\rm Ric}\nolimits}

\def\Rm{\mathop{\rm Rm}\nolimits}
\def\tr{\mathop{\rm tr}\nolimits}

\def\vol{\mathop{\rm vol}\nolimits}
\def\eucl{\mathop{\rm eucl}\nolimits}

\def\vol{\mathop{\rm Vol}\nolimits}

\def\inj{\mathop{\rm inj}\nolimits}

\def\div{\mathop{\rm div}\nolimits}

\def\A{\mathop{\rm A}\nolimits}

\def\AVR{\mathop{\rm AVR}\nolimits}

\def\cat#1{{\mathfrak{#1}}}

\def\Li{\mathop{\rm \mathscr{L}}\nolimits}

\def\Diff{\mathop{\rm Diff}\nolimits}

\def\Sym{\mathop{\rm Sym}\nolimits}

\def\supp{\mathop{\rm supp}\nolimits}

\def\Har{\mathop{\rm \mathscr{H}}\nolimits}
\def\crit{\mathop{\rm Crit}\nolimits}
\def\C{\mathop{\rm \mathscr{C}}\nolimits}

\newcommand{\bh}{\bar{h}}

\title{Smoothing out positively curved metric cones by Ricci expanders}
\author[Alix Deruelle]{Alix Deruelle}
\address[Alix Deruelle]{Mathematics Institute, University of Warwick, Gibbet Hill Rd, Coventry, West Midlands CV4 7AL}
\email{A.Deruelle@warwick.ac.uk}

\begin{document}
\begin{abstract}
We investigate the possibility of desingularizing a positively curved metric cone by an expanding gradient Ricci soliton with positive curvature operator. This amounts to study the deformation of such geometric structures. As a consequence, we prove that the moduli space of conical positively curved gradient Ricci expanders is connected.
\end{abstract}
\maketitle

\section{Introduction}
Given a metric cone $C(X)$ over a smooth compact Riemannian manifold $(X,g_X)$ endowed with its Euclidean cone metric $g_{C(X)}:=dr^2+r^2g_X$, we focus on sufficient conditions on the section $(X,g_X)$ ensuring the desingularization of this metric cone by the Ricci flow. More specifically, we look for an expanding gradient Ricci soliton smoothing out $\left(C(X),dr^2+r^2g_X\right)$ instantaneously. Recall that an expanding gradient Ricci soliton is a triplet $(M^n,g,\nabla f)$ where $(M^n,g)$ is a complete Riemannian manifold and $f$ is a smooth function (called the potential function) such that the following static equation holds on $M^n$ :
\begin{eqnarray}\label{eq-egs}
\Ric(g)+\frac{g}{2}=\nabla^{g,2}f.
\end{eqnarray}

In other words, we ask for the Bakry-Émery tensor $\Ric(g)+\nabla^{g,2}(-f)$ to be constantly negative. Formally speaking, one can associate to any expanding gradient Ricci soliton a corresponding Ricci flow $g(\tau):=(1+\tau)\phi_{\tau}^*g$, living on $(-1,+\infty)$ where $(\phi_{\tau})_{\tau}$ is the one parameter family of diffeomorphisms generated by the vector field $-\nabla^g f/(1+\tau)$. Note that these expanding structures are the only candidate among Ricci solitons that could do the job. In some sense, from a Ricci flow perspective, it seems that Ricci expanders should be at least generically the best critical metrics when one is unable to find an Einstein metric, a shrinking or a steady gradient Ricci soliton. The purpose of this article is to confirm these heuristics in some particular setting we describe now. 

In this paper, we will stick to the case where $X$ is diffeomorphic to the standard sphere $\mathbb{S}^{n-1}$ so that there is no topological restriction to do so : the first obvious necessary condition to desingularize such a metric cone is that the section of the cone has to bound but it has nothing to do with the expanding structure. We actually answer affirmatively when the section of the cone is positively curved, but before giving a statement, we need to recall a couple of definitions.

First, let us start with a very general definition of a Ricci flow with rough initial condition inspired by \cite{Sie-Phd} at least.

\begin{defn}
Let $(X,d_X)$ be a metric space with diameter less than $\pi$ and let $\left(C(X),d_{C(X)}\right)$ be the metric cone endowed with the Euclidean metric : $$d_{C(X)}((x,t),(y,s)):=\sqrt{t^2+s^2-2ts\cos(d_X(x,y))}.$$ Then, a smooth family $(M^n,g(t))_{t\in(0,T)}$, with $T\in(0,+\infty]$, of complete Riemannian manifolds is a Ricci flow with initial condition $(C(X),d_{C(X)},o)$ if it satisfies for some point $p\in M^n$ :
\begin{eqnarray*}
\left\{
\begin{array}{rl}
&\partial_tg=-2\Ric(g(t)),\quad\mbox{on $M^n\times (0,T),$}\\
&\\
& \lim_{t\rightarrow 0}(M^n,d_{g(t)},p)=\left(C(X),d_{C(X)},o\right),
\end{array}
\right.
\end{eqnarray*}
where the convergence is in term of the pointed Gromov-Hausdorff topology.
\end{defn}

When $(M^n,g(t))_{t\in(0,+\infty)}$ is an expander, the two notions of rough initial condition and asymptotic cone (in the sense of Gromov) coincide. Indeed, assume that $p$ is in the critical set of $f$, an assumption that will be satisfied in our setting, then 

\begin{eqnarray*}
\left(C(X),d_{C(X)},o\right)&=&\lim_{t\rightarrow 0}(M^n,d_{g(t)},p)=\lim_{t\rightarrow 0}(M^n,t^{1/2}\phi_{t-1}^*d_g,p)\\
&=&\lim_{t\rightarrow 0}(M^n,t^{1/2}d_g,\phi_{t-1}(p))=\lim_{r\rightarrow +\infty}(M^n,r^{-1}d_{g},p).
\end{eqnarray*}
Regarding our main problem, this means that it amounts to study the asymptotics of expanding gradient Ricci solitons that are \textit{asymptotically conical} whose definition taken from \cite{Der-Asy-Com-Egs} is recalled below :
\begin{defn}
An expanding gradient Ricci soliton $(M^n,g,\nabla f)$ is asymptotically conical with asymptotic cone $\left(C(X),dr^2+r^2g_X,r\partial r/2\right)$ if there exists a compact $K\subset M$, a positive radius $R$, a diffeomorphism $\phi:M\setminus K\rightarrow C(X)\setminus B(o,R)$ and a sequence of nonnegative functions $(f_k)_k$ defined on $\mathbb{R}_+$ such that
\begin{eqnarray}
&&\sup_{\partial B(o,r)}\arrowvert\nabla^k\left(\phi_*g-g_{C(X)}\right)\arrowvert_{g_{C(X)}}=\textit{O}(f_{k}(r)),\quad \forall k\in \mathbb{N},\label{class-def-asy-con}\\
&&f(\phi^{-1}(r,x))=\frac{r^2}{4},\quad\forall (r,x)\in C(X)\setminus B(o,R), \label{cond-exp-1}\\
&&f_k(r)=\textit{o}(1),\quad \mbox{as $r\rightarrow+\infty$, $\forall k\geq 0$}.
\end{eqnarray}

\end{defn}

In \cite{Der-Asy-Com-Egs}, we actually proved that the convergence is only polynomial at rate $\tau=2$, i.e. $f_{k}(r)=r^{-2-k}$ for any nonnegative integer $k$ when the cone is not Ricci flat : this rate is sharp. We are now in a position to state the first main theorem of this paper :

\begin{theo}\label{theo-I}
Let $(X,g_X)$ be a smooth simply connected compact Riemmanian manifold such that $\Rm(g_X)\geq 1$. Then there exists a unique expanding gradient Ricci soliton with nonnegative curvature operator asymptotic to $\left(C(X),dr^2+r^2g_X,r\partial_r/2\right)$.
\end{theo}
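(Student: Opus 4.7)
The natural strategy is a continuity method on the space $\mathcal{C}$ of Riemannian metrics on $\mathbb{S}^{n-1}$ with $\Rm\geq 1$, which is path connected (the Ricci flow preserves the positive curvature operator condition and, by B\"ohm--Wilking, deforms any such metric to a round metric of sectional curvature $1$, whose cone is flat $\mathbb{R}^n$ desingularized by the Gaussian expander $(\mathbb{R}^n,g_{eucl},|x|^2/4)$). Let $\mathcal{E}\subset\mathcal{C}$ denote the subset of sections admitting an expanding gradient Ricci soliton with nonnegative curvature operator asymptotic to the corresponding cone. The plan is to show $\mathcal{E}$ is non-empty, open, and closed in $\mathcal{C}$, and to handle uniqueness along the way.

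\textbf{Openness.} Given an expander $(M^n,g,\nabla f)\in\mathcal{E}$, I would linearize the soliton equation $\Ric(g)+g/2=\Hess f$ around it, after imposing a DeTurck-type gauge to mod out the action of diffeomorphisms. The resulting operator is a Lichnerowicz-type drift operator built from the weighted Laplacian $\Delta_f$, which is self-adjoint with respect to the natural measure $e^{-f}d\vol_g$. I would work in weighted H\"older spaces on $M^n$ whose decay rate reflects the polynomial asymptotics $O(r^{-2})$ obtained in \cite{Der-Asy-Com-Egs}. Fredholm theory for elliptic operators on asymptotically conical manifolds, combined with a kernel-cokernel analysis using the weighted Bochner formula, should give the desired isomorphism on appropriate weighted spaces; an implicit function theorem argument then produces, for each nearby cone section, an expander with the prescribed asymptotics.

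\textbf{Closedness.} Given a sequence $g_X^k\to g_X^\infty$ in $\mathcal{C}$ with corresponding expanders $(M^n_k,g_k,\nabla f_k)$ based at critical points of $f_k$, the nonnegativity of $\Rm$ together with the soliton identities (which control $\R$ in terms of $|\nabla f|^2$ and $f$) should yield uniform bounds on $|\Rm|$ and its higher covariant derivatives. A Cheeger--Gromov--Hamilton compactness argument then extracts a smooth limit expander, and the uniform polynomial rate of convergence at infinity together with the convergence of cone sections ensures that the limit is asymptotic to $C(X^\infty)$. Uniqueness should follow by combining openness (which shows that the moduli is locally parametrized by the cone section) with the connectedness of $\mathcal{C}$, or alternatively by matching formal asymptotic expansions of two candidate solutions to arbitrary order and invoking unique continuation for the elliptic soliton system.

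\textbf{Main obstacle.} The hardest step is the analysis of the linearized operator on the noncompact asymptotically conical manifold, specifically proving triviality of its kernel in the chosen weighted space. Nonnegativity of $\Rm$ combined with a weighted Bochner identity against $e^{-f}d\vol_g$ should yield the rigidity needed to eliminate nonzero kernel elements, but organising this together with the gauge choice and the contribution from infinity is delicate. A subsidiary difficulty is propagating $\Rm\geq 0$ across the continuity deformation, which I would address via the tensor maximum principle applied to the Ricci flow associated to each candidate expander.
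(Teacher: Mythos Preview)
Your overall architecture---continuity along the B\"ohm--Wilking path, with openness via implicit function theorem and closedness via compactness---matches the paper. However, two of your key steps would not go through as written.

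\textbf{The $L^2$/Bochner approach to the kernel fails.} You propose to kill the kernel of the linearized operator by a weighted Bochner identity integrated against $e^{\pm f}d\mu_g$. The paper explicitly rules this out: because the convergence to the asymptotic cone is only \emph{polynomial} (rate $r^{-2}$, not exponential), the natural weighted Sobolev spaces modeled on $L^2(e^{f_0}d\mu)$ on which $\Delta_f$ is symmetric are unavailable---the relevant tensors simply do not lie in them. The paper instead works in conical H\"older spaces $D^{k+2,\theta}_{f^\alpha,\nabla f}$, builds the resolvent of $\Delta_f$ by adapting Lunardi's semigroup method for Ornstein--Uhlenbeck operators with unbounded drift, and proves injectivity of the weighted Lichnerowicz operator $L_0=\Delta_f+2\Rm(g)\ast$ via Hamilton's \emph{tensor maximum principle}, using $\mathcal{L}_{\nabla f}g=g+2\Ric(g)>0$ as a barrier. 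This is where positive curvature is actually consumed, not in an integral identity.

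\textbf{The implicit soliton is not a priori gradient.} After DeTurck's trick, the implicit function theorem hands you a triple $(M,g_1,V_1)$ solving $2\Ric(g_1)+g_1=\mathcal{L}_{V_1}(g_1)$ with $V_1=\nabla^{g_1}f_0+(\text{gauge term})$; there is no reason $V_1$ should be a gradient for $g_1$. Your outline omits this entirely. The paper devotes a full section to an ad hoc ``no breather'' theorem: using that $\Rm(g_1)>0$ (itself nontrivial---one must control the radial curvatures via $\div\Rm$, requiring $C^3$ closeness), one applies Hamilton's matrix Harnack estimate (or alternatively a Cao--Hamilton entropy argument extended to the noncompact setting via Zhang's work) to force $\nabla_i V_j-\Ric_{ij}-g_{ij}/2=0$, hence $V$ is closed and therefore exact on $M\simeq\mathbb{R}^n$.

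For uniqueness, your first suggestion (local parametrization plus connectedness) is essentially what the paper does, but it needs an anchor: uniqueness at the round endpoint, which is Chodosh's theorem on rotationally symmetric expanders. Your alternative via formal expansions and unique continuation is not pursued in the paper and would require separate justification.
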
 

The second aim of the paper is to give a classification of asymptotically conical expanding gradient Ricci soliton with positive curvature operator in dimension greater than $2$. Indeed, in dimension $2$, Kotschwarr \cite{Kot-Unp-Not} proved that an expanding gradient Ricci soliton with non negative scalar curvature is rotationally symmetric hence reducing the equation (\ref{eq-egs}) to an O.D.E.. The O.D.E. analysis then furnishes a one parameter family of expanding gradient Ricci soliton $(\Sigma^2,g_c)_{c\in(0,1]}$ asymptotical to $\left(C(\mathbb{S}^1),dr^2+(cr)^2d\theta^2)\right)_{c\in(0,1]}$, $\Sigma^2$ being diffeomorphic to $\mathbb{R}^2$. The analogous higher dimensional examples have been provided by Bryant in unpublished notes [Chap. $1$,\cite{Cho-Lu-Ni-I}] : this is again a one parameter family $(\mathbb{R}^n,g_c)_{c\in(0,1]}$ that is rotationally symmetric, that has non negative curvature operator and that is asymptotic to $\left(C(\mathbb{S}^{n-1}),dr^2+(cr)^2g_{\mathbb{S}^{n-1}}\right)_{c\in(0,1]}$. We are able to show the following :

\begin{theo}\label{theo-II}
Let $(M^n,g,\nabla f)$, $n\geq 3$, be an expanding gradient Ricci soliton with positive curvature that is asymptotically conical. Then there exists a one parameter family of asymptotically conical expanding gradient Ricci solitons with positive curvature connecting $(M^n,g,\nabla f)$ to a Bryant soliton $(\mathbb{R}^n,g_c,\nabla f_c)$ with some $c$ depending on the volume of the section of the asymptotic cone of $(M^n,g)$.
\end{theo}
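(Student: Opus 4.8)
The plan is to exhibit the one-parameter family as a connected path inside the moduli space of asymptotically conical expanding gradient Ricci solitons with positive curvature operator, by combining Theorem \ref{theo-I} (existence and uniqueness of the expander over a prescribed positively curved section) with a deformation argument on the space of sections. First I would observe that, by Theorem \ref{theo-I}, the soliton $(M^n,g,\nabla f)$ is the unique nonnegatively curved expander asymptotic to $C(X)$ for $(X,g_X)$ its link; moreover, since $\Rm(g)>0$ strictly, the asymptotic analysis (polynomial convergence at rate $\tau=2$ from \cite{Der-Asy-Com-Egs}) forces $\Rm(g_X)\ge 1$ with equality precisely along directions detecting the cone, so $(X,g_X)$ lies in the space $\mathcal{M}_+(\mathbb{S}^{n-1})$ of metrics on $\mathbb{S}^{n-1}$ with $\Rm\ge 1$. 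This space is path-connected: any such $(X,g_X)$ can be joined to the round metric $g_{\mathbb{S}^{n-1}}(c)$ of constant curvature $1/c^2$ for a suitable $c\in(0,1]$ determined by $\vol(X,g_X)$, for instance by Ricci flow on $X$ (which, starting from $\Rm\ge 1$ in dimension $n-1$, converges to a round metric by Böhm–Wilking / Brendle–Schoen when $n-1\ge 3$, and trivially when $n-1=2$), then rescaling; alternatively one uses a direct convex-combination-type interpolation within the cone of curvature conditions. Denote this path $s\mapsto (X,g_X^s)$, $s\in[0,1]$, with $g_X^0=g_X$ and $g_X^1$ round.

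Next I would upgrade this path of sections to a path of solitons. For each $s$, Theorem \ref{theo-I} provides a unique nonnegatively curved expander $(M^n_s,g_s,\nabla f_s)$ asymptotic to $C(X,g_X^s)$; at $s=1$ this expander is, by uniqueness together with Bryant's classification of rotationally symmetric expanders, exactly the Bryant soliton $(\mathbb{R}^n,g_c,\nabla f_c)$. The content here is (i) openness: positivity of the curvature operator is an open condition and is preserved along the family because each $g_s$ has $\Rm\ge 0$ and, being non-Einstein and non-flat, satisfies the strong maximum principle for the Ricci-flow evolution associated to the expander, forcing $\Rm(g_s)>0$ on all of $M^n_s$; and (ii) continuity/smoothness of $s\mapsto (M^n_s,g_s)$ in a suitable weighted topology, which should follow from an implicit function theorem argument on the soliton equation \eqref{eq-egs} in the weighted Hölder spaces adapted to the conical asymptotics — the linearization is the (Bakry–Émery) Lichnerowicz-type operator, whose Fredholm theory and invertibility modulo the cone deformation are exactly what the earlier sections of the paper establish. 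Since $M^n_s$ is determined up to diffeomorphism and is simply connected with $\Rm>0$, each $M^n_s$ is diffeomorphic to $\mathbb{R}^n$, so the family genuinely lives on a fixed manifold.

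Finally I would assemble these pieces: the concatenation gives a continuous (indeed smooth) path in the moduli space joining $(M^n,g,\nabla f)=(M^n_0,g_0,\nabla f_0)$ to $(\mathbb{R}^n,g_c,\nabla f_c)$, with every intermediate soliton asymptotically conical and of positive curvature operator, and with $c$ pinned down by $\vol(X)$ since the volume of the link is a continuous invariant along the path and the Bryant parameter $c$ is a strictly monotone function of that volume. I expect the main obstacle to be step (ii) above — establishing the regularity of the map $s\mapsto (M^n_s,g_s)$, equivalently showing that the soliton equation defines a smooth Banach manifold near each $g_s$. The difficulty is twofold: one must set up the correct weighted function spaces in which the linearized operator is Fredholm (the asymptotically conical, non-compact geometry means one is dealing with a scattering/b-type elliptic problem, and the weight must avoid the indicial roots at rate $\tau=2$), and one must handle the gauge freedom coming from diffeomorphisms and the choice of potential $f$ — here condition \eqref{cond-exp-1} and the normalization that the critical point of $f$ be a fixed basepoint serve to rigidify the problem, and a DeTurck-type trick adapted to expanders removes the remaining diffeomorphism kernel. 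Once invertibility modulo the finite-dimensional space of cone deformations is in hand, the implicit function theorem closes the argument.
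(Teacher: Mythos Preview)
Your outline is essentially the paper's own strategy: run the B\"ohm--Wilking normalized Ricci flow on the link to obtain a path $(g_X^s)_{s\in[0,1]}$ of metrics with $\Rm\ge 1$ connecting $g_X$ to a round metric, and lift this to a path of expanders via the implicit function theorem in weighted H\"older spaces plus the compactness theorem (Theorem~\ref{Compactness-II}). Two remarks on details: the volume is \emph{constant} along the normalized flow, not merely continuous, which is what pins down $c$; and your ``convex-combination interpolation'' alternative does not preserve the curvature condition in general, so the Ricci flow route is the one that works.

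There is, however, one genuine missing step. After the DeTurck trick, the implicit function theorem produces a solution of
\[
-2\Ric(g)-g+\Li_{V}(g)=0
\]
for some vector field $V$ that is \emph{a priori not a gradient}. The DeTurck trick only restores ellipticity of the linearization; it does not force $V=\nabla f$. Since the uniqueness clause of Theorem~\ref{theo-I} is stated only within the class of \emph{gradient} expanders, you cannot immediately identify the implicit soliton produced by the IFT with the gradient one furnished by Theorem~\ref{theo-I}, and hence cannot conclude smooth dependence of the gradient family on $s$ from your argument as written. The paper closes this gap with a separate ``no breather'' result (Theorem~\ref{trivial-exp-sol-gra}): using either Hamilton's matrix Harnack estimate and the strong maximum principle, or a Cao--Hamilton entropy argument adapted to the noncompact setting, one shows that any asymptotically conical expanding Ricci soliton with positive curvature operator is necessarily gradient. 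This step genuinely uses the positivity of the curvature operator (not just nonnegativity), and in particular requires the careful verification---carried out in the proof of Theorem~\ref{Loc-Dif-Ban}---that the \emph{radial} curvatures of the perturbed expander remain positive, which is delicate since the radial curvatures of the cone itself vanish. Your strong-maximum-principle sketch for positivity is in the right spirit but does not address this radial issue. Once the no-breather theorem is in hand, your argument goes through.
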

In other words, theorem \ref{theo-II} shows that the moduli space of positively curved expanding gradient Ricci solitons smoothly coming out of a metric cone is connected.

To tackle this problem, i.e. theorems \ref{theo-I} and \ref{theo-II}, we use a continuity method. Roughly speaking, the main idea comes from the following observation : given a simply connected Riemannian manifold $(X,g_X)$ such that $\Rm(g_X)\geq 1$, one can start the (normalized) Ricci flow and end up with a one parameter family of metrics $(g(s))_{s\in[0,+\infty]}$ on $X$ of constant volume connecting $(X,g_X)$ to $(X,c^2g_{\mathbb{S}^{n-1}})$ where $c^{n-1}=\vol(X,g_X)/\vol(\mathbb{S}^{n-1},g_{\mathbb{S}^{n-1}})$ thanks to the fundamental work of Böhm and Wilking \cite{Boh-Wil}. Therefore, the initial metric cone $(C(X),dr^2+r^2g_X)$ is connected to the cone $(C(\mathbb{S}^{n-1}),dr^2+(cr)^2g_{\mathbb{S}^{n-1}})$ which is smoothed out by the corresponding Bryant soliton ! As usual, one splits the proof in two parts : the openness and the closedness of the set of such solutions. The compactness of positively curved expanding gradient Ricci solitons has been proved in \cite{Der-Asy-Com-Egs}. Indeed, we proved the following theorem : 

\begin{theo}\label{Compactness-II}[\cite{Der-Asy-Com-Egs}]
The class
\begin{eqnarray*}
\cat{M}^{\vol}_{Exp}(n,(\Lambda_k)_{k\geq 0},V_0)&:=&\{\mbox{$(M^n,g,\nabla f,p)$ normalized expanding gradient Ricci soliton. s.t.} 
\\&&\Rm(g)\geq 0\quad;\quad\crit(f)=\{p\}\quad;\\
&&\quad\AVR(g)\geq V_0\quad ;\quad \A_g^k(\Rm(g))\leq\Lambda_k,\quad\forall k\geq 0 \}
\end{eqnarray*}
is compact in the pointed $C^{\infty}$ topology. \\

Moreover, let a sequence $(M_i,g_i,\nabla f_i,p_i)_i$ be in $\cat{M}^{\vol}_{Exp}(n,(\Lambda_k)_{k\geq 0},V_0)$. Then there exists a subsequence converging in the pointed $C^{\infty}$ topology to an expanding gradient Ricci soliton $(M_{\infty},g_{\infty},\nabla f_{\infty},p_{\infty})$ in $\cat{M}^{\vol}_{Exp}(n,(\Lambda_k)_{k\geq 0},V_0)$ whose asymptotic cone\\ $(C(X_{\infty}),g_{C(X_{\infty})},o_{\infty})$ is the limit in the Gromov-Hausdorff topology of the sequence of the asymptotic cones $(C(X_i),g_{C(X_i)},o_i)_i$ with $C^{\infty}$ convergence outside the apex. 
\end{theo}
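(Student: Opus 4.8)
\emph{Proof proposal.} The plan is to run the Cheeger--Gromov--Hamilton compactness machinery and then check that the limit inherits every defining property of the class. The hypotheses $\A_{g_i}^k(\Rm(g_i))\leq\Lambda_k$ give uniform bounds on $|\nabla^k\Rm(g_i)|_{g_i}$ over all of $M_i$ for each $k$ (with quadratic-type decay in $d_{g_i}(p_i,\cdot)$). Since $\Rm(g_i)\geq 0$, Bishop--Gromov monotonicity gives $\vol_{g_i}B(p_i,r)\geq\AVR(g_i)\,\omega_n r^n\geq V_0\,\omega_n r^n$ for every $r>0$; taking $r=1$ and using $|\Rm(g_i)|\leq\Lambda_0$ on $B(p_i,1)$, Cheeger's local injectivity radius estimate yields $\inj_{g_i}(p_i)\geq\iota_0(n,\Lambda_0,V_0)>0$. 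Hamilton's theorem then produces a subsequence converging in the pointed $C^\infty$ Cheeger--Gromov sense to a complete pointed manifold $(M_\infty,g_\infty,p_\infty)$, still with $\Rm(g_\infty)\geq 0$ and $|\nabla^k\Rm(g_\infty)|\leq\Lambda_k$.

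Next I would put an expander structure on the limit. The normalization of the potential bounds $f_i(p_i)$ uniformly, and together with $\nabla f_i(p_i)=0$ and the Hessian bound $\Hess_{g_i}f_i=\Ric(g_i)+\frac{g_i}{2}\leq C(n,\Lambda_0)\,g_i$ this gives uniform $C^0$ bounds on the $f_i$ over each fixed ball; the soliton equation then bootstraps to uniform $C^\infty_{\mathrm{loc}}$ bounds, so after a further subsequence $f_i\to f_\infty$ in $C^\infty_{\mathrm{loc}}$ with $\Hess_{g_\infty}f_\infty=\Ric(g_\infty)+\frac{g_\infty}{2}$, $\nabla f_\infty(p_\infty)=0$, and the same normalization in the limit. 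Because $\Ric(g_\infty)\geq 0$ we have $\Hess_{g_\infty}f_\infty\geq\frac12 g_\infty>0$, so $f_\infty$ is strictly convex and $\crit(f_\infty)=\{p_\infty\}$. For each fixed $r$, $\vol_{g_\infty}B(p_\infty,r)=\lim_i\vol_{g_i}B(p_i,r)\geq V_0\,\omega_n r^n$, hence $\AVR(g_\infty)\geq V_0$; and letting the pointwise bounds $(1+d_{g_i}(p_i,\cdot))^{k+2}|\nabla^k\Rm(g_i)|\leq\Lambda_k$ pass to the limit gives $\A_{g_\infty}^k(\Rm(g_\infty))\leq\Lambda_k$. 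Thus $(M_\infty,g_\infty,\nabla f_\infty,p_\infty)$ lies in $\cat{M}^{\vol}_{Exp}(n,(\Lambda_k)_{k\geq 0},V_0)$, and it is non-compact since $\AVR(g_\infty)>0$.

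There remains the claim about asymptotic cones. The $\A^k$-bounds force quadratic curvature decay and hence, by the structure theory of \cite{Der-Asy-Com-Egs}, a smooth asymptotic cone $(C(X),g_{C(X)},o)$ for every element of the class, together with a \emph{uniform} rate: a radius $R$ and a diffeomorphism $\phi:\{d_g(p,\cdot)>R\}\to C(X)\setminus B(o,R)$ with $\sup_{\partial B(o,r)}|\nabla^k(\phi_*g-g_{C(X)})|_{g_{C(X)}}\leq C_k\,r^{-2-k}$, where $R$ and the $C_k$ depend only on $n$, $V_0$ and finitely many $\Lambda_j$. Granting this, the links $(X_i,g_{X_i})$ have uniformly bounded geometry and volume bounded below by $c(n)V_0$, so a subsequence converges in $C^\infty$ to some $(X_\infty,g_{X_\infty})$; and the uniform rate $\tau=2$ makes the rescalings $(M_i,\lambda^{-2}g_i,p_i)$ uniformly close to $(C(X_i),g_{C(X_i)},o_i)$ on larger and larger balls, so a diagonal argument in $i\to\infty$ then $\lambda\to\infty$ shows that $(C(X_i),g_{C(X_i)},o_i)\to(C(X_\infty),g_{C(X_\infty)},o_\infty)$ in pointed Gromov--Hausdorff topology with $C^\infty$ convergence outside the apex, and that $(C(X_\infty),g_{C(X_\infty)})$ is the asymptotic cone of $(M_\infty,g_\infty)$.

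The main obstacle is this uniform asymptotic-conicality input: one must know not only that each soliton in the class is asymptotically conical, but that the radius $R$ where the cone becomes visible and all the decay constants $C_k$ are controlled by $n$, $V_0$ and the $\Lambda_j$ alone. This is exactly where the analysis of the linearised soliton operator on the conical end in \cite{Der-Asy-Com-Egs} (yielding the sharp rate $f_k(r)=r^{-2-k}$) is needed; everything else is the now-standard Cheeger--Gromov--Hamilton packaging, together with the two elementary facts that $\AVR$ passes to the limit from below when $\Rm\geq 0$ and that strict convexity of $f_\infty$ pins the limiting critical point to the basepoint.
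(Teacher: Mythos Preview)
This theorem is not proved in the present paper: it is quoted verbatim from \cite{Der-Asy-Com-Egs} and used as a black box to settle the closedness step of the continuity method. There is therefore no ``paper's own proof'' to compare your proposal against here.

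That said, your outline matches the standard strategy one expects in \cite{Der-Asy-Com-Egs}: Cheeger--Gromov--Hamilton compactness from uniform curvature and injectivity radius control, passage of the potential function and the soliton equation to the limit, and then identification of the limiting asymptotic cone via uniform decay estimates. You have also correctly isolated the genuine analytic input, namely the \emph{uniform} asymptotic conicality (uniform radius $R$ and uniform decay constants $C_k$ depending only on $n$, $V_0$ and the $\Lambda_j$), which is precisely the content developed in \cite{Der-Asy-Com-Egs}.

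One point deserves more care. You write that the hypotheses $\A_{g_i}^k(\Rm(g_i))\leq\Lambda_k$ ``give uniform bounds on $|\nabla^k\Rm(g_i)|_{g_i}$ over all of $M_i$'', and later that they yield the pointwise inequality $(1+d_{g_i}(p_i,\cdot))^{k+2}|\nabla^k\Rm(g_i)|\leq\Lambda_k$. But $\A_g^k(\Rm(g))$ is defined as a $\limsup$ at infinity, not a global supremum; a bound on it says nothing, a priori, about the curvature on any compact set. The passage from these asymptotic invariants to genuine global weighted bounds uses the soliton structure---the elliptic equations $\Delta_f\Rm(g)+\Rm(g)+\Rm(g)\ast\Rm(g)=0$ (and their differentiated versions) together with the maximum principle and the properness of $f$---and is itself part of the analysis carried out in \cite{Der-Asy-Com-Egs}. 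Without that step you cannot launch Hamilton's compactness theorem, so this should be flagged as a second place (besides the uniform cone rate) where the cited reference does the real work. A smaller point: the preservation of the normalization $\int_M e^{-f}\,d\mu_g=(4\pi)^{n/2}$ under the limit is not automatic on a noncompact manifold and also requires the uniform quadratic growth of $f_i$ to control the tails of the integrals.
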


We explain how theorem \ref{Compactness-II} solves the closedness of the set of solutions as in theorems \ref{theo-I} and \ref{theo-II}. As the operator is supposed to be nonnegative, the potential function is a proper strictly convex function (proposition \ref{pot-fct-est}), in particular, its critical set is reduced to a point and the underlying manifold is diffeomorphic to $\mathbb{R}^n$. Therefore, the assumption $\crit(f)=\{p\}$ only suggests that we decide to mark this Riemannian manifold by this particular point.

We recall that the invariants $ (\A_g^k(\Rm(g)))_{k\geq 0}$ are defined by $$ \A_g^k(\Rm(g)):=\limsup_{r_p\rightarrow+\infty}r_p^{2+k}\arrowvert\nabla^k\Rm(g)\arrowvert.$$ By the Gauss equations applied to the section of the asymptotic cone of a Ricci expander, bounding these invariants uniformly amounts to bound  the covariant derivatives of the curvature of the link uniformly which is actually the case in our setting described above. Moreover, the asymptotic volume ratio $\AVR(g):=\lim_{r\rightarrow +\infty}\vol B(p,r)/r^n$, for some point $p\in M^n$  is proportional to the volume of the link that is constant here, hence bounded from below uniformly.

Therefore, it remains to provide a proof of the openness of such set of solutions. Before going deeper into the details of the proof, we mention works on analogous subjects that share similarities with this approach. As a kind of Dirichlet problem at infinity, we benefitted from the works on conformally compact Einstein manifolds by Anderson \cite{And-Pre-Con-Inf}, Biquard \cite{Biq-Met-Asy-Ein}, Graham-Lee \cite{Gra-Lee-Ein-Con-Inf}, Lee \cite{Lee-Hyp} to mention a few. More closely, this project on Ricci expanders has been motivated by the thesis of Siepmann \cite{Sie-Phd} on deformations of Ricci expanders in a Kähler setting : we will mention all along the paper the technical difficulties we had to face when one drops the Kähler assumption. Finally,  the main source of inspiration comes from the work of Schulze and Simon \cite{Sch-Sim} : they actually show that any asymptotic cone of a generic Riemannian manifold with positive curvature operator and positive asymptotic volume ratio can be smoothed out by a positively curved expanding gradient Ricci soliton where the convergence is only given in term of the pointed Gromov-Hausdorff topology. Therefore, our results make precise the picture in the case the convergence to the asymptotic cone is smooth. More precisely, recall that the asymptotic cone of an $n$-dimensional Riemannian manifold with nonnegative curvature operator (nonnegative sectional curvature suffices) and positive asymptotic volume ratio is a metric cone over a compact Alexandrov space homeomorphic to $\mathbb{S}^{n-1}$ of curvature not less than $1$ : see \cite{Sch-Sim} and the references therein. Therefore, theorem \ref{theo-I} provides a complete understanding of which metric cone can appear as the asymptotic cone of a Ricci expander in a smooth setting. \\

We now give both the structure of the paper and the main steps of the proof :\\
\begin{enumerate}
\item Fix once and for all an expanding gradient Ricci soliton $(M^n,g_0,\nabla^{g_0}f_0)$ with positive curvature asymptotic to $(C(\mathbb{S}^{n-1}),dr^2+r^2\gamma_0,r\partial_r/2)$. Given a perturbed metric $\gamma_1$ of $\gamma_0$ on the section of the cone, we want to solve the following problem :
\begin{eqnarray}\label{E-system}
\left\{
\begin{array}{rl}
&-2\Ric(g_1)-g_1+\Li_{\nabla^{g_1}f_0}(g_1)=0\\
&\\
& \lim_{r\rightarrow +\infty}(M^n,r^{-2}g,p)=(C(\mathbb{S}^{n-1}),dr^2+r^2\gamma_1,o).
\end{array}
\right.
\end{eqnarray}
Note that we are fixing the potential function : it turns out that it simplifies the analysis, the main reason being that on any asymptotically conical expanding gradient Ricci soliton the potential functions are all the same outside a compact set when pulled back on their respective asymptotic cone. This choice is also adopted in \cite{Sie-Phd}.

The first issue one usually faces when trying to prescribe some Ricci curvature is the lack of strict ellipticity of the system (\ref{E-system}). The is due to the invariance under the whole group of diffeomorphisms of $M^n$. To circumvent this problem, one uses the so called DeTurck's trick, that amounts to fix a gauge once and for all, by adding the right Lie derivative so that the linearization becomes strictly elliptic. Indeed, the linearization of (\ref{E-system}) at $(M^n,g_0)$ is :
\begin{eqnarray*}
D_{g_0}\left(-2\Ric(g_1)-g_1+\Li_{\nabla^{g_1}f_0}(g_1)\right)(h)&=&\Delta_{g_0,f_0}h+2\Rm(g_0)\ast h\\
&&-\Li_{\div_{g_0,f_0}h-\nabla^{g_0}\tr_{g_0}h/2}(g_0),
\end{eqnarray*}
where $h$ is a symmetric $2$-tensor, where $\Delta_{g_0,f_0}h:=\Delta_{g_0}h+\nabla^{g_0}_{\nabla^{g_0}f_0}h$ is the weighted laplacian and $\div_{g_0,f_0}h:=\div_{g_0}h+h(\nabla^{g_0}f_0)$ is the weighted divergence and finally $(\Rm(g_0)\ast h)_{ij}:=\Rm(g_0)_{iklj}h_{kl}$. The operator $\Delta_{g_0,f_0}$ is also known as the Witten laplacian when considered on differential forms and belongs to the class of Fokker-Planck operators.  \\

\item
\begin{itemize}
\item We are then reduced to study the Fredholm properties of the linearized operator $L$ also called the weighted Lichnerowicz operator defined by :
$$L:=\Delta_{g_0,f_0}+2\Rm(g_0)\ast .$$
As we said previously, the convergence to the asymptotic cone of $(M^n,g_0,\nabla^{g_0}f_0)$ is only polynomial in the distance to a fixed point. This prevents us from using weighted Sobolev spaces modeled on $L^2(e^{f_0}d\mu(g))$ on which $L$ or $\Delta_{g_0,f_0}$ is symmetric. Hence the inevitable use of adequate Hölder spaces : see section \ref{Sec-Fct-Spa}.

Now, essentially because $L$ has unbounded coefficients, Da Prato and Lunardi \cite{DaP-Lun-Orn-Uhl} showed that such operator is neither strongly continuous nor analytic in the space of bounded continuous functions on $(\mathbb{R}^n,\eucl)$. Nonetheless, adapting the proof of Lunardi in the Euclidean case \cite{Lun-Sch-Est} to a Riemannian setting, we are able to make sense of the semi-group and the resolvent associated to this weighted laplacian.  The main reason why it works comes from the basic observation :
\begin{eqnarray}\label{jus-con-res}
f_0^{\alpha}\circ \Delta_{g_0,f_0}\circ f_0^{-\alpha}\simeq \Delta_{g_0,f_0}-\alpha,
\end{eqnarray}
up to compact perturbations, for any real number $\alpha$. As the potential function is quadratic in the distance, computation (\ref{jus-con-res}) shows that this weighted laplacian preserves tensors with some given polynomial decay at infinity and when $\alpha$ is positive, one can make sense of the resolvent. In section \ref{Sec-Sol-C^0}, we state the main results that prove the existence of solutions in the previously mentioned Hölder spaces with low regularity : theorems \ref{Main-theo-weig-sch-est}, \ref{first-iso-sch}. The proof of theorem \ref{Main-theo-weig-sch-est} is postponed to section \ref{sub-sec-proof-the-Lun} where we recall several facts about interpolation theory in a Riemannian setting. We then state and prove in section \ref{Sec-Sol-C^k} theorem \ref{iso-weighted-lap-II} that is the analogue of theorem  \ref{first-iso-sch} with higher regularity.\\

\item As $L$ is a compact perturbation of the weighted laplacian, it remains to show that it is injective so that $L$ is an injective Fredholm operator of index $0$, i.e. an isomorphism. For that purpose, we use for the first time the assumption on the sign of the curvature. More precisely, the main tool is the maximum principle for symmetric $2$-tensors due to Hamilton that has been used several times in the setting of Ricci solitons : \cite{Bre-Rot-3d}, \cite{Cho-Exp-Asy-Con}, \cite{Der-Asy-Com-Egs} to mention a few : this corresponds to corollaries \ref{sec-iso-sch}, \ref{sec-iso-sch-II}.\\

\end{itemize}
\item It suffices now to apply the implicit function theorem for Banach spaces to get an expanding deformation of $(M^n,g_0,\nabla^{g_0}f_0)$ : this is the purpose of section \ref{sec-exp-str-I}. If $\gamma_1$ is regular enough but not smooth, this implicit expanding soliton is not smooth at infinity even if it is (locally) analytic by the results of Bando \cite{Ban-Ana}. This actually provides, up to the knowledge of the author, the first example of expanding Ricci solitons that are not smoothly asymptotic to their asymptotic cone : see theorem \ref{Loc-Dif-Ban}.

Now, what if $\gamma_1$ is $C^{\infty}$ ? We actually have to use the Nash-Moser theorem to circumvent this inevitable loss of derivatives. We follow the presentation of the Nash-Moser theorem due to Hamilton \cite{Ham-Nas-Mos} dealing with the so called tame category for Fréchet spaces : see section \ref{Sec-Fct-Spa-Fre} for the definitions of this category. It turns out that our previous choice of function spaces is not in the tame category mainly because bounded continuous tensors do not extend in an obvious way at infinity (or on the section of the asymptotic cone). This is why we study the realization of the weighted laplacian  in the space of continuous tensors converging to $0$ at infinity : see section \ref{Sec-Fct-Spa-Fre} for the definitions of the relevant Fréchet function spaces and section \ref{Sec-Sol-C^k_0} where we state and prove the main theorems \ref{iso-weighted-lap-I-bis} and \ref{iso-weighted-lap-II-bis} analogous to theorems \ref{iso-weighted-lap-II} and  \ref{first-iso-sch}.
Finally, it is well-known that one needs to invert the linearized operator on a whole neighborhood of $(M^n,g_0)$ (not just at this point) : this is the content of theorem \ref{Lin-Iso-Fre} in section \ref{Sec-Tam-Lin-Ope}. This will lead us directly to the existence of Fréchet deformations with the help of the Nash-Moser theorem : see theorem \ref{Loc-Dif-Fre} of section \ref{Sec-Def-Exp-Fre}.
 \\

\item So far, what we managed to implicitly produce is an asymptotically conical expanding Ricci soliton that is not GRADIENT a priori, that is a triplet $(M^n,g_1,V_1)$ satisfying the static equation $$2\Ric(g_1)+g_1=\Li_{V_1}(g_1).$$
To compensate for this issue, we need an ad-hoc no breather theorem in the spirit of Perelman : theorem \ref{trivial-exp-sol-gra}. In our setting, we can actually guarantee that $(M^n,g_1)$ has positive curvature operator, something that is not straightforward since the radial curvatures of a metric cone always vanish, i.e. it is not an obvious open condition. It turns out that the decay of the radial curvatures of an expanding gradient Ricci soliton can be controlled from below : \cite{Der-Asy-Com-Egs}. These curvatures are essentially given by the divergence of the curvature operator. Therefore, if the metric $g_1$ is sufficiently regular at infinity ($C^3$ at least), the curvature operator of $g_1$ is positive. 

Section \ref{cao-ham-sec} starts with some general considerations that motivate the introduction of some sophisticated objects such as the Hamilton matrix Harnack quadratic and some adequate entropy : proposition \ref{Mot-Ent-Cao-Ham}.
Then, the rest of this section is devoted to give two different proofs of theorem \ref{trivial-exp-sol-gra}.

On one hand, we are able to adapt the arguments of Hamilton \cite{Ham-Ete-Ric} and Ni \cite{Ni-Mon-Kah-Ric} in section \ref{Sec-Fir-Pro-No-Bre} to get rid of non gradient expanding Ricci soliton. This proof is based on the maximum principle and fully uses the Hamilton matrix Harnack estimate.

On the other hand, section \ref{Sec-Sec-Pro-No-Bre} gives an alternative proof based on some entropy introduced by Cao and Hamilton \cite{Cao-Ham-Har} for compact Riemannian manifolds with nonnegative curvature operator. Thanks to the work of Zhang \cite{Zha-Log}, we are able to justify the monotonicity of this entropy along a Ricci flow  with nonnegative curvature operator and controlled entropy at infinity. The proof uses the trace Harnack inequality due to Hamilton. In some sense, only the assumptions on the geometry at infinity differ. We end this section by giving a proof of theorem \ref{theo-I}.

\end{enumerate}

\textbf{Acknowledgements.}
 I would like to thank Felix Schulze and Peter Topping for all these fruitful conversations we had on Ricci expanders.\\

The author is supported by the EPSRC on a Programme Grant entitled ‘Singularities of Geometric Partial Differential Equations’ (reference number EP/K00865X/1).

\section{Banach deformations}\label{Sec-Ban-Def-Egs}
\subsection{Functions spaces}\label{Sec-Fct-Spa}
In the spirit of \cite{Sie-Phd}, we define the following function (weighted) Hölder spaces for a complete Riemannian manifold $(M,g)$. Let $E$ be a tensor bundle over $M$, i.e. $(\otimes^rT^*M)\otimes(\otimes^sTM)$, or a subbundle such as the exterior bundles $\Lambda^rT^*M$ or the symmetric bundles $S^rT^*M$ : we will mainly consider the cases where $E$ is either the trivial line bundle over $M$ or the bundle of symmetric $2$-tensors $S^2T^*M$. 
Let $\theta\in(0,1)$ and let $k$ be a nonnegative integer.

We start with a couple of definitions.\\

\begin{itemize}

\item Let $(M,g)$ be a Riemannian manifold and let $V$ be a smooth vector field on $M$. The \textit{weighted laplacian} (with respect to $V$) is 
$\Delta_{V}T:=\Delta T+\nabla_{V}T$,
where $T$ is a tensor on $M$.

\item Let $(M,g)$ be a Riemannian manifold and let $w:M\rightarrow\mathbb{R}$ be a smooth function on $M$. The \textit{weighted laplacian} (with respect to $w$) is 
$\Delta_{w}T:=\Delta_{\nabla w} T$,
where $T$ is a tensor on $M$.

\end{itemize}

\begin{itemize}
\item $C^{k,\theta}(M,E):=\{h\in C^{k,\theta}_{loc}(M,E)\quad|\quad\|h\|_{C^{k,\theta}(M,E)}<+\infty\}$, where 
$$\|h\|_{C^{k,\theta}(M,E)}:=\sum_{i=0}^k\sup_M\arrowvert\nabla^ih\arrowvert+\left[\nabla^kh\right]_{\theta},$$ where 
$$\left[\nabla^kh\right]_{\theta}:=\sup_{x\in M}\sup_{y\in B(x,\delta)\setminus\{x\}}\frac{\arrowvert\nabla^kh(x)-P_{x,y}^*\nabla^kh(y)\arrowvert}{d(x,y)^{\theta}},$$ where $\delta$ is a fixed positive constant depending on the injectivity radius of $(M,g)$ and $P_{x,y}$ denotes the parallel transport along the unique minimizing geodesic from $x$ to $y$.\\

\item $C_{con}^{k,\theta}(M,E):=\{h\in C^{k,\theta}_{loc}(M,E)\quad|\quad\|h\|_{C_{con}^{k,\theta}(M,E)}<+\infty\}$ where
$$\|h\|_{C_{con}^{k,\theta}(M,E)}:=\sum_{i=0}^k\|(r_p^2+1)^{i/2}\nabla^ih\|_{C^{0,\theta}(M,E)},$$
where $r_p$ denotes the distance function to a fixed point $p\in M$.

\item Let $V$ be a smooth vector field on $M$.  
$$D^{k+2}_{V}(M,E):=\{h\in \cap_{p\geq 1}W^{k+2,p}_{loc}(M,E)\quad|\quad h\in C_{con}^{k}(M,E)\quad|\quad \Delta_{V} h\in C_{con}^{k}(M,E)\},$$ equipped with the norm 

$$\|h\|_{D^{k+2}_{V}(M,E)}:=\|h\|_{C_{con}^{k}(M,E)}+\|\Delta_{V}h\|_{C^{k}_{con}(M,E)}.$$

$$D^{k+2,\theta}_{V}(M,E):=\{h\in C^{k+2,\theta}_{loc}(M,E)\quad|\quad h\in C_{con}^{k,\theta}(M,E)\quad|\quad \Delta_{V} h\in C_{con}^{k,\theta}(M,E)\},$$ equipped with the norm 

$$\|h\|_{D^{k+2,\theta}_{V}(M,E)}:=\|h\|_{C_{con}^{k,\theta}(M,E)}+\|\Delta_{V} h\|_{C^{k,\theta}_{con}(M,E)}.$$\\

\item Let $w:M\rightarrow\mathbb{R}_+$ be a smooth function on $M$. 

\begin{eqnarray*}
C_{con,w}^{k,\theta}(M,E):=w^{-1}C_{con}^{k,\theta}(M,E),\\
\end{eqnarray*}
 endowed with the norm $\|h\|_{C_{con,w}^{k,\theta}(M,E)}:=\|wh\|_{C_{con}^{k,\theta}(M,E)}.$\\
Similarly, we define 
\begin{eqnarray*}
D^{k+2,\theta}_{w,V}(M,E):=w^{-1}\cdot D^{k+2,\theta}_{V}(M,E),\\
\end{eqnarray*}
 endowed with the norm $\|h\|_{D^{k+2,\theta}_{w,V}(M,E)}:=\|wh\|_{D^{k+2,\theta}_{V}(M,E)}.$\\

\end{itemize}

\begin{rk} \label{rk-non-int-spa}
\begin{itemize}
\item It is not obvious that the spaces $D_{w,X}^{k+2,\theta}(M,E)$ are Banach spaces, a priori.

\item
We point out that the spaces $C_{con}^{k,\theta}(M,E)$ are not equal to the interpolation spaces $(C^k_{con}(M,E),C^{k+1}_{con}(M,E))_{\theta,\infty}$. Indeed, one can identify these interpolation spaces as follows : 
\begin{eqnarray*}
&&(C^k_{con}(M,E),C^{k+1}_{con}(M,E))_{\theta,\infty}=\left\{h\in C^k_{con}(M,E) \quad|\quad \left[(r_p^2+1)^{k/2}\nabla^kh\right]_{con,\theta}<+\infty\right\},\\
&&\left[H\right]_{con,\theta}:=\sup_{x\in M}\sup_{y\in B(x,\delta r_p(x))\setminus\{x\}}\min\left\{r_p(x)^{\theta},r_p(y)^{\theta}\right\}\frac{\arrowvert H(x)-P_{x,y}^*H(y)\arrowvert}{d(x,y)^{\theta}},
\end{eqnarray*}
where $H$ is a tensor on $M$ and $\delta$ is a fixed positive constant depending on a lower bound of $\inf_{x\in M}\inj(x,g)/r_p(x)$, where $\inj(x,g)$ denotes the injectivity radius at $x$ in $M$ for the metric $g$.
\end{itemize}

\end{rk}
The weights $w$ we consider are polynomial instead of being exponential in the distance as in \cite{Sie-Phd}. The main reason is that the convergence to the asymptotic cone is polynomial in the non negatively curved case whereas the convergence is exponential as soon as the asymptotic cone is Ricci flat : \cite{Sie-Phd} and \cite{Der-Asy-Com-Egs}.

If $(M^n,g,\nabla f)$ is a non trivial expanding gradient Ricci soliton, we define $$v:=f+\mu(g)+n/2.$$ By proposition \ref{pot-fct-est}, $v$ is a positive eigenfunction for the weighted laplacian $\Delta_f$ associated to the eigenvalue $1$. Moreover, if $(M^n,g,\nabla f)$ is asymptotically conical, then $f$, hence $v$ is equivalent to $r_p^2/4$. From now on, $v$ will play the role of the distance function $r_p$, for some $p\in M$.

We study here the Fredholm properties of the weighted Lichnerowicz operator 
$$L_0:=\Delta_f+2\Rm(g)\ast,$$
 in weighted spaces $D^{k+2,\theta}_{f^{\alpha},\nabla f}(M,E):= D^{k+2,\theta}_{v^{\alpha},\nabla f}(M,E)$ for some positive $\alpha$ and some $\theta\in(0,1)$. For this purpose, it is convenient to consider the weighted Lichnerowicz operator defined by 
\begin{eqnarray*}
&&L_{\alpha}h:=\left(\Delta_{v-\gamma\ln v}-\alpha+2\Rm(g)\ast+\alpha(\alpha+1)\arrowvert\nabla\ln v\arrowvert^2\right)h,\quad h\in S^2T^*M.
\end{eqnarray*}
The reason for introducing such operator is given by the following formal relation :
\begin{eqnarray*}
L_{\alpha}h=v^{\alpha}L_0(v^{-\alpha}h),\quad h\in S^2T^*M.
\end{eqnarray*}
Therefore, for $h\in D_{f^{\alpha},\nabla f}^{k+2,\theta}(M,S^2T^*M)$, $L_0h=L_0h_0\in C_{con,f^{\alpha}}^{k,\theta}(M,S^2T^*M)$ if and only if $L_{\alpha}h_{\alpha}\in C^{k,\theta}_{con}(M,S^2T^*M)$ where $h_{\alpha}:=v^{\alpha}h$.

\subsection{Solutions in $D^{2,\theta}_{\nabla f}(M,E)$}\label{Sec-Sol-C^0}
The main aim of this section is to establish the existence of $C^{2,\theta}$ solutions for families of weighted operators. We start with the following theorem that is an adaptation of \cite{Lun-Sch-Est} in a Riemannian setting.
\begin{theo}(Lunardi)\label{Main-theo-weig-sch-est}
Let $(M^n,g)$ be a complete Riemannian manifold with positive injectivity radius. Let $V$ be a smooth vector field on $M$. Assume that, for some large integer $k\geq 1$, there exists a constant $K(k)$ such that
\begin{eqnarray}
\|\Rm(g)\ast V\|_{C^k(M,E)}+\|\Rm(g)\|_{C^k(M,E)}+\|\nabla V\|_{C^{k-1}(M,E)}\leq K(k),
\end{eqnarray}
where $\Rm(g)\ast V:=\Rm(g)(V,\cdot,\cdot,\cdot)$. Assume there exists a smooth function $\phi:M\rightarrow \mathbb{R}$ such that 
\begin{eqnarray*}
\lim_{+\infty}\phi=+\infty,\quad\sup_M\arrowvert\Delta_g\phi\arrowvert+\arrowvert\nabla_{V}\phi\arrowvert<+\infty.
\end{eqnarray*}
 Then, 
 \begin{itemize}
 \item For any $H\in C^{0}(M,E)$ and $\lambda>0$, there exists a unique tensor $h\in D_{V}^2(M,E)$ and a positive constant $C$ such that
 \begin{eqnarray*}
\Delta_{V}h-\lambda h=H,\quad \|h\|_{D^2_{V}(M,E)}\leq C\|H\|_{C^{0}(M,E)},
\end{eqnarray*}
Moreover, $D_{V}^2(M,E)$ is continuously embedded in $C^{1,\theta}(M,E)$ for any $\theta\in(0,1)$.\\
 \item For any $H\in C^{0,\theta}(M,E)$, with $\theta\in(0,1)$, and $\lambda>0$, there exists a unique tensor $h\in C^{2,\theta}(M,E)$ and a constant $C$ independent of $H$ such that 
\begin{eqnarray*}
\Delta_{V}h-\lambda h=H,\quad \|h\|_{C^{2,\theta}(M,E)}\leq C\|H\|_{C^{0,\theta}(M,E)}.
\end{eqnarray*}
\end{itemize}

\end{theo}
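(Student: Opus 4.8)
The plan is to realise the resolvent of $\Delta_{V}$ through the heat semigroup it generates and to read off the Schauder estimates from the smoothing properties of that semigroup, in the spirit of Lunardi \cite{Lun-Sch-Est}; the essential difference with the Euclidean case is that no explicit kernel is available, so every estimate is obtained by a Bernstein--Bochner computation in which the (unbounded) vector field $V$ enters only as the drift $\nabla_{V}$ of the quantity being differentiated, hence never spoils the maximum principle. Since $\Delta_{V}$ is neither strongly continuous nor analytic on $C^{0}(M,E)$ by \cite{DaP-Lun-Orn-Uhl}, the semigroup has to be constructed by hand.

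\emph{Step 1 (the semigroup).} I would first solve, by standard parabolic theory, the Dirichlet problems $\partial_{t}u_{R}=\Delta_{V}u_{R}$ on $(0,+\infty)\times B(p,R)$ with $u_{R}(0,\cdot)=H$ and $u_{R}\equiv 0$ on $\partial B(p,R)$. Kato's inequality applied to the tensor $u_{R}$, together with the parabolic maximum principle, gives $\|u_{R}(t,\cdot)\|_{C^{0}(M,E)}\le\|H\|_{C^{0}(M,E)}$ uniformly in $R$; interior parabolic estimates on unit balls --- whose constants depend only on $n$, on the positive lower bound for the injectivity radius and on the $K(k)$ --- then produce, via a diagonal extraction, a solution $T(t)H:=\lim_{R\to+\infty}u_{R}(t,\cdot)$ of $\partial_{t}u=\Delta_{V}u$ on $M$ with $u(0,\cdot)=H$, satisfying $\|T(t)H\|_{C^{0}(M,E)}\le\|H\|_{C^{0}(M,E)}$ and $T(t+s)=T(t)T(s)$. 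Uniqueness of this bounded solution on every finite time interval --- which is what makes $(T(t))_{t\ge 0}$ well defined --- follows by comparing $|u|$ with $\varepsilon\phi+C\varepsilon t$ and letting $\varepsilon\to 0$, using precisely that $\phi\to+\infty$ while $|\Delta_{g}\phi|+|\nabla_{V}\phi|$ is bounded. \emph{Step 2 (smoothing).} I would then prove $\|\nabla^{j}T(t)H\|_{C^{0}(M,E)}\le C_{j}\,t^{-j/2}\|H\|_{C^{0}(M,E)}$ for $t\in(0,1]$ and $1\le j\le k'$ with $k'$ comparable to $k$: for $j=1$ this is a localized Bernstein argument applied to the smooth $u_{R}$, Bochner's formula showing that $|\nabla u_{R}|^{2}$ satisfies a parabolic inequality whose first order part is again $\Delta_{V}$, with the favourable term $-2|\nabla^{2}u_{R}|^{2}$ and zeroth order coefficients controlled by $\|\Rm(g)\|_{C^{k}(M,E)}$ (curvature terms), $\|\nabla V\|_{C^{k-1}(M,E)}$ (commuting $\nabla_{V}$ with covariant derivatives) and $\|\Rm(g)\ast V\|_{C^{k}(M,E)}$ (the term produced by the Ricci identity when $\nabla_{V}$ is commuted with the connection Laplacian on $E$), so that $t|\nabla u_{R}|^{2}+|u_{R}|^{2}$, cut off near $\partial B(p,R)$, is a subsolution of $\partial_{t}-\Delta_{V}$ for small $t$; the higher $j$ follow by iterating on $\nabla^{j}u_{R}$, and for $t\ge 1$ one writes $\nabla^{j}T(t)=\nabla^{j}T(1)\,T(t-1)$. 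Combining these with the moment bound $\|T(t)H-H\|_{C^{0}(M,E)}\le C t^{\theta/2}\|H\|_{C^{0,\theta}(M,E)}$ (valid by bounded geometry) and a dyadic telescoping yields the refinement $\|\nabla^{j}T(t)H\|_{C^{0}(M,E)}\le C_{j}\,t^{-(j-\theta)/2}\|H\|_{C^{0,\theta}(M,E)}$ for $t\in(0,1]$.

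\emph{Step 3 (resolvent and Schauder estimates).} For $\lambda>0$ set $R(\lambda)H:=\int_{0}^{+\infty}e^{-\lambda t}\,T(t)H\,\diff t$, so that $(\lambda-\Delta_{V})R(\lambda)=\Id$ and $\|R(\lambda)H\|_{C^{0}(M,E)}\le\lambda^{-1}\|H\|_{C^{0}(M,E)}$. If $H\in C^{0}(M,E)$, integrating the $j=1$ smoothing estimate (integrable near $t=0$) gives $\|\nabla R(\lambda)H\|_{C^{0}(M,E)}\le C\|H\|_{C^{0}(M,E)}$, and splitting $\int_{0}^{+\infty}$ at $t=d(x,y)^{2}$ --- the integrand bounded by $C t^{-1/2}\|H\|_{C^{0}}$ for $t<d(x,y)^{2}$ and by $C\,d(x,y)\,t^{-1}\|H\|_{C^{0}}$ through $\nabla^{2}T(t)H$ for $t\ge d(x,y)^{2}$ --- gives $[\nabla R(\lambda)H]_{\theta}\le C\|H\|_{C^{0}(M,E)}$ for every $\theta\in(0,1)$, the borderline logarithm being absorbed since $\theta<1$. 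Hence $h:=-(\lambda-\Delta_{V})^{-1}H$ solves $\Delta_{V}h-\lambda h=H$, lies in $C^{1,\theta}(M,E)$, has $\Delta_{V}h=\lambda h+H\in C^{0}(M,E)$, and by interior $L^{p}$-theory lies in $W^{2,p}_{loc}(M,E)$ for every $p$; thus $h\in D_{V}^{2}(M,E)$ with $\|h\|_{D_{V}^{2}(M,E)}\le C\|H\|_{C^{0}(M,E)}$, and uniqueness follows as in Step 1 from Kato's inequality and the Lyapunov function. The embedding $D_{V}^{2}(M,E)\hookrightarrow C^{1,\theta}(M,E)$ is then immediate, since any $h\in D_{V}^{2}$ equals $(\lambda-\Delta_{V})^{-1}(\lambda h-\Delta_{V}h)$ with right-hand side in $C^{0}(M,E)$. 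Finally, if $H\in C^{0,\theta}(M,E)$, the refined estimates $\|\nabla^{2}T(t)H\|_{C^{0}}\le C t^{-1+\theta/2}\|H\|_{C^{0,\theta}}$ and $\|\nabla^{3}T(t)H\|_{C^{0}}\le C t^{-3/2+\theta/2}\|H\|_{C^{0,\theta}}$ make $\int_{0}^{+\infty}e^{-\lambda t}\nabla^{2}T(t)H\,\diff t$ absolutely convergent and, by the same splitting at $t=d(x,y)^{2}$, $[\nabla^{2}R(\lambda)H]_{\theta}\le C\|H\|_{C^{0,\theta}(M,E)}$; together with the bounds on $h$, $\nabla h$ and $\nabla^{2}h$ this is $\|h\|_{C^{2,\theta}(M,E)}\le C\|H\|_{C^{0,\theta}(M,E)}$, which is the second assertion.

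\emph{Main difficulty.} The crux is Steps 1--2: giving the semigroup a meaning without the crutch of analyticity, and, above all, proving the smoothing bounds \emph{globally on $M$}, with constants independent of the size of $V$. What makes this possible is exactly that in every Bochner identity $V$ reappears only inside $\Delta_{V}$, so its growth is irrelevant to the maximum principle; the terms that genuinely have to be controlled are the tensorial commutator terms, which is why the hypotheses bound the $C^{k}$-norms of $\Rm(g)$, $\nabla V$ and $\Rm(g)\ast V$, and the Lyapunov function $\phi$ is needed to license the maximum principle at infinity. The same package can alternatively be organised through interpolation theory, identifying the relevant H\"older--Zygmund spaces with classes characterised by the decay of $\|T(t)H-H\|_{C^{0}}$ or of $\|\nabla^{2}T(t)H\|_{C^{0}}$ as $t\to 0$; this is the viewpoint I would adopt to present the details in section \ref{sub-sec-proof-the-Lun}.
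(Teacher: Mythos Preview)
Your outline is correct and follows essentially the same Lunardi strategy as the paper: build the semigroup $(T(t))_{t\ge0}$ for $\Delta_V$, prove Bernstein--Bochner smoothing estimates with constants depending only on $K(k)$ (the key point being that $V$ reappears only as drift, so the maximum principle applies via the Lyapunov function $\phi$), define the resolvent as a Laplace transform, and read off the Schauder estimates from the time decay of $\|\nabla^j T(t)\|$.

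There are two organisational differences worth flagging. First, for the construction of the semigroup you approximate by Dirichlet problems on balls $B(p,R)$, whereas the paper (following \cite{Lun-Sch-Est}) truncates the vector field itself, setting $V_k:=\psi_k V$ with a cutoff $\psi_k$ built from a regularised distance function, and solves $\partial_t h_k=\Delta_{V_k}h_k$ on all of $M$; this sidesteps tensor boundary conditions entirely and makes the passage to the limit cleaner since the commutator bounds for $[\nabla^i,\Delta_{V_k}]$ are uniform in $k$. Second, for the $C^{2,\theta}$ estimate you split the Laplace integral at $t=d(x,y)^2$ and bound the H\"older seminorm of $\nabla^2 R(\lambda)H$ directly from $\|\nabla^2 T(t)H\|_{C^0}\le Ct^{-1+\theta/2}\|H\|_{C^{0,\theta}}$ and $\|\nabla^3 T(t)H\|_{C^0}\le Ct^{-3/2+\theta/2}\|H\|_{C^{0,\theta}}$; the paper instead identifies the real interpolation spaces $(C^k(M,E),C^{k+1}(M,E))_{\theta,\infty}=C^{k,\theta}(M,E)$ explicitly (using a Cheeger--Gromov covering and partition of unity to carry the Euclidean argument over), deduces $\|T(t)\|_{\Li(C^\theta,C^\alpha)}\le Ce^{\omega t}t^{-(\alpha-\theta)/2}$ by interpolation, and then uses the decomposition $h=\int_0^\rho+\int_\rho^\infty$ to place $h$ in $(C^\alpha,C^{2,\alpha})_{1-(\alpha-\theta)/2,\infty}=C^{2,\theta}$. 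Your direct approach is slightly more economical here; the paper's interpolation route is more systematic and yields the embedding statements along the way. Both are standard packagings of the same content, and you correctly note this equivalence at the end of your proposal.
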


The proof is almost verbatim the proof of Lunardi \cite{Lun-Sch-Est}. We only give the main steps in section \ref{sub-sec-proof-the-Lun}.

The main application of theorem \ref{Main-theo-weig-sch-est} concerns expanding gradient Ricci solitons $(M^n,g,\nabla f)$ where $V$ is cooked up with the help of the potential function $f$. 

\begin{theo}\label{first-iso-sch}
Let $(M^n,g,\nabla f)$ be an expanding gradient Ricci soliton with bounded curvature and positive injectivity radius such that $f$ is proper. Let $\alpha >0$.
\begin{itemize}
 \item There exists a positive constant $C$ such that, for any $H\in C^{0}(M,E)$, there exists a unique tensor $h\in D_{\nabla f}^2(M,E)$ satisfying
 \begin{eqnarray*}
(\Delta_{v-2\alpha \ln v}-\alpha)h=H,\quad \|h\|_{D^2_{\nabla f}(M,E)}\leq C\|H\|_{C^{0}(M,E)}.
\end{eqnarray*}
Moreover, $D_{\nabla f}^2(M,E)$ embeds continuously in $C^{\theta}(M,E):=C^{\lfloor\theta\rfloor,\theta-\lfloor\theta\rfloor}(M,E)$, for any $\theta\in(0,2)$, i.e. there exists a positive constant $C$ such that for any $h\in D_{\nabla f}^2(M,E)$,
\begin{eqnarray*}
\|h\|_{C^{\theta}(M,E)}\leq C\|h\|_{D_{\nabla f}^2(M,E)}^{1-\theta/2}\|h\|_{C^0(M,E)}^{\theta/2}.
\end{eqnarray*}

 \item There exists a positive constant $C$ such that, for any $H\in C^{0,\theta}(M,E)$, with $\theta\in(0,1)$, there exists a unique tensor $h\in C^{2,\theta}(M,E)$ satisfying
 \begin{eqnarray*}
(\Delta_{v-2\alpha \ln v}-\alpha)h=H,\quad \|h\|_{C^{2,\theta}(M,E)}\leq C\|H\|_{C^{0,\theta}(M,E)}.
\end{eqnarray*}
\end{itemize}

Moreover, the operator $\Delta_f: D_{f^{\alpha},\nabla f}^{2,\theta}(M,E)\rightarrow C_{con,f^{\alpha}}^{0,\theta}(M,E)$ is an isomorphism of Banach spaces.

\end{theo}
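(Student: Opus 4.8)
The plan is to deduce the three assertions from Theorem~\ref{Main-theo-weig-sch-est} applied to a drift vector field built out of the potential, and then to obtain the concluding isomorphism by conjugating $\Delta_f$ with the weight $v^\alpha$.

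\emph{Verification of the hypotheses of Theorem~\ref{Main-theo-weig-sch-est}.} I would run the theorem with $V:=\nabla^g(v-2\alpha\ln v)=(1-2\alpha v^{-1})\nabla f$ (using $\nabla v=\nabla f$ and the positivity of $v$ from Proposition~\ref{pot-fct-est}), with $\lambda:=\alpha>0$, and with Lyapunov function $\phi:=\ln v$. The curvature hypothesis holds: since $(M^n,g,\nabla f)$ is asymptotically conical, $\Rm(g)$ and all its covariant derivatives decay polynomially at infinity (\cite{Der-Asy-Com-Egs}) while $\Hess f=\Ric(g)+g/2$ and $|\nabla f|^2/v$ are bounded together with all their derivatives, so that $\|\Rm(g)\ast V\|_{C^k}+\|\Rm(g)\|_{C^k}+\|\nabla V\|_{C^{k-1}}<\infty$ for every $k$. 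As for $\phi=\ln v$: it tends to $+\infty$ at infinity, and using the eigenfunction identity $\Delta_f v=\Delta v+|\nabla v|^2=v$ one gets $\Delta_g\ln v=1-|\nabla f|^2/v-|\nabla f|^2/v^2$ and $\nabla_V\ln v=(1-2\alpha v^{-1})|\nabla f|^2/v$, both bounded on $M$. Granting Theorem~\ref{Main-theo-weig-sch-est}, its two bullets immediately give existence, uniqueness and the estimates for $(\Delta_{v-2\alpha\ln v}-\alpha)h=H$ in $D^2_V(M,E)$ (for $H\in C^0$) and in $C^{2,\theta}(M,E)$ (for $H\in C^{0,\theta}$), together with $D^2_V(M,E)\hookrightarrow C^{1,\theta}(M,E)$.

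\emph{Reconciliation of the spaces, completeness, interpolation.} The operators $\Delta_{\nabla f}$ and $\Delta_V$ differ only by $2\alpha\nabla_{\nabla\ln v}$, whose coefficient has size $|\nabla\ln v|\asymp r_p^{-1}$; since every $h$ with $h$ and $\Delta_{\nabla f}h$ (resp.\ $\Delta_V h$) controlled is automatically in $C^{1,\theta}$ by the previous step, this first-order term maps each domain boundedly into the target space, so that $D^2_V=D^2_{\nabla f}$ and $D^{2,\theta}_V=D^{2,\theta}_{\nabla f}$ with equivalent norms, which is what allows the statement to be phrased with $D^2_{\nabla f}$. Interior Schauder estimates together with the completeness of $C^{0,\theta}_{con}$ show that $D^{2,\theta}_{\nabla f}$, hence $D^{2,\theta}_{v^\alpha,\nabla f}$, is a Banach space, settling Remark~\ref{rk-non-int-spa} in the cases at hand. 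Finally, $\|h\|_{C^\theta}\le C\|h\|_{D^2_{\nabla f}}^{1-\theta/2}\|h\|_{C^0}^{\theta/2}$ for $\theta\in(0,2)$ follows from the analyticity — established inside the proof of Theorem~\ref{Main-theo-weig-sch-est} — of the semigroup generated by the realization of $\Delta_{\nabla f}$ on $C^0(M,E)$ with domain $D^2_{\nabla f}$, together with the standard moment inequality for analytic semigroups and the inclusion $C^\theta(M,E)\subseteq(C^0(M,E),D^2_{\nabla f})_{\theta/2,\infty}$.

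\emph{The isomorphism $\Delta_f$, and the main obstacle.} By construction of the weighted norms (with $v$ in the role of $r_p$, recalling $D^{2,\theta}_{f^\alpha,\nabla f}:=D^{2,\theta}_{v^\alpha,\nabla f}$), multiplication by $v^\alpha$ is an isometric isomorphism $D^{2,\theta}_{f^\alpha,\nabla f}\to D^{2,\theta}_{\nabla f}$ and $C^{0,\theta}_{con,f^\alpha}\to C^{0,\theta}_{con}=C^{0,\theta}$, and under it $\Delta_f$ is conjugated to
\[
\widetilde L_\alpha h:=v^\alpha\Delta_f(v^{-\alpha}h)=\big(\Delta_{v-2\alpha\ln v}-\alpha\big)h+\alpha(\alpha+1)|\nabla\ln v|^2 h ,
\]
the shift $-\alpha$ being exactly $v^{-1}\Delta_f v$. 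The first summand $P:=\Delta_{v-2\alpha\ln v}-\alpha$ is an isomorphism $D^{2,\theta}_{\nabla f}\to C^{0,\theta}$: surjectivity with estimate is the second bullet above (the $C^{2,\theta}$ solution lies in $D^{2,\theta}_{\nabla f}$ with the claimed bound), and injectivity on $D^{2,\theta}_{\nabla f}=D^{2,\theta}_V$ is the uniqueness in Theorem~\ref{Main-theo-weig-sch-est}, i.e.\ the maximum principle with barrier $\phi=\ln v$ applied to $|h|^2$ (here $\alpha>0$ enters through $\Delta_{v-2\alpha\ln v}|h|^2=2\alpha|h|^2+2|\nabla h|^2\ge0$). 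The zeroth-order operator $h\mapsto\alpha(\alpha+1)|\nabla\ln v|^2 h$ is compact from $D^{2,\theta}_{\nabla f}$ to $C^{0,\theta}$, since $D^{2,\theta}_{\nabla f}\hookrightarrow C^{1,\beta}$ for some $\beta\in(\theta,1)$, $C^{1,\beta}$ embeds compactly into $C^{0,\theta}$ on relatively compact sets, and $|\nabla\ln v|^2$ together with its derivative tends to $0$ at infinity. Hence $\widetilde L_\alpha$ is Fredholm of index $0$, and it remains to prove injectivity: if $\widetilde L_\alpha h=0$ then $u:=v^{-\alpha}h\in C^0(M,E)$ solves $\Delta_f u=0$ with $u\to0$ at infinity (again using $\alpha>0$), so $\Delta_f|u|^2=2|\nabla u|^2\ge0$ with $|u|^2$ attaining an interior maximum, and the strong maximum principle forces $u\equiv0$. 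Thus $\widetilde L_\alpha$, and with it $\Delta_f$, is a Banach isomorphism. The genuine analytic difficulty is Theorem~\ref{Main-theo-weig-sch-est} itself, proved separately; granting it, the only delicate points here are checking its hypotheses — where the decay of $\Rm(g)$ coming from the asymptotically conical structure is essential, since otherwise $\Rm(g)\ast V$ grows like $r_p$ — and the impossibility of taking $\alpha=0$, the spectral shift $-\alpha$ produced by the conjugation being precisely what makes the operator invertible.
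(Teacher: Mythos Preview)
Your overall architecture matches the paper's: verify the hypotheses of Theorem~\ref{Main-theo-weig-sch-est} for $V=\nabla(v-2\alpha\ln v)$ with Lyapunov function $\phi=\ln v$, then conjugate $\Delta_f$ by $v^\alpha$ and treat the remainder $\alpha(\alpha+1)|\nabla\ln v|^2$ as a compact perturbation. The compactness argument and the injectivity of $\Delta_f$ via the maximum principle are correct and essentially identical to the paper.

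There is, however, a genuine gap in the verification step. The theorem assumes only \emph{bounded} curvature, positive injectivity radius, and $f$ proper; it does \emph{not} assume asymptotic conicality. You invoke polynomial decay of $\Rm(g)$ to control $\Rm(g)\ast V$ (and then emphasize at the end that this decay is ``essential, since otherwise $\Rm(g)\ast V$ grows like $r_p$''). Under the stated hypotheses this reasoning is unavailable: $\Rm(g)$ is merely bounded while $|\nabla f|$ grows linearly. The paper closes this by a soliton identity, not by decay: from Lemma~\ref{id-EGS} one has $\Rm(g)(\nabla f,\cdot,\cdot,\cdot)=\div\Rm(g)$, and Shi's estimates (valid once the curvature is bounded along the self-similar Ricci flow) bound all covariant derivatives of $\Rm(g)$. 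Hence $\Rm(g)\ast V$ is bounded without any decay assumption, and the same identity, differentiated, yields the $C^k$ bounds required in Theorem~\ref{Main-theo-weig-sch-est}. Your proof therefore establishes only a weaker statement (asymptotically conical expanders), whereas the Bianchi-type identity gives the theorem as stated.

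A smaller point: you attribute the interpolation inequality $\|h\|_{C^\theta}\le C\|h\|_{D^2_{\nabla f}}^{\theta/2}\|h\|_{C^0}^{1-\theta/2}$ to analyticity of the semigroup. As the introduction recalls (after Da~Prato--Lunardi), the semigroup generated by $\Delta_V$ is \emph{not} analytic on $C^0$; the inequality is obtained instead by constructing the resolvent directly and identifying $D(A)=D^2_V$ (Proposition~\ref{iden-dom-op}), from which the embedding $D^2_V\hookrightarrow C^\theta$ with the stated moment inequality follows. This does not affect your conclusion but the justification should be rerouted accordingly. Also note that the exponents in your inequality are swapped relative to the statement.
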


\begin{proof}
It suffices to verify the assumptions of theorem \ref{Main-theo-weig-sch-est} for $V:=\nabla v-2\alpha\nabla\ln v$.

 First, by Shi's estimates \cite{Shi-Def}, the covariant derivatives of the curvature operator are bounded on $M^n$. 
 
 Secondly, by lemma \ref{id-EGS}, $\Rm(g)(V,\cdot,\cdot,\cdot)$ is bounded since the first covariant derivative of the curvature is bounded and one verifies easily that the covariant derivatives of $V$ are bounded on $M$.
 
 Finally, define $\phi:=\ln v$, by lemma \ref{id-EGS} together with proposition \ref{pot-fct-est},
 \begin{eqnarray*}
&&\sup_M\arrowvert\Delta_g\phi\arrowvert=\sup_M\left\arrowvert\frac{\Delta_gv}{v}-|\nabla\ln v|^2\right\arrowvert<+\infty\\
&&\sup_M\arrowvert\Delta_{V}\phi\arrowvert=\sup_M\left\arrowvert1-(2\alpha+1)|\nabla\ln v|^2\right\arrowvert<+\infty.
\end{eqnarray*}

Now, $\Delta_f$ is injective on $D_{f^{\alpha},\nabla f}^{2}(M,E)$ and  
\begin{eqnarray*}
\Delta_fh&=&(v^{\alpha}\Delta_fv^{-\alpha})h_{\alpha}\\
&=&\left(\Delta_{v-2\alpha\ln v}-\alpha+\alpha(\alpha+1)\arrowvert\nabla\ln v\arrowvert^2\right)h_{\alpha}\\
&=:&\left(\Delta_{v-2\alpha\ln v}-\alpha\right)h_{\alpha}+ K_{\alpha}h_{\alpha},
\end{eqnarray*}
where $h_{\alpha}:=v^{\alpha}h$ and $K_{\alpha}h_{\alpha}:=\alpha(\alpha+1)\arrowvert\nabla\ln v\arrowvert^2h_{\alpha}$.

We claim that $K_{\alpha}:D^{2,\theta}_{\nabla f}(M,E)\rightarrow C_{con}^{0,\theta}(M,E)$ is a compact operator for any $\theta\in(0,1)$. Indeed, let $(h_n)_n$ be a bounded sequence in $D^{2,\theta}_{\nabla f}(M,E)$. Then, since $D^{2,\theta}_{\nabla f}(M,E)$ continuously embeds in $C^{2,\theta}(M,E)$, there exists a subsequence still denoted by $(h_n)_n$ converging to $h\in D^{2,\theta}_{\nabla f}(M,E)$ in the $C_{loc}^{2}(M,E)$-topology by Ascoli's theorem. Now, if $\epsilon>0$ is given, as $f$ is proper, there exists a compact $K_{\epsilon}\subset M$ such that, for $k=0,1$,
\begin{eqnarray*}
|\nabla^{k+1}\ln v|^2(x)\leq \epsilon,\quad \forall x\in M\setminus K_{\epsilon}.
\end{eqnarray*}

Therefore,
\begin{eqnarray*}
\|K_{\alpha}(h_n-h)\|_{C_{con}^{0,\theta}(M,E)}&\leq& \|h_n-h\|_{C^1(M,E)} \|K_{\alpha}\|_{C^1(M\setminus K_{\epsilon},E|_{M\setminus K_{\epsilon}})}\\
&&+\|h_n-h\|_{C^1(K_{\epsilon},E|_{K_{\epsilon}})} \|K_{\alpha}\|_{C^1(M,E)}\\
&\leq& C\epsilon,
\end{eqnarray*}
for $n$ large enough, i.e. $(h_n)_n$ converges to $h$ in the $C_{con}^{0,\theta}(M,E)$ topology.

 Hence, one deduces that $\Delta_f$ is an injective Fredholm operator of index $0$, i.e. an isomorphism.
 
\end{proof}

Ultimately, we want to prove that for some positive $\alpha$, $L_{\alpha}$ is an isomorphism in case $(M^n,g,\nabla f)$ is an expanding gradient Ricci soliton with positive curvature. Now, 
\begin{eqnarray*}
L_{\alpha}&=&v^{\alpha}\circ\Delta_f\circ v^{-\alpha}+2\Rm(g)\ast\\
&=&\widetilde{L_{\alpha}}+ \widetilde{ K_{\alpha}},
\end{eqnarray*}
where $\widetilde{L_{\alpha}}$ is an isomorphism by corollary \ref{first-iso-sch}. We claim that $\widetilde{ K_{\alpha}}:D^{2,\theta}_{\nabla f}(M,E)\rightarrow C_{con}^{0,\theta}(M,E)$ is a compact operator if the curvature goes to $0$ at infinity, for any $\theta\in(0,1)$. Indeed, let $(h_n)_n$ be a bounded sequence in $D_{\nabla f}^{2,\theta}(M,E)$. Then, there exists a subsequence still denoted by $(h_n)_n$ converging to $h\in D_{\nabla f}^{2,\theta}(M,E)$ in the $C_{loc}^{2}(M,E)$ topology. Now, if $\epsilon>0$ is given, there exists a compact $K_{\epsilon}\subset M$ such that, for $k=0,1$,
\begin{eqnarray*}
|\nabla^k\Rm(g)(x)|\leq \epsilon,\quad \forall x\in M\setminus K_{\epsilon},
\end{eqnarray*}
since by local Shi's estimates (see \cite{Der-Asy-Com-Egs} for a proof on expanders), $\lim_{+\infty}\nabla^k\Rm(g)=0$ if $\lim_{+\infty}\Rm(g)=0$.
Therefore,
\begin{eqnarray*}
\|\widetilde{K_{\alpha}}(h_n-h)\|_{C_{con}^{0,\theta}(M,E)}&\leq& \|h_n-h\|_{C^1(M,E)} \| \widetilde{ K_{\alpha}}\|_{C^1(M\setminus K_{\epsilon},E|_{M\setminus K_{\epsilon}})}\\
&&+\|h_n-h\|_{C^1(K_{\epsilon},E|_{K_{\epsilon}})} \| \widetilde{ K_{\alpha}}\|_{C^1(M,E)}\\
&\leq& C\epsilon,
\end{eqnarray*}
for $n$ large enough, i.e. $(h_n)_n$ converges to $h$ in the $C_{con}^{0,\theta}(M,E)$ topology.

In particular, $L_{\alpha}$ is an isomorphism if and only if it is injective since it is a compact perturbation of an isomorphism (i.e. an injective  Fredholm operator of index $0$). Moreover, by construction, $L_{\alpha}$ is injective if and only if $L_0$ is. We are able to prove the following in the particular case $E=S^2T^*M$ :
\begin{coro}\label{sec-iso-sch}
Let $(M^n,g,\nabla f)$ be an expanding gradient Ricci soliton with positive curvature operator and such that the curvature goes to zero at infinity. 

Then, for any $\alpha>0$, $L_{\alpha}$ is an isomorphism or equally,
\begin{eqnarray*}
L_0:D^{2,\theta}_{f^{\alpha},\nabla f}(M,S^2T^*M)\rightarrow C^{0,\theta}_{con,f^{\alpha}}(M,S^2T^*M)
\end{eqnarray*}
is an isomorphism.
\end{coro}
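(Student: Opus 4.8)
The plan is to complete the Fredholm picture already set up and then to prove injectivity of $L_0$ by a maximum principle exploiting positivity of the curvature operator (this is why we are restricted to $E=S^2T^*M$). By the discussion preceding the statement, $L_\alpha=\widetilde{L_\alpha}+\widetilde{K_\alpha}$, where $\widetilde{L_\alpha}=v^\alpha\circ\Delta_f\circ v^{-\alpha}$ is an isomorphism (Theorem~\ref{first-iso-sch}) and $\widetilde{K_\alpha}=2\Rm(g)\ast\,$ is a compact operator $D^{2,\theta}_{\nabla f}(M,S^2T^*M)\to C^{0,\theta}_{con}(M,S^2T^*M)$ because the curvature goes to $0$ at infinity. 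Hence $L_\alpha$, equivalently $L_0\colon D^{2,\theta}_{f^\alpha,\nabla f}(M,S^2T^*M)\to C^{0,\theta}_{con,f^\alpha}(M,S^2T^*M)$, is a Fredholm operator of index $0$, so it is an isomorphism as soon as it is injective. Since any $h\in D^{2,\theta}_{f^\alpha,\nabla f}(M,S^2T^*M)$ satisfies $\arrowvert h\arrowvert\le Cv^{-\alpha}$ with $v\simeq r_p^2/4\to+\infty$, it suffices to prove the following Liouville-type statement, which then handles every $\alpha>0$ at once: \emph{if $h\in S^2T^*M$ solves $L_0h=\Delta_fh+2\Rm(g)\ast h=0$ and $\arrowvert h\arrowvert\to0$ at infinity, then $h\equiv0$.}

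To bring in the sign of the curvature I would transfer this to the associated Ricci flow $g(\tau):=(1+\tau)\phi_\tau^*g$ on $(-1,+\infty)$ and set $\tilde h(\tau):=(1+\tau)\phi_\tau^*h$. Using $\nabla^{g,2}f=\Ric(g)+g/2$, a direct computation shows that $L_0h=0$ is equivalent to $\tilde h$ being a solution on $(-1,+\infty)$ of the Lichnerowicz heat equation
\begin{eqnarray*}
\partial_\tau\tilde h=\Delta_{g(\tau)}\tilde h+2\Rm(g(\tau))\ast\tilde h-\Ric(g(\tau))\circ\tilde h-\tilde h\circ\Ric(g(\tau)).
\end{eqnarray*}
Two features are decisive: $g(\tau)$ has positive curvature operator for every $\tau$, being $g$ up to scaling and pull-back; and $\phi_\tau$ pushes every fixed point of $M$ out to the conical end as $\tau\to-1^+$, so that $\arrowvert\tilde h(\tau)\arrowvert_{g(\tau)}(x)=\arrowvert h\arrowvert_g(\phi_\tau(x))\to0$, i.e. $\tilde h(\tau)\to0$ locally uniformly as $\tau\to-1^+$.

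I would then apply Hamilton's maximum principle for symmetric $2$-tensors. Since $\Rm(g(\tau))\ge0$, the closed convex cones of pointwise positive- and negative-semidefinite symmetric $2$-tensors are invariant under parallel transport and are preserved by the above equation: on a null eigenvector $V$ of a boundary tensor the Ricci terms vanish while the curvature term $(\Rm(g(\tau))\ast\tilde h)(V,V)$ has the correct sign, so the reaction points into each cone along its boundary; strict positivity of $\Rm(g(\tau))$ upgrades this to the strong maximum principle. Feeding forward the vanishing of $\tilde h$ as $\tau\to-1^+$, together with an exterior barrier at spatial infinity — one uses that $\arrowvert\tilde h\arrowvert$ is a subsolution of a scalar heat operator whose zeroth-order term is controlled by the decaying curvature, equivalently the favourable sign of the zeroth-order term of the conjugated operator $\Delta_{v-2\alpha\ln v}-\alpha$ exploited in Theorem~\ref{first-iso-sch} — one confines the argument to a compact region and forces $\tilde h\equiv0$, hence $h\equiv0$.

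The hard part will be exactly this last point. The global quantity $\sup_M\arrowvert\tilde h(\tau)\arrowvert_{g(\tau)}$ equals $\sup_M\arrowvert h\arrowvert_g$ and does \emph{not} decay as $\tau\to-1^+$, so the naive global maximum principle is powerless: one must rule out concentration of $\tilde h$ at the conical end. This is precisely why the weighted function spaces $D^{2,\theta}_{f^\alpha,\nabla f}$ and the positive weight $v^\alpha$ are indispensable here — they supply the exterior barrier reducing matters to a compact set, on which the strict positivity of the curvature operator closes the argument.
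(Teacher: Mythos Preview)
Your Fredholm reduction is correct and matches the paper: $L_\alpha$ is a compact perturbation of an isomorphism, so everything reduces to showing $L_0$ is injective on $D^{2,\theta}_{f^\alpha,\nabla f}(M,S^2T^*M)$, i.e.\ that any $h$ with $L_0h=0$ and $|h|\to0$ at infinity vanishes.

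For injectivity, however, your parabolic route is both more involved than necessary and, as you yourself flag, incomplete. The observation that $\sup_M|\tilde h(\tau)|_{g(\tau)}=\sup_M|h|_g$ is constant in $\tau$ kills the naive global argument, and your last paragraph is an acknowledgement of the gap rather than a closure of it: saying the weighted spaces ``supply the exterior barrier'' restates what must be proved. There is also a small slip: at the unique critical point $p$ of $f$ one has $\phi_\tau(p)=p$ for all $\tau$, so $|\tilde h(\tau)|_{g(\tau)}(p)=|h|_g(p)$ does not tend to $0$ as $\tau\to-1^+$, and your ``locally uniform'' convergence already fails there.

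The paper avoids the parabolic detour and works directly with the static equation, using a concrete barrier \emph{tensor} rather than trying to propagate smallness forward in time. The key observation you are missing is that
\[
\Li_{\nabla f}(g)=2\nabla^{g,2}f=2\Ric(g)+g
\]
is itself a solution of $L_0(\cdot)=0$ (this follows from the soliton identity $\Delta_f\Ric(g)+\Ric(g)+2\Rm(g)\ast\Ric(g)=0$ together with $L_0g=2\Ric(g)$), and it is \emph{positive definite} because $\Ric(g)\ge0$. One then applies Hamilton's tensor maximum principle in the static form due to Brendle for Ricci solitons: comparing $h$ with multiples of $\Li_{\nabla f}(g)$ on the sublevel sets $\{f\le t\}$ yields a constant $C$ independent of $t$ with
\[
\sup_{f\le t}|h|\le C\sup_{f=t}|h|.
\]
Since $|h|\to0$ at infinity, letting $t\to+\infty$ gives $h\equiv0$. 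This is where positivity of the curvature is actually used: it makes the zeroth-order term $2\Rm(g)\ast$ cooperate with the maximum principle, while nonnegativity of $\Ric(g)$ is what makes the barrier $\Li_{\nabla f}(g)$ positive definite and uniformly comparable to $g$.
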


\begin{proof}
As we noticed before, it suffices to prove that $L_0$ is injective on $D^{2,\theta}_{f^{\alpha},\nabla f}(M,S^2T^*M)$. This is achieved with the help of the maximum principle for symmetric $2$-tensors due to Hamilton \cite{Ham-Fou} adapted by Brendle in \cite{Bre-Rot-3d} in the case of Ricci solitons. This has been already used in that setting in \cite{Der-Sta-Sge}. We sketch the argument here for the convenience of the reader.
Let $h$ be a symmetric $2$-tensor such that $\Delta_fh+2\Rm(g)\ast h=0$. The crucial observation is that $L_{\nabla f}g$ satisfies also 
 $\Delta_f\Li_{\nabla f}(g)+2\Rm(g)\ast \Li_{\nabla f}g=0$ and that it is positive definite in case of nonnegative Ricci curvature. Then, one uses $L_{\nabla f}g$ as a barrier tensor to prove that for any height $t$ large enough,
 \begin{eqnarray*}
\sup_{f\leq t}\arrowvert h\arrowvert\leq C\sup_{f=t}\arrowvert h\arrowvert,
\end{eqnarray*}
for a positive constant $C$ independent of $t$. Hence the result since $h$ tends to zero at infinity.

\end{proof}

\subsection{Proof of Theorem \ref{Main-theo-weig-sch-est}}\label{sub-sec-proof-the-Lun}
From now on, let $(M^n,g)$ be a complete Riemannian manifold satisfying the assumptions of theorem \ref{Main-theo-weig-sch-est}.

\subsubsection{Uniqueness}
 
\begin{prop}\label{uni-ppe-max}
Let $\lambda$ be a positive real number and $h\in \cap_{p\geq 1}W^{2,p}_{loc}(M,E)$ be a bounded tensor such that $\Delta_{V}h-\lambda h=0$. Then $h=0$.
\end{prop}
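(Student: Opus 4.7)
The strategy is a maximum principle argument using the auxiliary exhaustion function $\phi$ supplied by the hypothesis. The key scalar quantity to control is $w:=|h|^2$, for which one has the pointwise identity
\begin{equation*}
\Delta_V w \;=\; 2\langle \Delta_V h, h\rangle + 2|\nabla h|^2 \;=\; 2\lambda w + 2|\nabla h|^2 \;\geq\; 2\lambda\, w,
\end{equation*}
since $h$ solves $\Delta_V h = \lambda h$. Because $h \in \cap_{p\geq 1}W^{2,p}_{\mathrm{loc}}$, Sobolev embedding gives $h\in C^{1,\alpha}_{\mathrm{loc}}$ for all $\alpha\in(0,1)$, and $w$ inherits $W^{2,p}_{\mathrm{loc}}$ regularity, which is more than enough to apply the strong maximum principle pointwise almost everywhere.

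I would then exploit the exhaustion function. Set $C:=\sup_M\bigl(|\Delta_g\phi|+|\nabla_V\phi|\bigr)<+\infty$, which is finite by hypothesis. Since $\phi\to+\infty$ at infinity and is smooth, $\phi$ is bounded below; call $m:=\inf_M\phi$. For each $\varepsilon>0$, consider the auxiliary function
\begin{equation*}
u_\varepsilon \;:=\; w-\varepsilon\,\phi.
\end{equation*}
Since $w$ is bounded (as $h$ is) and $\phi\to+\infty$, we have $u_\varepsilon\to-\infty$ at infinity, so $u_\varepsilon$ attains its supremum at some point $x_\varepsilon\in M$. At $x_\varepsilon$,
\begin{equation*}
\Delta_V u_\varepsilon(x_\varepsilon)\;\leq\;0
\quad\Longrightarrow\quad
\Delta_V w(x_\varepsilon)\;\leq\;\varepsilon\,\Delta_V\phi(x_\varepsilon)\;\leq\;\varepsilon C.
\end{equation*}
Combined with $\Delta_V w\geq 2\lambda w$, this forces $w(x_\varepsilon)\leq \varepsilon C/(2\lambda)$.

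From the defining maximality of $x_\varepsilon$, for every $x\in M$,
\begin{equation*}
w(x) - \varepsilon\phi(x) \;\leq\; w(x_\varepsilon) - \varepsilon\phi(x_\varepsilon) \;\leq\; \frac{\varepsilon C}{2\lambda} - \varepsilon m,
\end{equation*}
so $w(x)\leq \varepsilon\bigl(\phi(x)+C/(2\lambda)-m\bigr)$. Fixing $x$ and sending $\varepsilon\to 0^+$ yields $w(x)\leq 0$, hence $w\equiv 0$ and $h\equiv 0$.

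The only mildly delicate point is justifying the pointwise inequality $\Delta_V u_\varepsilon(x_\varepsilon)\leq 0$ at a mere supremum point, not an interior maximum on a compact. Here one can either invoke Bony's strong maximum principle for $W^{2,p}_{\mathrm{loc}}$ strong solutions (valid once $p>n$), or alternatively a routine cutoff/Ekeland approximation reducing to a genuine interior maximum on a large ball $B(p,R_\varepsilon)$; both are entirely standard, and the sign of $\lambda$ is what makes the coercive term $-\lambda w$ work in our favor. I do not see any substantial obstacle: the scalar reduction via $w=|h|^2$ eliminates the tensorial nature of $h$ (no curvature terms intervene since we have the bare weighted Laplacian, no Lichnerowicz correction), and the exhaustion hypothesis on $\phi$ is tailor-made for this computation.
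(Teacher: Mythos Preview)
Your proof is correct and follows essentially the same maximum-principle-via-exhaustion argument as the paper: the paper works with the mollified norm $h_\epsilon=\sqrt{|h|^2+\epsilon^2}$ and the barrier $h_\epsilon-\phi/k$, whereas you work with $w=|h|^2$ and the barrier $w-\varepsilon\phi$, which slightly streamlines things by avoiding the double limit. One small remark: since the paper already observes that elliptic regularity makes $h$ smooth, your discussion of Bony's maximum principle and $W^{2,p}_{\mathrm{loc}}$ regularity for $w$ is unnecessary---the supremum of $u_\varepsilon$ is a genuine interior maximum of a smooth function on a boundaryless manifold.
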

\begin{proof}

First of all, by elliptic regularity, $h$ is smooth. Consider the mollified norm of $h$ : $h_{\epsilon}:=\sqrt{\arrowvert h\arrowvert^2+\epsilon^2}$, for $\epsilon>0$. Then, 
\begin{eqnarray*}
\Delta_{V}h_{\epsilon}-\lambda h_{\epsilon}&=&\frac{1}{h_{\epsilon}}\left(<h,\Delta_{V}h>-\lambda h_{\epsilon}^2+\arrowvert\nabla h\arrowvert^2-\frac{\arrowvert\nabla\arrowvert h\arrowvert^2\arrowvert^2}{4h_{\epsilon}^2}\right)\\
&\geq&-\lambda\epsilon.
\end{eqnarray*}
Now, if $h_{\epsilon,k}:=h_{\epsilon}-\phi/k$ for $k\geq 1$, then $\lim_{k\rightarrow+\infty}\sup_Mh_{\epsilon,k}=\sup_Mh_{\epsilon}$ and since $\phi$ is an exhaustion function, $\sup_Mh_{\epsilon,k}=h_{\epsilon,k}(x_k)$ for some $x_k\in M$. Without loss of generality, $\phi$ can be assumed to be nonnegative. As $h_{\epsilon,k}$ satisfies
\begin{eqnarray*}
\Delta_{V}h_{\epsilon,k}-\lambda h_{\epsilon,k}\geq -\lambda\epsilon -k^{-1}\sup_M(\Delta_{V}\phi),
\end{eqnarray*}
one has, when evaluating the previous differential inequality at $x_k$,
\begin{eqnarray*}
\lambda\sup_M h_{\epsilon,k}\leq \lambda\epsilon+k^{-1}\sup_M(\Delta_{V}\phi).
\end{eqnarray*}

By letting $k$ go to $+\infty$, we have $\lambda\sup_M h_{\epsilon}\leq \lambda\epsilon$.\\

By letting $\epsilon$ go to $0$, we get $\lambda\sup_M \arrowvert h\arrowvert\leq0$.

\end{proof}

 \subsubsection{A priori estimates}
As in \cite{Lun-Sch-Est}, one needs to solve the Cauchy problem associated to $\Delta_{V}$ first.
For that purpose, the main step is to find a priori estimates on covariant derivatives of the solution to the Cauchy problem up to order $3$. More precisely,
\begin{theo}\label{a-priori-cauchy-sol}
Let $h_0\in C^0(M,E)$ and let $(h(t,\cdot))_{t\in[0,T)}$ be a bounded smooth classical solution of the Cauchy problem with initial condition $h_0$. Assume that, for some integer $k\geq 1$, there exists a constant $K(k)$ such that
\begin{eqnarray}
\|\Rm(g)\ast V\|_{C^k(M,E)}+\|\Rm(g)\|_{C^k(M,E)}+\|\nabla V\|_{C^{k-1}(M,E)}\leq K(k),
\end{eqnarray}
where $\Rm(g)\ast {V}:=\Rm(g)({V},\cdot,\cdot,\cdot)$.\\

 Then, for any $T>0$, there exists $C=C(n,\lambda,k,T)$ such that
 \begin{eqnarray*}
\arrowvert h(t)\arrowvert^2_{C^0(M,E)}+\sum_{i=1}^k\frac{(\alpha t)^i}{i}\arrowvert\nabla^ih(t)\arrowvert^2_{C^0(M,E)}
\leq C\arrowvert h_0\arrowvert^2_{C^0(M,E)}.
\end{eqnarray*}
Moreover, if $(M^n,g,\nabla f)$ is an expanding gradient Ricci soliton, and ${V}:=\nabla f$, then, if $E=M\times\mathbb{R}$,
\begin{eqnarray*}
\arrowvert u(t)\arrowvert_{C^0(M)}^2+\alpha t\arrowvert\nabla u(t)\arrowvert_{C^0(M)}^2\leq t^{\alpha-2}e^{t}\arrowvert u_0\arrowvert_{C^0(M)}^2,
\end{eqnarray*}
for any positive time $t$ and any positive $\alpha$.
\end{theo}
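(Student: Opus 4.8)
The plan is to derive Bernstein-type differential inequalities for the quantities $|\nabla^i h|^2$ along the evolution $\partial_t h = \Delta_V h - \lambda h$ and then feed them into a weighted maximum principle of the kind already used in Proposition \ref{uni-ppe-max}. First I would fix notation: write $P := \partial_t - \Delta_V + \lambda$, so that $Ph = 0$. The starting point is the classical Bochner-type commutation: applying $P$ to $|\nabla^i h|^2$ one gets, using $[\Delta, \nabla^i] = \sum \nabla^j \Rm * \nabla^{i-1-j}(\cdot)$ and $[\nabla_V, \nabla^i] = \sum \nabla^j(\nabla V) * \nabla^{i-j}(\cdot)$,
\begin{eqnarray*}
P\left(|\nabla^i h|^2\right) = -2|\nabla^{i+1} h|^2 + \sum_{j=0}^{i} \nabla^j(\Rm) * \nabla^{i-j} h * \nabla^i h + \sum_{j=1}^{i}\nabla^{j-1}(\nabla V) * \nabla^{i-j+1}h * \nabla^i h - 2\lambda |\nabla^i h|^2,
\end{eqnarray*}
where the $*$-terms are controlled by the hypothesis $\|\Rm(g)*V\|_{C^k} + \|\Rm(g)\|_{C^k} + \|\nabla V\|_{C^{k-1}} \le K(k)$. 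The crucial structural point — this is what forces the hypothesis to be on $\Rm * V$ rather than on $V$ itself — is that the top-order term coming from the drift, namely $\nabla^i(\nabla_V h) - \nabla_V \nabla^i h$, contributes $i\,(\nabla V) * \nabla^i h$, i.e.\ it stays at order $i$ and does \emph{not} produce a $\nabla^{i+1}h$ term, so the good negative term $-2|\nabla^{i+1}h|^2$ is genuinely available to absorb cross terms by Cauchy–Schwarz.

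Next I would run the standard inductive Bernstein trick. Set $\varphi_i := \sum_{j=0}^{i} \frac{(\alpha t)^j}{j!} |\nabla^j h|^2$ (or the simpler weight $\frac{(\alpha t)^i}{i}$ as in the statement — the exact combinatorial weight is chosen at the end to make the bookkeeping close). Computing $P\varphi_i$, the time-derivative of the weight produces $\alpha \frac{(\alpha t)^{i-1}}{(i-1)!}|\nabla^i h|^2$, which is exactly what is needed to dominate the cross term $\frac{(\alpha t)^i}{i!}\,|\nabla^{i}h|\cdot|\nabla^{i+1}h|$ after using $-2\frac{(\alpha t)^i}{i!}|\nabla^{i+1}h|^2$ and Young's inequality, provided $\alpha$ is taken small relative to $K(k)$ and $i \le k$. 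One obtains $P\varphi_k \le C(n,k,K(k))\,\varphi_k$, i.e.\ $e^{-Ct}\varphi_k$ is a subsolution of $P' := \partial_t - \Delta_V$. Then I would invoke exactly the exhaustion-function argument of Proposition \ref{uni-ppe-max}: using the function $\phi$ with $\lim_{+\infty}\phi = +\infty$ and $\sup_M(|\Delta_g\phi| + |\nabla_V\phi|) < +\infty$, one subtracts $\phi/m$, localizes the supremum at an interior point, applies the parabolic maximum principle on $[0,T-\varepsilon]$, and lets $m\to\infty$, to conclude $\sup_M \varphi_k(t,\cdot) \le e^{Ct}\sup_M \varphi_k(0,\cdot) = e^{Ct}|h_0|^2_{C^0}$ — here one also needs the a priori boundedness of $h(t,\cdot)$ and of its derivatives on compact time intervals, which is part of the hypothesis ``bounded smooth classical solution'' together with interior parabolic regularity. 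Rescaling the weight back to $\frac{(\alpha t)^i}{i}$ gives the stated inequality.

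For the scalar refinement when $(M^n,g,\nabla f)$ is an expanding gradient Ricci soliton, $V = \nabla f$ and $E = M\times\mathbb{R}$: here $\Delta_f u = \Delta u + \langle \nabla f, \nabla u\rangle$ acting on functions, and the point is that the soliton identities (Lemma \ref{id-EGS}, Proposition \ref{pot-fct-est}), giving $\Delta f + |\nabla f|^2$ and $f \sim r_p^2/4$, let one replace the crude constant $e^{Ct}$ by the sharp factor $t^{\alpha - 2}e^t$. The mechanism is the conjugation relation \eqref{jus-con-res}: writing $u = v^{-\alpha}\tilde u$ (or conjugating by the appropriate power of $f$) turns $\Delta_f - \lambda$ into $\Delta_{v - 2\alpha \ln v} - \alpha + \alpha(\alpha+1)|\nabla\ln v|^2$, so choosing $\lambda$ and tracking the eigenvalue-$1$ property of $v$ produces the explicit $t$-power $t^{\alpha-2}$ and the $e^t$ from the $-1$ eigenvalue shift; there is no curvature term since on functions $\Rm * u = 0$, so the estimate is clean and the gradient term $\alpha t|\nabla u|^2$ comes from the $i=1$ case of the Bernstein computation above with no absorption losses.

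\textbf{Main obstacle.} The delicate step is the commutator bookkeeping at top order: one must verify that \emph{no} term of order $i+1$ survives in $P(|\nabla^i h|^2)$ after the drift commutator — equivalently that it is $\|\nabla V\|_{C^{k-1}}$ and $\|\Rm * V\|_{C^k}$ (not $\|V\|$) that enter — and then that the single good negative term $-2|\nabla^{i+1}h|^2$, weighted by $(\alpha t)^i/i!$, can simultaneously absorb the cross term at order $i+1$ \emph{and} leave room for the inductive constant to propagate; getting the powers of $(\alpha t)$ and the smallness of $\alpha$ to line up across all $i \le k$ is where the Lunardi argument \cite{Lun-Sch-Est} does its real work, and I would follow it essentially verbatim.
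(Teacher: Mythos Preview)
Your treatment of the general estimate is essentially the paper's: form the Bernstein quantity $s=|h|^2+\sum_{i=1}^k\frac{(\alpha t)^i}{i}|\nabla^i h|^2$, compute $(\partial_t-\Delta_V)s$ via the commutators $[\nabla^i,\Delta]$ and $[\nabla^i,\nabla_V]$, bound the reaction terms by $Cs$ using the hypotheses on $\Rm$, $\Rm\ast V$ and $\nabla V$, and conclude with the exhaustion--function maximum principle of Proposition~\ref{uni-ppe-max}. (In the paper the constraint is $\alpha\le 2/(k-1)$ for $k\ge 2$, coming from the sign of $\sum_{i}(\alpha-2/i)(\alpha t)^i|\nabla^{i+1}h|^2$; your Young--inequality absorption is an equivalent way to close.) One small slip: the Cauchy problem here is $\partial_t h=\Delta_V h$, with no zeroth--order $\lambda h$ term.

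Your account of the scalar refinement, however, misidentifies the mechanism. The conjugation relation~\eqref{jus-con-res} is a spatial identity for the elliptic operator $\Delta_f$ and does not by itself generate the time factor $t^{\alpha-2}e^t$; no weight $v^{-\alpha}$ enters this argument. What the paper actually does is compute the $k=1$ Bernstein quantity $s=u^2+\alpha t|\nabla u|^2$ \emph{exactly}. On functions the commutator is
\[
[\nabla,\Delta_f]u=-\Ric(g)(\nabla u)+\nabla^2 f(\nabla u,\cdot)=\tfrac{1}{2}\nabla u,
\]
the last equality being precisely the soliton equation $\nabla^2 f=\Ric(g)+g/2$. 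This yields the \emph{identity}
\[
(\partial_t-\Delta_f)s=(\alpha-2+\alpha t)|\nabla u|^2-2\alpha t|\nabla^2 u|^2,
\]
hence $(\partial_t-\Delta_f)s\le\bigl((\alpha-2)/t+1\bigr)s$, and integrating this ordinary differential inequality in $t$ produces $t^{\alpha-2}e^t$ directly. The sharp constant comes from the exact coefficient $1/2$ in the commutator, i.e.\ from the soliton structure, not from any conjugation or eigenvalue shift.
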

\begin{proof}[Proof of theorem \ref{a-priori-cauchy-sol}]
The proof consists in deriving the evolution equation satisfied by $$s(t,x):=\arrowvert h\arrowvert^2(t,x)+\sum_{i=1}^k\frac{(\alpha t)^i}{i}\arrowvert \nabla^ih\arrowvert^2(t,x),$$ where $\alpha$ is to be defined later. Then, one applies the maximum principle to finish the proof. The computation is lengthy and similar to \cite{Lun-Sch-Est}. Nonetheless, it seems that this computation gives new results in the case of an expanding gradient Ricci soliton for functions ($E=M\times \mathbb{R}$) that are interesting in their own right.

On one hand, 
\begin{eqnarray*}
\partial_ts&=&2<h,\Delta_{V}h>+\alpha\sum_{i=1}^k(\alpha t)^{i-1}\arrowvert\nabla^i h\arrowvert^2\\
&&+2\sum_{i=1}^k\frac{(\alpha t)^i}{i}<\nabla^i(\Delta_{V}h),\nabla^ih>.
\end{eqnarray*}
On the other hand,
\begin{eqnarray*}
\Delta_{V}s&=&2\arrowvert\nabla h\arrowvert^2+2\sum_{i=1}^k\frac{(\alpha t)^i}{i}\arrowvert\nabla^{i+1}h\arrowvert^2\\
&&+2<\Delta_{V}h,h>+2\sum_{i=1}^k\frac{(\alpha t)^i}{i}<\Delta_{V}(\nabla^ih),\nabla^ih>,\\
\end{eqnarray*}
Therefore, if $k\geq 2$,
\begin{eqnarray*}
\partial_ts-\Delta_{V}s&=&(\alpha-2)\arrowvert\nabla h\arrowvert^2+\sum_{i=1}^{k-1}\left(\alpha-\frac{2}{i}\right)(\alpha t)^i\arrowvert\nabla^{i+1}h\arrowvert^2-\frac{2(\alpha t)^k}{k}\arrowvert\nabla^{k+1}h\arrowvert^2\\
&&+2\sum_{i=1}^k\frac{(\alpha t)^i}{i}<[\nabla^i,\Delta_{V}]h,\nabla^ih>.
\end{eqnarray*}
If $k=1$, then,
\begin{eqnarray*}
\partial_ts-\Delta_{V}s&=&(\alpha-2)\arrowvert\nabla h\arrowvert^2-2(\alpha t)\arrowvert\nabla^{2}h\arrowvert^2+2\alpha t<[\nabla,\Delta_{V}]h,\nabla h>.
\end{eqnarray*}

It remains to identify the commutators $[\nabla^k,\Delta_{V}]h$ for $k\geq 1$.
In general, one can prove by induction on $k$ that
\begin{eqnarray*}
[\nabla^k,\Delta]h&=&\sum_{j=0}^k\nabla^jh\ast \nabla^{k-j}\Rm(g),
\end{eqnarray*}
and
\begin{eqnarray*}
[\nabla^k,\nabla_{{V}}] h&=&\sum_{j=0}^{k-1}\nabla^{k-j}{V}\ast\nabla^{j+1}h+\nabla^{k-1-j}(\Rm(g)\ast {V})\ast\nabla^jh,\\
\end{eqnarray*}
where $\Rm(g)\ast {V}:=\Rm(g)({V},\cdot,\cdot,\cdot)$.\\

In particular, by assumptions made on the vector field and the curvature operator, one has, for any $i\in\{1,...,k\}$,
\begin{eqnarray*}
\arrowvert[\nabla^i,\Delta_{V}]h\arrowvert\leq C\sum_{j=0}^i\arrowvert\nabla^jh\arrowvert.
\end{eqnarray*}
Therefore, if $k\geq 2$ and if $\alpha\leq \frac{2}{k-1}$, then
\begin{eqnarray*}
\partial_ts-\Delta_{V}s&\leq&C\sum_{i=1}^k\sum_{j=0}^i\frac{(\alpha t)^{\frac{j}{2}}}{i}\arrowvert\nabla^jh\arrowvert(\alpha t)^{\frac{i-j}{2}}\arrowvert\nabla^ih\arrowvert\\
&\leq&C(\alpha,k,T)s,
\end{eqnarray*}
for $t\in(0,T]$.\\

Again, if $k=1$, and if $E=M\times \mathbb{R}$, one has the more precise estimates : 
\begin{eqnarray*}
[\nabla,\Delta_{{V}}] u=-\Ric(g)(\nabla u)+\nabla {V}_k\nabla_k u,
\end{eqnarray*}
where $u$ is a smooth function on $M^n$. In particular, if $(M^n,g,\nabla f)$ is an expanding gradient Ricci soliton and if ${V}=\nabla f$ then,
\begin{eqnarray*}
\partial_t s-\Delta_{ f}s&=&(\alpha-2+\alpha t)\arrowvert\nabla u\arrowvert^2-2(\alpha t)\arrowvert\nabla^{2}u\arrowvert^2,
\end{eqnarray*}
where $s:=u^2+\alpha t \arrowvert\nabla u\arrowvert^2$. Therefore, 
\begin{eqnarray*}
\partial_t s-\Delta_{\nabla f}s &\leq&(\alpha-2+\alpha t)\arrowvert\nabla u\arrowvert^2\\
&\leq&\left(\frac{\alpha-2}{t}+1\right)s,
\end{eqnarray*}
if $\alpha=2$. 

The estimates follow  by applying the maximum principle in a similar way we did in the proof of proposition \ref{uni-ppe-max} together with the fact from the theory of parabolic equations that $\lim_{t\rightarrow 0} t^{k/2}\nabla^kh(t,x)=0$ for $x\in M^n$ and $k\geq 1$.

\end{proof}
\subsubsection{The Cauchy problem}
The second step consists in solving the Cauchy problem :
\begin{theo}[Lunardi]\label{Cau-Pro}
For any $h_0\in C^0(M,E)$, there exists a unique bounded classical solution $h(t)$ to the Cauchy problem
\[
\left\{
\begin{array}{rl}
&\partial_th=\Delta_{V}h \quad\mbox{on}\quad M^n\times (0,+\infty),\\
&\\
& h(0,x)=h_0(x), \quad x\in M^n.
\end{array}
\right.
\]

\end{theo}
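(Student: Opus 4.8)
The plan is to build the solution by exhaustion, using the a priori estimates of Theorem \ref{a-priori-cauchy-sol} to pass to the limit, and then to invoke Proposition \ref{uni-ppe-max} for uniqueness. First I would fix a smooth exhaustion $M=\bigcup_j \Omega_j$ by relatively compact open sets with smooth boundary, $\overline{\Omega_j}\subset\Omega_{j+1}$, and on each $\Omega_j$ solve the linear parabolic initial–boundary value problem
\begin{eqnarray*}
\left\{
\begin{array}{rl}
&\partial_t h^{(j)}=\Delta_{V}h^{(j)}\quad\text{on }\Omega_j\times(0,+\infty),\\
&\\
&h^{(j)}(0,\cdot)=h_0\ \text{ on }\Omega_j,\qquad h^{(j)}=h_0\ \text{ on }\partial\Omega_j\times(0,+\infty),
\end{array}
\right.
\end{eqnarray*}
which is solvable by standard linear parabolic theory on bounded domains since $\Delta_V$ is uniformly elliptic on $\overline{\Omega_j}$ with smooth coefficients. (If $h_0$ is merely continuous one first mollifies it; the mollifications converge locally uniformly and the estimates below are uniform, so this causes no difficulty.) By the maximum principle of Proposition \ref{uni-ppe-max} applied on $\Omega_j$, one has $\|h^{(j)}(t,\cdot)\|_{C^0(\overline{\Omega_j})}\le \|h_0\|_{C^0(M,E)}$ for all $t$, a bound independent of $j$.

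Next I would extract a limit. On any fixed compact $K\subset M$ and any $[t_1,t_2]\subset(0,+\infty)$, the uniform $C^0$ bound together with interior parabolic Schauder estimates gives uniform bounds on $h^{(j)}$ in $C^{2+\theta,1+\theta/2}$ on a slightly smaller cylinder, for all $j$ large enough that $K\Subset\Omega_j$; iterating the interior estimates yields uniform $C^{\infty}_{loc}$ bounds away from $t=0$. By Arzelà–Ascoli and a diagonal argument we obtain a subsequence converging in $C^{\infty}_{loc}(M\times(0,+\infty))$ to a smooth $h$ solving $\partial_t h=\Delta_V h$ with $\|h(t,\cdot)\|_{C^0}\le\|h_0\|_{C^0}$. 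The delicate point is attainment of the initial datum: one must show $h(t,\cdot)\to h_0$ locally uniformly as $t\to 0$. Here I would use the a priori estimates of Theorem \ref{a-priori-cauchy-sol}, which control $(\alpha t)^{i/2}\nabla^i h(t)$ uniformly; combined with the equation, $\partial_t h=\Delta_V h$ is bounded in $L^\infty_{loc}$ for $t$ away from $0$, but to reach $t=0$ one compares with the Euclidean heat kernel on small balls (a standard barrier argument: freeze coefficients, use that the modulus of continuity of $h_0$ controls oscillation, and that $\Delta_V$ differs from the flat Laplacian by bounded first-order and zeroth-order terms on the ball). This step, and in particular verifying continuity up to $t=0$, is where the bulk of the work lies and is the main obstacle; everything else is soft compactness plus the maximum principle.

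Finally, uniqueness of the bounded classical solution follows from the linearity of the equation and the maximum principle: if $h_1,h_2$ are two bounded classical solutions with the same initial datum, then $w:=h_1-h_2$ is bounded, solves $\partial_t w=\Delta_V w$, and $w(0,\cdot)=0$; applying the argument of Proposition \ref{uni-ppe-max} to $w_\epsilon:=\sqrt{|w|^2+\epsilon^2}$ on each time slice — with the exhaustion function $\phi$ furnishing the point where the sup is attained, exactly as in the stationary case but now tracking the time derivative — gives $\partial_t(\sup_M|w|)\le 0$ in the barrier sense, hence $\sup_M|w|(t)\le\sup_M|w|(0)=0$ for all $t>0$. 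This establishes both existence and uniqueness, completing the proof of Theorem \ref{Cau-Pro}.
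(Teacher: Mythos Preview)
Your proposal is correct, and the uniqueness part matches the paper exactly: the paper applies the parabolic maximum principle to $h_\epsilon(t)-\phi/k$ just as you describe.

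For existence, however, you take a genuinely different route. The paper does \emph{not} exhaust the manifold by bounded domains; instead it truncates the unbounded drift. Concretely, it builds a smooth exhaustion function $F$ with $|\nabla F|+|\nabla^2 F|\le c$ (via Shi's construction, using the curvature bound), sets $V_k:=\psi(F/k)\,V$, and solves the Cauchy problem $\partial_t h_k=\Delta_{V_k}h_k$ on all of $M$ with initial datum $h_0$. Since each $V_k$ is bounded, this falls under standard parabolic theory with bounded coefficients; moreover the derivatives of $V_k$ are bounded uniformly in $k$, so the a~priori estimates of Theorem~\ref{a-priori-cauchy-sol} apply uniformly to the sequence $(h_k)_k$, and the crucial observation $\sup_M|\nabla_{V_k}\phi|\le\sup_M|\nabla_V\phi|<+\infty$ lets the maximum principle go through uniformly. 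One then passes to the limit via equicontinuity on compact sets of $[0,+\infty)\times M$.

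What each approach buys: your domain-exhaustion method is arguably more elementary in that it avoids Shi's exhaustion function and the construction of $V_k$; the maximum principle on bounded domains is automatic and does not need the barrier $\phi$. On the other hand, the paper's truncation keeps the problem posed on all of $M$ at every stage, so the global a~priori estimates of Theorem~\ref{a-priori-cauchy-sol} can be invoked verbatim for each $h_k$, and the equicontinuity down to $t=0$ follows from the same arguments as in Lunardi's Euclidean paper; in your scheme you cannot directly apply Theorem~\ref{a-priori-cauchy-sol} to $h^{(j)}$ because of the boundary, which is why you (correctly) fall back on interior Schauder estimates and a local barrier argument for the initial condition. Both strategies are standard and both work here.
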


\begin{proof}[Sketch of proof of theorem \ref{Cau-Pro}]

Uniqueness follows as in proposition \ref{uni-ppe-max}. 

Indeed, let $(h(t))_{t\in[0,T]}$ be a smooth bounded solution to the Cauchy problem. Then, by applying the maximum principle to $ h_{\epsilon}(t)-\phi/k$, where $k$ is a positive integer and where $h_{\epsilon}(t)$ is the mollified norm of $h(t)$ defined in the proof of proposition \ref{uni-ppe-max}, one shows that $\sup_{M\times [0,T]}\arrowvert h(t)\arrowvert  \leq \sup_M \arrowvert h_0\arrowvert.$\\

Concerning the existence, this is done by approximating the Cauchy problem by standard ones, i.e. with second order differential operators with bounded coefficients. Indeed, let $k$ be a positive integer, let $F$ be a smooth exhaustion function on $M$ such that 
\begin{eqnarray*}
 F(x)\leq c(r_p(x)+1), \quad \forall x\in M,\quad \arrowvert\nabla F\arrowvert+\arrowvert\nabla^2F\arrowvert\leq c,
\end{eqnarray*}
for some point $p\in M$ and some positive constant $c$. The existence of such function uses only the boundedness of the curvature and is ensured by theorem $3.6$ of \cite{Shi-Kah}. In the case of a Ricci expander, the square root of $v$ would be a perfect candidate as soon as $f$ is proper.

 Now, let $\psi:\mathbb{R}_+\rightarrow\mathbb{R}_+$ be a smooth function such that $\psi(x)=1$ if $0\leq x\leq 1 $ and $\psi(x)=0$ if $x\geq 2$. Then consider $\psi_k(x):=\psi(F(x)/k)$ and define a smooth vector field ${V}_k$ on $M$ by ${V}_k:=\psi_k{V}$. By construction, the vector field ${V}_k$ is bounded on $M$ and its derivatives are bounded uniformly in $k$. The sequence $({V}_k)_k$ converges uniformly on compact sets to ${V}$ as $k$ tends to $+\infty$. Therefore, by the theory of parabolic equations with bounded coefficients, the Cauchy problem
 
 \[
\left\{
\begin{array}{rl}
&\partial_th_k=\Delta_{{V}_k}h_k \quad\mbox{on}\quad M\times (0,+\infty),\\
&\\
& h_k(0,x)=h_0(x), \quad x\in M,
\end{array}
\right.
\]

has a unique classical bounded solution $(h_k(t))_{t\in(0,+\infty)}$. One can apply the a priori estimates from theorem \ref{a-priori-cauchy-sol} to bound the covariant derivatives of the sequence $(h_k(t))_k$ independently of $k$ by invoking the maximum principle with the help of the exhaustion function $\phi$ and by using the assumptions $\sup_M\arrowvert\Delta_g\phi\arrowvert+\arrowvert\nabla_{V}\phi\arrowvert<+\infty$ : 
\begin{eqnarray*}
\sup_M\arrowvert\nabla_{{V}_k}\phi\arrowvert\leq\sup_M\arrowvert\nabla_{{V}}\phi\arrowvert<+\infty.
\end{eqnarray*}

Then, one shows that the sequence $(h_k(t))_{t\in(0,+\infty)}$ is equicontinuous on compact sets of $[0,+\infty)\times M$ by the same arguments used in \cite{Lun-Sch-Est} in order to solve the initial Cauchy problem.

\end{proof}

To sum it up, we are able to define a semigroup of linear operators $T(t)$ in $C^0(M,E)$ by 
\begin{eqnarray*}
(T(t)h_0)(x):=h(t,x),\quad t\geq 0,\quad x\in M,\quad h_0\in C^0(M,E),
\end{eqnarray*}
where $(h(t))_{t\in(0,+\infty)}$ is the unique solution to the Cauchy problem given by theorem \ref{Cau-Pro}. By the semigroup law and a priori estimates given by theorem \ref{a-priori-cauchy-sol} applied to $T=1$, one gets,
\begin{eqnarray*}
\|T(t)\|_{\Li(C^0(M,E),C^k(M,E))}&\leq&\frac{Ce^{\omega t}}{t^{k/2}},\quad k\in\{1,2,3\},\\
\|T(t)\|_{\Li(C^3(M,E))}&\leq&Ce^{\omega t},\quad t>0,
\end{eqnarray*}
for some real number $\omega$ and some constant $C$ time independent. In order to get Schauder estimates, we need to recall several facts about real interpolation on Banach spaces. The notations and the presentation are taken from the notes \cite{Lun-Not-Int}.\\

 If $Y\subset X$ are two Banach spaces, then one defines the interpolation space $(X,Y)_{\theta,\infty}$ for $\theta\in(0,1)$ by
 \begin{eqnarray*}
(X,Y)_{\theta,\infty}:=\{h\in X\quad|\quad\|h\|_{\theta,\infty}:=\sup_{t\in(0,1)}t^{-\theta}K(t,h,X,Y)<+\infty\},
\end{eqnarray*}
where
\begin{eqnarray*}
K(t,h,X,Y):=\inf\{\|a\|_X+t\|b\|_Y\quad|\quad h=a+b,\quad (a,b)\in X\times Y\}.
\end{eqnarray*}

One can check that $(X,Y)_{\theta,\infty}$ is a Banach space. In fact, we are able to identify this interpolation space with classical Hölder spaces in our setting. We believe that the following proposition is well-known but, in order to keep this paper as self-contained as possible, we provide a proof of it. 

\begin{prop}\label{Int-Spa-Hol}
Let $(M,g)$ be a complete Riemannian manifold with bounded curvature, bounded covariant derivatives of the curvature operator and positive injectivity radius. Then the following holds.
\begin{enumerate}
\item For $\theta\in(0,1)$, and $k\in\mathbb{N}$,
\begin{eqnarray}
(C^k(M,E),C^{k+1}(M,E))_{\theta,\infty}=C^{k,\theta}(M,E).\label{Int-Spa-Unresc-Sch}
\end{eqnarray}
\item Let $0\leq \theta_1\leq\theta_2\leq 1$, $0\leq\theta\leq 1$.Then, if $(1-\theta)\theta_1+\theta\theta_2$ is not an integer,
\begin{eqnarray*}
(C^{\theta_1}(M,E),C^{\theta_2}(M,E))_{\theta,\infty}=C^{(1-\theta)\theta_1+\theta\theta_2}(M,E),
\end{eqnarray*}
where $C^{\theta}(M,E):=C^{\lfloor\theta\rfloor,\theta-\lfloor\theta\rfloor}(M,E)$.\\


\end{enumerate}
\end{prop}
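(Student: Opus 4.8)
The plan is to reduce everything to the classical Euclidean characterization of H\"older spaces as real interpolation spaces between $C^k$ and $C^{k+1}$ (see e.g. \cite{Lun-Not-Int}), transplanted locally to $M$ via harmonic (or normal) coordinate charts, and then to glue the local statements using the uniform geometry hypotheses (bounded curvature, bounded covariant derivatives of curvature, positive injectivity radius). First I would fix, by the $C^k$-compactness theory for manifolds with bounded geometry, a uniform radius $\rho_0>0$ and, for each $p\in M$, a harmonic coordinate chart on $B(p,\rho_0)$ in which the components $g_{ij}$ are bounded in $C^{k+1,\alpha}$ uniformly in $p$, with the metric uniformly equivalent to the Euclidean one; this turns covariant derivatives into ordinary partial derivatives up to uniformly bounded lower-order correction terms, and makes parallel transport along short geodesics a uniformly $C^{k}$-controlled perturbation of the identity (so that the H\"older seminorm $[\nabla^k h]_\theta$ of the excerpt is, up to uniform constants, the sup over charts of the usual Euclidean H\"older seminorm of the chart components). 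The point is that all the chart-to-chart comparison constants are uniform, precisely because the geometry is uniform.

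Next, for part (1), I would prove the two inclusions of \eqref{Int-Spa-Unresc-Sch} separately. For $C^{k,\theta}(M,E)\hookrightarrow (C^k,C^{k+1})_{\theta,\infty}$: given $h\in C^{k,\theta}$, mollify $h$ at scale $t$ inside each chart (using a fixed partition of unity subordinate to the uniform cover $\{B(p,\rho_0/2)\}$, which itself has bounded multiplicity by the volume comparison coming from bounded Ricci and positive injectivity radius) to produce a decomposition $h = a_t + b_t$ with $\|a_t\|_{C^k}\le C\,t^\theta\,[\nabla^k h]_\theta$ and $\|b_t\|_{C^{k+1}}\le C\,t^{\theta-1}[\nabla^k h]_\theta$; this gives $K(t,h)\le C t^\theta \|h\|_{C^{k,\theta}}$, hence $h\in(C^k,C^{k+1})_{\theta,\infty}$. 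Conversely, if $h\in(C^k,C^{k+1})_{\theta,\infty}$, then for any $x$, any $y\in B(x,\delta)$, and any decomposition $h=a+b$ one estimates $|\nabla^k h(x)-P^*_{x,y}\nabla^k h(y)|\le |\nabla^k a(x)-P^*_{x,y}\nabla^k a(y)| + |\nabla^k b(x)-P^*_{x,y}\nabla^k b(y)|$, bounding the first term by $2\|a\|_{C^k}$ and the second by $C\,d(x,y)\,\|b\|_{C^{k+1}}$ (the mean value inequality in a chart, with the parallel-transport correction absorbed into the uniform constant); optimizing over decompositions with $t=d(x,y)$ gives $[\nabla^k h]_\theta\le C\|h\|_{\theta,\infty}$. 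This is verbatim the Euclidean argument, the only new content being the uniformity of constants, which the bounded-geometry hypotheses supply.

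For part (2), I would invoke reiteration. By part (1) with $k=\lfloor\theta_i\rfloor$ one has $C^{\theta_i}(M,E)=(C^{\lfloor\theta_i\rfloor},C^{\lfloor\theta_i\rfloor+1})_{\theta_i-\lfloor\theta_i\rfloor,\infty}$ (interpreting the boundary cases $\theta_i\in\mathbb{Z}$ directly), so both $C^{\theta_1}$ and $C^{\theta_2}$ are interpolation spaces in the scale $\{C^m\}_{m\in\mathbb N}$; the reiteration theorem for the real method (again \cite{Lun-Not-Int}) then identifies $(C^{\theta_1},C^{\theta_2})_{\theta,\infty}$ with the space of exponent $(1-\theta)\theta_1+\theta\theta_2$ in that same scale, which by part (1) is $C^{(1-\theta)\theta_1+\theta\theta_2}(M,E)$, under the stated non-integrality assumption on the exponent (the hypothesis is exactly what is needed for the reiteration identification to land on a H\"older and not a Zygmund/$C^m$ endpoint). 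I expect the only genuinely delicate step to be the first one: setting up the uniform family of charts and checking that the manifold H\"older seminorm with parallel transport is comparable, with $p$-independent constants, to the Euclidean chart seminorms, including the treatment of the bundle $E$ (for which one trivializes $E$ over each chart by radial parallel transport and checks the transition functions are uniformly $C^{k,\alpha}$). Once that comparison is in hand, parts (1) and (2) are the standard Euclidean arguments.
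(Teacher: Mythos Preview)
Your approach is correct and, for part~(2), essentially identical to the paper's (both invoke reiteration; the paper verifies directly that $C^1\in J_{1/2}\cap K_{1/2}$ relative to $(C^0,C^2)$, while you deduce the needed $J\cap K$ membership from part~(1)). For part~(1), however, the two constructions of the decomposition $h=a+b$ in the inclusion $C^{k,\theta}\subset (C^k,C^{k+1})_{\theta,\infty}$ are genuinely different. You work in a \emph{fixed}-scale harmonic atlas, trivialize $E$ by radial parallel transport, and mollify at scale $t$ chart by chart; the paper instead avoids coordinates altogether by building, via the Cheeger--Gromov covering lemma and regularized distance functions, a partition of unity $(\phi_{i,\rho})_i$ at the \emph{variable} scale $\rho$ and setting $b_\rho:=\sum_i\phi_{i,\rho}\,h(x_i)$, $a_\rho:=h-b_\rho$, i.e.\ a piecewise-constant approximation rather than a convolution. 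Your route is a more literal transplant of the Euclidean argument and requires the harmonic-chart/bundle-trivialization machinery (hence uses the full bounded-geometry hypotheses from the start); the paper's construction is more intrinsic and elementary---no charts, only a good cover with controlled multiplicity and smoothed distance cutoffs---at the cost of a slightly longer write-up of that covering. Both yield the same bound $K(\rho,h)\lesssim\rho^\theta\|h\|_{C^{k,\theta}}$ and both rely on bounded multiplicity to sum the local pieces.
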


\begin{proof}[Proof of proposition \ref{Int-Spa-Hol}]
\begin{itemize}
\item We only prove $(C^0(M,E),C^{1}(M,E))_{\theta,\infty}=C^{0,\theta}(M,E).$ \\

The proof for an arbitrary nonnegative integer $k$ is similar. \\

 Concerning the first inclusion, let $h\in(C^0(M,E),C^{1}(M,E))_{\theta,\infty}$. Then,
\begin{eqnarray*}
\| h\|_{C^0(M,E)}\leq K(1,h,C^0(M,E),C^1(M,E))\leq \| h\|_{\theta,\infty}.
\end{eqnarray*}

Moreover, for a decomposition $h=a+b$, with $a\in C^0(M,E)$ and $b\in C^1(M,E)$, 
\begin{eqnarray*}
\arrowvert h(x)-h(y)\arrowvert&\leq& \arrowvert a(x)-a(y)\arrowvert+\arrowvert b(x)-b(y)\arrowvert\\
&\leq&2\| a\|_{C^0(M,E)}+\| b\|_{C^1(M,E)}d_g(x,y),
\end{eqnarray*}
for any $x,y\in M$. By taking the infimum over such decompositions of $h$, one gets,
\begin{eqnarray*}
\arrowvert h(x)-h(y)\arrowvert&\leq& 2K(d_g(x,y)/2,h,C^0(M,E),C^1(M,E))\\
&\leq&2^{1-\theta} d_g(x,y)^{\theta}\| h\|_{\theta,\infty}.
\end{eqnarray*}

To prove the converse inclusion, in the Euclidean case $M^n=\mathbb{R}^n$, one usually considers the convolution of $h$ by a compactly supported function. In general, this procedure is meaningful at small scales only. Then, the proof consists in building a suitable partition of unity in order to patch these local convolutions. 

More precisely, we will use the following lemma due to Cheeger and Gromov \cite{Che-Gro-Cha-Num} :
\begin{lemma}[Cheeger-Gromov]\label{Goo-Cov-Che-Gro}
Let $(M^n,g)$ be a complete Riemmannian manifold with $\Ric(g)\geq-g$. Fix $\rho_0>0$ and $\lambda>1$. Then, for all $0<\rho\leq\rho_0$, there is a covering of $M^n$ by sets $U_1$,..., $U_N$ such that
\begin{itemize}
\item Each $U_i$ is a union of disjoint metric balls of radius $\rho$, and the distance between the centers of each pair of balls is at least $2\lambda \rho$.
\item $N\leq N(n,\rho_0,\lambda).$
\end{itemize}
\end{lemma}

The next crucial observation that goes back to Gromov \cite{Gro-Boo} is that one only needs the exponential map (of some point) to be a local diffeomorphism in order to perform a local convolution. This works well in case of bounded curvature. Indeed, by standard comparison estimates [Chap. $6$, \cite{Pet}], if the sectional curvature $K_g$ is less than a constant $K$ then the exponential map of a point $p$ is non singular on the open ball $B(0_p,\pi/2\sqrt{K})$, where $0_p\in T_pM$.
From now on, assume (by rescaling the metric) that $\Ric(g)\geq -g$ and let $U_1$, ..., $U_N$ be a covering of $M^n$ given by \ref{Goo-Cov-Che-Gro} with $\rho\leq\rho_0$ positive small enough so that the exponential maps of the centers $(x_i)_i$ of the balls of the covering $(U_j)_{1\leq j\leq N}$ are non singular.

By lemma $5.3$ in \cite{Che-Gro-Bou-Neu}, we can regularize the distance functions to the points $(x_i)_i$ by a convolution defined on $B(x_i,\rho_0)$  by
\begin{eqnarray*}
f_{i,\delta}(x):=\frac{1}{\delta^n}\int_{T_xM}d_g(x_i,\exp_x(v))\chi\left(\frac{\arrowvert v\arrowvert}{\delta}\right)d\mu(v),
\end{eqnarray*}
where $d\mu$ is the Lebesgue measure induced on $T_xM$ by the metric $g$, where $\chi$ is a nonnegative smooth cut-off function of unit mass, i.e. $\int_{\mathbb{R}^n}\chi=1$ and where $\delta$ is a positive parameter small enough to be defined later. One can show that 
\begin{eqnarray*}
\arrowvert f_{i,\delta}(\cdot)-d_g(x_i,\cdot)\arrowvert\leq C\delta, \quad\arrowvert\nabla^l f_{i,\delta}\arrowvert\leq c(n,k_0)\delta^{1-l},
\end{eqnarray*}
for $l\geq 1$, where $k_0$ is a bound on the curvature and its covariant derivatives up to a certain order depending on $l$ and $C$ is a positive constant independent of $\delta$. Choose $\delta$ proportional to $\rho$ such that 
\begin{eqnarray*}
\{f_{i,\delta}\leq \rho\}\subset B_g(x_i,2\rho).
\end{eqnarray*}
Let $\psi:\mathbb{R}_+\rightarrow\mathbb{R}_+$ be a smooth function such that $\psi\equiv 1$ on $[0,1]$ and $\psi\equiv 0$ on $[2,+\infty)$. Then define
\begin{eqnarray*}
\widetilde{\phi_{i,\rho}}(x):=\psi\left(\frac{2f_{i,\delta}(x)}{\rho}\right),
\end{eqnarray*}
so that $\supp \widetilde{\phi_{i,\rho}}\subset B_g(x_i,2\rho)$. By construction, the sum $\sum_i\widetilde{\phi_{i,\rho}}$ is well-defined, and $\sum_i\widetilde{\phi_{i,\rho}}\geq 1$.
 
We get a partition of unity $(\phi_{i,\rho})_i$ for the covering $(B_g(x_i,2\rho))_i$ with $\lambda=3$ as in lemma \ref{Goo-Cov-Che-Gro} by considering 
\begin{eqnarray*}
\phi_{i,\rho}:=\frac{\widetilde{\phi_{i,\rho}}}{\sum_i\widetilde{\phi_{i,\rho}}}.
\end{eqnarray*}

We are now in a position to prove the converse inclusion. The proof mimics the proof in the Euclidean case (e.g. \cite{Lun-Not-Int}). Define, for $\rho\leq \rho_0$, and $h\in C^{\theta}(M,E)$,
\begin{eqnarray*}
b_{\rho}:=\sum_{i}\phi_{i,\rho}h(x_i),\quad a_{\rho}:= h-b_{\rho}.
\end{eqnarray*}

By construction, one checks that 
\begin{eqnarray*}
|a_{\rho}(x)|&\leq& \sum_i\phi_{i,\rho}(x)\arrowvert h(x_i)-h(x)\arrowvert\\&\leq& \sum_i\phi_{i,\rho}(x) d_g^{\theta}(x_i,x)\| h\|_{C^{\theta}(M,E)}\\
&\leq&(2\rho)^{\theta}\|h\|_{C^{\theta}(M,E)},\\
\|b_{\rho}\|_{C^0(M,E)}&\leq&\|h\|_{C^{0}(M,E)}\leq\|h\|_{C^{\theta}(M,E)}\\
|\nabla b_{\rho}(x)|&\leq&\sum_i|\nabla\phi_{i,\rho}||h(x_i)-h(x)|\\
&\leq&C\rho^{\theta-1}\|h\|_{C^{\theta}(M,E)},
\end{eqnarray*}
where $C=C(N)$ with $N$ as in lemma \ref{Goo-Cov-Che-Gro}. Therefore,
\begin{eqnarray*}
\rho^{-\theta}K(\rho,h,C^0(M,E),C^1(M,E))&\leq&\rho^{-\theta}(\|a_{\rho}\|_{C^0(M,E)}+\rho\|b_{\rho}\|_{C^1(M,E)})\\
&\leq& C\|h\|_{C^{\theta}(M,E)},
\end{eqnarray*}
i.e. $h\in (C^0(M,E),C^1(M,E))_{\theta,\infty}$.\\

\item Again, we only prove $(C^0(M,E),C^2(M,E))_{\theta,\infty}=C^{2\theta}(M,E)$, if $2\theta< 1$. The other cases can be proved similarly. The main ingredient of the proof is the Reiteration theorem (e.g. \cite{Lun-Not-Int}). \\

We need to recall two definitions first. \\

Let $X,Y,E$ be Banach spaces such that $Y\subset E\subset X$. Let $\theta\in[0,1]$.\\
\begin{itemize}
\item a Banach space $E$ belongs to the class $J_{\theta}(X,Y)$ if there exists a positive constant $c$ such that $\|y\|_E\leq c\|y\|_X^{1-\theta}\|y\|_Y^{\theta}$, for any $y\in Y$.\\
\item a Banach space $E$ belongs to the class $K_{\theta}(X,Y)$ if there exists a positive constant $c$ such that  $t^{-\theta}K(t,x,X,Y)\leq c\|x\|_E$, for any $x\in E$ and $t\in(0,1)$.
\end{itemize}

\begin{claim}\label{reit-verif}
$C^1(M,E)\in J_{1/2}(C^0(M,E),C^2(M,E))\cap K_{1/2}(C^0(M,E),C^2(M,E)).$
\end{claim}

\begin{proof}[Proof of claim \ref{reit-verif}]
The proof of $C^1(M,E)\in J_{1/2}(C^0(M,E),C^2(M,E))$ is standard. We reproduce it for the convenience of the reader. It is adapted from \cite{Lun-Not-Int}.
It suffices to prove it for functions on $M$. Let $T$ be a function on $M$ belonging to $C^2(M,\mathbb{R}):=C^2(M,E)$ where $E=M\times \mathbb{R}$. Let $x\in M$, $v\in T_xM$ a vector of unit length and $\gamma$ be a geodesic such that $\gamma(0)=x$ and $\gamma'(0)=v$. Then, for any positive $t$,
\begin{eqnarray*}
\arrowvert T(\gamma(t))-T(x)-<\nabla T(x),v>t\arrowvert\leq \frac{1}{2}\arrowvert\nabla^2T\arrowvert_{C^0(M,\mathbb{R})}t^2.
\end{eqnarray*}
Hence,
\begin{eqnarray*}
\arrowvert<\nabla T(x),v>\arrowvert\leq \frac{1}{2}\|T\|_{C^2(M,\mathbb{R})}t+\frac{2\| T\|_{C^0(M,\mathbb{R})}}{t},
\end{eqnarray*}
for any positive $t$. By minimizing the right hand side of the previous inequality on $t$, one gets
\begin{eqnarray*}
\arrowvert<\nabla T(x),v>\arrowvert\leq C\|T\|^{1/2}_{C^2(M,\mathbb{R})}\| T\|^{1/2}_{C^0(M,\mathbb{R})}.
\end{eqnarray*}

Let us prove $ C^1(M,E)\in K_{1/2}(C^0(M,E),C^2(M,E)).$ With the same notations as before, consider the decomposition : $h=a_{\rho}+b_{\rho}$. Then, by construction of the partition of unity, one checks that
\begin{eqnarray*}
\|a_{\rho}\|_{C^0(M,E)}&\leq& (2\rho)\|h\|_{C^1(M,E)},\\
\|b_{\rho}\|_{C^{1}(M,E)}&\leq&C \|h\|_{C^1(M,E)},\\
\|\nabla^2b_{\rho}\|_{C^{0}(M,E)}&\leq& C\rho^{-1}\|h\|_{C^1(M,E)},
\end{eqnarray*}
hence the claim since 
\begin{eqnarray*}
\rho^{-1/2}K(\rho,h,C^{0}(M,E),C^{2}(M,E))&\leq& \rho^{-1/2}\left(\|a_{\rho^{1/2}}\|_{C^{0}(M,E)}+\rho\|b_{\rho^{1/2}}\|_{C^{2}(M,E)}\right)\\
&\leq& C\|h\|_{C^{1}(M,E)}.
\end{eqnarray*}

\end{proof}
Now, if $2\theta<1$, by applying the Reiteration theorem to $E_1:=C^0(M,E)\in J_{0}(C^0(M,E),C^2(M,E))\cap K_{0}(C^0(M,E),C^2(M,E))$ and to $E_2:=C^{1}(M,E)$, we get 
\begin{eqnarray*}
(E_1,E_2)_{2\theta,\infty}&=&(C^0(M,E),C^2(M,E))_{(1-2\theta)\times0+2\theta \times 1/2,\infty},
\end{eqnarray*} 
that is,
\begin{eqnarray*}
(C^0(M,E),C^2(M,E))_{\theta,\infty}=C^{2\theta}(M,E),
\end{eqnarray*}
by the previous results.
\end{itemize}

\end{proof}
Therefore, by the interpolation theorem and proposition \ref{Int-Spa-Hol}, we get the
\begin{coro}\label{inter-sch-semi-gp}
With the previous notations, 
\begin{eqnarray*}
\|T(t)\|_{\Li(C^{\theta}(M,E),C^{\alpha}(M,E))}\leq\frac{Ce^{\omega t}}{t^{\frac{\alpha-\theta}{2}}},\quad 0\leq\theta\leq\alpha\leq 3,
\end{eqnarray*}
where $C$ is a time independent positive constant (depending on $\theta$ and $\alpha$). 
\end{coro}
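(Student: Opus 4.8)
The plan is to derive the estimate from the two facts already in hand — the smoothing bounds $\|T(t)\|_{\Li(C^0(M,E),C^k(M,E))}\le Ce^{\omega t}t^{-k/2}$ for integers $0\le k\le 3$ and the uniform boundedness $\|T(t)\|_{\Li(C^3(M,E))}\le Ce^{\omega t}$ — by two successive applications of exact real interpolation. The point to notice is that a single naive interpolation between these does not produce the sharp exponent $(\alpha-\theta)/2$: one first has to interpolate in the target variable to obtain smoothing from $C^0$ to \emph{fractional} orders, and only then interpolate that family against the $C^3$-boundedness, with the interpolation parameter and the intermediate order tuned so that the source and target interpolation spaces come out to be exactly $C^\theta(M,E)$ and $C^\alpha(M,E)$.

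Step 1 (fractional smoothing out of $C^0$). Starting from $\|T(t)\|_{\Li(C^0(M,E),C^k(M,E))}\le Ce^{\omega t}t^{-k/2}$ for $k=0,1,2,3$ (the case $k=0$ being the maximum principle bound $\|T(t)\|_{\Li(C^0(M,E))}\le 1$ from the proof of Theorem \ref{Cau-Pro}), I would apply the interpolation functor $(\cdot,\cdot)_{\sigma,\infty}$ to the pair $T(t):C^0(M,E)\to C^k(M,E)$, $T(t):C^0(M,E)\to C^{k+1}(M,E)$ for $0\le k\le 2$, and use $(C^k(M,E),C^{k+1}(M,E))_{\sigma,\infty}=C^{k,\sigma}(M,E)=C^{k+\sigma}(M,E)$ from Proposition \ref{Int-Spa-Hol} to obtain
\[
\|T(t)\|_{\Li(C^0(M,E),C^s(M,E))}\le\frac{Ce^{\omega t}}{t^{s/2}},\qquad 0\le s\le 3 .
\]

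Step 2 (interpolate against the $C^3$-bound). Fix $0\le\theta\le\alpha\le3$; if $\theta=3$ then $\alpha=3$ and the estimate is just the $C^3$-bound, so assume $\theta<3$ and set $\sigma:=\theta/3\in[0,1)$ and $s_0:=3(\alpha-\theta)/(3-\theta)\in[0,3]$, the choice being engineered so that $(1-\sigma)\cdot 0+\sigma\cdot 3=\theta$, $(1-\sigma)s_0+\sigma\cdot 3=\alpha$, and $s_0(1-\sigma)=\alpha-\theta$. Interpolating with exponent $\sigma$ the bounded operators $T(t):C^0(M,E)\to C^{s_0}(M,E)$ (norm $\le Ce^{\omega t}t^{-s_0/2}$ by Step 1) and $T(t):C^3(M,E)\to C^3(M,E)$ (norm $\le Ce^{\omega t}$), and identifying $(C^0(M,E),C^3(M,E))_{\sigma,\infty}=C^\theta(M,E)$ and $(C^{s_0}(M,E),C^3(M,E))_{\sigma,\infty}=C^\alpha(M,E)$ via Proposition \ref{Int-Spa-Hol} (using the Reiteration theorem to reach these couples), the interpolation theorem yields
\[
\|T(t)\|_{\Li(C^\theta(M,E),C^\alpha(M,E))}\le\bigl(Ce^{\omega t}t^{-s_0/2}\bigr)^{1-\sigma}\bigl(Ce^{\omega t}\bigr)^{\sigma}=Ce^{\omega t}\,t^{-s_0(1-\sigma)/2}=\frac{Ce^{\omega t}}{t^{(\alpha-\theta)/2}} .
\]

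The main obstacle I anticipate is not any individual step but two bookkeeping points. The first is finding the right combination above — in particular the intermediate order $s_0$ — so that the two endpoint interpolation spaces match $C^\theta(M,E)$ and $C^\alpha(M,E)$ simultaneously; once $s_0$ is chosen, the rest is mechanical. The second is that Proposition \ref{Int-Spa-Hol} identifies $(C^{\theta_1}(M,E),C^{\theta_2}(M,E))_{\sigma,\infty}$ with a Hölder space only when the resulting exponent is non-integer, whereas $C^m(M,E)$ with $m\in\mathbb{N}$ is not a member of this interpolation scale; thus the argument above is clean for $\theta,\alpha\notin\mathbb{Z}$, while the (few) cases with $\theta$ or $\alpha$ in $\{1,2\}$ — which play no role in the Schauder theory of the later sections — require a separate, more hands-on argument, for instance by re-running the a priori estimate of Theorem \ref{a-priori-cauchy-sol} with a $C^m$ initial datum, or by a careful approximation by nearby non-integer exponents that keeps track of the interpolation constants.
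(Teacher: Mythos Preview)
Your proposal is correct and is precisely the argument the paper has in mind: the paper simply says the corollary follows ``by the interpolation theorem and proposition \ref{Int-Spa-Hol}'', and you have carefully unpacked that sentence into the two-step interpolation (first in the target to get fractional smoothing out of $C^0$, then between that and the $C^3$-boundedness with the parameters tuned so the source and target come out right). Your caveat about integer exponents is well taken and is a technicality the paper silently ignores; it causes no trouble in the applications, where only non-integer $\theta,\alpha$ are needed.
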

\subsubsection{End of the proof of theorem \ref{Main-theo-weig-sch-est}}

As noticed in \cite{Lun-Sch-Est}, the semigroup $T(t)$ is not necessarily strongly continuous on $C^0(M,E)$. Nevertheless, one can manage to make sense of a realization of the operator $\Delta_{V}$ in $C^0(M,E)$. Indeed, for any $\lambda>0$, consider the operator
\begin{eqnarray*}
(R(\lambda)h)(x):=\int_0^{\infty}e^{-\lambda t}(T(t)h)(x)dt,\quad x\in M.
\end{eqnarray*}
This is well-defined since $\|T(t)h\|_{C^0(M,E)}\leq \|h\|_{C^0(M,E)}$ for any positive time $t$. Moreover, 
\begin{eqnarray*}
\|R(\lambda)\|_{\Li(C^0(M,E))}\leq\frac{1}{\lambda}.
\end{eqnarray*}

Since $(T(t))_t$ is a semigroup, $R(\cdot)$ satisfies the first resolvent identity, i.e. for any positive $\lambda,\mu$, $R(\lambda)-R(\mu)=(\mu-\lambda)R(\lambda)\circ R(\mu)$. Finally, as $R(\lambda)(h)(x)$ is the Laplace transform of the tensor $t\rightarrow T(t)(h)(x)$, $R(\lambda)$ is injective. Therefore, there exists a closed operator 
\begin{eqnarray*}
A:D(A)\rightarrow C^0(M,E),\quad D(A)=\mathrm{Range} (R(\lambda)),\quad \lambda >0,
\end{eqnarray*}
such that $R(\lambda)$ is the resolvent of $A$, i.e. $R(\lambda) =R(\lambda,A)$. As in proposition $4.1$ of \cite{Lun-Sch-Est}, one identifies $D(A)$ :
\begin{prop}[Lunardi]\label{iden-dom-op}
\begin{eqnarray*}
&&D(A)=\{h\in\cap_{p\geq 1}W_{loc}^{2,p}(M,E)\cap C^{0}(M,E)\quad : \quad \Delta_{V}h\in C^0(M,E)\}=:D^2_{V}(M,E),\\
&& Ah=\Delta_{V}h,\quad\forall h\in D(A).
\end{eqnarray*}
Moreover, for any $\theta \in (0,2)$, there is some positive constant $C$ such that 
\begin{eqnarray*}
\| h\|_{C^{\theta}(M,E)}\leq C\|h\|_{C^0(M,E)}^{1-\theta/2}\|h\|_{D(A)}^{\theta/2},\quad \forall h\in D(A),
\end{eqnarray*}
where $\|h\|_{D(A)}:=\|h\|_{C^0(M,E)}+\|Ah\|_{C^0(M,E)}$.
\end{prop}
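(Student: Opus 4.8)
The plan is to transcribe Lunardi's argument from \cite{Lun-Sch-Est} into the Riemannian framework, using as black boxes the a priori estimates of Theorem \ref{a-priori-cauchy-sol}, the smoothing bounds of Corollary \ref{inter-sch-semi-gp}, and the uniqueness statement of Proposition \ref{uni-ppe-max}. The proof splits into two inclusions and then the interpolation estimate.

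First I would establish $D(A)\subseteq D^2_{V}(M,E)$ together with the formula $Ah=\Delta_{V}h$. Given $h\in D(A)$, write $g:=(\lambda-A)h\in C^0(M,E)$, so that $h=R(\lambda)g=\int_0^{+\infty}e^{-\lambda t}T(t)g\,dt$, the integral converging in $C^0(M,E)$ and, since $\|T(t)\|_{\Li(C^0(M,E),C^1(M,E))}\leq Ce^{\omega t}t^{-1/2}$ is integrable near $0$, also in $C^1$ uniformly on compact subsets. One cannot differentiate twice under the integral sign, because $\|T(t)g\|_{C^2(M,E)}$ is only controlled by $Ce^{\omega t}t^{-1}\|g\|_{C^0(M,E)}$, which is not integrable at $t=0$; instead I would test against an arbitrary smooth compactly supported tensor $\varphi$. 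For $t>0$, parabolic interior regularity makes $t\mapsto\langle T(t)g,\varphi\rangle$ of class $C^1$ with derivative $\langle\Delta_{V}T(t)g,\varphi\rangle=\langle T(t)g,\Delta_{V}^{*}\varphi\rangle$, which stays bounded on $(0,+\infty)$ (on $\supp\varphi$ the smooth field $V$ is bounded, so $\Delta_{V}^{*}\varphi\in C^{\infty}_c$), while $\langle T(t)g,\varphi\rangle\to\langle g,\varphi\rangle$ as $t\to 0$ since $T(t)g\to g$ pointwise with a uniform bound. Integrating by parts in $t$ gives
\begin{eqnarray*}
\langle h,\Delta_{V}^{*}\varphi\rangle=\int_0^{+\infty}e^{-\lambda t}\langle\Delta_{V}T(t)g,\varphi\rangle\,dt=\lambda\langle h,\varphi\rangle-\langle g,\varphi\rangle,
\end{eqnarray*}
so $\Delta_{V}h=\lambda h-g$ in the distributional sense. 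As the coefficients are smooth, interior $L^p$ elliptic regularity upgrades this to $h\in\cap_{p\geq 1}W^{2,p}_{loc}(M,E)$ with $\Delta_{V}h=\lambda h-g\in C^0(M,E)$, hence $h\in D^2_{V}(M,E)$ and $Ah=\lambda h-g=\Delta_{V}h$.

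For the reverse inclusion $D^2_{V}(M,E)\subseteq D(A)$, given $h\in D^2_{V}(M,E)$ I would set $g:=\lambda h-\Delta_{V}h\in C^0(M,E)$ and $\tilde h:=R(\lambda)g$; by the previous step $\tilde h\in D^2_{V}(M,E)$ with $(\lambda-\Delta_{V})\tilde h=g$, so $w:=h-\tilde h$ is bounded, lies in $\cap_{p\geq 1}W^{2,p}_{loc}(M,E)$, and satisfies $\Delta_{V}w-\lambda w=0$; Proposition \ref{uni-ppe-max} forces $w=0$ and hence $h=R(\lambda)g\in\mathrm{Range}(R(\lambda))=D(A)$. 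Finally, for the interpolation inequality I would use the identity $h=\lambda R(\lambda)h-R(\lambda)Ah$ for $h\in D(A)$: Corollary \ref{inter-sch-semi-gp} applied with lower space $C^0(M,E)$ gives $\|T(t)\|_{\Li(C^0(M,E),C^{\theta}(M,E))}\leq Ce^{\omega t}t^{-\theta/2}$, and integrating against $e^{-\lambda t}$ yields
\begin{eqnarray*}
\|h\|_{C^{\theta}(M,E)}\leq C\lambda^{\theta/2}\|h\|_{C^0(M,E)}+C\lambda^{\theta/2-1}\|Ah\|_{C^0(M,E)}
\end{eqnarray*}
for $\lambda$ large; optimizing in $\lambda$ (taking $\lambda\simeq\max\{c,\|Ah\|_{C^0(M,E)}/\|h\|_{C^0(M,E)}\}$ for a suitable threshold $c>0$) and using $\|Ah\|_{C^0(M,E)}\leq\|h\|_{D(A)}$ produces the claimed bound $\|h\|_{C^{\theta}(M,E)}\leq C\|h\|_{C^0(M,E)}^{1-\theta/2}\|h\|_{D(A)}^{\theta/2}$.

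The main obstacle is the first inclusion: passing from the Laplace-transform representation of $h$ to the strong equation $\Delta_{V}h=\lambda h-g$ despite the non-integrable $t^{-1}$ blow-up of the $C^2$-estimate for $T(t)g$ near $t=0$. The remedy---testing against compactly supported tensors, where the relevant pairing stays uniformly bounded in $t$, and only afterwards invoking local elliptic regularity---is exactly Lunardi's device and is where the analytic care concentrates; everything else is bookkeeping with the estimates already proved.
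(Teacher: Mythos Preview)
Your argument is correct and follows precisely the route of Lunardi's proposition~4.1 in \cite{Lun-Sch-Est}, which is exactly what the paper defers to rather than proving in detail. The three ingredients you invoke---the distributional identification via pairing against compactly supported test tensors (bypassing the non-integrable $t^{-1}$ blow-up of the $C^2$-smoothing estimate), the uniqueness from Proposition~\ref{uni-ppe-max} for the reverse inclusion, and the resolvent representation $h=\lambda R(\lambda)h-R(\lambda)Ah$ combined with Corollary~\ref{inter-sch-semi-gp} and optimization in $\lambda$---are the standard steps, and you have assembled them correctly.
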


We are now in a position to finish the proof of theorem \ref{Main-theo-weig-sch-est}. Let $H\in C^{\theta}(M,E)$ and $\lambda >0$. Then, 
\begin{eqnarray*}
h(x):=\int_0^{\infty}e^{-\lambda t}(T(t)H)(x)dt,\quad x\in M,
\end{eqnarray*}
 is well-defined, $h\in D(A)$ and satisfies $\lambda h-\Delta_{V} h=H$. It remains to show that $h\in C^{2,\theta}(M,E)$.
 
 First, $h\in C^{\theta}(M,E)$ by proposition \ref{iden-dom-op} since for any $h\in D(A)$,
 \begin{eqnarray*}
\| h\|_{C^{\theta}(M,E)}&\leq& C\|h\|_{C^0(M,E)}^{1-\theta/2}\|h\|_{D(A)}^{\theta/2},\\
&\leq&\frac{C}{\lambda^{1-\theta/2}}\|H\|_{C^{\theta}(M,E)}.
\end{eqnarray*}

Now, if $\omega\in\mathbb{R}$ is as in corollary \ref{inter-sch-semi-gp}, let $\eta>\omega$. Then $h$ satisfies $\eta h-\Delta_{V}h=H+(\eta-\lambda)h=:\tilde{H},$ with
\begin{eqnarray*}
\|\tilde{H}\|_{C^{\theta}(M,E)}\leq\left(1+\frac{C|\eta-\lambda|}{\lambda^{1-\theta/2}}\right)\|H\|_{C^{\theta}(M,E)}.
\end{eqnarray*}

As $\eta>\omega\geq0$, one can represent $h$ in terms of $\tilde{H}$ as well : 
\begin{eqnarray*}
h(x)=\int_0^{\infty}e^{-\eta t}(T(t)\tilde{H})(x)dt.
\end{eqnarray*}

According to proposition \ref{Int-Spa-Hol}, it suffices to prove that $h\in(C^{\alpha}(M,E),C^{2,\alpha}(M,E))_{1-(\alpha-\theta)/2,\infty}$ for any $\alpha\in(\theta,1)$. As in \cite{Lun-Sch-Est}, one considers the following decomposition of $h$,
\begin{eqnarray*}
h=a_{\rho}+b_{\rho},\quad a_{\rho}(x):=\int_0^{\rho}e^{-\eta t}(T(t)\tilde{H})(x)dt,\quad x\in M,\quad\rho >0.
\end{eqnarray*}

Again, by corollary \ref{inter-sch-semi-gp}, one estimates
\begin{eqnarray*}
\|a_{\rho}\|_{C^{\alpha}(M,E)}&\leq& C\rho^{1-(\alpha-\theta)/2}\|\tilde{H}\|_{C^{\theta}(M,E)},\\
\|b_{\rho}\|_{C^{2,\alpha}(M,E)}&\leq& C \rho^{-(\alpha-\theta)/2}\|\tilde{H}\|_{C^{\theta}(M,E)}.
\end{eqnarray*}

Hence, $h\in C^{2,\theta}(M,E)$ and $\|h\|_{C^{2,\theta}(M,E)}\leq C\|H\|_{C^{0,\theta}(M,E)}$ with $C$ independent of $h$.

\subsection{Solutions in $D^{2+k,\theta}_{\nabla f}(M,E)$}\label{Sec-Sol-C^k}
We now study the realization of the weighted laplacian in function spaces with higher regularity. We begin with a uniqueness statement about subsolutions of weighted elliptic equations with controlled growth at infinity :

\begin{prop}\label{ppe-max-pol-sub-sol}
Let $(M^n,g,\nabla f)$ be an expanding gradient Ricci soliton such that $f$ is proper. Let $u:M^n\rightarrow\mathbb{R}$ be a $C_{loc}^2$ function such that 
\begin{eqnarray*}
\Delta_{v-\gamma\ln v}u-\alpha u\geq 0,\quad u=\textit{O}(v^{\beta}),
\end{eqnarray*}

 where $\alpha$, $\beta$ are real numbers such that $\min\{1,\alpha\}>\beta(\geq0)$ and where $\gamma$ is any real number. Then,
 $\sup_{M^n}u\leq 0$.

\end{prop}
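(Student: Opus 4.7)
The plan is to argue by contradiction using the explicit barrier $w := v^{\beta'}$ for some exponent $\beta'$ with $\beta < \beta' < \min\{1,\alpha\}$. Using the eigenvalue relation $\Delta_f v = v$ from proposition \ref{pot-fct-est} (equivalently, $\Delta v + |\nabla v|^2 = v$), a direct computation yields
\begin{eqnarray*}
(\Delta_{v-\gamma\ln v} - \alpha)\, v^{\beta'} = (\beta' - \alpha)\, v^{\beta'} + \beta'(\beta' - 1 - \gamma)\, v^{\beta'-2}\, |\nabla v|^2.
\end{eqnarray*}
The soliton identity $R(g) + |\nabla f|^2 = f + \mu(g)$ together with proposition \ref{pot-fct-est} gives $|\nabla v|^2 \sim v$ at infinity, hence $|\nabla v|^2/v^2 \to 0$. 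Therefore the first term dominates, and there exists $R_0 > 0$ such that
\begin{eqnarray*}
(\Delta_{v-\gamma\ln v} - \alpha)\, v^{\beta'} \leq \tfrac{\beta' - \alpha}{2}\, v^{\beta'} < 0 \quad \text{on } \{v \geq R_0\}.
\end{eqnarray*}

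Suppose toward a contradiction that $\delta := \sup_M u > 0$ and pick $y_0 \in M$ with $u(y_0) > \delta/2$. For $\epsilon > 0$, set $u_\epsilon := u - \epsilon v^{\beta'}$. Since $f$ (hence $v$) is proper and $u = O(v^\beta)$ with $\beta' > \beta$, $u_\epsilon \to -\infty$ at infinity, so it attains its supremum at an interior point $x_\epsilon$. For $\epsilon < \delta/(4\, v^{\beta'}(y_0))$ one has $u_\epsilon(y_0) > \delta/4$, whence $u_\epsilon(x_\epsilon) > \delta/4 > 0$. The relations $\nabla u_\epsilon(x_\epsilon) = 0$ and $\Delta u_\epsilon(x_\epsilon) \leq 0$ together with the hypothesis $(\Delta_{v-\gamma\ln v} - \alpha)\, u \geq 0$ yield the chain
\begin{eqnarray*}
0 > -\alpha\, u_\epsilon(x_\epsilon) \geq (\Delta_{v-\gamma\ln v} - \alpha)\, u_\epsilon(x_\epsilon) \geq -\epsilon\, (\Delta_{v-\gamma\ln v} - \alpha)\, v^{\beta'}(x_\epsilon),
\end{eqnarray*}
which forces $(\Delta_{v-\gamma\ln v} - \alpha)\, v^{\beta'}(x_\epsilon) > 0$. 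The asymptotic estimate above then confines $x_\epsilon$ to the compact set $K := \{v \leq R_0\}$, uniformly in $\epsilon$.

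To close the argument, I would extract a subsequence $\epsilon_k \to 0$ with $x_{\epsilon_k} \to x_* \in K$. Since $\epsilon_k v^{\beta'}(x_{\epsilon_k}) \to 0$ and $u$ is continuous, $u(x_*) \geq \delta/4 > 0$. The identities $\nabla u(x_{\epsilon_k}) = \epsilon_k \beta' v^{\beta'-1}\nabla v(x_{\epsilon_k})$ and $\Delta u(x_{\epsilon_k}) \leq \epsilon_k \Delta(v^{\beta'})(x_{\epsilon_k})$, combined with the uniform bounds on the geometric quantities on $K$, pass to the limit to yield $\nabla u(x_*) = 0$ and $\Delta u(x_*) \leq 0$. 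Hence $(\Delta_{v-\gamma\ln v} - \alpha)\, u(x_*) \leq -\alpha\, u(x_*) < 0$, contradicting the hypothesis at $x_*$.

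The main obstacle I anticipate is pinning down the location of the maximizers $x_\epsilon$. When $\beta'(\beta' - 1 - \gamma) \leq 0$ (automatic for $\gamma \geq 0$), the second term in the barrier computation is non-positive everywhere and the contradiction follows at once from the chain above, without needing the compactness step. The genuine difficulty is the case of large negative $\gamma$, where $v^{\beta'}$ is only a strict supersolution near infinity; there the quantitative decay $|\nabla v|^2/v^2 = o(1)$ provided by the soliton structure is essential to confine the maximizers to $K$, and the strict inequality $\min\{1,\alpha\} > \beta$ is exploited to pick $\beta'$ with both the right polynomial growth relative to $u$ and the strict sign $\beta' - \alpha < 0$.
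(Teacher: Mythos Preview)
Your argument is correct and rests on the same barrier $w=v^{\beta'}$ and the same differential identity that the paper uses. The only substantive difference is in the endgame: the paper avoids your compactness/limit step entirely by noting that the ``error term'' is \emph{globally} bounded. Indeed, from $|\nabla v|^2\leq v$ (Proposition~\ref{pot-fct-est}) and $\beta'\le 1$ one has $|\nabla\ln v|^2\,v^{\beta'}\le v^{\beta'-1}\le C$ on all of $M$, so applying the maximum principle to $u-\tfrac{1}{k}v^{\beta'}$ at its interior maximizer $x_k$ yields directly
\[
\alpha\,\sup_M\Bigl(u-\tfrac{1}{k}v^{\beta'}\Bigr)\le \tfrac{C(\beta',\gamma)}{k},
\]
and letting $k\to\infty$ finishes. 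This sidesteps both the contradiction set-up and the case analysis on the sign of $\beta'(\beta'-1-\gamma)$ that you flagged as the main obstacle; no localisation of the maximizers is needed.
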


\begin{proof}
Let $k$ be a positive integer and let $\epsilon$ be a positive number such that $\beta+\epsilon<\min\{1,\alpha\}$. Then,
\begin{eqnarray}
\Delta_{v-\gamma\ln v}\left(u-\frac{v^{\beta+\epsilon}}{k}\right)&\geq& \alpha u-\alpha \frac{v^{\beta+\epsilon}}{k}-\frac{C(\beta,\epsilon,\gamma)}{k}\arrowvert\nabla\ln v\arrowvert^2v^{\beta+\epsilon}\\
&\geq&\alpha\left(u-\frac{v^{\beta+\epsilon}}{k}\right)-\frac{C(\beta,\epsilon,\gamma)}{k}\arrowvert\nabla\ln v\arrowvert^2v^{\beta+\epsilon}.\label{sub-sol-cov-der}
\end{eqnarray}
On the other hand, by the very choice of $\epsilon$ and as $f$, hence $v$, is proper, $$\limsup_{+\infty}\left(u-\frac{v^{\beta+\epsilon}}{k}\right)=-\infty,$$ for any positive integer $k$. Let $x_k\in M$ be a point such that 
\begin{eqnarray*}
\sup_{M}\left(u-\frac{v^{\beta+\epsilon}}{k}\right)=\left(u-\frac{v^{\beta+\epsilon}}{k}\right)(x_k).
\end{eqnarray*}
Then, according to the maximum principle applied to $\left(u-v^{\beta+\epsilon}/k\right)$, inequality (\ref{sub-sol-cov-der}) gives
\begin{eqnarray*}
\alpha\left(u-\frac{v^{\beta+\epsilon}}{k}\right)(x_k)\leq\frac{C(\beta,\epsilon,\gamma)}{k}\arrowvert\nabla\ln v\arrowvert^2v^{\beta+\epsilon}(x_k).
\end{eqnarray*}
Now, $\arrowvert\nabla\ln v\arrowvert^2\leq v^{-1}$, therefore, as $\beta+\epsilon\leq 1$,
 \begin{eqnarray*}
\sup_{M}\left(u-\frac{v^{\beta+\epsilon}}{k}\right)=\left(u-\frac{v^{\beta+\epsilon}}{k}\right)(x_k)\leq \frac{C(\beta,\epsilon,\gamma)}{\alpha k},
\end{eqnarray*}
which means that if $k$ tends to zero, $\sup_{M^n}u\leq 0$.

\end{proof}
 
 As a straightforward application of proposition \ref{ppe-max-pol-sub-sol}, we get 
 
\begin{coro}\label{cor-uni-sol-pol-gro}
Let $(M^n,g,\nabla f)$ be an expanding gradient Ricci soliton such that $f$ is proper. Let $T$ be a tensor such that $T\in\cap_{p\geq 1}W^{2,p}_{loc}(M,E)$ and such that
\begin{eqnarray*}
\Delta_{v-\gamma\ln v}T-\alpha T= 0,\quad T=\textit{O}(v^{\beta}),
\end{eqnarray*}

 where $\alpha$, $\beta$ are real numbers such that $\min\{1,\alpha\}>\beta(\geq0)$ and where $\gamma$ is any real number. Then,
 $T\equiv 0$.
\end{coro}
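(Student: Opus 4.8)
The plan is to reduce Corollary~\ref{cor-uni-sol-pol-gro} to the scalar maximum principle of Proposition~\ref{ppe-max-pol-sub-sol}, by passing from the tensorial equation satisfied by $T$ to a differential inequality for a regularization of its pointwise norm. First, since the equation $\Delta_{v-\gamma\ln v}T=\alpha T$ has smooth coefficients, elliptic regularity shows that $T$ is actually smooth, exactly as in the proof of Proposition~\ref{uni-ppe-max}. Note also that the hypothesis $\min\{1,\alpha\}>\beta\geq0$ forces $\alpha>0$.

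Next, for $\epsilon>0$ I would introduce the mollified norm $T_{\epsilon}:=\sqrt{\arrowvert T\arrowvert^2+\epsilon^2}$, which is a smooth positive function on $M$. The Bochner-type identity used in the proof of Proposition~\ref{uni-ppe-max} gives
$$\Delta_{v-\gamma\ln v}T_{\epsilon}-\alpha T_{\epsilon}=\frac{1}{T_{\epsilon}}\left(<\Delta_{v-\gamma\ln v}T,T>-\alpha T_{\epsilon}^2+\arrowvert\nabla T\arrowvert^2-\frac{\arrowvert\nabla\arrowvert T\arrowvert^2\arrowvert^2}{4T_{\epsilon}^2}\right).$$
Plugging in $\Delta_{v-\gamma\ln v}T=\alpha T$, so that $<\Delta_{v-\gamma\ln v}T,T>-\alpha T_{\epsilon}^2=-\alpha\epsilon^2$, and using the Kato inequality $\arrowvert\nabla\arrowvert T\arrowvert^2\arrowvert^2\leq4T_{\epsilon}^2\arrowvert\nabla T\arrowvert^2$, one obtains $\Delta_{v-\gamma\ln v}T_{\epsilon}-\alpha T_{\epsilon}\geq-\alpha\epsilon^2/T_{\epsilon}\geq-\alpha\epsilon$, since $\alpha>0$ and $T_{\epsilon}\geq\epsilon$. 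This is the only genuine computation in the argument, and it is routine.

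Finally, I would set $u:=T_{\epsilon}-\epsilon$. Then $\Delta_{v-\gamma\ln v}u-\alpha u=\left(\Delta_{v-\gamma\ln v}T_{\epsilon}-\alpha T_{\epsilon}\right)+\alpha\epsilon\geq\alpha\epsilon\left(1-\epsilon/T_{\epsilon}\right)\geq0$, and $0\leq u\leq\arrowvert T\arrowvert=\textit{O}(v^{\beta})$; since $\min\{1,\alpha\}>\beta\geq0$, the function $u$ satisfies all the hypotheses of Proposition~\ref{ppe-max-pol-sub-sol}, whence $\sup_M u\leq0$. Combined with $u\geq0$ this forces $u\equiv0$, i.e. $\arrowvert T\arrowvert\equiv0$, i.e. $T\equiv0$. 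I do not expect any real obstacle here: the parameter $\epsilon>0$ is kept fixed throughout — no limit $\epsilon\to0$ is needed — and it serves only to keep the test function $C^2$ across the zero set of $T$, where $\arrowvert T\arrowvert$ itself need not be smooth; otherwise the corollary is a direct transcription of Proposition~\ref{ppe-max-pol-sub-sol}.
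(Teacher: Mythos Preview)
Your proof is correct and is precisely the argument the paper has in mind: the corollary is stated as ``a straightforward application of Proposition~\ref{ppe-max-pol-sub-sol}'', and you have supplied exactly the expected details, namely the mollified-norm trick already used in the proof of Proposition~\ref{uni-ppe-max} together with the shift $u:=T_{\epsilon}-\epsilon$ to absorb the error term. Your observation that no limit $\epsilon\to 0$ is needed is a nice touch.
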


We are now in a position to state the main result of this section :

\begin{theo}\label{iso-weighted-lap-II}
Let $(M^n,g,\nabla f)$ be an asymptotically conical expanding gradient Ricci soliton. Let $H\in C^{k,\theta}_{con}(M,E)$, $k\geq1$, then there exists a unique $h\in D^{k+2,\theta}_{\nabla f}(M,E)$ such that 
\begin{eqnarray*}
\Delta_{v-2\alpha \ln v}h-\alpha h=H, \quad\alpha\in (1/2,+\infty), \quad\theta\in[0,1).
\end{eqnarray*}
Moreover, the following holds.
\begin{itemize}
\item There exists a positive constant $C$ (independent of $H$) such that 
\begin{eqnarray*}
\| h\|_{D^{k+2,\theta}_{\nabla f}(M,E)}\leq C\|H\|_{C^{k,\theta}_{con}(M,E)},
\end{eqnarray*}
i.e. the operator
\begin{eqnarray*}
\Delta_{v-2\alpha\ln v}-\alpha:D^{k+2,\theta}_{\nabla f}(M,E)\rightarrow C^{k,\theta}_{con}(M,E),
\end{eqnarray*}
is an isomorphism of Banach spaces.
\item For $\theta\in(0,1)$, the space 
\begin{eqnarray*}
\C^{k;1,\theta}(M,E)&:=&\left\{h\in C^{k+1,\theta}_{loc}(M,E)\quad|\quad v^{i/2}\nabla^ih\in C^{1,\theta}(M,E),\quad \forall i=0,...,k\right\}\\
\end{eqnarray*}

 embeds continuously in $D^{k+2}_{\nabla f}(M,E)$.

\item There exists a positive constant $C$ such that, for $\theta\in(0,1)$,
\begin{eqnarray*}
\|h\|_{\C^{k;2,\theta}(M,E)}\leq C\|H\|_{C^{k,\theta}_{con}(M,E)},
\end{eqnarray*}
where,
\begin{eqnarray*}
\C^{k;2,\theta}(M,E)&:=&\left\{h\in C^{k+2,\theta}_{loc}(M,E)\quad|\quad v^{i/2}\nabla^ih\in C^{2,\theta}(M,E),\quad \forall i=0,...,k\right\}\\
\end{eqnarray*}
\end{itemize}
\end{theo}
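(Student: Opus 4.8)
The plan is to iterate a ``gain one weighted derivative'' step, bootstrapping from the $k=0$ case supplied by theorem~\ref{first-iso-sch} (and, underneath it, theorem~\ref{Main-theo-weig-sch-est}). \emph{Uniqueness} is immediate: any $h\in D^{k+2,\theta}_{\nabla f}(M,E)$ is bounded, so the hypothesis of corollary~\ref{cor-uni-sol-pol-gro} holds with $\beta=0<\min\{1,\alpha\}$ and $h\equiv 0$. For \emph{existence}, theorem~\ref{first-iso-sch} applied to $H\in C^{k,\theta}_{con}(M,E)\subset C^{0,\theta}(M,E)$ already produces $h\in D^{2}_{\nabla f}(M,E)\cap C^{2,\theta}(M,E)$ with $\Delta_{v-2\alpha\ln v}h-\alpha h=H$ and $\|h\|_{C^{2,\theta}}\le C\|H\|_{C^{0,\theta}}$; since $\Delta_{\nabla f}h=\Delta_{v-2\alpha\ln v}h+2\alpha\nabla_{\nabla\ln v}h=\alpha h+H+2\alpha\nabla_{\nabla\ln v}h$, the only issue is to show that this $h$ actually lies in $D^{k+2,\theta}_{\nabla f}(M,E)$ with the quantitative bound, and, modulo elliptic regularity, this reduces to proving $h\in C^{k,\theta}_{con}(M,E)$, i.e. that the rescaled derivatives $q_{i}:=v^{i/2}\nabla^{i}h$ are bounded and Hölder for $i=0,\dots,k$, with norm controlled by $\|H\|_{C^{k,\theta}_{con}}$.

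This I would prove by induction on $i$ (the case $i=0$ being theorem~\ref{first-iso-sch}), differentiating the equation satisfied by $q_{i-1}$ and rescaling. Differentiation requires the commutator identity $[\nabla,\Delta_{w}]h=\Hess(w)\ast\nabla h+\Rm(g)\ast\nabla w\ast h+\Rm(g)\ast\nabla h+\nabla\Rm(g)\ast h$ with $w:=v-2\alpha\ln v$; the soliton equation $\Hess v=\Hess f=\Ric(g)+g/2$ together with $\Hess(\ln v)=\Hess(v)/v-dv\otimes dv/v^{2}$ gives $\Hess(w)=g/2+\textit{O}(v^{-1})$, the error and all of its covariant derivatives decaying because the soliton is asymptotically conical (so that $|\nabla^{j}\Rm(g)|=\textit{O}(v^{-1-j/2})$ and, by the soliton identity, $|\nabla v|^{2}=v+\textit{O}(1)$). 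Contracting the $g/2$ into the new slot of $\nabla h$ produces a term $\tfrac12\nabla h$, so that $\nabla q_{i-1}$ solves a weighted equation $\Delta_{V_{i-1}}(\nabla q_{i-1})-(\alpha-\tfrac12)(\nabla q_{i-1})=\Phi_{i}$ with $V_{i-1}:=\nabla v-(2\alpha+i-1)\nabla\ln v$ and right-hand side $\Phi_{i}$ controlled, in the relevant weighted norm, by $\|H\|_{C^{k,\theta}_{con}}$ and by $q_{0},\dots,q_{i-1}$; this is the setting of theorem~\ref{Main-theo-weig-sch-est} with $\lambda=\alpha-\tfrac12$, the hypotheses on $V_{i-1}$ and on the exhaustion $\phi=\ln v$ being verified exactly as in the proof of theorem~\ref{first-iso-sch}. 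The factor $v^{1/2}$ then restores the spectral parameter to $\alpha$, via the conjugation rule $v^{1/2}\circ(\Delta_{w}-\beta)\circ v^{-1/2}=\Delta_{w-\ln v}-\beta-\tfrac12+\textit{O}(v^{-1})$ (the tensorial analogue of (\ref{jus-con-res})), and one obtains $q_{i}\in C^{2,\theta}$ with the desired bound. This is also where the hypothesis $\alpha>1/2$ is used: each differentiation transiently lowers the parameter from $\alpha$ to $\alpha-\tfrac12$, which must stay positive to invoke theorem~\ref{Main-theo-weig-sch-est}. After $k$ steps $h\in C^{k,\theta}_{con}(M,E)$, hence $h\in D^{k+2,\theta}_{\nabla f}(M,E)$ with the asserted estimate; the $\theta=0$ statement runs identically using the $D^{2}_{\nabla f}$ part of theorem~\ref{first-iso-sch}, and combined with injectivity (corollary~\ref{cor-uni-sol-pol-gro}) this shows $\Delta_{v-2\alpha\ln v}-\alpha:D^{k+2,\theta}_{\nabla f}(M,E)\to C^{k,\theta}_{con}(M,E)$ is an isomorphism; that the domain is a Banach space (not clear a priori, cf. remark~\ref{rk-non-int-spa}) then follows, $\Delta_{\nabla f}$ being closed on $C^{k,\theta}_{con}$ by interior elliptic estimates and the operator being a bounded bijection with a two-sided bound.

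Finally, the embedding $\C^{k;1,\theta}(M,E)\hookrightarrow D^{k+2}_{\nabla f}(M,E)$ and the estimate $\|h\|_{\C^{k;2,\theta}}\le C\|H\|_{C^{k,\theta}_{con}}$ come out of the same chain by keeping the sharper conclusions of theorem~\ref{Main-theo-weig-sch-est}: its first bullet embeds $D^{2}_{V}$ continuously in $C^{1,\theta}$ and its second gives $C^{2,\theta}$ bounds, so applied at each level they place $v^{i/2}\nabla^{i}h$ in $C^{1,\theta}$, respectively $C^{2,\theta}$, which are the defining conditions of $\C^{k;1,\theta}$ and $\C^{k;2,\theta}$; the embedding of $\C^{k;1,\theta}$ into $D^{k+2}_{\nabla f}$ is then just the verification that $\Delta_{\nabla f}h\in C^{k}_{con}$, i.e. the weight bookkeeping already carried out above ($|\nabla v|\sim v^{1/2}$ together with the decay of the curvature). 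I expect this bookkeeping to be the one genuine difficulty of the proof: at every level one must check that each commutator term and each conjugation error lands, together with all of its covariant derivatives, in the correct space $C^{k-i,\theta}_{con}$, so that it is either admissible data for theorem~\ref{Main-theo-weig-sch-est} or a compact perturbation of the model operator, as were $K_{\alpha}$ and $\widetilde{K_{\alpha}}$ in section~\ref{Sec-Sol-C^0} --- and this is precisely where the sharp decay rates of an asymptotically conical expander, the soliton identities for $\Hess v$ and $|\nabla v|^{2}$, and the exact exponents $i/2$ in the weighted norms all have to be matched.
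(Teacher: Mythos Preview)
Your overall architecture is the right one and essentially matches the paper: induction on the number of weighted derivatives, commuting $\nabla$ with $\Delta_{w}$ (picking up the $g/2$ from $\Hess v$), and feeding the resulting equation back into the $k=0$ theory. The bookkeeping you flag at the end is indeed the main labour, and your commutator computation is correct.

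There is, however, a genuine gap in the inductive step. You apply theorem~\ref{Main-theo-weig-sch-est} to $\nabla q_{i-1}$ with $\lambda=\alpha-\tfrac12$; since $\nabla q_{i-1}$ is a priori bounded (from $q_{i-1}\in C^{1,\theta}$), uniqueness identifies it with the $C^{2,\theta}$ solution and you get $\nabla q_{i-1}\in C^{2,\theta}$. But the subsequent sentence, ``the factor $v^{1/2}$ then restores the spectral parameter to $\alpha$, \dots, and one obtains $q_i\in C^{2,\theta}$'', is where the argument breaks. Conjugation by $v^{1/2}$ transforms the \emph{equation}, not the boundedness of the solution: from $\nabla q_{i-1}$ bounded you only get $v^{(i-1)/2}\nabla^{i}h$ bounded, hence $q_i=v^{i/2}\nabla^{i}h=O(v^{1/2})$, not $q_i$ bounded. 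So the induction does not close on the statement ``$q_i\in C^{2,\theta}$'', and your application of theorem~\ref{Main-theo-weig-sch-est} at the next step (which needs its input to be a priori bounded) is not justified.

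The paper closes exactly this gap with corollary~\ref{cor-uni-sol-pol-gro}, and this is the step you are missing inside the induction (you only invoke it once, for uniqueness of $h$ itself). Concretely, the paper derives directly the equation for $h_k:=q_k$,
\[
\Delta_{v-(2\alpha+k)\ln v}h_k-\alpha h_k=(\Rm(g)+|\nabla\ln v|^2)\ast h_k+H_k+\sum_{i=0}^{k-1}v^{(k-i)/2}\nabla^{k-i}\Rm(g)\ast h_i,
\]
bounds the right-hand side in $C^{0,\theta}$ using the inductive information on $h_0,\dots,h_{k-1}$, produces via theorem~\ref{first-iso-sch} a bounded auxiliary solution $\tilde h_k\in D^{2,\theta}_{\nabla f}$, and then observes that $T:=h_k-\tilde h_k$ solves the homogeneous equation with $T=O(v^{1/2})$. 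Corollary~\ref{cor-uni-sol-pol-gro} with $\beta=\tfrac12$ forces $T\equiv 0$, so $h_k=\tilde h_k\in C^{2,\theta}$. This is precisely where $\alpha>\tfrac12$ enters in the paper: one needs $\tfrac12<\min\{\alpha,1\}$ in that corollary. Your explanation (``$\alpha-\tfrac12>0$ to invoke theorem~\ref{Main-theo-weig-sch-est}'') is the mirror image of the same phenomenon, but as written it does not substitute for the missing polynomial-growth uniqueness step.
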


\begin{proof}
Uniqueness comes from theorem \ref{first-iso-sch}. We will prove theorem \ref{iso-weighted-lap-II} by induction on $k$. We can start the induction with the case $k=0$ : this is exactly the content of theorem \ref{first-iso-sch} without any restriction on $\alpha$. Let $k$ be a positive integer and let $H\in C^{k,\theta}_{con}(M,E)$. In particular, $H\in C^{k-1}_{con}(M,E)$ and there exists a solution $h\in D^{2+k-1}_{\nabla f}(M,E)$ such that $\Delta_{v-2\alpha \ln v}h-\alpha h=H.$ Moreover, there exists a positive constant $C$ such that 
\begin{eqnarray}
&&\| h\|_{D^{2+k-1}_{\nabla f}(M,E)}\leq C\|H\|_{C^{k-1}_{con}(M,E)},\label{ind-1-wei-lap-II}\\
&&\|h\|_{\C^{k-1;1,\theta}(M,E)}\leq C\|h\|_{D^{2+k-1}_{\nabla f}(M,E)}^{\theta/2}\|h\|_{C^{k-1}_{con}(M,E)}^{1-\theta/2}.
\end{eqnarray}
In particular, we get 
\begin{eqnarray}
\|v^{{(k-1)}/2}\nabla^{k-1}h\|_{C^{1,\theta}(M,E)}\leq C\|H\|_{C^{k-1}_{con}(M,E)}.\label{ind-2-wei-lap-II}
\end{eqnarray}

We claim that $h\in D^{k+2,\theta}_{\nabla f}(M,E)$. 

First of all, as $H\in C^{k,\theta}_{loc}(M,E)$, $h\in C^{k+2,\theta}_{loc}(M,E)$ by elliptic regularity. We now consider $h_i:=v^{i/2}\nabla^i h$ for $i=0,...,k$. 

We compute the evolution of $h_k$ as follows :
\begin{eqnarray*}
\Delta_{v-(2\alpha+k)\ln v}h_k-\alpha h_k=(\Rm(g)+\arrowvert\nabla\ln v\arrowvert^2)\ast h_k+H_k+\sum_{i=0}^{k-1}v^{\frac{k-i}{2}}\nabla^{k-i}\Rm(g)\ast h_i,
\end{eqnarray*}
where $H_k:=v^{k/2}\nabla^kH$. Now, thanks to estimates (\ref{ind-1-wei-lap-II}) and (\ref{ind-2-wei-lap-II}) together with the fact that $(M^n,g,\nabla f)$ is asymptotically conical, the righthand side of the previous equation can be bounded by
\begin{eqnarray*}
\|(\Rm(g)+\arrowvert\nabla\ln v\arrowvert^2)\ast h_k\|_{C^{0,\theta}(M,E)}&+&\|H_k+\sum_{i=0}^{k-1}v^{\frac{k-i}{2}}\nabla^{k-i}\Rm(g)\ast h_i\|_{C^{0,\theta}(M,E)}\\
&&\leq C\|H\|_{C^{k,\theta}_{con}(M,E)}.
\end{eqnarray*}

Therefore, by theorem \ref{first-iso-sch}, there exists a solution $\tilde{h}_k\in D^{2,\theta}_{\nabla f}(M,E)$ to
\begin{eqnarray*}
&&\Delta_{v-(2\alpha+k)\ln v}\tilde{h}_k-\alpha \tilde{h}_k=(\Rm(g)+\arrowvert\nabla\ln v\arrowvert^2)\ast h_k+H_k+\sum_{i=0}^{k-1}v^{\frac{k-i}{2}}\nabla^{k-i}\Rm(g)\ast h_i,\\
&&\| \tilde{h}_k\|_{C^{2,\theta}(M,E)}\leq C\|H\|_{C^{k,\theta}_{con}(M,E)},\quad\mbox{if $\theta\in(0,1)$}.
\end{eqnarray*}

On the other hand, the difference $T:=h_k-\tilde{h}_k$ satisfies
\begin{eqnarray*}
&&T\in\cap_{p\geq 1}W^{2,p}_{loc}(M,E)\quad;\quad \Delta_{v-(2\alpha+k)\ln v}T-\alpha T=0\quad;\quad T=\textit{O}(v^{1/2}).
\end{eqnarray*}
Corollary \ref{cor-uni-sol-pol-gro} ensures that $T\equiv 0$ since $1/2<\min\{\alpha,1\}$. In particular, if $\theta\in(0,1)$, $h_k\in D^{2,\theta}_{\nabla f}(M,E)$ and 
\begin{eqnarray*}
\|h_k\|_{C^{2,\theta}(M,E)}\leq C\|H\|_{C^{k,\theta}_{con}(M,E)}.
\end{eqnarray*}

\end{proof}

Here is a straightforward application of theorem \ref{iso-weighted-lap-II} : 
\begin{coro}\label{iso-sch-laplacian}
Let $(M^n,g,\nabla f)$ be an asymptotically conical expanding gradient Ricci soliton.

Then, for any $\alpha\in(1/2,+\infty)$ and any $\theta\in[0,1)$,
\begin{eqnarray*}
\Delta_f:D^{k+2,\theta}_{f^{\alpha},\nabla f}(M,S^2T^*M)\rightarrow C^{k,\theta}_{con,f^{\alpha}}(M,S^2T^*M),
\end{eqnarray*}
is an isomorphism.
\end{coro}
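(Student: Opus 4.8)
The plan is to deduce this from Theorem~\ref{iso-weighted-lap-II} exactly as the isomorphism property of $\Delta_f$ in Theorem~\ref{first-iso-sch} was deduced from the $k=0$ case. First I would use that multiplication by $v^{\alpha}$ is an isometric isomorphism $D^{k+2,\theta}_{f^{\alpha},\nabla f}(M,S^2T^*M)\to D^{k+2,\theta}_{\nabla f}(M,S^2T^*M)$ and $C^{k,\theta}_{con,f^{\alpha}}(M,S^2T^*M)\to C^{k,\theta}_{con}(M,S^2T^*M)$, together with the pointwise identity
\[
v^{\alpha}\circ\Delta_f\circ v^{-\alpha}=\Delta_{v-2\alpha\ln v}-\alpha+K_{\alpha},\qquad K_{\alpha}:=\alpha(\alpha+1)|\nabla\ln v|^{2},
\]
so that $\Delta_f:D^{k+2,\theta}_{f^{\alpha},\nabla f}\to C^{k,\theta}_{con,f^{\alpha}}$ is an isomorphism if and only if $\Delta_{v-2\alpha\ln v}-\alpha+K_{\alpha}:D^{k+2,\theta}_{\nabla f}\to C^{k,\theta}_{con}$ is.

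By Theorem~\ref{iso-weighted-lap-II} (the case $k=0$ being covered by Theorem~\ref{first-iso-sch}, with no restriction on $\alpha$), the operator $\Delta_{v-2\alpha\ln v}-\alpha:D^{k+2,\theta}_{\nabla f}\to C^{k,\theta}_{con}$ is an isomorphism for every $\alpha\in(1/2,+\infty)$ and $\theta\in[0,1)$. I would then show that $K_{\alpha}:D^{k+2,\theta}_{\nabla f}(M,E)\to C^{k,\theta}_{con}(M,E)$ is compact, which is the higher-order analogue of the compactness argument in the proof of Theorem~\ref{first-iso-sch}: since $f$ is proper and $(M^n,g,\nabla f)$ is asymptotically conical, $v$ is equivalent to $r_p^2/4$ and every covariant derivative $\nabla^{j}(|\nabla\ln v|^{2})$ tends to $0$ at infinity, so multiplication by $|\nabla\ln v|^2$ has coefficient decaying to $0$ together with its first $k$ derivatives; combining the (local) continuous embedding $D^{k+2,\theta}_{\nabla f}\hookrightarrow C^{k+2,\theta}_{loc}$ with Arzel\`a--Ascoli, a bounded sequence in $D^{k+2,\theta}_{\nabla f}$ has a subsequence whose image under $K_{\alpha}$ converges in $C^{k,\theta}_{con}$, by splitting the $C^{k,\theta}_{con}$-norm over a large compact set (where local convergence applies) and its complement (where the decay of $|\nabla\ln v|^2$ and of its first $k$ derivatives makes the contribution arbitrarily small). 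Hence $\Delta_{v-2\alpha\ln v}-\alpha+K_{\alpha}$ is a compact perturbation of an isomorphism, so it is a Fredholm operator of index $0$; equivalently $\Delta_f$ is Fredholm of index $0$ between the weighted spaces.

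Finally I would verify injectivity and conclude. Since $D^{k+2,\theta}_{f^{\alpha},\nabla f}(M,S^2T^*M)\subseteq D^{2,\theta}_{f^{\alpha},\nabla f}(M,S^2T^*M)$, injectivity of $\Delta_f$ on the former follows from injectivity on the latter, which is part of Theorem~\ref{first-iso-sch}; alternatively, if $\Delta_f h=0$ with $h\in D^{k+2,\theta}_{f^{\alpha},\nabla f}$, then $h=O(v^{-\alpha})\to 0$ at infinity while $\Delta_f|h|^2=2|\nabla h|^2\geq0$, so the strong maximum principle gives $h\equiv0$. Being an injective Fredholm operator of index $0$, $\Delta_f$ is an isomorphism of Banach spaces, which proves the corollary. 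The only point requiring real care is the compactness of $K_{\alpha}$ in the $C^{k,\theta}_{con}$-topology for $k\geq1$ --- that is, estimating the weighted H\"older seminorms of $K_{\alpha}(h_n-h)$ away from a large compact set in terms of finitely many derivatives of $|\nabla\ln v|^2$ via the Leibniz rule --- but this is routine and entirely parallel to the $k=0$ computation already carried out.
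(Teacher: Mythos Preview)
Your proposal is correct and follows essentially the same route the paper has in mind: the corollary is stated as ``a straightforward application of Theorem~\ref{iso-weighted-lap-II}'', and the intended argument is precisely the higher-$k$ analogue of the compact-perturbation reasoning already spelled out in the proof of Theorem~\ref{first-iso-sch} (conjugate by $v^{\alpha}$, write $v^{\alpha}\Delta_f v^{-\alpha}=(\Delta_{v-2\alpha\ln v}-\alpha)+K_{\alpha}$, invoke Theorem~\ref{iso-weighted-lap-II} for the principal part, check $K_{\alpha}$ is compact, and use injectivity). Your write-up fills in exactly these details.
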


We state the analogue of corollary \ref{sec-iso-sch} with higher regularity.

\begin{coro}\label{sec-iso-sch-II}
Let $(M^n,g,\nabla f)$ be an asymptotically conical expanding gradient Ricci soliton with positive curvature operator.

Then, for any $\alpha\in(1/2,+\infty)$ and any $\theta\in[0,1)$, 
\begin{eqnarray*}
L_0:D^{k+2,\theta}_{f^{\alpha},\nabla f}(M,S^2T^*M)\rightarrow C^{k,\theta}_{con,f^{\alpha}}(M,S^2T^*M),
\end{eqnarray*}
is an isomorphism.
\end{coro}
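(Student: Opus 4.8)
The plan is to follow the proof of Corollary \ref{sec-iso-sch} essentially verbatim, with the two $k=0$ ingredients replaced by their higher-regularity counterparts. Write $L_0 = \Delta_f + 2\,\Rm(g)\ast\,$. By Corollary \ref{iso-sch-laplacian}, the weighted laplacian $\Delta_f : D^{k+2,\theta}_{f^{\alpha},\nabla f}(M,S^2T^*M) \to C^{k,\theta}_{con,f^{\alpha}}(M,S^2T^*M)$ is an isomorphism of Banach spaces for every $\alpha \in (1/2,+\infty)$ and $\theta\in[0,1)$. Hence it suffices to check that the zeroth order term $h\mapsto \Rm(g)\ast h$ is a \emph{compact} operator between these same spaces: $L_0$ will then be a Fredholm operator of index $0$, and an isomorphism as soon as it is injective.

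\textbf{Compactness of the curvature term.} This is the higher-order analogue of the argument given right after Theorem \ref{first-iso-sch}, and is the step I expect to require the most care. Conjugating by the weight $v^\alpha$, which commutes with the pointwise contraction $\Rm(g)\ast$, reduces the claim to the compactness of $2\,\Rm(g)\ast : D^{k+2,\theta}_{\nabla f}(M,S^2T^*M)\to C^{k,\theta}_{con}(M,S^2T^*M)$. Given a bounded sequence $(h_n)_n$ in $D^{k+2,\theta}_{\nabla f}$, interior elliptic estimates together with the embedding $\C^{k;1,\theta}(M,E)\hookrightarrow D^{k+2}_{\nabla f}(M,E)$ of Theorem \ref{iso-weighted-lap-II} give uniform local $C^{k+1}$-bounds, so after passing to a subsequence $h_n\to h$ in $C^{k+1}_{loc}$. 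Since $(M^n,g,\nabla f)$ is asymptotically conical, the rate-$2$ curvature estimates of \cite{Der-Asy-Com-Egs} yield $|\nabla^j\Rm(g)| = \textit{O}(v^{-1-j/2})$ for all $j\ge 0$; feeding this into the Leibniz expansion $\nabla^i(\Rm(g)\ast h) = \sum_{j=0}^i \nabla^j\Rm(g)\ast\nabla^{i-j}h$ and using $v^{(i-j)/2}|\nabla^{i-j}h|\le C$ for $i-j\le k$, one gets $v^{i/2}|\nabla^i(\Rm(g)\ast h)|\le C\,v^{-1}\,\|h\|_{D^{k+2,\theta}_{\nabla f}}$ for $0\le i\le k$, and similarly for the Hölder seminorms. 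The extra factor $v^{-1}\to 0$ at infinity allows the usual splitting: $\Rm(g)\ast(h_n-h)$ has $C^{k,\theta}_{con}$-norm smaller than any prescribed $\epsilon$ outside a suitable compact set, while on that compact set it converges by Ascoli; hence $(\Rm(g)\ast h_n)_n$ converges in $C^{k,\theta}_{con}$, proving compactness.

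\textbf{Injectivity.} Here the sign hypothesis on the curvature enters, and the argument is precisely the one already in place for $k=0$. If $h\in D^{k+2,\theta}_{f^{\alpha},\nabla f}(M,S^2T^*M)$ satisfies $L_0 h = 0$, then in particular $h\in D^{2,\theta}_{f^{\alpha},\nabla f}(M,S^2T^*M)$ with $\Delta_f h + 2\,\Rm(g)\ast h = 0$, so Corollary \ref{sec-iso-sch} applies and forces $h\equiv 0$. Recall that its proof runs Hamilton's maximum principle for symmetric $2$-tensors \cite{Ham-Fou}, \cite{Bre-Rot-3d} using $\Li_{\nabla f}(g)$ — which also lies in $\ker L_0$ and is positive definite because $\Ric(g)>0$ — as a barrier tensor, bounding $\sup_{f\le t}|h|\le C\sup_{f=t}|h|$ uniformly in $t$ and then letting $t\to+\infty$ since $h$ decays at infinity.

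Combining the three steps, $L_0$ is an injective Fredholm operator of index $0$, hence an isomorphism from $D^{k+2,\theta}_{f^{\alpha},\nabla f}(M,S^2T^*M)$ onto $C^{k,\theta}_{con,f^{\alpha}}(M,S^2T^*M)$ for all $\alpha\in(1/2,+\infty)$ and $\theta\in[0,1)$. As indicated, the delicate point is the compactness of the curvature perturbation in the higher-regularity weighted Hölder scale: this genuinely uses the sharp asymptotics of $\nabla^j\Rm(g)$ from \cite{Der-Asy-Com-Egs} together with the embedding and interpolation statements of Theorem \ref{iso-weighted-lap-II}, whereas everything else is a bookkeeping upgrade of the $k=0$ case already treated in Corollary \ref{sec-iso-sch}.
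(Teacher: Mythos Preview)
Your proposal is correct and follows precisely the strategy the paper intends: the paper does not spell out a proof of this corollary, presenting it simply as ``the analogue of corollary~\ref{sec-iso-sch} with higher regularity'', and the implicit argument is exactly what you wrote --- use Corollary~\ref{iso-sch-laplacian} for the isomorphism of $\Delta_f$, check compactness of $2\Rm(g)\ast$ in the weighted $C^{k,\theta}_{con}$-scale via the curvature decay, and inherit injectivity from the $k=0$ case (Corollary~\ref{sec-iso-sch}).

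One small correction: when you invoke ``the embedding $\C^{k;1,\theta}(M,E)\hookrightarrow D^{k+2}_{\nabla f}(M,E)$ of Theorem~\ref{iso-weighted-lap-II}'' to obtain local $C^{k+1}$-bounds on a bounded sequence in $D^{k+2,\theta}_{\nabla f}$, the arrow points the wrong way for what you need. What actually delivers the local compactness and the weighted tail control is the \emph{third} bullet of Theorem~\ref{iso-weighted-lap-II}, namely the estimate $\|h\|_{\C^{k;2,\theta}}\lesssim\|\Delta_{v-2\alpha\ln v}h-\alpha h\|_{C^{k,\theta}_{con}}$, which shows that $D^{k+2,\theta}_{\nabla f}$ embeds continuously into $\C^{k;2,\theta}$ (hence $v^{i/2}\nabla^i h\in C^{2,\theta}$ for $i\le k$). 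With that citation fixed, your compactness paragraph goes through verbatim.
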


\begin{rk}\label{rk-fin-nb-asy-cur-rat}
We stated theorem \ref{iso-weighted-lap-II} and corollaries \ref{iso-sch-laplacian} and \ref{sec-iso-sch-II} in the setting of asymptotically conical expanders. An inspection of the proof shows actually that one only needs the finiteness of the invariants $(\A^i_g(\Rm(g)))_{0\leq i\leq k+1}$, where $k$ is a positive integer, introduced in \cite{Der-Asy-Com-Egs}. Recall that :
$$\A^i_g(\Rm(g)):=\limsup_{+\infty}r_p^{2+i}\arrowvert\nabla^i\Rm(g)\arrowvert.$$
These invariants quantify the regularity of the convergence of the expander to its asymptotic cone as shown in \cite{Der-Asy-Com-Egs}.
\end{rk}


\subsection{Deformations of expanders : the Banach version}\label{sec-exp-str-I}
Let $(M^n,g_0,\nabla^0f_0)$ be a fixed expanding gradient Ricci soliton asymptotical to $(C(X),dr^2+r^2\gamma_0,r\partial_r/2)$. Denote by $M_s:=\{f=s\}$ the level set of the potential function. Let $\phi$ be the following diffeomorphism introduced in \cite{Der-Asy-Com-Egs} :
 \begin{eqnarray*}
\phi:(t_0,+\infty)\times M_{t_0^2/4}=:(t_0,+\infty)\times X &\rightarrow& M_{> t_0^2/4}\\
(t,x)&\rightarrow& \phi_{\frac{t^2}{4}-\frac{t_0^2}{4}}(x),
\end{eqnarray*}
for $t_0$ large enough, where $(\phi_t)_t$ is the Morse flow associated to the potential function $f$, i.e. $$\partial_t\phi_t=\frac{\nabla f}{\arrowvert\nabla f\arrowvert^2}.$$
Let $\theta\in(0,1)$.
Define $\cat{M}et^{k,\theta}(X)$ to be the convex cone of metrics on $X$ in $C^{k,\theta}(X,S^2T^*X)$ and let $B^{k,\theta}(\gamma_0)$ be the open unit ball centered at $\gamma_0$ in $C^{k,\theta}(X,S^2T^*X)$.
Next, we define a first approximation as in Lee \cite{Lee-Hyp} (and other related works on Einstein conformally compact manifolds previously mentioned)
\begin{eqnarray*}
 B^{k,\theta}(\gamma_0)\subset \cat{M}et^{k,\theta}(X)&\stackrel{T}{\longmapsto}&\cat{M}et^{k,\theta}(M)\cap\{ g_0+C^{k,\theta}_{con}(M,S^2T^*M)\}\\
 \gamma&{\longmapsto}& g_0+\psi(4f)\phi_{*}(\gamma-\gamma_0),
\end{eqnarray*}
where $\psi$ is a smooth function on $M$ such that $\psi\equiv 1$ on $M\setminus \{f\leq 2(t_0^2/4)\}$ and zero on $\{f\leq t_0^2/4\}$. $T$ is an affine map hence smooth and by its very definition, $(M^n,T(\gamma),p)$ is asymptotically conical to $(C(X),dr^2+r^2\gamma,o)$. Moreover, $T(\gamma_0)=g_0$.\\

Define the quadratic map 
\begin{eqnarray}
Q(g_2,g_1)&:=&-2\Ric(g_2)-g_2+\Li_{\nabla^{g_2}f_0}(g_2)+\Li_{V(g_2,g_1,f_0)}(g_2),\\
V(g_2,g_1,f_0)&:=&\div_{g_1,f_0}(g_2-g_1)-\frac{\nabla^{g_1}\tr_{g_1}(g_2-g_1)}{2}\\
&=&\div_{g_0}(g_2-g_1)+\nabla^{g_0}_{\nabla^{g_0}f_0}(g_2-g_1)-\frac{\nabla^{g_1}\tr_{g_1}(g_2-g_1)}{2}.
\end{eqnarray}

\begin{theo}\label{Loc-Dif-Ban}
Let $(M^n,g_0,\nabla^{g_0} f_0)$ be an asymptotically conical expanding gradient Ricci soliton with positive curvature operator. 

Then, for any $\alpha\in(1/2,1]$, and any nonnegative integer $k$, the map 
\begin{eqnarray*}
B^{k+2,\theta}(\gamma_0)\times (U(0)\subset D^{k+2,\theta}_{f^{\alpha},\nabla f}(M,S^2T^*M))&\stackrel{\Phi_{\alpha}}{\longmapsto}&B^{k+2,\theta}(\gamma_0)\times C^{k,\theta}_{con,f^{\alpha}}(M,S^2T^*M)\\
(\gamma,h)&{\longmapsto}&(\gamma,Q(T(\gamma)+h,T(\gamma)))
\end{eqnarray*}
is well-defined provided $U(0)$ is a neighborhood of $0$ sufficiently small in  $D^{k+2,\theta}_{f^{\alpha},\nabla f}(M,S^2T^*M)$. Moreover, $\Phi_{\alpha}$ is a local diffeomorphism at $(\gamma_0,0)$.\\

In particular, if $k\geq 1$, for any deformation $\gamma\in \cat{M}et^{k+2,\theta}(X)$ close enough to $\gamma_0$, there exists an expanding Ricci soliton with positive curvature operator whose asymptotic cone is $(C(X),dr^2+r^2\gamma,o)$.\\
\end{theo}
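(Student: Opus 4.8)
The plan is to apply the inverse function theorem for Banach spaces to $\Phi_\alpha$ at the point $(\gamma_0,0)$. First I would check that $\Phi_\alpha$ is a well-defined $C^\infty$ map into $B^{k+2,\theta}(\gamma_0)\times C^{k,\theta}_{con,f^\alpha}(M,S^2T^*M)$ once $U(0)$ is small. Since $g_0$ is an expanding gradient Ricci soliton and $V(g_0,g_0,f_0)=0$, one has $Q(T(\gamma_0),T(\gamma_0))=Q(g_0,g_0)=0$, so $\Phi_\alpha(\gamma_0,0)=(\gamma_0,0)$. For a general $\gamma$ the DeTurck term in $Q(T(\gamma),T(\gamma))$ still vanishes, so $Q(T(\gamma),T(\gamma))$ is the soliton defect $-2\Ric(T(\gamma))-T(\gamma)+\Li_{\nabla^{T(\gamma)}f_0}(T(\gamma))$ of the asymptotically conical metric $T(\gamma)$; since the soliton defect of a Euclidean cone $dr^2+r^2\gamma$ with potential $r^2/4$ is precisely $-2\Ric$ of the cone, which has size $\textit{O}(v^{-1})$ when measured against the cone metric, one gets $Q(T(\gamma),T(\gamma))\in C^{k,\theta}_{con,f^\alpha}(M,S^2T^*M)$ exactly because $\alpha\le1$ (this is the source of the restriction on $\alpha$ from above). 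The remaining terms $Q(T(\gamma)+h,T(\gamma))-Q(T(\gamma),T(\gamma))$ are linear plus quadratic in $h$, and their coefficients, built algebraically from the curvature and Christoffel symbols of $T(\gamma)$ and from $f_0$, preserve the polynomial decay $v^{-\alpha}$, so they remain in the target for $U(0)$ small; smoothness of $\Phi_\alpha$ is routine since $T$ is affine and $Q$ is fibrewise-algebraic in $(g_2,g_1)$, their derivatives up to order two, and $f_0$.

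Next I would compute $D_{(\gamma_0,0)}\Phi_\alpha$. Because $\Phi_\alpha$ preserves its first coordinate, this differential is block upper-triangular with the identity in the $\gamma$-slot, hence an isomorphism if and only if the derivative at $h=0$ of $h\mapsto Q(g_0+h,g_0)$ is. Here $g_1=g_0$ is fixed, so the DeTurck field $V(g_0+h,g_0,f_0)=\div_{g_0,f_0}h-\nabla^{g_0}\tr_{g_0}h/2$ is \emph{linear} in $h$ and vanishes at $h=0$; thus the derivative of $\Li_{V(g_0+h,g_0,f_0)}(g_0+h)$ at $h=0$ equals $\Li_{\div_{g_0,f_0}h-\nabla^{g_0}\tr_{g_0}h/2}(g_0)$, which is exactly the term cancelling the Lie-derivative contribution in the linearization of $-2\Ric(g_1)-g_1+\Li_{\nabla^{g_1}f_0}(g_1)$ recorded in the introduction. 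One is left with
$$
\left.\frac{d}{dt}\right|_{t=0}Q(g_0+th,g_0)=\Delta_fh+2\Rm(g_0)\ast h=L_0h.
$$
By corollary \ref{sec-iso-sch-II}, $L_0:D^{k+2,\theta}_{f^\alpha,\nabla f}(M,S^2T^*M)\to C^{k,\theta}_{con,f^\alpha}(M,S^2T^*M)$ is an isomorphism for every $\alpha\in(1/2,+\infty)$ — this is where the positivity of the curvature operator enters, via Hamilton's maximum principle for symmetric $2$-tensors. Hence $D_{(\gamma_0,0)}\Phi_\alpha$ is a bounded isomorphism, and the inverse function theorem shows $\Phi_\alpha$ is a local diffeomorphism at $(\gamma_0,0)$.

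For the last assertion, given $\gamma\in\cat{M}et^{k+2,\theta}(X)$ sufficiently close to $\gamma_0$ we obtain a unique small $h=h(\gamma)\in D^{k+2,\theta}_{f^\alpha,\nabla f}(M,S^2T^*M)$ with $Q(T(\gamma)+h,T(\gamma))=0$. Setting $g_\gamma:=T(\gamma)+h$ and unwinding this identity yields $2\Ric(g_\gamma)+g_\gamma=\Li_{W}(g_\gamma)$ with $W:=\nabla^{g_\gamma}f_0+V(g_\gamma,T(\gamma),f_0)$, so $(M^n,g_\gamma,W)$ is an expanding Ricci soliton (not a priori gradient), asymptotically conical with asymptotic cone $(C(X),dr^2+r^2\gamma,o)$ because $T(\gamma)$ is and $h$, together with its weighted covariant derivatives, decays like $v^{-\alpha}$. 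It remains to see $\Rm(g_\gamma)>0$: on a fixed large compact set $g_\gamma$ is $C^2$-close to $g_0$, whose curvature operator is positive; outside a compact set the tangential curvatures of $g_\gamma$ stay bounded below by a positive multiple of $v^{-1}$ (the perturbation decays strictly faster), while the radial curvatures — which vanish for a genuine metric cone and so cannot be handled by naive perturbation — are controlled from below through the decay of $\div\Rm$ by the estimates of \cite{Der-Asy-Com-Egs}, which requires that $g_\gamma$ be $C^{3,\theta}$ at infinity, i.e. $k\ge1$.

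The step I expect to be the main obstacle is exactly this last one: positivity of the curvature operator is \emph{not} an open condition among asymptotically conical metrics, since the radial sectional curvatures degenerate at spatial infinity, so it cannot simply be inherited from $g_0$. Making it work requires exploiting the soliton structure of $g_\gamma$ itself — the lower bound on $\div\Rm$ of \cite{Der-Asy-Com-Egs} — which is why one is forced to work in the higher-regularity spaces $D^{k+2,\theta}$ with $k\ge1$; the other, more technical, place where the sign of the curvature is essential is the isomorphism property of $L_0$ through Hamilton's tensor maximum principle.
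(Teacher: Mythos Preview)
Your proposal is correct and follows essentially the same approach as the paper: you verify well-definedness of $\Phi_\alpha$ (identifying the restriction $\alpha\le 1$ as coming from the $v^{-1}$ decay of $Q(T(\gamma),T(\gamma))$), compute the block-triangular differential with $L_0$ in the $h$-slot, invoke corollary \ref{sec-iso-sch-II} for the isomorphism, and then handle positivity of $\Rm(g_\gamma)$ by separating tangential from radial curvatures, using the soliton identity $\div\Rm(g_\gamma)=\Rm(g_\gamma)(\cdot,\cdot,V,\cdot)+\nabla^{g_\gamma,2}V(g_\gamma,T(\gamma),f_0)$ together with the lower bound on radial curvatures of $g_0$ from \cite{Der-Asy-Com-Egs} and $C^3$-closeness, which forces $k\ge 1$. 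The only additional content in the paper's proof is a detailed derivation of \emph{tame} estimates for $\Phi_\alpha$ (claim \ref{cla-inv-ham} and the term-by-term bounds on lines (\ref{lig1-loc})--(\ref{lig5-loc})), which are not needed for the Banach statement itself but are recorded there in anticipation of the Nash--Moser argument in section \ref{Sec-Fre-Def}.
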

\begin{rk}
\begin{itemize}
\item As noticed in remark \ref{rk-fin-nb-asy-cur-rat}, theorem \ref{Loc-Dif-Ban} could have been stated with much less regularity at infinity : we only need the finiteness of a finite number of invariants $(\A_g^i(\Rm(g)))_i$ to perform the analysis.\\

\item The reason why we need $k\geq 1$ to ensure that the curvature of the implicit expander is positive comes from the radial curvature of an expanding gradient Ricci soliton given in term of the divergence of the curvature operator. \\

\item By choosing $\gamma_1$ close enough to $\gamma_0$ in $C^{k,\theta}(X,S^2T^*X)$ but not in $C^{k+1,\theta}(X,S^2T^*X)$, this proves the existence of Ricci expanders that are NOT smoothly asymptotic to their asymptotic cone. In particular, this shows that the finiteness of some of the asymptotic curvature ratios $\A^i_g(\Rm(g))$ does not imply the finiteness of all of them, i.e. there are no Shi's estimates for the rescaled curvature in general (unless the solution is ancient).\\

\item In the course of the proof of theorem \ref{Loc-Dif-Ban}, we prove more by establishing some tame estimates for the map $\Phi_{\alpha}$ that will be useful for section \ref{Sec-Fre-Def}.
\\
\end{itemize}
\end{rk}

Before giving the proof of theorem \ref{Loc-Dif-Ban}, we actually need two technical lemmata :

\begin{lemma}\label{lin-qua-egs}
Let $(M,g)$ be a Riemannian manifold, let $f$ be a smooth function and let $h$ be a symmetric $2$-tensor such that $g+h$ is metric. Then, for $s\in[0,1]$,
\begin{eqnarray}
D_{g+sh}(-2\Ric)(h)&=&\Delta_{g+sh}h+2\Rm(g+sh)\ast h-\Sym(\Ric(g+sh)\otimes h)\\
&&+\nabla^{2,g+sh}\tr h-\Li_{\div_{g+sh}h}(g+sh).\\
D^2_{g+sh}(-2\Ric)(h,h)&=&(g+sh)^{-2}\ast\nabla^{g+sh,2}h\ast h+(g+sh)^{-2}\ast(\nabla^{g+sh}h)^{\ast2}.\label{Der-Sec-Ric}\\
Q(g+h,g)&=&Q(g,g)+\Delta_{g,f}h+2\Rm(g)\ast h+\frac{1}{2}\Sym(Q(g,g)\otimes h)\\
&&+\Li_{\div_{g}h-\nabla^g\tr_gh/2}(h)+\Li_{h^{\ast2}\ast\nabla^gf}(g+h),\\
&&+\int_0^1(1-s)D^2_{g+sh}(-2\Ric)(h,h)ds,
\end{eqnarray}
where $(2\Rm(g)\ast h)_{ij}:=2\Rm(g)_{iklj}h_{kl}$.
Moreover, if $(M^n,g,\nabla f)$ is an expanding gradient Ricci soliton,
\begin{eqnarray*}
Q(g+h,g+h)&=&\Delta_{g,f}h+2\Rm(g)\ast h-\Li_{\div_{g}h-\nabla^g\tr_gh/2}(g)\\
&&+\int_0^1(1-s)D^2_{g+sh}(-2\Ric)(h,h)ds+\Li_{\nabla^{g+h}f-\nabla^gf}(g+h),
\end{eqnarray*}

\end{lemma}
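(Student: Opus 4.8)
The plan is to derive the three identities in turn: the first from the classical linearization of the Ricci tensor, the second from one further differentiation in the deformation parameter $s$, and the last two from Taylor's formula with integral remainder combined with the DeTurck-type cancellation of the gauge terms and the soliton equation.

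First I would recall the linearization of $g\mapsto\Ric(g)$, obtained by varying the Christoffel symbols, $D_g\Gamma(h)^k_{ij}=\tfrac12 g^{kl}(\nabla_ih_{jl}+\nabla_jh_{il}-\nabla_lh_{ij})$, then the Riemann tensor, then tracing. Multiplying by $-2$ and reorganising the covariant derivatives of $h$ into the rough Laplacian $\Delta_g h$, the curvature action $2\Rm(g)\ast h$, the zeroth order term $\Sym(\Ric(g)\otimes h)$, the Hessian $\nabla^{2,g}\tr h$ and the gauge term $\Li_{\div_g h}(g)$ gives the first displayed identity at $g$; since $g+sh$ is a metric for all $s\in[0,1]$ as soon as $g+h$ is, substituting $g+sh$ for $g$ gives it for all such $s$. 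For the second formula I would differentiate this identity once more in $s$, using $\partial_s(g+sh)=h$ so that all higher $s$-derivatives of $g+sh$ vanish: each term on the right is a contraction, through $(g+sh)^{-1}$ and universal constants, of $\nabla^{g+sh}$-covariant derivatives of $h$ of order $\le 2$ against $h$ or against $\Rm(g+sh)$, and since $\partial_s(g+sh)^{-1}$, $\partial_s\Gamma^{g+sh}$ and $\partial_s\Rm(g+sh)$ are all of the shape $(g+sh)^{-1}\ast(\cdots)$ in $\nabla^{g+sh}h$, collecting produces exactly the two types $(g+sh)^{-2}\ast\nabla^{g+sh,2}h\ast h$ and $(g+sh)^{-2}\ast(\nabla^{g+sh}h)^{\ast2}$ of (\ref{Der-Sec-Ric}); the exact constants are immaterial for the later tame estimates.

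Next I would expand $Q(\cdot,g)$ around its first argument by Taylor's formula,
\[
Q(g+h,g)=Q(g,g)+D_1Q(g,g)(h)+\int_0^1(1-s)D^2_1Q(g+sh,g)(h,h)\,ds,
\]
and identify $D_1Q(g,g)(h)$ by differentiating $-2\Ric(g_2)-g_2+\Li_{\nabla^{g_2}f}(g_2)+\Li_{V(g_2,g,f)}(g_2)$ at $g_2=g$, using the first variation above, the elementary variations $D_g(\nabla^{g_2}f)(h)=-h(\nabla f)$ and $D_g(V(g_2,g,f))(h)=\div_{g,f}h-\tfrac12\nabla^g\tr_gh$, and $\Li_{\nabla f}(g)=2\nabla^{g,2}f$. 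The key point is that the $g$-valued Lie derivatives produced by $-2\Ric$, by the variation of the vector field $\nabla^{g_2}f$, and by $V$ cancel exactly (the DeTurck cancellation), while the residual zeroth order contribution $\Sym\big((\nabla^{g,2}f-\Ric(g)-\tfrac12 g)\otimes h\big)$ becomes $\tfrac12\Sym(Q(g,g)\otimes h)$ because $2\nabla^{g,2}f-2\Ric(g)-g=Q(g,g)$; what is left is $\Delta_{g,f}h+2\Rm(g)\ast h$ plus the $h$-valued gauge term $\Li_{\div_g h-\nabla^g\tr_g h/2}(h)$. As the Lie-derivative pieces of $Q$ are at most affine in their base metric once the gauge field is frozen, the second order remainder reduces, modulo Lie derivatives quadratic in $h$ along vector fields of the form $h^{\ast2}\ast\nabla f$, to $\int_0^1(1-s)D^2_{g+sh}(-2\Ric)(h,h)\,ds$, giving the third identity.

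Finally, for the soliton case I would use that (\ref{eq-egs}) forces $Q(g,g)=-2\Ric(g)-g+2\nabla^{g,2}f=0$, and that taking both arguments equal to $g+h$ makes $V(g+h,g+h,f)$ vanish, so $Q(g+h,g+h)=-2\Ric(g+h)-(g+h)+\Li_{\nabla^{g+h}f}(g+h)$. Splitting $\Li_{\nabla^{g+h}f}(g+h)=\Li_{\nabla^{g}f}(g+h)+\Li_{\nabla^{g+h}f-\nabla^{g}f}(g+h)$ and setting $\widetilde Q(g_2):=-2\Ric(g_2)-g_2+\Li_{\nabla^{g}f}(g_2)$, which is affine in $g_2$ in its Lie term and has $\widetilde Q(g)=Q(g,g)=0$, Taylor's formula gives $\widetilde Q(g+h)=D_g\widetilde Q(h)+\int_0^1(1-s)D^2_{g+sh}(-2\Ric)(h,h)\,ds$; the linearization computed in the introduction together with (\ref{eq-egs}) (which turns the residual $\Sym(\nabla^{g,2}f\otimes h)$ and the $Q(g,g)$-correction into the stated shape) identifies $D_g\widetilde Q(h)=\Delta_{g,f}h+2\Rm(g)\ast h-\Li_{\div_g h-\nabla^g\tr_g h/2}(g)$, and adding $\Li_{\nabla^{g+h}f-\nabla^gf}(g+h)$ back recovers the last identity. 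I expect the real work to be entirely bookkeeping: making the cancellation of the metric-valued Lie derivatives close precisely, tracking the coefficient $\tfrac12$ in the $\Sym(Q(g,g)\otimes h)$ term, and verifying that every remainder term fits the advertised schematic form, with the curvature contributions in the second variation absorbed into the $\ast$-notation.
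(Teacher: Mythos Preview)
Your proposal is correct and follows essentially the same route as the paper: the first two formulas are standard (the paper simply cites [Chap.~2, \cite{Ben}]), and for the $Q$-identities you both expand via Taylor's formula with integral remainder for $-2\Ric$, use the identity $\Li_{\nabla^g f}(h)=\nabla^g_{\nabla^g f}h+\tfrac12\Sym(\Li_{\nabla^g f}(g)\otimes h)$ together with $\nabla^{g+h}f-\nabla^g f+h(\nabla^g f)=h^{\ast2}\ast\nabla^g f$, and let the DeTurck gauge terms cancel. Your organization via $\widetilde Q$ in the soliton case is a slight repackaging of the paper's direct expansion, but the computation is the same.
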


\begin{proof}
The first and second variation of the Ricci curvature are standard and can be found in [Chap.$2$, \cite{Ben}].

Concerning the variation of $Q$ : 
\begin{eqnarray*}
Q(g+h,g)&=&-2\Ric(g+h)-(g+h)+\Li_{\nabla^{g+h}f}(g+h)+\Li_{\div_{g,f}h-\nabla^g\tr_gh/2}(g+h)\\
&=&-2\Ric(g)-g+\Li_{\nabla^gf}(g)+(-2D_{g}\Ric(h))-h\\
&&+\Li_{\nabla^gf}(h)+\Li_{\div_{g}h-\nabla^g\tr_gh/2}(g)\\
&&+\int_0^1(1-s)(-2D^2_{g+sh})\Ric(h,h)ds+\Li_{\div_{g}h-\nabla^g\tr_gh/2}(h)\\
&&+\Li_{h^{\ast2}\ast\nabla^gf}(g+h),
\end{eqnarray*}
since $\nabla^{g+h}f-\nabla^gf+h(\nabla^gf)=h^{\ast2}\ast\nabla^gf$. 

Now, using the first variation of the Ricci curvature,
\begin{eqnarray*}
Q(g+h,g)&=&Q(g,g)+\Delta_{g,f}h+2\Rm(g)\ast h-\Sym\left(\left(\Ric(g)+\frac{g}{2}\right)\otimes h\right)\\
&&+\frac{1}{2}\Sym(\Li_{\nabla^g f}(g)\otimes h)+\int_0^1(1-s)(-2D^2_g)\Ric(h,h)ds\\
&&+\Li_{\div_{g}h-\nabla^g\tr_gh/2}(h)+\Li_{h^{\ast2}\ast\nabla^gf}(g+h).
\end{eqnarray*}
since $\Li_{\nabla^gf}(h)=\nabla^g_{\nabla^gf}h+\frac{1}{2}\Sym(\Li_{\nabla^g f}(g)\otimes h)$. That is, by using the definition of $Q(g,g)$,
\begin{eqnarray*}
Q(g+h,g)&=&Q(g,g)+\Delta_{g,f}h+2\Rm(g)\ast h+\frac{1}{2}\Sym(Q(g,g)\otimes h)\\
&&+\int_0^1(1-s)D^2_{g+sh}(-2\Ric)(h,h)ds+\Li_{\div_{g}h-\nabla^g\tr_gh/2}(h)\\
&&+\Li_{h^{\ast2}\ast\nabla^gf}(g+h).
\end{eqnarray*}

If $(M^n,g,\nabla^g f)$ is an expanding gradient Ricci soliton, 
\begin{eqnarray*}
Q(g+h,g+h)&=&-2\Ric(g+h)-(g+h)+\Li_{\nabla^{g+h}f}(g+h)\\
&=&-2\Ric(g)-g+\Li_{\nabla^gf}(g)\\
&&+\Delta_gh+2\Rm(g)\ast h-\Sym(\Ric(g)\otimes h)-h+\Li_{\nabla^gf}(h)\\
&&-\Li_{\div_{g}h-\nabla^g\tr_gh/2}(g)+\int_0^1(1-s)(-2D^2_{g+sh}\Ric)(h,h)ds\\
&&+\Li_{\nabla^{g+h}f-\nabla^gf}(g+h)\\
&=&\Delta_{g,f}h+2\Rm(g)\ast h-\Li_{\div_{g}h-\nabla^g\tr_gh/2}(g)\\
&&+\int_0^1(1-s)(-2D^2_{g+sh}\Ric)(h,h)ds+\Li_{\nabla^{g+h}f-\nabla^gf}(g+h).
\end{eqnarray*}

\end{proof}

\begin{lemma}\label{Fir-App-Egs}
With the above notations, 
\begin{eqnarray*}
Q(T(\gamma),T(\gamma))&=&\nabla^{g_0,2}h(\gamma)+(\Rm(g_0)+f_0^{-1})\ast h(\gamma)\\
&&+\int_0^1(1-s)(-2D^2_{g_0+sh(\gamma)}\Ric)(h(\gamma),h(\gamma))ds,
\end{eqnarray*}
outside a compact set, where $h(\gamma):=T(\gamma)-T(\gamma_0)$.
\end{lemma}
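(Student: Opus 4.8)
The plan is to specialise the last identity of Lemma \ref{lin-qua-egs} to $g=g_0$, $f=f_0$ and $h=h(\gamma):=T(\gamma)-T(\gamma_0)$. This is legitimate because $(M^n,g_0,\nabla^{g_0}f_0)$ is an expanding gradient Ricci soliton and $T(\gamma)=g_0+h(\gamma)$ is a metric once $\gamma$ is close enough to $\gamma_0$. It gives, on all of $M$,
\begin{eqnarray*}
Q(T(\gamma),T(\gamma))&=&\Delta_{g_0,f_0}h(\gamma)+2\Rm(g_0)\ast h(\gamma)-\Li_{\div_{g_0}h(\gamma)-\nabla^{g_0}\tr_{g_0}h(\gamma)/2}(g_0)\\
&&+\int_0^1(1-s)D^2_{g_0+sh(\gamma)}(-2\Ric)(h(\gamma),h(\gamma))\,ds+\Li_{\nabla^{g_0+h(\gamma)}f_0-\nabla^{g_0}f_0}(g_0+h(\gamma)).
\end{eqnarray*}
The integral term is already in the desired form, so everything reduces to analysing the three linear terms and the potential-defect term on the region where $\psi(4f_0)\equiv 1$, i.e. outside a fixed compact set, where $h(\gamma)=\phi_*(\gamma-\gamma_0)$.

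On this region I would exploit that, by construction of $\phi$, the potential reads $f_0=t^2/4$ in the coordinates $(t,x)$: hence $df_0$ is proportional to $dt$, the level sets of $f_0$ are the slices $\{t=\mathrm{const}\}$, and $\partial_t$ is a positive multiple of $\nabla^{g_0}f_0$. Consequently $g_0$ is block-diagonal for the splitting $\partial_t/TX$ there, while $h(\gamma)=\phi_*(\gamma-\gamma_0)$ has no $dt$-component at all and is $t$-independent. Three consequences: (i) $\iota_{\nabla^{g_0}f_0}h(\gamma)=0$ and, since $g_0$ and $g_0+h(\gamma)$ share the same $dt^2$-coefficient and the same (vanishing) cross term, $\nabla^{g_0+h(\gamma)}f_0=\nabla^{g_0}f_0$, so the potential-defect term vanishes identically outside the compact set; (ii) $\Li_{\nabla^{g_0}f_0}h(\gamma)=0$, which together with the soliton equation $\Hess f_0=\Ric(g_0)+g_0/2$ gives $\nabla^{g_0}_{\nabla^{g_0}f_0}h(\gamma)=-\Sym((\Ric(g_0)+g_0/2)\otimes h(\gamma))$, turning the weighted part of $\Delta_{g_0,f_0}$ into a purely algebraic term; (iii) the remaining second-order content is $\Delta_{g_0}h(\gamma)$ together with the Bianchi/DeTurck term $-\Li_{\div_{g_0}h(\gamma)-\nabla^{g_0}\tr_{g_0}h(\gamma)/2}(g_0)$, which after commuting covariant derivatives decomposes into a genuine second-order operator — what is written $\nabla^{g_0,2}h(\gamma)$ in the statement, and which also absorbs the non-decaying contribution $-h(\gamma)$ coming from (ii) — plus zeroth order terms whose coefficients are built out of $\Rm(g_0)$ and $\Ric(g_0)$.

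To finish, I would bound the coefficients of the zeroth order terms: since $(M^n,g_0,\nabla^{g_0}f_0)$ is asymptotically conical, the estimates of \cite{Der-Asy-Com-Egs} give $|\Rm(g_0)|=O(r_p^{-2})=O(f_0^{-1})$ (and similarly for $\Ric(g_0)$), so $2\Rm(g_0)\ast h(\gamma)-\Sym(\Ric(g_0)\otimes h(\gamma))$ and all the curvature terms produced by the commutators are $O(|\Rm(g_0)|+f_0^{-1})\cdot h(\gamma)$, which is exactly the term $(\Rm(g_0)+f_0^{-1})\ast h(\gamma)$. The main obstacle is the book-keeping of step (iii): cleanly separating the honest second-order part of $-\Li_{\div_{g_0}h(\gamma)-\nabla^{g_0}\tr_{g_0}h(\gamma)/2}(g_0)$ from the zeroth order curvature remainders via the standard Lichnerowicz-type commutation identities, and justifying the three radial-parallelism facts $\iota_{\nabla^{g_0}f_0}h(\gamma)=0$, $\Li_{\nabla^{g_0}f_0}h(\gamma)=0$, $\nabla^{g_0+h(\gamma)}f_0=\nabla^{g_0}f_0$ directly from the definition of $\phi$; none of this is deep, but it is where the content sits.
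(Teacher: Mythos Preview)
Your reduction to the last identity of Lemma~\ref{lin-qua-egs} and the handling of points (i) and (iii) are fine: the transversality $\iota_{\nabla^{g_0}f_0}h(\gamma)=0$ and hence $\nabla^{T(\gamma)}f_0=\nabla^{g_0}f_0$ hold outside the compact set, and $\Delta_{g_0}h(\gamma)-\Li_{\div_{g_0}h(\gamma)-\nabla^{g_0}\tr_{g_0}h(\gamma)/2}(g_0)$ is indeed schematically $\nabla^{g_0,2}h(\gamma)+\Rm(g_0)\ast h(\gamma)$.

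The gap is in (ii). You assert that $h(\gamma)$ is ``$t$-independent'' and hence $\Li_{\nabla^{g_0}f_0}h(\gamma)=0$. But for $T(\gamma)$ to be asymptotic to $(C(X),dr^2+r^2\gamma)$ one needs $h(\gamma)$ to scale like $r^2$ in the radial direction; in the paper's notation $h(\gamma)=4f_0\,\phi_*(\gamma-\gamma_0)$ outside the compact set, where $\phi_*(\gamma-\gamma_0)$ is the $t$-independent transversal tensor. The product rule then gives
\[
\Li_{\nabla^{g_0}f_0}h(\gamma)=4|\nabla^{g_0}f_0|^2\,\phi_*(\gamma-\gamma_0)+4f_0\,\Li_{\nabla^{g_0}f_0}\bigl(\phi_*(\gamma-\gamma_0)\bigr)=\frac{|\nabla^{g_0}f_0|^2}{f_0}\,h(\gamma),
\]
the second summand vanishing exactly by your radial argument applied to $\phi_*(\gamma-\gamma_0)$, not to $h(\gamma)$. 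Combined with $\frac{1}{2}\Sym(\Li_{\nabla^{g_0}f_0}(g_0)\otimes h(\gamma))=h(\gamma)+\Sym(\Ric(g_0)\otimes h(\gamma))$ and the soliton identity $|\nabla^{g_0}f_0|^2=f_0+\mu(g_0)-\R_{g_0}$, this yields
\[
\nabla^{g_0}_{\nabla^{g_0}f_0}h(\gamma)=\frac{\mu(g_0)-\R_{g_0}}{f_0}\,h(\gamma)-\Sym(\Ric(g_0)\otimes h(\gamma)),
\]
which \emph{is} of the form $(\Rm(g_0)+f_0^{-1})\ast h(\gamma)$. In your computation the missing $+h(\gamma)$ from the Lie derivative is precisely what should cancel the $-h(\gamma)$ you obtained; your attempt to ``absorb'' this leftover $-h(\gamma)$ into the genuinely second-order operator $\nabla^{g_0,2}h(\gamma)$ cannot work, since that symbol denotes a pure second-derivative expression with no zeroth-order part, and the coefficient $-1$ is neither $O(|\Rm(g_0)|)$ nor $O(f_0^{-1})$.
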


\begin{proof}
Using lemma \ref{lin-qua-egs}, and defining $h(\gamma):=T(\gamma)-T(\gamma_0)$,
\begin{eqnarray*}
Q(T(\gamma),T(\gamma))&=&Q(g_0+h(\gamma),g_0+h(\gamma))\\
&=&\Delta_{g_0,f_0}h(\gamma)+2\Rm(g_0)\ast h(\gamma)  -\Li_{\div_{g_0}h(\gamma)-\nabla^{g_0}\tr_{g_0}h(\gamma)/2}(g_0)\\
&&+\Li_{\nabla^{T(\gamma)}f_0-\nabla^{T(\gamma_0)}f_0}(g_0+h(\gamma))+\int_0^1(1-s)(-2D^2_{g_0+sh(\gamma)}\Ric)(h(\gamma),h(\gamma))ds\\
&=&\nabla^{g_0,2}h(\gamma)+\nabla^{g_0}_{\nabla^{g_0}f_0}h(\gamma)+\Rm(g_0)\ast h(\gamma)\\
&&+\Li_{\nabla^{T(\gamma)}f_0-\nabla^{T(\gamma_0)}f_0}(g_0+h(\gamma))+\int_0^1(1-s)(-2D^2_{g_0+sh(\gamma)}\Ric)(h(\gamma),h(\gamma))ds.
\end{eqnarray*}
Now, using the soliton identities, outside a compact set,
\begin{eqnarray*}
\nabla^{g_0}_{\nabla^{g_0}f_0}h(\gamma)&=&\Li_{\nabla^{g_0}f_0}(h(\gamma))-\frac{1}{2}\Sym(\Li_{\nabla^{g_0}f_0}(g_0)\otimes h(\gamma))\\
&=&4\arrowvert\nabla^{g_0}f_0\arrowvert^2\phi_{*}(\gamma-\gamma_0)+(4f_0)\Li_{\nabla^{g_0}f_0}(\phi_{*}(\gamma-\gamma_0))\\
&&-h(\gamma)-\Sym(\Ric(g_0)\otimes h(\gamma))\\
&=&\frac{\mu(g_0)-\R_{g_0}}{f_0}h(\gamma)-\Sym(\Ric(g_0)\otimes h(\gamma))+(4f_0)\Li_{\nabla^{g_0}f_0}(\phi_{*}(\gamma-\gamma_0)).
\end{eqnarray*}
As the diffeomorphism $\phi$ is preserving the expanding structure, that is $\phi^*f(t,x)=t^2/4$ for $(t,x)\in C(X)$, and as the deformation $h(\gamma)$ is transversal to the radial direction,
\begin{eqnarray*}
 \phi^*(h(\gamma)(\nabla^{g_0}f_0))(t,x)&=&t^2(\gamma-\gamma_0)(\phi^*(\nabla^{g_0}f_0))(t,x)=0,\\
 \nabla^{T(\gamma)}f_0-\nabla^{T(\gamma_0)}f_0&=&\nabla^{g_0+h(\gamma)}f_0-\nabla^{g_0}f_0\\
 &=&0,
\end{eqnarray*}
outside a compact set.

Therefore, outside a compact set,
\begin{eqnarray*}
Q(T(\gamma),T(\gamma))&=&\nabla^{2,g_0}h(\gamma)+\Rm(g_0)\ast h(\gamma)\\
&&+\int_0^1(1-s)(-2D^2_{g_0+sh(\gamma)}\Ric)(h(\gamma),h(\gamma))ds.
\end{eqnarray*}

\end{proof}

We are now in a position to prove theorem \ref{Loc-Dif-Ban}.

\begin{proof}[Proof of theorem \ref{Loc-Dif-Ban}]
$\Phi_{\alpha}$ is actually well-defined. We will only consider the case $\alpha=1$ to simplify notations. Indeed, by lemmata \ref{lin-qua-egs} and \ref{Fir-App-Egs},
\begin{eqnarray}
Q(T(\gamma)+h,T(\gamma))&=&\nabla^{g_0,2}h(\gamma)+(\Rm(g_0)+v_0^{-1})\ast h(\gamma)\label{lig1-loc}\\
&&+\int_0^1(1-s)D^2_{g_0+sh(\gamma)}(-2\Ric)(h(\gamma),h(\gamma))ds\label{lig2-loc}\\
&&+\Delta_{T(\gamma),f_0}h+2\Rm(T(\gamma))\ast h+\frac{1}{2}\Sym(Q(T(\gamma),T(\gamma))\otimes h)\label{lig3-loc}\\
&&+\int_0^1(1-s)D^2_{T(\gamma)+sh}(-2\Ric)(h,h)ds\label{lig4-loc}\\
&&+\nabla^{T(\gamma)}h\ast\nabla^{T(\gamma)}h+\nabla^{T(\gamma),2}h\ast h+\Li_{h^{*2}\ast\nabla^{T(\gamma)}f_0}(T(\gamma)+h).\label{lig5-loc}
\end{eqnarray}
Note first that by its very definition, $$h(\gamma)\in (C^{k+2}_{con}(M,S^2T^*M),C^{k+2+1}_{con}(M,S^2T^*M))_{\theta,\infty}\subset C^{k+2,\theta}_{con}(M,S^2T^*M),$$ by remark \ref{rk-non-int-spa}. By inspecting each term, one can actually check that $\Phi_{\alpha}$ is well-defined. We anticipate on the next section on Fréchet deformations by proving some "tame" estimates on the map $\Phi_{\alpha}$.

The proof is actually inspired by \cite{Ham-Nas-Mos}. One of the main tool are the interpolation inequalities that hold here when the parameter $\theta$ is $0$. As noticed in remark \ref{rk-non-int-spa}, corresponding interpolation inequalities for an arbitrary $\theta\in(0,1)$ are not straightforward. To circumvent this issue, we will use the simple fact that, for a given nonnegative integer $k$ and some $\theta\in(0,1)$, the norm $\|\cdot\|_{k,\theta}$ (on the spaces considered above) is dominated by the norm $\|\cdot\|_{k+1}$. The drawback is that we establish less precise estimates : this will not affect the use of the Nash-Moser theorem.

We temporarily simplify the notations : denote by $(\|\cdot\|_{k+2,\theta}^{D_f})_{k\geq 0}$ (respectively $(\|\cdot\|_{k,\theta}^{C_f})_{k\geq 0}$, respectively $(\|\cdot\|_{k,\theta}^{C})_{k\geq 0}$)   the collection of norms induced on $(D^{k+2,\theta}_{f^{\alpha},\nabla f}(M,S^2T^*M))_{k\geq 0}$ (respectively on $(C^{k,\theta}_{con,f^{\alpha}}(M,S^2T^*M))_{k\geq 0}$, respectively on $(C^{k,\theta}_{con}(M,S^2T^*M))_{k\geq 0}$).

\begin{itemize}
\item Concerning the first line (\ref{lig1-loc}), by invoking theorem \ref{iso-weighted-lap-II} : 
\begin{eqnarray*}
\|v_0^{k/2}\nabla^{g_0,k}(\nabla^{g_0,2}h(\gamma))\|^{C_f}_{0,\theta}&\lesssim& \|h(\gamma)\|^C_{k+2,\theta},\\
\|v_0^{k/2}\nabla^{g_0,k}((\Rm(g_0)+v_0^{-1})\ast h(\gamma))\|^{C_f}_{k,\theta}&\lesssim&\sum_{i=0}^k\|v_0\Rm(g_0)\|^C_{k-i,\theta} \|h(\gamma)\|^{C}_{i,\theta}\\
&\lesssim& \|h(\gamma)\|^{C}_{k+2,\theta},
\end{eqnarray*}
where $\lesssim$ means up to a multiplicative positive constant independent of the variables.

The main difficulty in estimating the remaining terms is the presence of connections with respect to metrics different from $g_0$, indeed, the function spaces we consider are defined in terms of the metric $g_0$.

We first remark that for two metrics $g_1$ and $g_2$ and for some tensor $T$, one has 
\begin{eqnarray}\label{Var-For-2-Met-Con}
\nabla^{g_2}T=\nabla^{g_1}T+g_2^{-1}\ast \nabla^{g_1}(g_2-g_1)\ast T.\label{fir-var-cov-der}
\end{eqnarray}

\item Concerning the last line (\ref{lig5-loc}) :

\begin{eqnarray*}
\nabla^{T(\gamma)}h&=&\nabla^{g_0}h+T(\gamma)^{-1}\ast \nabla^{g_0}h(\gamma)\ast h\\
&&\\
\nabla^{g_0}T(\gamma)^{-1}&=&T(\gamma)^{-2}\ast\nabla^{g_0}h(\gamma),\\
&&\\
\nabla^{T(\gamma),2}h&=&\nabla^{T(\gamma)}(\nabla^{g_0}h+T(\gamma)^{-1}\ast \nabla^{g_0}h(\gamma)\ast h)\\
&=&\nabla^{g_0}(\nabla^{g_0}h+T(\gamma)^{-1}\ast \nabla^{g_0}h(\gamma)\ast h)\\
&&+T(\gamma)^{-1}\ast \nabla^{g_0}h(\gamma)\ast (\nabla^{g_0}h+T(\gamma)^{-1}\ast \nabla^{g_0}h(\gamma)\ast h)\\
&=&\nabla^{g_0,2}h+T(\gamma)^{-1}\ast \nabla^{g_0,2}h(\gamma)\ast h\\
&&+T(\gamma)^{-1}\ast \nabla^{g_0}h(\gamma)\ast \nabla^{g_0}h+T(\gamma)^{-2}\ast \nabla^{g_0}h(\gamma)^{*2}\ast h.
\end{eqnarray*}

We need to estimate the (rescaled) derivatives of the inverse of a small perturbation of $g_0$ : 

\begin{claim}\label{cla-inv-ham}
There exists some large but fixed nonnegative integer $k_0$ such that for $k\geq k_0$,
\begin{eqnarray*}
\|(g_0+h_0)^{-1}\|_{k,\theta}^C\lesssim \|h_0\|^C_{k,\theta}+1,
\end{eqnarray*}
for $h_0\in U(0)\subset C^{k_0,\theta}_{con}(M,S^2T^*M)$.
\end{claim}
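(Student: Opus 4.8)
The plan is to differentiate the algebraic identity $(g_0+h_0)\cdot(g_0+h_0)^{-1}=\Id$ and run the resulting recursion through the weighted H\"older norms, reducing the whole estimate to a fixed, finite amount of pointwise data which is controlled because $h_0$ ranges over a bounded neighbourhood $U(0)$ of $0$ in $C^{k_0,\theta}_{con}(M,S^2T^*M)$. So $\|\cdot\|^C_{k_0,\theta}$-sized quantities are to be treated as harmless $O(1)$ constants throughout.

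First I would record the fiberwise information. If $U(0)$ is small enough in $C^0$, then at every point $g_0+h_0$ is a metric comparable to $g_0$, so $A:=(g_0+h_0)^{-1}$ obeys $\|A\|_{C^0}\lesssim 1$; writing the pointwise identity $A(x)-P^*_{x,y}A(y)=A(x)\ast\bigl(P^*_{x,y}(g_0+h_0)(y)-(g_0+h_0)(x)\bigr)\ast P^*_{x,y}A(y)$ and using that parallel transport is $g_0$-isometric (hence $[g_0]_\theta=0$), one gets $[A]_\theta\lesssim\|A\|_{C^0}^2\,[h_0]_\theta\lesssim\|h_0\|^C_{k_0,\theta}\lesssim 1$. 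Next, since $\nabla^{g_0}g_0=0$, a straightforward induction gives $\nabla A=-A\ast\nabla h_0\ast A$ and, more generally,
\[
\nabla^kA=\sum_{m\ge1}\ \sum_{\substack{j_1+\dots+j_m=k\\ j_\ell\ge1}}c_{j_1,\dots,j_m}\,A\ast\nabla^{j_1}h_0\ast A\ast\dots\ast\nabla^{j_m}h_0\ast A,
\]
so that, distributing the weight $v^{i/2}=v^{j_1/2}\cdots v^{j_m/2}$ onto the derivatives of $h_0$ and leaving the $A$-factors unweighted, each $v^{i/2}\nabla^iA$ ($0\le i\le k$) is a finite sum of monomials $A\ast(v^{j_1/2}\nabla^{j_1}h_0)\ast A\ast\dots\ast(v^{j_m/2}\nabla^{j_m}h_0)\ast A$ with $\sum_\ell j_\ell=i$. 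I would then estimate each such monomial in $C^{0,\theta}$ by submultiplicativity together with $\|PQ\|_{C^{0,\theta}}\le\|P\|_{C^0}\|Q\|_{C^0}+\|P\|_{C^0}[Q]_\theta+[P]_\theta\|Q\|_{C^0}$; since every $A$-factor is bounded in $C^{0,\theta}$, the monomial is controlled by a sum, over the choice of which factor carries a H\"older seminorm, of terms $\bigl(\prod_{\ell\neq s}\|v^{j_\ell/2}\nabla^{j_\ell}h_0\|_{C^0}\bigr)[v^{j_s/2}\nabla^{j_s}h_0]_\theta$ (plus the purely-$C^0$ term and the terms in which a $[A]_\theta$ appears, which are easier). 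The two inputs to conclude are the integer-order interpolation inequalities of the spaces $C^i_{con}(M,E)$ available on an asymptotically conical expander, which I will use as $\|h_0\|^C_j\lesssim(\|h_0\|^C_{k_0})^{1-\sigma}(\|h_0\|^C_k)^{\sigma}$ with $\sigma=(j-k_0)/(k-k_0)$ for $k_0\le j\le k$, and the domination $\|\cdot\|^C_{j,\theta}\lesssim\|\cdot\|^C_{j+1}$ (legitimate because the scale in the H\"older seminorm is fixed), used as a substitute for the missing genuine H\"older interpolation in the $C^{k,\theta}_{con}$ scale. A factor with $j_\ell<k_0$ contributes only an $O(1)$ factor; a factor with $j_\ell\ge k_0$ contributes a power $(j_\ell-k_0)/(k-k_0)$ of $\|h_0\|^C_k$, and the distinguished factor $s$ contributes (after applying the domination and then interpolating, using $j_s\le k-1$ whenever $m\ge2$ so that $j_s+1\le k$) a power $(j_s+1-k_0)/(k-k_0)$. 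A short bookkeeping of exponents over a monomial with $\sum_\ell j_\ell=i\le k$ shows the total power of $\|h_0\|^C_k$ is at most $(k+1-2k_0)/(k-k_0)$, which is $\le1$ precisely because $k_0\ge1$; the degenerate cases $i=0$, $m=1$, and the single monomial $A\ast\nabla^kh_0\ast A$ are handled directly and give exactly $\lesssim1+\|h_0\|^C_{k,\theta}$. Adding up, $\|v^{i/2}\nabla^iA\|_{C^{0,\theta}}\lesssim1+\|h_0\|^C_{k,\theta}$ for all $i\le k$, which is the claim.

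The step that requires care, and which I expect to be the only real obstacle, is the control of the intermediate-order H\"older seminorms $[v^{j_s/2}\nabla^{j_s}h_0]_\theta$: as recalled in Remark~\ref{rk-non-int-spa}, $C^{k,\theta}_{con}(M,E)$ is not an interpolation scale, so the naive H\"older interpolation inequalities are unavailable. The device of replacing $\|\cdot\|^C_{j,\theta}$ by $\|\cdot\|^C_{j+1}$ and of interpolating from level $k_0$ — where $h_0$ is bounded — rather than from level $0$ is exactly what keeps all exponents $\le1$ and makes the final estimate linear in the top-order norm; the price, irrelevant for the Nash--Moser argument of the next section, is that the multiplicative constants are slightly less sharp than those one would obtain from genuine interpolation.
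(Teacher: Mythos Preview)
Your proposal is correct and follows essentially the same approach as the paper. The only difference is packaging: the paper differentiates $(g_0+h_0)\cdot(g_0+h_0)^{-1}=\Id$ once to obtain the one-step recursion
\[
(g_0+h_0)\,\nabla^{g_0,k}(g_0+h_0)^{-1}=-\sum_{i=0}^{k-1}\nabla^{g_0,k-i}h_0\ast\nabla^{g_0,i}(g_0+h_0)^{-1},
\]
then argues by induction on $k$, reducing to the single quadratic sum $\sum_i\|h_0\|^C_{k-i,\theta}\|h_0\|^C_{i,\theta}$, whereas you expand fully via Fa\`a di Bruno and track exponents directly. Both rely on exactly the same two devices you single out---the domination $\|\cdot\|^C_{j,\theta}\lesssim\|\cdot\|^C_{j+1}$ and the integer-order interpolation inequalities---and your identification of the H\"older-interpolation obstacle and its workaround matches the paper's. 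The paper's inductive version is shorter; your version makes more transparent why the condition $k_0\ge1$ is exactly what forces the total exponent $(k+1-2k_0)/(k-k_0)\le1$.
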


\begin{proof}[Proof of claim \ref{cla-inv-ham}]
For $k=1$, one has
\begin{eqnarray*}
\nabla^{g_0}(g_0+h_0)^{-1}=(g_0+h_0)^{-2}\ast\nabla^{g_0}h_0,
\end{eqnarray*}
which implies the result if $\|h_0\|^C_{1,\theta}$ is small.

\begin{eqnarray*}
0&=&\nabla^{g_0,k}\left[(g_0+h_0)\circ(g_0+h_0)^{-1}\right]\\
&=&(g_0+h_0)\nabla^{g_0,k}(g_0+h_0)^{-1}+\sum_{i=0}^{k-1}\nabla^{g_0,k-i}h_0\ast \nabla^{g_0,i}(g_0+h_0)^{-1},
\end{eqnarray*}
therefore, by induction on $k$,
\begin{eqnarray*}
\| v_0^{k/2}\nabla^{g_0,k}(g_0+h_0)^{-1}\|^C_{0,\theta}&\lesssim& \sum_{i=0}^{k-1}\|h_0\|^C_{k-i,\theta}\|(g_0+h_0)^{-1}\|_{i,\theta}^C\\
&\lesssim&\sum_{i=0}^{k-1}\|h_0\|^C_{k-i,\theta}(\|h_0\|_{i,\theta}^C+1)\\
&\lesssim& \|h_0\|^C_{k,\theta}+ \sum_{i=0}^{k-1}\|h_0\|^C_{k-i,\theta}\|h_0\|_{i,\theta}^C\\
&\lesssim&  \|h_0\|^C_{k-1,\theta}+\sum_{i=1}^{k-2}\|h_0\|^C_{k-i+1}\|h_0\|_{i+1}^C,
\end{eqnarray*}
if $k\geq 3$ (the case $k=2$ can be adapted) and if $\|h_0\|^C_{1,\theta}$ is bounded. By using interpolation inequalities :
\begin{eqnarray*}
\| v_0^{k/2}\nabla^{g_0,k}(g_0+h_0)^{-1}\|^C_{0,\theta}&\lesssim& \|h_0\|^C_{k,\theta}+\|h_0\|^C_{k}\|h_0\|_{3}^C\\
&\lesssim&\|h_0\|^C_{k,\theta},
\end{eqnarray*}
provided $\|h_0\|^C_{3,\theta}$ is bounded. Hence the result.

\end{proof}
 
 Now, combining these estimates with the results of theorem \ref{iso-weighted-lap-II}, one can bound the following terms as follows :
 \begin{eqnarray*}
\|v_0^{k/2}\nabla^{g_0,k}(\nabla^{g_0,2}h\ast h)\|_{0,\theta}^{C_f}&\lesssim&\sum_{i=0}^k\|\nabla^{g_0,2}h\|^{C_f}_{i,\theta}\|h\|^{C_f}_{k-i,\theta}\\
&\lesssim&\sum_{i=0}^k\|h\|^{D_f}_{i+2,\theta}\|h\|^{C_f}_{k-i,\theta}\\
&\lesssim&\sum_{i=0}^k\|h\|^{D_f}_{i+2,\theta}\|h\|^{D_f}_{k-i+2,\theta}\\
&\lesssim&\|h\|^{D_f}_{k+2,\theta}+\sum_{i=1}^{k-1}\|h\|^{D_f}_{i+3}\|h\|^{D_f}_{k-i+3}\\
&\lesssim&\|h\|^{D_f}_{k+2,\theta}+\|h\|^{D_f}_{k+2}\|h\|^{D_f}_{4}\\
&\lesssim&\|h\|^{D_f}_{k+2,\theta},
\end{eqnarray*}
provided $\|h\|^{D_f}_{4,\theta}$ is bounded. Similarly, one has : 
\begin{eqnarray*}
\|v_0^{k/2}\nabla^{g_0,k}(\nabla^{g_0}h\ast \nabla^{g_0}h)\|_{0,\theta}^{C_f}&\lesssim&\sum_{i=0}^k\|\nabla^{g_0}h\|^{C_f}_{i,\theta}\|\nabla^{g_0}h\|^{C_f}_{k-i,\theta}\\
&\lesssim&\sum_{i=0}^k\|h\|^{D_f}_{i+2,\theta}\|h\|^{D_f}_{k-i+2,\theta}\\
&\lesssim&\|h\|^{D_f}_{k+2,\theta},
\end{eqnarray*}
provided $\|h\|^{D_f}_{4,\theta}$ is bounded.
 
 Using claim \ref{cla-inv-ham} together with interpolation inequalities, one gets in a similar way :
 \begin{eqnarray*}
\|\nabla^{T(\gamma)}h\ast \nabla^{g_0}h+\nabla^{T(\gamma),2}h\ast h\|^{C_f}_{k,\theta}\lesssim \|h\|^{D_f}_{k+2,\theta}+\|h(\gamma)\|^C_{k+2,\theta}(\|h\|_{k_0+2,\theta}^{D_f})^2,
\end{eqnarray*}
 for some fixed nonnegative integer $k_0$.
 
 The Lie derivative can be rewritten in the following way :
 \begin{eqnarray*}
\Li_{h^{*2}\ast\nabla^{T(\gamma)}f_0}(T(\gamma)+h)&=&\nabla^{T(\gamma)+h}(h^{*2}\ast\nabla^{T(\gamma)}f_0)\\
&=&\nabla^{T(\gamma)+h}(h^{*2}\ast\nabla^{g_0}f_0)\\
&=&\nabla^{g_0}(h^{*2}\ast\nabla^{g_0}f_0)\\
&&+(T(\gamma)+h)^{-1}\ast\nabla^{g_0}(T(\gamma)+h)\ast h^{*2}\ast\nabla^{g_0}f_0,
\end{eqnarray*}
since $T(\gamma)+h$ is a metric and $\nabla^{T(\gamma)}f_0=\nabla^{g_0}f_0$ outside a compact set. Again, one can estimate this term by 
\begin{eqnarray*}
\|\Li_{h^{*2}\ast\nabla^{T(\gamma)}f_0}(T(\gamma)+h)\|^{C_f}_{k,\theta}\lesssim \|h\|^{D_f}_{k+2,\theta}+\|h(\gamma)\|^C_{k+2,\theta},
\end{eqnarray*}
provided $(\gamma,h)\in U(\gamma_0,0)$ is in a neighborhood of $(\gamma_0,0)$ sufficiently small.\\

\item Concerning the integral terms in lines (\ref{lig2-loc}) and (\ref{lig4-loc}), one applies the previous methods, that is, the use of  interpolation inequalities together with formula (\ref{fir-var-cov-der}), to formula (\ref{Der-Sec-Ric}) to get : 
\begin{eqnarray*}
\left\|\int_0^1(1-s)D^2_{g_0+sh(\gamma)}(-2\Ric)(h(\gamma),h(\gamma))ds\right\|^{C_f}_{k,\theta}\lesssim \|h(\gamma)\|^C_{k+2,\theta},
\end{eqnarray*}
provided $\|h(\gamma)\|_{k_0+2,\theta}^C$ is bounded for some fixed nonnegative integer $k_0$. Similarly, one gets : 
\begin{eqnarray*}
\left\|\int_0^1(1-s)D^2_{T(\gamma)+sh}(-2\Ric)(h,h)ds\right\|^{C_f}_{k,\theta}\lesssim \|h(\gamma)\|^C_{k+2,\theta}+\|h\|^{D_f}_{k+2,\theta},
\end{eqnarray*}
provided $\|h(\gamma)\|_{k_0+2,\theta}^C+\|h\|^{D_f}_{k_0+2,\theta}$ is bounded for some fixed nonnegative integer $k_0$.\\

\item Concerning the third line \ref{lig3-loc}, the only term that could cause some  trouble is $\nabla^{T(\gamma)}_{\nabla^{T(\gamma)}f_0}h$. Nonetheless, outside a compact set : 
\begin{eqnarray*}
\nabla^{T(\gamma)}_{\nabla^{T(\gamma)}f_0}h&=&(\nabla^{T(\gamma)}-\nabla^{g_0})h\ast \nabla^{g_0}f_0+\nabla^{g_0}_{ \nabla^{g_0}f_0}h\\
&=&T(\gamma)^{-1}\ast\nabla^{g_0}h(\gamma)\ast h\ast  \nabla^{g_0}f_0+\nabla^{g_0}_{\nabla^{g_0}f_0}h.
\end{eqnarray*}
Now, by definition of the spaces $D^{k+2,\theta}_{f,\nabla f}(M,S^2T^*M)$, $\nabla^{g_0}_{\nabla^{g_0}f_0}h\in D^{k+2,\theta}_{f,\nabla f}(M,S^2T^*M)$ and since $h(\gamma)\in C^{k+2,\theta}_{con}(M,S^2T^*M)$, $\nabla^{g_0} h(\gamma)\ast \nabla^{g_0}f_0\in C^{k,\theta}_{con}(M,S^2T^*M)$.
 \end{itemize}
 
 To sum it up, we obtained the following (tame) estimate as expected :
 \begin{eqnarray}
 \|Q(T(\gamma)+h,T(\gamma))\|_{k,\theta}^{C_f}\lesssim \|(h(\gamma),h)\|^{C\times D_f}_{k+2,\theta},\label{tame-est-phi}
 \end{eqnarray}
  if $\|(h(\gamma),h)\|^{C\times D_f}_{k_0+2,\theta}$ is bounded with some fixed nonnegative integer $k_0$.\\

By the Banach version of the inverse function theorem, it suffices to show that the differential of $\Phi_{\alpha}$ at $(\gamma_0,0)$ is invertible.

\begin{eqnarray*}
D_{(\gamma_0,0)}\Phi_{\alpha}(\bar{\gamma},\bar{h})&=&(\bar{\gamma},D^1_{(g_0,g_0)}Q(D_{\gamma_0}T(\bar{\gamma})+\bar{h})+D^2_{(g_0,g_0)}Q(D_{\gamma_0}T(\bar{\gamma})))\\
&=&(\bar{\gamma},D^1_{(g_0,g_0)}Q(\bar{h})+K_{g_0}(\bar{\gamma})),\\
\end{eqnarray*}
where $K_{g_0}(\bar{\gamma}):=D^1_{(g_0,g_0)}Q(D_{\gamma_0}T(\bar{\gamma}))+D^2_{(g_0,g_0)}Q(D_{\gamma_0}T(\bar{\gamma})).$
By lemma \ref{lin-qua-egs}, one easily identifies the operator $D^1_{(g_0,g_0)}Q$ with the weighted Lichnerowicz operator $\Delta_{g_0,f_0}+2\Rm(g_0)\ast$.
Now, by theorem \ref{iso-weighted-lap-II},  $D^1_{(g_0,g_0)}Q$ is an isomorphism of Banach spaces which proves that $D_{(\gamma_0,0)}\Phi_{\alpha}$ is an isomorphism.

Finally, assume that $(M^n,g_0,\nabla^{g_0}f_0)$ has positive curvature and let $\gamma\in \cat{M}et^{k+2,\theta}(X)$ close enough to $\gamma_0$ so that there exists a (unique) deformation $h\in D^{k+2,\theta}_{f,\nabla f}(M,S^2T^*M)$ such that $Q(T(\gamma)+h,T(\gamma))=0$, i.e. such that $$-2\Ric(g)-g+\Li_{\nabla^gf_0}(g)+\Li_{V(g,T(\gamma),f_0)}(g)=0,$$ where $g:=T(\gamma)+h$.
If $V:=\nabla^gf_0+V(g,T(\gamma),f_0)$ then $$-2\Ric(g)-g+\Li_{V}(g)=0.$$
It suffices to prove that the radial curvatures of $g$ are positive since :
\begin{eqnarray*}
v\arrowvert \Rm(g)|_{sph}-\Rm(g_0)|_{sph}\arrowvert\leq C(n)\| g-g_0\|_{C^2_{con}(M,S^2T^*M)}.
\end{eqnarray*}

Now,
\begin{eqnarray*}
2\div\Rm(g)_{ijk}&=&\nabla^g_i2\Ric(g)_{jk}-\nabla^g_j2\Ric(g)_{ik}=\nabla^g_i\Li_{V}(g)_{jk}-\nabla^g_j\Li_{V}(g)_{ik}\\
&=&\nabla^g_i(\nabla^g_j{V}_k+\nabla^g_k{V}_j)-\nabla^g_j(\nabla^g_i{V}_k+\nabla^g_k{V}_i)\\
&=&\Rm(g)_{ij{V}k}+\nabla^g_{i}\nabla^g_k{V}_j-\nabla^g_j\nabla^g_k{V}_i\\
&=&2\Rm(g)_{ij{V}k}+\nabla^g_i(\nabla^g_k{V}_j-\nabla^g_j{V}_k)-\nabla^g_j(\nabla^g_k{V}_i-\nabla^g_i{V}_k),
\end{eqnarray*}
that is, using the definition of ${V}$, $$\div\Rm(g)(\cdot,\cdot,\cdot)=\Rm(g)(\cdot,\cdot,{V},\cdot)+\nabla^{g,2}V(g,T(\gamma),f_0).$$
On the one hand, as $(M^n,g_0,\nabla^{g_0}f_0)$ is an asymptotically conical expanding gradient Ricci soliton with positive curvature, by lemma \ref{id-EGS} and [Section $3$, \cite{Der-Asy-Com-Egs}],
\begin{eqnarray*}
&&\div\Rm(g_0)(\cdot,\cdot,\cdot)=\Rm(g_0)(\cdot,\cdot,\nabla^{g_0}f_0,\cdot),\\
&&\inf_{x\in M^n}v_0(x)\min_{u\perp \nabla^{g_0}f_0;\arrowvert u\arrowvert_{g_0}=1}\Rm(g_0)(u,\nabla^{g_0}f_0,\nabla^{g_0}f_0,u)=c>0.
\end{eqnarray*}
Now, as $g-g_0\in C^{2+k,\theta}_{con}(M,S^2T^*M)$, with $k\geq 1$, we have 
\begin{eqnarray*}
v_0^{3/2}\arrowvert\div_g\Rm(g)-\div_{g_0}\Rm(g_0)\arrowvert_{g_0}\leq C(n)\|g-g_0\|_{C_{con}^{3}(M,S^2T^*M)},
\end{eqnarray*}
that is, if $(\gamma,h)$ is sufficiently close to $(\gamma_0,0)$ in $C^{k+2,\theta}(X,S^2T^*X)\times D^{k+2,\theta}_{f,\nabla f}(M,S^2T^*M)$ for $k\geq 1$, $$\inf_{x\in M}v_0(x)\min_{u\perp V;\arrowvert u\arrowvert_{g}=1}\left(\Rm(g)(u,V,V,u)+\nabla^{2,g}V(g,T(\gamma),f_0)\ast V\right)\geq\frac{c}{2}>0.$$
On the other hand, 
\begin{eqnarray*}
v_0^{3/2}\arrowvert\nabla^{g,2}V(g,T(\gamma),f_0)\arrowvert&=&v_0^{3/2}\arrowvert\nabla^{g,2}(\nabla^{T(\gamma)}h+h(\nabla^{T(\gamma)}f_0))\arrowvert\\
&\leq&C(n,g_0)\| h\|_{C^3_{con,f}(M,S^2T^*M)}.
\end{eqnarray*}
Therefore, if $(\gamma,h)$ is sufficiently close to $(\gamma_0,0)$ in $C^{k+2,\theta}(X,S^2T^*X)\times D^{k+2,\theta}_{f,\nabla f}(M,S^2T^*M)$ for $k\geq 1$, then the radial curvatures of $g$ are positive and decay at most like $v_0^{-2}$, i.e. $r_p^{-4}$ in terms of the distance function to a fixed point $p$, at infinity.

\end{proof}

\section{Fréchet deformations}\label{Sec-Fre-Def}

\subsection{Functions spaces}\label{Sec-Fct-Spa-Fre}

We recall several definitions from \cite{Ham-Nas-Mos} :
\begin{itemize}
\item A Fréchet space $F$ is \textit{graded} if it admits a collection of seminorms $\{\|\cdot\|_k\}_{k\geq 0}$ defining the topology of $F$ and such that for any $k\geq 0$, $\|\cdot\|_k\leq\|\cdot\|_{k+1}$.\\
\item Let $L:F\rightarrow G$ be a linear map between graded Fréchet spaces. $L$ is \textit{tame} if it satisfies a so called \textit{tame estimate} of degree $r$ and base $b$, i.e.
\begin{eqnarray*}
\|Lh\|_{k}\leq C(k)\|h\|_{k+r},\quad k\geq b,
\end{eqnarray*}
for any $h\in F$.\\
\item Let $P:U\subset F\rightarrow G$ be a continuous map of an open subset $U$ of $F$ into $G$ where $F$ and $G$ are graded Fréchet spaces. $P$ is \textit{tame} of degree $r$ and base $b$ if $\|P(h)\|_{n}\leq C(1+\|h\|_{n+r})$, for all $h\in U$ and all $n\geq b$ for some positive constant $C$ eventually depending on $n$ and $U$.\\
\item Let $B$ be a Banach manifold. One denotes by $\Sigma(B)$ the space of exponentially decreasing sequences in $B$, i.e. the graded Fréchet space : 
$$\Sigma(B):=\{(h_k)_{k}, h_k\in B\quad|\quad \|(h_k)_{k}\|_n:=\sum_{k\geq 0}e^{nk}\|h_k\|_B<+\infty, \forall n\geq 0\}.$$
\item A graded Fréchet space $F$ is \textit{tame} if there exists a Banach space $B$ such that there exists two tame linear maps 
$$F\stackrel{L}{\longmapsto}\Sigma(B)\stackrel{M}{\longmapsto} F,$$ such that $M\circ L=Id_F$.
\end{itemize}

We consider the following function spaces :\\
\begin{itemize}
\item If $X$ is a smooth compact manifold, $$C^{\infty}(X,E):=\cap_{k\geq 0} C^{k,\theta}(X,E),$$
 endowed with the collection of norms $\left(\|\cdot\|_{C^{k,\theta}(M,E)}\right)_{k\geq 0}$, for some fixed $\theta\in[0,1)$.
 \item Let $(M,g,\nabla f)$ be a complete expanding gradient Ricci soliton. For some nonnegative integer $k$ and some $\theta\in[0,1)$, we define :
  $$C^{k,\theta}_{0,con}(M,E):=\{h\in C^{k,\theta}_{con}(M,E)\quad|\quad \lim_{+\infty}v^{i/2}\nabla^ih=0,\quad\forall i=0,...,k\}.$$
  
  \item Let $V$ be a smooth vector field on $M$ and $\theta\in[0,1)$. 
$$D^{k+2,\theta}_{0,V}(M,E):=\{ h\in D^{k+2,\theta}_{V}(M,E)\quad|\quad h\in C^{k,\theta}_{0,con}(M,E)\quad|\quad \Delta_{V}h\in C^{k,\theta}_{0,con}(M,E)\}.$$
  \item Let $w:M\rightarrow\mathbb{R_+^*}$ be a smooth function.
  
  $$C_{0,con,w}^{k,\theta}(M,E):=w^{-1}C_{0,con}^{k,\theta}(M,E),$$ equipped with $\|h\|_{C_{0,con,w}^{k,\theta}(M,E)}:=\|wh\|_{C_{con}^{k,\theta}(M,E)}.$

 $$D^{k+2,\theta}_{0,w,V}(M,E):=w^{-1}\cdot D^{k+2,\theta}_{0,V}(M,E),$$ equipped with the norm $\|h\|_{D^{k+2,\theta}_{0,w,V}(M,E)}:=\|wh\|_{D^{k+2,\theta}_{0,V}(M,E)}.$

$$C^{\infty}_{0,con,w}(M,E):=\cap_{k\geq 0} C^{k,\theta}_{0,con,w}(M,E),$$
  endowed with the collection of norms $\left(\|\cdot\|_{C^{k,\theta}_{con,w}(M,E)}\right)_{k\geq 0}$.

$$D^{\infty}_{0,w,V}(M,E):=\cap_{k\geq 0}D^{k+2,\theta}_{0,w,V}(M,E),$$  endowed with the collection of norms $\left(\|\cdot\|_{D^{k+2,\theta}_{0,w,V}(M,E)}\right)_{k\geq 0}$.
\end{itemize}


\subsection{Solutions in $C^k_0(M,E)$, $k\geq 0$}\label{Sec-Sol-C^k_0}
\subsubsection{The case $k=0$}
We start with an ad-hoc version of the maximum principle for weighted spaces : 
\begin{prop}\label{ppe-max-int-wei-hea-equ}
Let $(M,g)$ be a complete Riemannian manifold. Let $w: M\rightarrow\mathbb{R}_+$ be a smooth non negative function on $M$.
Let $u:M\times[0,T]$ be a subsolution to the weighted heat equation with weight $w$, i.e.
\begin{eqnarray*}
\partial_tu-\Delta_wu\leq0,\quad\mbox{on $M\times(0,T]$},
\end{eqnarray*}
such that $\sup_{M}u(0,\cdot)\leq 0$. Assume that there is some positive $\alpha$ such that
\begin{eqnarray*}
\int_0^T\int_Me^{-\alpha d_g^2(x,p)}\max\{u(t,x),0\}^2e^{w(x)}d\mu_g(x)dt<+\infty.
\end{eqnarray*}
Then $\sup_{M}u(t,\cdot)\leq 0$ for any $t\in(0,T]$.

\end{prop}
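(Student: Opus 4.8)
The plan is to run the Karp--Li / Grigor'yan weighted $L^{2}$ energy argument, adapted to the drift Laplacian $\Delta_{w}$, which is symmetric with respect to the measure $d\mu_{w}:=e^{w}d\mu_{g}$; unlike the exhaustion-function argument of Proposition \ref{uni-ppe-max}, here it is the a priori integrability hypothesis that will close the estimate. \emph{First} I would reduce to the positive part: set $v:=\max\{u,0\}\geq 0$ and note that, by Kato's inequality, $v$ is again a distributional subsolution of the weighted heat equation, $\partial_{t}v-\Delta_{w}v\leq 0$ on $M\times(0,T]$, with $v(0,\cdot)=0$; moreover the hypothesis becomes $\int_{0}^{T}\!\!\int_{M}e^{-\alpha\rho^{2}}v^{2}\,d\mu_{w}\,dt<+\infty$, where $\rho:=d_{g}(\cdot,p)$. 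It then suffices to prove $v\equiv 0$.

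\emph{Next} I would set up a localized Gaussian energy. Fix $\tau\in(0,\min\{T,(4\alpha)^{-1}\}]$ and choose a scale $a\in[2,(2\alpha\tau)^{-1}]$, which is possible since $(2\alpha\tau)^{-1}\geq 2$. With $s(t):=a(\tau-t)$ and $\xi(x,t):=-\rho^{2}(x)/(4s(t))$ — a Lipschitz space-time function — one checks, almost everywhere, that
\[
\partial_{t}\xi+2|\nabla\xi|^{2}\ \leq\ \frac{(2-a)\rho^{2}}{4s^{2}}\ \leq\ 0,
\]
using $|\nabla\rho|\leq 1$. For $R>0$ pick a cutoff $\phi_{R}$ with $\phi_{R}\equiv 1$ on $B(p,R)$, $\supp\phi_{R}\subset B(p,2R)$ and $|\nabla\phi_{R}|\leq C/R$. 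Multiplying $\partial_{t}v-\Delta_{w}v\leq 0$ by $v\phi_{R}^{2}e^{2\xi}\geq 0$, integrating over $M$ against $d\mu_{w}$, integrating by parts (legitimate since $\phi_{R}$ has compact support), and absorbing the two cross terms in $\langle\nabla v,\nabla\xi\rangle$ and $\langle\nabla v,\nabla\phi_{R}^{2}\rangle$ into $-\int|\nabla v|^{2}\phi_{R}^{2}e^{2\xi}$ by Young's inequality, I expect to arrive at
\[
\frac{d}{dt}\int_{M}v^{2}\phi_{R}^{2}e^{2\xi}\,d\mu_{w}\ \leq\ 2\int_{M}v^{2}\phi_{R}^{2}e^{2\xi}\bigl(\partial_{t}\xi+2|\nabla\xi|^{2}\bigr)d\mu_{w}+\frac{C'}{R^{2}}\int_{B(p,2R)\setminus B(p,R)}v^{2}e^{2\xi}\,d\mu_{w},
\]
whose first term on the right-hand side is $\leq 0$ by the choice of $a$.

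\emph{Then} I would let $R\to+\infty$. Writing $J_{R}(t):=\int_{M}v^{2}\phi_{R}^{2}e^{2\xi}\,d\mu_{w}$, which is finite by compact support and satisfies $J_{R}(0)=0$, integration in time gives $J_{R}(t)\leq C'R^{-2}\int_{0}^{\tau}\!\!\int_{M}v^{2}e^{2\xi}\,d\mu_{w}\,ds$. Since $2\xi(x,s)=-\rho^{2}(x)/(2a(\tau-s))\leq-\rho^{2}(x)/(2a\tau)\leq-\alpha\rho^{2}(x)$ for $s\in[0,\tau)$, the last integral is bounded by the finite quantity $\int_{0}^{T}\!\!\int_{M}e^{-\alpha\rho^{2}}v^{2}\,d\mu_{w}\,dt$; hence $J_{R}(t)\leq C''/R^{2}\to 0$, and monotone convergence forces $\int_{M}v^{2}(x,t)e^{2\xi(x,t)}\,d\mu_{w}(x)=0$, so $v(\cdot,t)\equiv 0$ on $(0,\tau]$. \emph{Finally}, as $\tau$ depends only on $\alpha$, repeating the argument on $[\tau,2\tau]$, $[2\tau,3\tau]$, \ldots — with vanishing data at each restart time now available — covers $[0,T]$ in finitely many steps, yielding $\sup_{M}u(t,\cdot)\leq 0$ for all $t\in(0,T]$.

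Two points need (routine) care: Kato's inequality in the first step, and the fact that $\rho$ is only Lipschitz — but the cut locus has measure zero, so the differential inequality for $\xi$ and all the integrations by parts remain valid. I expect the genuine (if purely bookkeeping) obstacle to be the sign-definite absorption of the cross terms in the energy identity: it is exactly this that pins down the admissible window $[2,(2\alpha\tau)^{-1}]$ for the Gaussian scale $a$, and hence forces both the short-time restriction on $\tau$ and the time-iteration in the last step.
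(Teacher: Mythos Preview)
Your proposal is correct and is precisely the Karp--Li weighted $L^{2}$ energy argument that the paper itself invokes: the paper does not write out a proof at all, only states that it is ``a straightforward adaptation of the unweighted case $w=0$ given in an unpublished paper by Karp and Li'' (referring to Chap.~7 of \cite{Ben}). You have simply supplied the details of that adaptation --- the symmetry of $\Delta_{w}$ against $e^{w}d\mu_{g}$ making the integration by parts go through verbatim --- and your bookkeeping on the Gaussian scale $a$ and the time step $\tau\le(4\alpha)^{-1}$ is standard and accurate.
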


The proof of proposition \ref{ppe-max-int-wei-hea-equ} is a straightforward adaptation of the unweighted case $w=0$ given in an unpublished paper by Karp and Li : [Chap.7,\cite{Ben}].

\begin{theo}\label{iso-weighted-lap-I-bis}
Let $(M,g,\nabla f)$ be an expanding gradient Ricci soliton with bounded curvature such that $f$ is proper.
Then, for any positive $\alpha$ and any real number $\gamma$, the operator
\begin{eqnarray*}
h\in D^{2,\theta}_{0,\nabla f}(M,E)\longmapsto\Delta_{v-\gamma\ln v}h-\alpha h\in C^{0,\theta}_0(M,E)
\end{eqnarray*}
is an isomorphism of Banach spaces for $\theta\in(0,1)$. In particular, the operator $$\Delta_f: D_{0,f^{\alpha},\nabla f}^{2,\theta}(M,E)\rightarrow C_{0,con,f^{\alpha}}^{0,\theta}(M,E)$$ is an isomorphism of Banach spaces.
 \end{theo}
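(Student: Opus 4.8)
\emph{Strategy and Step 1 (the isomorphism on the larger spaces).} The assertion to prove is the displayed isomorphism for an arbitrary real $\gamma$; the ``in particular'' statement about $\Delta_f$ then follows from the case $\gamma=2\alpha$ exactly as in Theorem~\ref{first-iso-sch}, using $v^{\alpha}\circ\Delta_f\circ v^{-\alpha}=\Delta_{v-2\alpha\ln v}-\alpha+\alpha(\alpha+1)|\nabla\ln v|^2$ and the fact that multiplication by $\alpha(\alpha+1)|\nabla\ln v|^2$ is a compact operator $D^{2,\theta}_{0,\nabla f}(M,E)\to C^{0,\theta}_{0}(M,E)$ (it even maps into continuous tensors vanishing at infinity, since $|\nabla\ln v|^2\le v^{-1}$), hence a compact perturbation of an isomorphism. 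Write $L:=\Delta_{v-\gamma\ln v}-\alpha$. I first record that $L\colon D^{2,\theta}_{\nabla f}(M,E)\to C^{0,\theta}_{con}(M,E)$ is an isomorphism with $\|h\|_{D^{2,\theta}_{\nabla f}}\le C\|Lh\|_{C^{0,\theta}_{con}}$: the vector field $V:=\nabla v-\gamma\nabla\ln v$ satisfies the hypotheses of Theorem~\ref{Main-theo-weig-sch-est} ($\Rm(g)$ and its covariant derivatives are bounded by Shi's estimates, $\Rm(g)\ast V$ and $\nabla V$ are bounded by lemma~\ref{id-EGS} and proposition~\ref{pot-fct-est}, and $\phi:=\ln v$ is an exhaustion with $\sup_M(|\Delta_g\phi|+|\nabla_V\phi|)<+\infty$), so for $H\in C^{0,\theta}_{con}(M,E)=C^{0,\theta}(M,E)$ and $\lambda=\alpha>0$ there is a unique $h\in C^{2,\theta}(M,E)$ with $Lh=H$ and $\|h\|_{C^{2,\theta}}\le C\|H\|_{C^{0,\theta}}$; since $\Delta_{\nabla f}h=\Delta_{\nabla v}h=Lh+\alpha h+\gamma\nabla_{\nabla\ln v}h\in C^{0,\theta}_{con}(M,E)$ one gets $h\in D^{2,\theta}_{\nabla f}(M,E)$ with the asserted bound, and injectivity on $D^{2,\theta}_{\nabla f}(M,E)$ is corollary~\ref{cor-uni-sol-pol-gro} with $\beta=0<\min\{1,\alpha\}$.

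\emph{Step 2 (reduction to persistence of decay).} Injectivity of $L$ on the subspace $D^{2,\theta}_{0,\nabla f}(M,E)\subset D^{2,\theta}_{\nabla f}(M,E)$ is immediate from Step~1. For surjectivity onto $C^{0,\theta}_0(M,E)$, take $H\in C^{0,\theta}_0(M,E)\subset C^{0,\theta}_{con}(M,E)$ and let $h\in D^{2,\theta}_{\nabla f}(M,E)$ be the solution from Step~1, so $\|h\|_{D^{2,\theta}_{\nabla f}}\le C\|H\|_{C^{0,\theta}_0}$. Since the $D^{2,\theta}_{0,\nabla f}$-norm is the restriction of the $D^{2,\theta}_{\nabla f}$-norm, it remains only to show $h\to 0$ at infinity; then $\Delta_{v-\gamma\ln v}h=\alpha h+H\to 0$ as well, so $h\in D^{2,\theta}_{0,\nabla f}(M,E)$, and $L$ is a norm-isomorphism onto the Banach space $C^{0,\theta}_0(M,E)$ (which in particular renders $D^{2,\theta}_{0,\nabla f}(M,E)$ complete).

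\emph{Step 3 (the barrier argument).} This is the heart of the matter. Fix $\varepsilon>0$ and pick $R\ge 1$ large enough that $|H|\le\alpha\varepsilon$ on $\{v\ge R\}$ and, for a fixed $\mu>0$, $\Delta_{v-\gamma\ln v}(v^{-\mu})-\alpha v^{-\mu}\le 0$ on $\{v\ge R\}$; this is possible because the soliton identities (lemma~\ref{id-EGS}, proposition~\ref{pot-fct-est}) give $\Delta_{v-\gamma\ln v}v=v+O(1)$ and $|\nabla v|^2=O(v)$, hence $\Delta_{v-\gamma\ln v}(v^{-\mu})=-\mu v^{-\mu}+O(v^{-\mu-1})$. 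Set $C:=(\|h\|_{C^0}+1)R^{\mu}$, and for $\delta\in(0,1)$ let $w_\delta:=\varepsilon+\delta+Cv^{-\mu}$ and $h_\delta:=\sqrt{|h|^2+\delta^2}$. A Kato-type computation as in proposition~\ref{uni-ppe-max} gives $\Delta_{v-\gamma\ln v}h_\delta-\alpha h_\delta\ge-|H|-\alpha\delta\ge-\alpha(\varepsilon+\delta)$ on $\{v\ge R\}$, while by construction $\Delta_{v-\gamma\ln v}w_\delta-\alpha w_\delta\le-\alpha(\varepsilon+\delta)$ there and $w_\delta>\|h\|_{C^0}+\delta\ge h_\delta$ on $\{v=R\}$. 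Thus $u_\delta:=h_\delta-w_\delta$ satisfies $\Delta_{v-\gamma\ln v}u_\delta-\alpha u_\delta\ge 0$ on $\{v\ge R\}$ and $u_\delta\le 0$ on $\{v=R\}$. Running the maximum-principle argument of proposition~\ref{uni-ppe-max} on $\{v\ge R\}$ with the exhaustion function $\ln v$ (legitimate since $\sup|\Delta_{v-\gamma\ln v}\ln v|<+\infty$ and $u_\delta-\tfrac1k\ln v\to-\infty$ at infinity) yields $\sup_{\{v\ge R\}}u_\delta\le 0$, i.e.\ $h_\delta\le\varepsilon+\delta+Cv^{-\mu}$ on $\{v\ge R\}$. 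Letting $\delta\downarrow 0$ gives $|h|\le\varepsilon+Cv^{-\mu}$ on $\{v\ge R\}$, so $\limsup_{v\to+\infty}|h|\le\varepsilon$; as $\varepsilon>0$ was arbitrary, $h\to 0$ at infinity, which finishes the proof.

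\emph{Expected main obstacle.} The delicate point is Step~3: one has to manufacture a barrier with the correct \emph{polynomial} decay adapted to the shifted operator $\Delta_{v-\gamma\ln v}-\alpha$, and then apply the maximum principle on the non-compact end $\{v\ge R\}$ where a priori one only controls $h$ by a bound, not by decay; the exhaustion function $\ln v$ together with the estimates $|\nabla\ln v|^2\le v^{-1}$ and $\Delta_{v-\gamma\ln v}v=v+O(1)$ are precisely what make this go through.
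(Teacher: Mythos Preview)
Your proof is correct, and Step~3 takes a genuinely different route from the paper's. The paper represents the solution via the semigroup, $h=\int_0^\infty e^{-\alpha t}T(t)H\,dt$, and then proves the decay of $h$ by showing that $T(t)H$ vanishes at infinity at a controlled rate in $t$: it multiplies the parabolic subsolution $|T(t)H|$ by a spatial cutoff $\psi(v/r)+\epsilon$, derives a differential inequality for this product, and applies the Karp--Li integral maximum principle (proposition~\ref{ppe-max-int-wei-hea-equ}) to conclude $\sup_{v\ge r(\epsilon)}|T(t)H|\le C\epsilon e^{C\epsilon t}$, which after integration in $t$ gives $\sup_{v\ge r(\epsilon)}|h|\le C\epsilon$. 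You instead bypass the semigroup entirely and run a purely elliptic comparison on the end $\{v\ge R\}$ with the explicit supersolution $\varepsilon+\delta+Cv^{-\mu}$, using the computation $\Delta_{v-\gamma\ln v}(v^{-\mu})=-\mu v^{-\mu}+O(v^{-\mu-1})$ and the exhaustion $\ln v$ to justify the maximum principle at infinity. Your argument is shorter and more self-contained (it needs neither the parabolic machinery nor proposition~\ref{ppe-max-int-wei-hea-equ}); the paper's approach, on the other hand, yields the finer information that the semigroup $T(t)$ itself preserves decay at infinity, which is natural from the viewpoint of the general theory developed in section~\ref{sub-sec-proof-the-Lun} but is not strictly needed for the theorem at hand.
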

 
 \begin{proof}
 According to theorem \ref{first-iso-sch}, it suffices to prove that if $H\in C^{0,\theta}_0(M,E)$ and if $h$ is the unique solution in $D^{2,\theta}_{\nabla f}(M,E)$ to $\Delta_{v-\gamma\ln v}h-\alpha h=H$ then $\lim_{+\infty}h=0$. We recall that $h$ is given by the following formula : 
 \begin{eqnarray*}
h=\int_0^{\infty}e^{-\alpha t}T(t)Hdt,
\end{eqnarray*}
where $(T(t))_{t\geq 0}$ is the one parameter semigroup generated by $\Delta_{v-\gamma\ln v}$.
Therefore, it suffices to prove that $\lim_{+\infty}T(t)H=0$ for any positive time at a controlled rate. More specifically, we claim that, for any positive $\epsilon$, there is some positive radius $r(\epsilon)$ such that for any positive time $t$,
\begin{eqnarray*}
\sup_{v\geq r(\epsilon)}\arrowvert T(t)H\arrowvert\leq \epsilon e^{\epsilon t}.
\end{eqnarray*}
By considering the norm of $T(t)H$, we are reduced to consider a nonnegative subsolution of the heat equation for the weighted laplacian $\Delta_{v-\gamma\ln v}$. From now on, let $u:M^n\times[0,T]\rightarrow \mathbb{R}_+$ be a subsolution to the weighted heat equation with weight $v-\gamma\ln v$ such that $\lim_{+\infty}u(0,\cdot)=0$ and such that $u(t,\cdot)$ is bounded for any nonnegative time $t$ by $\sup_Mu(0,\cdot)$.

Let $\psi_{\epsilon,r}:=\psi(v/r)+\epsilon,$ where $\epsilon$ and $r$ are positive and where $\psi:[0,+\infty)\rightarrow[0,1]$ is such that,
\begin{eqnarray*}
&&\psi\arrowvert[0,1]\equiv 0\quad;\quad\psi\arrowvert[2,+\infty)\equiv1\quad;\\
&&\quad 0\leq\psi'\leq C\quad;\quad \arrowvert\psi''\arrowvert\leq C.
\end{eqnarray*}
Let $r_0(\epsilon)>0$ such that $u(x,0)\leq \epsilon$ if $v(x)\geq r_0(\epsilon)$. We compute the evolution of the function $\psi_{\epsilon,r}u$ as follows :
\begin{eqnarray*}
\partial_t(\psi_{\epsilon,r}u)&=&\psi_{\epsilon,r}\partial_tu\leq \psi_{\epsilon,r}\Delta_{v-\gamma\ln v}u\\
&\leq&\Delta_{v-\gamma\ln v}(\psi_{\epsilon,r}u)-2<\nabla\psi_{\epsilon,r},\nabla u>-u\Delta_{v-\gamma\ln v}\psi_{\epsilon,r}\\
&\leq&\Delta_{v-\gamma\ln v-2\ln\psi_{\epsilon,r}}(\psi_{\epsilon,r}u)+\left(2\arrowvert\nabla\ln \psi_{\epsilon,r}\arrowvert^2-\frac{\Delta_{v-\gamma\ln v}\psi_{\epsilon,r}}{\psi_{\epsilon,r}}\right)(\psi_{\epsilon,r}u).
\end{eqnarray*}
Now, on $\{r\leq v\leq 2r\}$,
\begin{eqnarray*}
\nabla\psi_{\epsilon,r}=\psi^{'} \frac{\nabla v}{r}\quad;\quad\Delta \psi_{\epsilon,r}=\psi^{''}\frac{\arrowvert\nabla v\arrowvert^2}{r^2}+\psi^{'}\frac{\Delta v}{r}\quad;\quad\\
<\nabla v,\nabla\psi_{\epsilon,r}>=\psi^{'} \frac{\arrowvert\nabla v\arrowvert^2}{r}\geq 0\quad;\quad <\nabla \ln v,\nabla\psi_{\epsilon,r}>=\psi^{'} \frac{\arrowvert\nabla v\arrowvert^2}{vr},
\end{eqnarray*}
so that, 
\begin{eqnarray*}
2\arrowvert\nabla\ln \psi_{\epsilon,r}\arrowvert^2-\frac{\Delta_{v-\gamma\ln v}\psi_{\epsilon,r}}{\psi_{\epsilon,r}}\leq \frac{C}{r\epsilon^2}+\frac{C}{r\epsilon},
\end{eqnarray*}
where $C$ is a positive constant uniform in $r$ and $\epsilon$, since $\arrowvert\nabla v\arrowvert^2\leq v$ by proposition \ref{pot-fct-est}.
Define $r(\epsilon):=\max\{\epsilon^{-3},r_0(\epsilon)\}$ so that
\begin{eqnarray*}
\partial_t(\psi_{\epsilon,r}u)\leq \Delta_{v-\gamma\ln v-2\ln\psi_{\epsilon,r}}(\psi_{\epsilon,r}u)+C\epsilon (\psi_{\epsilon,r}u),
\end{eqnarray*}
i.e. $U_{\epsilon,r}(t,\cdot):=e^{-C\epsilon t}\psi_{\epsilon,r}u$ is a subsolution to the heat equation with weight $v-\gamma\ln v-2\ln\psi_{\epsilon,r}$. Moreover, 
\begin{eqnarray*}
U_{\epsilon,r}(0,\cdot)\leq C\epsilon\quad;\quad U_{\epsilon,r}(t,\cdot)\leq (1+\epsilon)\sup_Mu(0,\cdot)\quad\forall t\geq 0.
\end{eqnarray*}
We are now in a position to apply proposition \ref{ppe-max-int-wei-hea-equ} to the subsolution $U_{\epsilon,r}(t,\cdot)-C\epsilon$. The only thing to check is the integrability assumption :
\begin{eqnarray*}
&&\int_0^T\int_{M}e^{-\beta d_g^2(x,p)}\max\{U_{\epsilon,r}(t,\cdot)-C\epsilon,0\}^2e^{v}v^{-\gamma}\psi_{\epsilon,r}^{-2}d\mu_gdt\\&&\leq C(\epsilon,g)\int_0^T\int_{M}e^{(1/4-\beta)d_g^2(x,p)}d\mu_gdt\\
&&\leq C(\epsilon,g,T)\int_{0}^{+\infty}e^{(1/4-\beta)r^2}A(p,r)dr\\
&&\leq C(\epsilon,g,T)\int_{0}^{+\infty}e^{(1/4-\beta)r^2+C(n,g)r}dr<+\infty
\end{eqnarray*}
as soon as $\beta >1/4$ for some fixed point $p\in M$, where we used the fact that the function $v$ is positive and at most quadratically growing together with the Bishop theorem since the (Ricci) curvature is bounded from below.

Therefore, since $T$ is arbitrary,
\begin{eqnarray*}
U_{\epsilon,r}(t,x)\leq C\epsilon,\quad\mbox{on $M^n\times[0,+\infty)$},
\end{eqnarray*}
i.e.
\begin{eqnarray*}
u(t,x)\leq C\epsilon e^{C\epsilon t}, \quad\mbox{on $\{f\geq 2r(\epsilon)\}\times[0,+\infty)$}.
\end{eqnarray*}

By integrating over $[0,+\infty)$, this shows that
\begin{eqnarray*}
\sup_{v\geq r(\epsilon)}\arrowvert h\arrowvert\leq C\epsilon,
\end{eqnarray*}
i.e. $\lim_{+\infty}h=0$ since $v$ is proper.

 \end{proof}
 
 \subsubsection{The cases $k\geq 1$}
 We state and prove the following result that is the analogue of theorem \ref{iso-weighted-lap-II}.
 
 \begin{theo}\label{iso-weighted-lap-II-bis}
Let $(M^n,g,\nabla f)$ be an asymptotically conical expanding gradient Ricci soliton. Then, the operator
\begin{eqnarray*}
\Delta_{v-2\alpha\ln v}-\alpha:D^{k+2,\theta}_{0,\nabla f}(M,E)\rightarrow C^{k,\theta}_{0,con}(M,E),
\end{eqnarray*}
is an isomorphism of Banach spaces for any $\alpha\in(1/2,+\infty)$ and any $\theta\in(0,1)$. In particular, the operator $$\Delta_f: D_{0,f^{\alpha},\nabla f}^{k+2,\theta}(M,E)\rightarrow C_{0,con,f^{\alpha}}^{k,\theta}(M,E),$$ is an isomorphism of Banach spaces  for any $\alpha\in(1/2,+\infty)$ and any $\theta\in(0,1)$.
\end{theo}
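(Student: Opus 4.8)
The plan is to bootstrap from the two ingredients already available: the isomorphism statement \emph{without} decay at infinity (Theorem \ref{iso-weighted-lap-II}, which provides uniqueness in the larger spaces and the embeddings into $\C^{k;2,\theta}(M,E)$), and the decaying statement in the base case $k=0$ (Theorem \ref{iso-weighted-lap-I-bis}). One then runs an induction on $k$ that tracks the decay at infinity through \emph{exactly} the computation already carried out in the proof of Theorem \ref{iso-weighted-lap-II}. Injectivity is immediate from the inclusions $C^{k,\theta}_{0,con}(M,E)\subset C^{k,\theta}_{con}(M,E)$, $D^{k+2,\theta}_{0,\nabla f}(M,E)\subset D^{k+2,\theta}_{\nabla f}(M,E)$ together with Theorem \ref{iso-weighted-lap-II} (indeed already Theorem \ref{first-iso-sch}).

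For surjectivity, let $H\in C^{k,\theta}_{0,con}(M,E)$. By Theorem \ref{iso-weighted-lap-II} there is a unique $h\in D^{k+2,\theta}_{\nabla f}(M,E)$ with $\Delta_{v-2\alpha\ln v}h-\alpha h=H$, so everything reduces to showing $h\in D^{k+2,\theta}_{0,\nabla f}(M,E)$, i.e. $v^{i/2}\nabla^ih\to0$ and $v^{i/2}\nabla^i(\Delta_fh)\to0$ as $r_p\to+\infty$ for $0\le i\le k$. I argue by induction on $k$, the case $k=0$ being Theorem \ref{iso-weighted-lap-I-bis}. In the inductive step, $H\in C^{k-1,\theta}_{0,con}(M,E)$ and uniqueness in the larger space identifies $h$ with the solution furnished by the inductive hypothesis, hence $h\in D^{k+1,\theta}_{0,\nabla f}(M,E)$ and $h_i:=v^{i/2}\nabla^ih\to0$ for $i\le k-1$. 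As in the proof of Theorem \ref{iso-weighted-lap-II}, $h_k:=v^{k/2}\nabla^kh$ solves
\begin{eqnarray*}
\Delta_{v-(2\alpha+k)\ln v}h_k-\alpha h_k=(\Rm(g)+\arrowvert\nabla\ln v\arrowvert^2)\ast h_k+H_k+\sum_{i=0}^{k-1}v^{\frac{k-i}{2}}\nabla^{k-i}\Rm(g)\ast h_i,
\end{eqnarray*}
with $H_k:=v^{k/2}\nabla^kH$. Since the soliton is asymptotically conical, $v^{(k-i)/2}\arrowvert\nabla^{k-i}\Rm(g)\arrowvert=\textit{O}(v^{-1})$; combined with $\arrowvert\nabla\ln v\arrowvert^2\le v^{-1}$, $\Rm(g)\to0$, the boundedness of $h_k$, the decay of $h_i$ for $i\le k-1$ and $H_k\to0$, the right-hand side belongs to $C^{0,\theta}_{0,con}(M,E)$ (the Hölder part is controlled the same way, using the local Shi-type estimates that yield decay of $\nabla\Rm(g)$).

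Now apply Theorem \ref{iso-weighted-lap-I-bis} with the admissible parameters $\gamma=2\alpha+k$ and $\alpha>1/2>0$: there is a unique $\tilde h_k\in D^{2,\theta}_{0,\nabla f}(M,E)$ solving the displayed equation. The difference $T:=h_k-\tilde h_k$ satisfies $\Delta_{v-(2\alpha+k)\ln v}T-\alpha T=0$ with $T=\textit{O}(v^{1/2})$, and since $1/2<\min\{\alpha,1\}$ Corollary \ref{cor-uni-sol-pol-gro} forces $T\equiv0$; hence $h_k=\tilde h_k\in C^{0,\theta}_{0,con}(M,E)$, i.e. $v^{k/2}\nabla^kh\to0$, which with the inductive hypothesis gives $h\in C^{k,\theta}_{0,con}(M,E)$. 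Finally, $\Delta_fh=\Delta_{v-2\alpha\ln v}h+2\alpha\nabla_{\nabla\ln v}h=H+\alpha h+2\alpha\nabla_{\nabla\ln v}h$; the first two terms are in $C^{k,\theta}_{0,con}(M,E)$, and $\nabla_{\nabla\ln v}h$ is too because $\arrowvert\nabla\ln v\arrowvert$ and its covariant derivatives decay while the embedding into $\C^{k;2,\theta}(M,E)$ of Theorem \ref{iso-weighted-lap-II} controls $\nabla^{k+1}h$, $\nabla^{k+2}h$ with the right $v$-weights, so this term carries an extra $\textit{O}(v^{-1})$. Thus $h\in D^{k+2,\theta}_{0,\nabla f}(M,E)$, and the open mapping theorem produces the bounded inverse. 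The weighted statement for $\Delta_f$ follows as in Theorem \ref{first-iso-sch}: under the isometry $h\mapsto v^\alpha h$ between the weighted and unweighted decaying spaces, $\Delta_f$ becomes $\Delta_{v-2\alpha\ln v}-\alpha$ plus the operator $\alpha(\alpha+1)\arrowvert\nabla\ln v\arrowvert^2$, which is compact on these spaces (it tends to $0$ at infinity), hence an injective Fredholm operator of index $0$.

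The only genuinely new difficulty compared with Theorem \ref{iso-weighted-lap-II} is establishing that the right-hand side of the commutator equation, and the correction term $\nabla_{\nabla\ln v}h$ appearing in $\Delta_fh$, actually tend to $0$ in $C^{0,\theta}$ norm rather than merely being bounded; this is precisely where asymptotic conicality enters decisively, via the curvature decay $\nabla^j\Rm(g)=\textit{O}(r_p^{-2-j})$ and the corresponding weighted control on the higher covariant derivatives of $h$.
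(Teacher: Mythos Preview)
Your proof is correct and follows essentially the same route as the paper's: reduce to Theorem \ref{iso-weighted-lap-II} for existence in the larger space, then propagate decay by induction on $k$ via the commutator equation for $h_k=v^{k/2}\nabla^k h$ together with Theorem \ref{iso-weighted-lap-I-bis} at each step. Your version is simply more explicit---you spell out the identification $h_k=\tilde h_k$ via Corollary \ref{cor-uni-sol-pol-gro} (the paper just says ``$h_k$ is the unique bounded solution'' and invokes Theorem \ref{iso-weighted-lap-I-bis} directly), you verify that $\Delta_f h\in C^{k,\theta}_{0,con}$ (which the paper leaves implicit), and you indicate the Fredholm argument for the weighted ``in particular'' statement; none of this differs in substance from the paper's argument.
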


\begin{proof}
Let $H\in C^{k,\theta}_{0,con}(M,E)$ and let $h$ be the unique solution in $D^{k+2,\theta}_{\nabla f}(M,E)$ to $\Delta_{v-2\alpha \ln v}h-\alpha h=H$ given by theorem \ref{iso-weighted-lap-II}. Then we claim that $h\in C^{k,\theta}_{0,con}(M,E)$.
Indeed, it is shown in the proof of theorem \ref{iso-weighted-lap-II} that $v^{k/2}\nabla^kh=:h_k$ is the unique bounded solution to 
\begin{eqnarray*}
\Delta_{v-(2\alpha+k)\ln v}h_k-\alpha h_k=(\Rm(g)+v^{-1})\ast h_k+H_k+\sum_{i=0}^{k-1}v^{\frac{k-i}{2}}\nabla^{k-i}\Rm(g)\ast h_i,
\end{eqnarray*}
where $H_k:=v^{k/2}\nabla^kH$.
By induction on $k$ and by assumption on the curvature decay, the right hand side goes to zero at infinity and so is $h_k$ by theorem \ref{iso-weighted-lap-I-bis} with $\gamma:=2\alpha+k$.

\end{proof}

\begin{coro}\label{sec-iso-sch-II-bis}
Let $(M^n,g,\nabla f)$ be an asymptotically conical expanding gradient Ricci soliton with positive curvature operator.

Then, for any $\alpha\in(1/2,+\infty)$ and any $\theta\in(0,1)$, $L_{\alpha}$ is an isomorphism or equally,
\begin{eqnarray*}
L_0:D^{k+2,\theta}_{0,f^{\alpha},\nabla f}(M,S^2T^*M)\rightarrow C^{k,\theta}_{0,con,f^{\alpha}}(M,S^2T^*M)
\end{eqnarray*}
is an isomorphism of Banach spaces.
\end{coro}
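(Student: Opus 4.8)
The plan is to argue exactly as in the proof of Corollary~\ref{sec-iso-sch-II}, transposed to the function spaces with vanishing at infinity. Write $L_0=\Delta_f+2\Rm(g)\ast$ and, after conjugation by $v^{\alpha}$,
\begin{eqnarray*}
L_{\alpha}=v^{\alpha}\circ\Delta_f\circ v^{-\alpha}+2\Rm(g)\ast=\widetilde{L_{\alpha}}+\widetilde{K_{\alpha}},
\end{eqnarray*}
where $\widetilde{L_{\alpha}}:D^{k+2,\theta}_{0,\nabla f}(M,S^2T^*M)\to C^{k,\theta}_{0,con}(M,S^2T^*M)$ is an isomorphism by Theorem~\ref{iso-weighted-lap-II-bis} and $\widetilde{K_{\alpha}}=2\Rm(g)\ast$. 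First I would check that $\widetilde{K_{\alpha}}$ does map $D^{k+2,\theta}_{0,\nabla f}$ into $C^{k,\theta}_{0,con}$: since $(M^n,g,\nabla f)$ is asymptotically conical, $v^{(k-i)/2}\nabla^{k-i}\Rm(g)\to 0$ at infinity for $0\leq i\leq k$, hence every term $v^{i/2}\nabla^i h\ast v^{(k-i)/2}\nabla^{k-i}\Rm(g)$ occurring in $v^{k/2}\nabla^k(\Rm(g)\ast h)$ tends to zero; so the image indeed lies in the closed subspace $C^{k,\theta}_{0,con}\subset C^{k,\theta}_{con}$.

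Next I would prove that $\widetilde{K_{\alpha}}:D^{k+2,\theta}_{0,\nabla f}(M,S^2T^*M)\to C^{k,\theta}_{0,con}(M,S^2T^*M)$ is compact, by the very same argument already used twice in Section~\ref{Sec-Sol-C^0}: a bounded sequence $(h_n)_n$ in $D^{k+2,\theta}_{0,\nabla f}$ has a subsequence converging in the $C^{k+2}_{loc}$ topology by Ascoli's theorem, using the continuous embedding $D^{k+2,\theta}_{0,\nabla f}\hookrightarrow\C^{k;2,\theta}$; given $\epsilon>0$, properness of $f$ together with the decay $\lim_{+\infty}\nabla^j\Rm(g)=0$ for $0\leq j\leq k$ (a consequence of the asymptotically conical hypothesis and local Shi's estimates, cf.\ \cite{Der-Asy-Com-Egs}) furnishes a compact $K_{\epsilon}\subset M$ outside of which all the weighted curvature factors are $\leq\epsilon$, and splitting $\|\widetilde{K_{\alpha}}(h_n-h)\|_{C^{k,\theta}_{con}}$ into its contribution over $K_{\epsilon}$ and over $M\setminus K_{\epsilon}$ shows the convergence takes place in $C^{k,\theta}_{con}$. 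Consequently $L_{\alpha}$ is a compact perturbation of an isomorphism, i.e.\ a Fredholm operator of index $0$, which is an isomorphism as soon as it is injective; and $L_{\alpha}$ is injective if and only if $L_0$ is, by construction.

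It remains to establish injectivity of $L_0$ on $D^{k+2,\theta}_{0,f^{\alpha},\nabla f}(M,S^2T^*M)$, and here I would simply observe that this space is contained in $D^{k+2,\theta}_{f^{\alpha},\nabla f}(M,S^2T^*M)$, on which $L_0$ is injective by Corollary~\ref{sec-iso-sch-II}; hence the kernel is trivial. (Alternatively, one reruns the maximum principle for symmetric $2$-tensors of Hamilton--Brendle as in the proof of Corollary~\ref{sec-iso-sch}: $\Li_{\nabla f}(g)=2\Ric(g)+g$ is positive definite, since the curvature operator is nonnegative, and it satisfies $\Delta_f\Li_{\nabla f}(g)+2\Rm(g)\ast\Li_{\nabla f}(g)=0$, so it serves as a barrier giving $\sup_{f\leq t}|h|\leq C\sup_{f=t}|h|$ with $C$ independent of $t$, whence $h\equiv 0$ because $h\to 0$ at infinity.) Therefore $L_0:D^{k+2,\theta}_{0,f^{\alpha},\nabla f}(M,S^2T^*M)\to C^{k,\theta}_{0,con,f^{\alpha}}(M,S^2T^*M)$ is an injective Fredholm operator of index $0$, i.e.\ an isomorphism. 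The only point that requires genuine care is the compactness step in these ``$C_0$'' spaces — in particular that the limiting tensor still belongs to $C^{k,\theta}_{0,con}$ — but this is automatic since $\widetilde{K_{\alpha}}$ has image in the closed subspace $C^{k,\theta}_{0,con}$; everything else is verbatim the Banach-space argument of Corollary~\ref{sec-iso-sch-II}.
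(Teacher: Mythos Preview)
Your proposal is correct and is exactly the argument the paper has in mind: the corollary is stated without proof in the paper, as the verbatim transposition of Corollary~\ref{sec-iso-sch-II} (itself the higher-regularity analogue of Corollary~\ref{sec-iso-sch}) to the $C_0$-spaces, with Theorem~\ref{iso-weighted-lap-II-bis} replacing Theorem~\ref{iso-weighted-lap-II} for the isomorphism of the unperturbed part, and the compactness of the curvature term together with injectivity inherited from the larger space.
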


\subsection{Tameness of the linearized operator}\label{Sec-Tam-Lin-Ope}
As said in the introduction, in order to apply the Nash-Moser theorem, one needs to invert the linearized operator on a neighborhood of $(\gamma_0,0)$ in $C^{\infty}(X,S^2T^*X)\times D^{\infty}_{0,f^{\alpha},\nabla f}(M,S^2T^*M)$. This is the purpose of the following theorem :
\begin{theo}\label{Lin-Iso-Fre}
Let $(M^n,g_0,\nabla^{g_0} f_0)$ be an asymptotically conical expanding gradient Ricci soliton. Let $(C(X),dr^2+r^2\gamma_0,o)$ be its asymptotic cone. Then, 
\begin{enumerate}
\item  For $\alpha\in(1/2,+\infty)$, the Fréchet spaces $D^{\infty}_{0,f^{\alpha},\nabla f}(M,E)$ and $C^{\infty}_{0,con,f^{\alpha}}(M,E)$ are tame.\\
\item There exists a neighborhood $U(\gamma_0,0)\subset (C^{\infty}(X,S^2T^*X))\times D^{\infty}_{0,f^{\alpha},\nabla f}(M,S^2T^*M))$ of $(\gamma_0,0)$ such that the family of linear maps $$L:U((\gamma_0,0))\times D^{\infty}_{0,f^{\alpha},\nabla f}(M,S^2T^*M))\rightarrow C^{\infty}_{0,con,f^{\alpha}}(M,S^2T^*M),$$
defined by 
\begin{eqnarray*}
L(\gamma,h)\bh&:=&\Delta_{T(\gamma)+h,f_0}\bh+2\Rm(T(\gamma)+h)\ast \bh\\
&&+\frac{1}{2}\Sym(Q(T(\gamma)+h,T(\gamma))\otimes \bh)+\Li_{\bh(\nabla^{T(\gamma)}f_0-\nabla^{T(\gamma)+h}f_0)}(T(\gamma)+h),
\end{eqnarray*}
 defines a smooth invertible tame map whose inverse is also tame for $\alpha\in(1/2,+\infty)$.
\end{enumerate}
\end{theo}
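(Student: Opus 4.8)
The plan is to deduce both assertions from the Banach‑level results of Subsections \ref{Sec-Sol-C^k} and \ref{Sec-Sol-C^k_0} together with the tame‑category formalism of \cite{Ham-Nas-Mos}; the only genuinely new inputs are the tameness of the weighted Fréchet spaces in $(1)$ and a parameter‑uniform Neumann‑series argument for $(2)$.

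For $(1)$, I would first treat $C^{\infty}_{0,con,f^{\alpha}}(M,E)$. Since $(M,g_0,\nabla^{g_0}f_0)$ is asymptotically conical, the diffeomorphism $\phi$ of Subsection \ref{sec-exp-str-I} together with the change of variable $s=v^{-1/2}$ radially compactifies $M$ into a compact manifold with boundary $\overline{M}$ with $\partial\overline{M}\cong X$; after multiplying by $v^{\alpha}$ and rescaling the $i$‑th covariant derivative by $v^{i/2}$, the defining conditions $\lim_{+\infty}v^{i/2}\nabla^i h=0$ become the vanishing of all derivatives of the corresponding section along $\partial\overline{M}$. Thus $C^{\infty}_{0,con,f^{\alpha}}(M,E)$ is carried by a degree‑$0$ tame map onto the space of smooth sections on $\overline{M}$ flat on $\partial\overline{M}$, whose tameness reduces, via a collar neighbourhood and a partition of unity, to the tameness of $C^{\infty}$ of a compact manifold and of a Schwartz‑type space $\{u\in C^{\infty}([0,1]):u^{(j)}(0)=0\ \forall j\}$, both established in \cite{Ham-Nas-Mos}. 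Tameness of $D^{\infty}_{0,f^{\alpha},\nabla f}(M,E)$ then follows for free: by Theorem \ref{iso-weighted-lap-II-bis} the operator $\Delta_f$ is a topological isomorphism at every level $k$ with uniform (degree‑$0$) bounds in both directions, hence assembles into a degree‑$0$ tame isomorphism $D^{\infty}_{0,f^{\alpha},\nabla f}(M,E)\to C^{\infty}_{0,con,f^{\alpha}}(M,E)$, which transports tameness back.

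For $(2)$, the key observation is that at the base point $L(\gamma_0,0)=L_0:=\Delta_{g_0,f_0}+2\Rm(g_0)\ast$, since $Q(g_0,g_0)=0$ by \eqref{eq-egs} and $\nabla^{T(\gamma_0)}f_0=\nabla^{g_0}f_0$; and by Corollary \ref{sec-iso-sch-II-bis} applied at every level $k$, $L_0$ is a smooth invertible tame map $D^{\infty}_{0,f^{\alpha},\nabla f}(M,S^2T^*M)\to C^{\infty}_{0,con,f^{\alpha}}(M,S^2T^*M)$ with tame inverse. Writing $L(\gamma,h)=L_0+P(\gamma,h)$, I would establish, using Lemma \ref{lin-qua-egs}, the change‑of‑connection identity \eqref{fir-var-cov-der}, Claim \ref{cla-inv-ham} and the interpolation inequalities exactly as in the proof of Theorem \ref{Loc-Dif-Ban}, that $(\gamma,h,\bh)\mapsto P(\gamma,h)\bh$ is a smooth tame map into $C^{\infty}_{0,con,f^{\alpha}}(M,S^2T^*M)$ with tame constants uniform over a small neighbourhood $U(\gamma_0,0)$, and — the coefficients of $P(\gamma,h)$ being built from $h(\gamma)+h$, small in low norm for $(\gamma,h)$ close to $(\gamma_0,0)$ — that shrinking $U$ yields a base level $b$ with $\|L_0^{-1}P(\gamma,h)\bh\|_{b}\le\tfrac12\|\bh\|_{b}$ on $U$. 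One must be careful here that although $h(\gamma)=\psi(4f)\phi_{*}(\gamma-\gamma_0)$ and its rescaled derivatives are only $O(1)$ at infinity, the output $P(\gamma,h)\bh$ still lies in the $0$‑space because every term carries a factor of $\bh\in D^{\infty}_{0,f^{\alpha},\nabla f}$ (recall $v^{\alpha}\bh$ is bounded and, as $\alpha>1/2$, $v^{1/2}\bh\to0$), and similarly a factor of $h$ or of $\nabla h$ which decays. Then $I+L_0^{-1}P(\gamma,h)$ is a smooth tame family of near‑identity maps of $D^{\infty}_{0,f^{\alpha},\nabla f}(M,S^2T^*M)$, uniformly in $(\gamma,h)\in U$, so by the tame Neumann‑series lemma of \cite{Ham-Nas-Mos} it admits a smooth tame family of inverses, and $L(\gamma,h)^{-1}=\big(I+L_0^{-1}P(\gamma,h)\big)^{-1}\circ L_0^{-1}$ is a smooth invertible tame family with tame inverse.

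The hard part will be the bookkeeping in $(2)$: checking that $P(\gamma,h)$ obeys tame estimates \emph{with values in the $0$‑spaces}, uniformly over the parameter neighbourhood, and then that the tame Neumann series produces an inverse that is smooth tame in $(\gamma,h)$ as well — the delicate point being that $h(\gamma)$ does not decay, so decay of $P(\gamma,h)\bh$ has to be recovered from decay of $\bh$ and of $h$. A secondary point requiring care is $(1)$: verifying that the radial‑compactification correspondence is a genuine isomorphism of graded Fréchet spaces with degree‑$0$ tame estimates, not merely a continuous bijection.
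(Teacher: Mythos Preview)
Your treatment of part (1) is essentially the same as the paper's: both reduce tameness of $C^{\infty}_{0,con,f^{\alpha}}(M,E)$ to Hamilton's result on $C^{\infty}_0$ of a compact manifold with boundary, and then transport tameness to $D^{\infty}_{0,f^{\alpha},\nabla f}(M,E)$ via the isomorphism $\Delta_f$. The paper does this through the conformal change to a cylinder $([0,+\infty)\times X, dt^2+g_X)$ rather than your radial compactification $s=v^{-1/2}$, but these are equivalent pictures.

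For part (2), your route diverges from the paper's, and there is a gap. The paper does \emph{not} use a Neumann series; it follows Hamilton's Theorem 3.3.1 and proves directly, by induction on $k$, the tame estimate for the inverse (Claim \ref{tame-est-inv-map}), using the Leibniz-type identity $L(\Gamma)(v_0^{1/2}\nabla\bh)=v_0^{1/2}\nabla(L(\Gamma)\bh)-L(v_0^{1/2}\nabla\Gamma)\bh$ to bootstrap. The problem with your argument as written is that there is no ``tame Neumann series lemma'' in \cite{Ham-Nas-Mos} of the kind you invoke. Smallness of $L_0^{-1}P(\gamma,h)$ at a single base level $b$ is not enough: from the estimate $\|P(\gamma,h)\bh\|_{k}\lesssim \|\Gamma-\Gamma_0\|_{2}\|\bh\|_{k+2}+\|\Gamma-\Gamma_0\|_{k+2}\|\bh\|_{4}$ (this is exactly \eqref{eq-tam-est-lin-ope}), the operator norm of $L_0^{-1}P$ at level $k$ contains the term $\|\Gamma-\Gamma_0\|_{k+2}$, which cannot be made small uniformly in $k$ by shrinking $U$. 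A naive geometric series therefore fails.

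Your approach is salvageable, but you must supply the missing argument yourself: with the two-term estimate above and $\|\Gamma-\Gamma_0\|_{k_0}\le\epsilon$ small, one checks by induction that $\|K^n\bh\|_{k}\lesssim \epsilon^n\|\bh\|_{k}+n\epsilon^{n-1}C_k\|\bh\|_{k_0}$ with $C_k\sim\|\Gamma-\Gamma_0\|_{k}$, whence $\sum_n K^n$ converges at each level with a tame bound. This works but is not a black-box citation; it is essentially equivalent in labour to the paper's inductive proof, and the paper's version has the advantage of giving the explicit estimate of Claim \ref{tame-est-inv-map} directly.
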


\begin{rk}
The reason why we consider the map $L$ in theorem \ref{Lin-Iso-Fre} will be legitimated by the proof of theorem \ref{Sec-Def-Exp-Fre}.
\end{rk}

\begin{proof}
\begin{enumerate}
\item According to theorem \ref{iso-weighted-lap-II-bis}, the operator 
\begin{eqnarray*}
\Delta_{v}: D^{\infty}_{0,f^{\alpha},\nabla f}(M,E)\rightarrow C^{\infty}_{0,con,f^{\alpha}}(M,E),
\end{eqnarray*}
is well-defined, is invertible, tame and its inverse is also tame for $\alpha>1/2$. Therefore, $D^{\infty}_{0,f^{\alpha},\nabla f}(M,E)$ is tame if and only if $C^{\infty}_{0,con,f^{\alpha}}(M,E)$ is. We claim that $C^{\infty}_{0,con,f^{\alpha}}(M,E)$ is tame for any positive $\alpha$. We do the proof for the bundle $E= S^2T^*M$, the proof in general can be adapted.

First, we remark that we can localize the argument outside a sufficiently large compact set of $M$ by using a suitable cutoff function.
Secondly, as $(M,g,\nabla f)$ is smoothly asymptotic to a metric cone $(C(X),dr^2+r^2g_X,r\partial_r/2)$ as explained at the beginning of section \ref{sec-exp-str-I}, by pulling back tensors on $M$, we are reduced to show that the corresponding weighted spaces $C^{\infty}_{0,con,r^{\alpha}}(C(X)\setminus B(o,1))$ where $\alpha$ is a nonnegative number, are tame. Actually, we are even reduced to show that $C^{\infty}_{0,con}(C(X)\setminus B(o,1))$ is tame since the map $h\in C^{\infty}_{0,con,r^{\alpha}}(C(X)\setminus B(o,1))\rightarrow r^{\alpha}h\in C^{\infty}_{0,con}(C(X)\setminus B(o,1))$ is an isometry.

Now, we reduce again the claim to manifolds that are asymptotically cylindrical by the following correspondence. Indeed, define the following diffeomorphism :
\begin{eqnarray*}
((0,+\infty)\times X,dt^2+g_X)&\stackrel{\psi}{\longmapsto}&(C(X)\setminus \bar{B}(o,1),dr^2+r^2g_X)\\
(t,x)&\longmapsto &(e^t,x).
\end{eqnarray*}

Then, $\psi^*(dr^2+r^2g_X)=e^{2t}(dt^2+g_X)$. If $g_{con}:=dr^2+r^2g_X$ and if $g_{cyl}:=dt^2+g_X$ then the previous equality reads $\psi^*g_{con}=e^{2t}g_{cyl}.$ Define the corresponding weighted spaces in the cylindrical case :
\begin{eqnarray*}
&&C^{\infty}_{0,cyl}([0,+\infty)\times X):=\\
&&\left\{h\in C^{\infty}_{loc}\quad|\quad \sup_{[0,+\infty)\times X}\arrowvert\nabla^kh\arrowvert<+\infty,\quad\limsup_{+\infty}\arrowvert\nabla^kh\arrowvert=0\quad\forall k\geq 0\right\}.
\end{eqnarray*}

One can check that $h_{con}\in C^{\infty}_{0,con}(C(X)\setminus B(o,1))$ if and only if $h_{cyl}\in C^{\infty}_{0,cyl}([0,+\infty)\times X)$ with the notations previously introduced.

Finally, we invoke corollary $1.3.8$ of \cite{Ham-Nas-Mos} that ensures that, if $N$ is a compact manifold with boundary, the space $C_{0}^{\infty}(N,E)$ of sections vanishing at the boundary of $N$ together with all their derivatives is tame.\\
\item
As in the proof of theorem \ref{Loc-Dif-Ban}, our approach is actually an adaptation of the proof of theorem $3.3.1$ of \cite{Ham-Nas-Mos}. We use massively interpolation inequalities again that hold here when the parameter $\theta$ is $0$. As noticed in remark \ref{rk-non-int-spa}, corresponding interpolation inequalities for an arbitrary $\theta\in(0,1)$ are not straightforward. We will only use the simple fact that, for a given nonnegative integer $k$ and some $\theta\in(0,1)$, the norm $\|\cdot\|_{k,\theta}$ (on the spaces considered above) is dominated by the norm $\|\cdot\|_{k+1}$. The drawback is that we establish less precise tame estimates for the map $L$ and its inverse : this will not affect the use of the Nash-Moser theorem.

We temporarily simplify the notations. \\

Denote by $(\|\cdot\|_{k+2,\theta}^{D_f})_{k\geq 0}$ (respectively $(\|\cdot\|_{k,\theta}^{C_f})_{k\geq 0}$, respectively $(\|\cdot\|_{k,\theta}^{C})_{k\geq 0}$)   the collection of norms induced on $D^{\infty}_{0,f^{\alpha},\nabla f}(M,S^2T^*M)$ (respectively on $C^{\infty}_{0,con,f^{\alpha}}(M,S^2T^*M)$, respectively on $C^{\infty}_{con}(M,S^2T^*M)$).
Finally, we denote by $\Gamma:=(T(\gamma),h)$ an element of $C^{\infty}_{con}(M,S^2T^*M)\times D^{\infty}_{0,f^{\alpha},\nabla f}(M,S^2T^*M)$. 
\begin{claim}\label{cla-tam-est-fam-lin-map}
There exists a nonnegative integer $k_0$ such that if $\|\Gamma-\Gamma_0\|_{k_0,\theta}^{C\times D_f}$ is bounded then
\begin{eqnarray}
\| L(\Gamma)\bh-L(\Gamma_0)\bh\|_{k,\theta}^{C_f}&\lesssim&\sum_{i=0}^k\|\bh\|^{D_f}_{k-i+2,\theta}\|\Gamma-\Gamma_0\|^{C\times D_f}_{i+2,\theta},\\
&&\\
\| L(\Gamma)\bh\|^{C_f}_{k,\theta}&\lesssim& \|\bh\|^{D_f}_{k+2,\theta}+\|\Gamma-\Gamma_0\|^{C\times D_f}_{k+2,\theta}\|\bh\|^{D_f}_{2+2,\theta},\label{eq-tam-est-lin-ope}
\end{eqnarray}
where the symbol $\lesssim$ means up to a multiplicative positive constant (independent of the variables).
\end{claim}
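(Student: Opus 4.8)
The plan is to run the tame-estimate part of the proof of theorem \ref{Loc-Dif-Ban} one more time, now regarding $L(\Gamma)$ as a second order linear differential operator in $\bh$ whose coefficients are built from $\Gamma=(T(\gamma),h)$, its inverse and its first two covariant $g_0$-derivatives. First I would record that $\Gamma_0=(T(\gamma_0),0)=(g_0,0)$, so that $L(\Gamma_0)\bh=\Delta_{g_0,f_0}\bh+2\Rm(g_0)\ast\bh=L_0\bh$: indeed $Q(g_0,g_0)=0$ kills the $\Sym(Q(\cdot)\otimes\bh)$-term and $\nabla^{g_0}f_0-\nabla^{g_0}f_0=0$ kills the Lie-derivative term. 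Consequently the $Q$-term and the Lie-derivative term of $L(\Gamma)$ automatically fit the right-hand sides of both inequalities (they vanish at $\Gamma_0$), and only the differences $\Delta_{T(\gamma)+h,f_0}\bh-\Delta_{g_0,f_0}\bh$ and $2(\Rm(T(\gamma)+h)-\Rm(g_0))\ast\bh$ have to be matched against the rest. All along I would use, exactly as in theorem \ref{Loc-Dif-Ban}: the variation formula $(\ref{Var-For-2-Met-Con})$ for covariant derivatives with respect to two metrics; claim \ref{cla-inv-ham} for the rescaled $g_0$-derivatives of the inverse of a small perturbation of $g_0$; theorem \ref{iso-weighted-lap-II} to convert $\|\nabla^{g_0,j}\bh\|$-quantities ($j\le 2$) into $\|\bh\|^{D_f}$-norms; and the already proven tame estimate $(\ref{tame-est-phi})$, $\|Q(T(\gamma)+h,T(\gamma))\|_{k,\theta}^{C_f}\lesssim\|(h(\gamma),h)\|^{C\times D_f}_{k+2,\theta}$. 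Since H\"older interpolation for general $\theta\in(0,1)$ is not available on these spaces (remark \ref{rk-non-int-spa}), I would systematically replace it by the cruder domination $\|\cdot\|_{k,\theta}\lesssim\|\cdot\|_{k+1}$ together with interpolation at $\theta=0$, losing one derivative; this is harmless for Nash--Moser.

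For the difference of weighted Laplacians, iterating $(\ref{Var-For-2-Met-Con})$ twice exhibits $\Delta_{T(\gamma)+h,f_0}\bh-\Delta_{g_0,f_0}\bh$ as a finite sum of schematic terms $(T(\gamma)+h)^{-j}\ast\nabla^{g_0,a}(h(\gamma)+h)\ast\nabla^{g_0,b}\bh$ with $a\le 2$, $b\le 1$, plus the drift correction $\nabla^{T(\gamma)+h}_{\nabla^{T(\gamma)+h}f_0}\bh-\nabla^{g_0}_{\nabla^{g_0}f_0}\bh$. The point for the drift term is that outside a compact set $h(\gamma)$ is transversal to the radial direction $\nabla^{g_0}f_0$ and $\nabla^{g_0}h(\gamma)$ carries no radial index to leading order, so — precisely as in the proof of lemma \ref{Fir-App-Egs} — the dangerous pairing $\nabla^{g_0}h(\gamma)\ast\nabla^{g_0}f_0$ cancels on the model cone up to a remainder of order $v_0^{-2}$ by asymptotic conicality; what survives is of the form $(T(\gamma)+h)^{-1}\ast\nabla^{g_0}h\ast\nabla^{g_0}f_0\ast\bh$, hence $O(v_0^{-2\alpha})$ since $|\nabla^{g_0}f_0|\sim v_0^{1/2}$ and $h,\bh=O(v_0^{-\alpha})$, which decays exactly because $\alpha>1/2$. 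Distributing $v_0^{k/2}\nabla^{g_0,k}$, applying the Leibniz rule, claim \ref{cla-inv-ham}, the curvature bounds $\|v_0\Rm(g_0)\|_{\ell,\theta}^C\le K$, theorem \ref{iso-weighted-lap-II} and interpolation then yields $\sum_{i=0}^k\|\bh\|^{D_f}_{k-i+2,\theta}\|\Gamma-\Gamma_0\|^{C\times D_f}_{i+2,\theta}$. The curvature difference is handled identically because $\Rm(T(\gamma)+h)-\Rm(g_0)$ is a universal polynomial in $(T(\gamma)+h)^{-1}$, $\nabla^{g_0}(h(\gamma)+h)$ and $\nabla^{g_0,2}(h(\gamma)+h)$.

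The $Q$-term $\frac{1}{2}\Sym(Q(T(\gamma)+h,T(\gamma))\otimes\bh)$ I would bound directly from $(\ref{tame-est-phi})$ via the Leibniz rule and interpolation; and for the Lie-derivative term, outside a compact set $\nabla^{T(\gamma)}f_0-\nabla^{T(\gamma)+h}f_0=-(T(\gamma)+h)^{-1}\ast h\ast(g_0+h(\gamma))^{-1}\ast df_0$ (the $h(\gamma)$ contribution dropping out by transversality), so the relevant vector field is $\bh\ast h\ast\nabla^{g_0}f_0$ up to bounded factors, i.e. $O(v_0^{-2\alpha+1/2})$, and writing $\Li_{\cdot}(T(\gamma)+h)$ through $\nabla^{T(\gamma)+h}$ and converting to $\nabla^{g_0}$ by $(\ref{Var-For-2-Met-Con})$ keeps everything in $C^{k,\theta}_{con,f^{\alpha}}$ with the claimed bound. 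Collecting the four contributions gives both inequalities; one finally fixes the base integer $k_0$ large enough that the finitely many ``provided $\|\cdot\|_{k_0,\theta}$ is bounded'' hypotheses accumulated along the way (from claim \ref{cla-inv-ham} and from each interpolation step) are all implied by boundedness of $\|\Gamma-\Gamma_0\|_{k_0,\theta}^{C\times D_f}$.

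I expect the main obstacle to be purely organizational: keeping track of every factor of $\nabla^{g_0}f_0$ — which grows like $v_0^{1/2}$, equivalently like $r_p$ — and ensuring it is always contracted against a tensor decaying at least like $v_0^{-\alpha}$, which is exactly where the hypothesis $\alpha>1/2$ is used and which rests repeatedly on the transversality of $h(\gamma)$ to the radial direction; together with checking, degree by degree, that the crude $\theta$-interpolation substitution combined with the precise counts in $(\ref{tame-est-phi})$ and theorem \ref{iso-weighted-lap-II} never lets more than $k+2$ derivatives of $\bh$ or of $\Gamma-\Gamma_0$ enter, so that the degree and base of the resulting tame estimates are as stated.
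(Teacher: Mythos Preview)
Your plan is essentially the paper's own proof: write $L(\Gamma)\bh-L(\Gamma_0)\bh$ schematically via the variation formula (\ref{Var-For-2-Met-Con}), use claim \ref{cla-inv-ham} for inverses, convert $\nabla^{g_0,\le 2}\bh$-quantities into $\|\bh\|^{D_f}$-norms through theorem \ref{iso-weighted-lap-II-bis}, invoke (\ref{tame-est-phi}) for the $Q$-term, and finish with Leibniz plus interpolation at $\theta=0$. The second inequality then follows from the first by the triangle inequality and $\|L(\Gamma_0)\bh\|^{C_f}_{k,\theta}\lesssim\|\bh\|^{D_f}_{k+2,\theta}$.

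There is one point where you over-complicate things and introduce an incorrect claim. In the drift term you assert that ``the dangerous pairing $\nabla^{g_0}h(\gamma)\ast\nabla^{g_0}f_0$ cancels on the model cone'' by transversality, leaving only the $h$-contribution. This is not how the paper argues, and it is not quite true: the Christoffel difference contracted with $\nabla^{g_0}f_0$ involves \emph{all three} contractions of $\nabla^{g_0}h(\gamma)$ with $\nabla^{g_0}f_0$, and only $\nabla^{g_0}_{\nabla^{g_0}f_0}h(\gamma)$ enjoys the extra smallness from lemma \ref{Fir-App-Egs}; the others reduce (via $h(\gamma)(\nabla^{g_0}f_0,\cdot)=0$) to $h(\gamma)\ast\nabla^{g_0,2}f_0$, which is merely $O(1)$, not $O(v_0^{-2})$. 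The point is that no cancellation is needed here at all: since $h(\gamma)\in C^{k,\theta}_{con}$, one has $\arrowvert\nabla^{g_0}h(\gamma)\arrowvert=O(v_0^{-1/2})$, and $\arrowvert\nabla^{g_0}f_0\arrowvert=O(v_0^{1/2})$, so the product is $O(1)$ and $\nabla^{g_0}h(\gamma)\ast\nabla^{g_0}f_0\ast\bh=O(v_0^{-\alpha})$ lands directly in $C^{0,\theta}_{con,f^{\alpha}}$. The paper simply keeps both $h(\gamma)$ and $h$ in this term and bounds it by $\sum_i\|\bh\|^{D_f}_{k-i+2,\theta}\|\Gamma-\Gamma_0\|^{C\times D_f}_{i+2,\theta}$; in particular the appeal to $\alpha>1/2$ you make at this spot is misplaced (that hypothesis enters only through theorem \ref{iso-weighted-lap-II-bis}). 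Where transversality \emph{is} used is exactly where you say later: $\nabla^{T(\gamma)}f_0=\nabla^{g_0}f_0$ outside a compact set, so the vector field $\nabla^{T(\gamma)+h}f_0-\nabla^{g_0}f_0$ contains only $h$, not $h(\gamma)$.
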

\begin{proof}[Proof of claim \ref{cla-tam-est-fam-lin-map}]
 Define $\Gamma_0:=(\gamma_0,0)$ and let $\Gamma\in U(\Gamma_0)$ such that $T(\gamma)$ is well-defined, i.e. is a metric. Then, by using the variation formula (\ref{Var-For-2-Met-Con}), and the fact that $\nabla^{T(\gamma)}f_0=\nabla^{T(\gamma_0)}f_0$ outside a compact set,
 \begin{eqnarray*}
(\Delta_{T(\gamma)+h}-\Delta_{T(\gamma_0)})\bh&=&\left[(T(\gamma)+h)^{-1}\ast (h(\gamma)+h)\right]\ast\nabla^{g_0,2}\bh\\
&&+\left[(T(\gamma)+h)^{-2}\ast\nabla^{g_0}(h(\gamma)+h)\right]\ast\nabla^{g_0}\bh\\
&&+[(T(\gamma)+h)^{-2}\ast\nabla^{g_0,2}(h(\gamma)+h)\\
&&+(T(\gamma)+h)^{-3}\ast\nabla^{g_0}(h(\gamma)+h)^{*2}]\ast \bh,\\
&&\\
\left[\nabla^{T(\gamma)+h}_{\nabla^{T(\gamma)+h}f_0}-\nabla^{T(\gamma_0)}_{\nabla^{T(\gamma_0)}f_0}\right]\bh&=&\nabla^{g_0}_{\left[\nabla^{T(\gamma)+h}-\nabla^{T(\gamma_0)}\right]f_0}\bh\\
&&+(T(\gamma)+h)^{-1}\ast\nabla^{g_0}(h(\gamma)+h)\ast\nabla^{T(\gamma)+h}f_0\ast \bh,\\
&&\\
\left[\nabla^{T(\gamma)+h}-\nabla^{T(\gamma_0)}\right]f_0&=&\left[\nabla^{T(\gamma)+h}-\nabla^{T(\gamma)}\right]f_0\\
&=&(T(\gamma)+h)^{-1}\ast h\ast\nabla^{g_0}f_0,\\
&&\\
\Rm(T(\gamma)+h)-\Rm(g_0)&=&(T(\gamma)+h)^{-1}\ast\nabla^{g_0,2}(h(\gamma)+h)\\
&&+(T(\gamma)+h)^{-2}\ast\nabla^{g_0}(h(\gamma)+h)^{*2},\\
&&\\
\bh(\nabla^{T(\gamma)}f_0-\nabla^{T(\gamma)+h}f_0)&=&\bh\ast(T(\gamma)+h)^{-1}\ast h \ast\nabla^{g_0}f_0,\\
&&\\
\Li_{\bh(\nabla^{T(\gamma)}f_0-\nabla^{T(\gamma)+h}f_0)}(T(\gamma)+h)&=&\nabla^{g_0}(h(\gamma)+h)\ast\bh\ast(T(\gamma)+h)^{-1}\ast h\ast\nabla^{g_0}f_0\\
&&+\nabla^{g_0}\left[\bh\ast(T(\gamma)+h)^{-1}\ast h\ast \nabla^{g_0}f_0\right]\ast (T(\gamma)+h).
\end{eqnarray*}
To sum it up, we get, after suppressing the contractions with $(T(\gamma)+h)$ or its inverse that are inoffensive as seen in the proof of theorem \ref{Loc-Dif-Ban} (claim \ref{cla-inv-ham}) :
\begin{eqnarray*}\label{est-dif-op-lin-1}
L(\gamma,h)\bh-L(\gamma_0,0)\bh&=&\nabla^{g_0,2}\bh\ast\left[ (h(\gamma)+h)\right]\\
&&\\
&&+\nabla^{g_0}\bh\ast[\nabla^{g_0}(h(\gamma)+h)+h\ast\nabla^{g_0}f_0]\label{est-dif-op-lin-2}\\
&&\\
&&+\bh\ast[\nabla^{g_0,2}(h(\gamma)+h)+\nabla^{g_0}(h(\gamma)+h)^{*2}]\\
&&\\
&&+\bh\ast[\nabla^{g_0}(h(\gamma)+h)\ast\nabla^{g_0}f_0\ast(1+h)]\\
&&\\
&&+\bh\ast[h\ast\nabla^{g_0,2}f_0+Q(T(\gamma)+h,T(\gamma)).]\label{est-dif-op-lin-3}
\end{eqnarray*}

We then estimate each term as we did in the proof of theorem \ref{Loc-Dif-Ban}. The term involving $Q(T(\gamma)+h,T(\gamma))$ has already been estimated : (\ref{tame-est-phi}). To simplify notations again, we will denote $\nabla^{g_0}$ by $\nabla$. By using theorem \ref{iso-weighted-lap-II-bis}, 
\begin{eqnarray*}
\|v_0^{k/2}\nabla^{k}(\nabla^{2}\bh\ast (\Gamma-\Gamma_0))\|^{C_f}_{0,\theta}&\lesssim&\sum_{i=0}^k\|\nabla^{2}(v^{i/2}\nabla^{i}\bh)\|^{C_f}_{0,\theta}\|\Gamma-\Gamma_0\|^{C\times D_f}_{k+2-i,\theta}\\
&\lesssim&\sum_{i=0}^k\|\bh\|_{i+2,\theta}^{D_f}\|\Gamma-\Gamma_0\|^{C\times D_f}_{k+2-i,\theta}\\
&&\\
\| v_0^{k/2}\nabla^{k}(\nabla\bh\ast [\nabla(\Gamma-\Gamma_0)+h\ast\nabla f_0])\|^{C_f}_{0,\theta}&\lesssim&\sum_{i=0}^k\|\nabla(v_0^{i/2}\nabla^{i}\bh)\|^{C_f}_{0,\theta}\|\Gamma-\Gamma_0\|^{C\times D_f}_{k+2-i,\theta}\\
&&+\sum_{i=0}^k\|\nabla(v_0^{(k-i)/2}\nabla^{k-i}\bh)\|^{C_f}_{0,\theta}\|v_0^{i/2}\nabla^i(\nabla f_0\ast h)\|^C_{0,\theta}\\
&\lesssim& \sum_{i=0}^k\|\bh\|_{k-i+2,\theta}^{D_f}\|\Gamma-\Gamma_0\|_{i+2,\theta}^{C\times D_f}\\
&&+\sum_{i=0}^k\|\bh\|^{D_f}_{k-i+2,\theta}\|h\|^{C_f}_{i,\theta}\\
&\lesssim& \sum_{i=0}^k\|\bh\|_{k-i+2,\theta}^{D_f}\|\Gamma-\Gamma_0\|_{i+2,\theta}^{C\times D_f}\\
&&\\
\| v_0^{k/2}\nabla^{k}(\nabla^{2}(\Gamma-\Gamma_0)\ast \bh)\|^{C_f}_{0,\theta}&\lesssim& \sum_{i=0}^k\|\Gamma-\Gamma_0\|_{k+2-i,\theta}^{C\times D_f}\|v_0^{i-2}\nabla^i \bh\|^{C_f}_{0,\theta}\\
&\lesssim&\sum_{i=0}^{k}\|\Gamma-\Gamma_0\|_{k+2-i,\theta}^{C\times D_f}\| \bh\|^{D_f}_{i+2,\theta},\\
&&\\
\| v_0^{k/2}\nabla^k(\nabla f_0\ast\nabla(\Gamma-\Gamma_0)\ast \bh)\|^{C_f}_{0,\theta}&\lesssim&\sum_{i=0}^k\|\bh\|_{k-i+2,\theta}^{D_f}(\|h(\gamma)\|^{C}_{i+1,\theta}+\|\nabla (v_0^{i/2}\nabla^i(\nabla f_0h))\|^{C}_{0,\theta})\\
&\lesssim&\sum_{i=0}^k\|\bh\|_{k-i+2,\theta}^{D_f}(\|h(\gamma)\|^{C}_{i+2,\theta}+\|h\|^{D_f}_{i+2,\theta})\\
&\lesssim&\sum_{i=0}^k\|\Gamma-\Gamma_0\|_{i+2,\theta}^{C\times D_f}\|\bh\|_{k-i+2,\theta}^{D_f},\\
&&\\
\| v_0^{k/2}\nabla^k(\nabla f_0\ast\nabla(\Gamma-\Gamma_0) \ast h\ast\bh)\|^C_{0,\theta}&\lesssim&\sum_{i=0}^k\|\bh\|^{D_f}_{k-i+2,\theta}\left(\sum_{j=0}^i\|\Gamma-\Gamma_0\|^{C\times D_f}_{j+2,\theta}\|h\|^{D_f}_{i-j+2,\theta}\right)\\
&\lesssim&\sum_{i=0}^k\|\bh\|^{D_f}_{k-i+2,\theta}\left(\sum_{j=0}^i\|\Gamma-\Gamma_0\|^{C\times D_f}_{j+2,\theta}\|\Gamma-\Gamma_0\|^{C\times D_f}_{i-j+2,\theta}\right)\\
&\lesssim&\sum_{i=0}^k\|\bh\|^{D_f}_{k-i+2,\theta}\|\Gamma-\Gamma_0\|^{C\times D_f}_{i+2,\theta},
\end{eqnarray*}
if $\|\Gamma-\Gamma_0\|_{k_0,\theta}^{C\times D_f}\leq C$ with $k_0$ large enough ($k_0=100$ does the job !). So far, we have proved that, if $\Gamma$ is in a neighborhood of $\Gamma_0$ : 
\begin{eqnarray*}
\| L(\Gamma)\bh\|_{k,\theta}^{C_f}&\lesssim&\| L(\Gamma_0)\bh\|_{k,\theta}^{C_f}+ \| L(\Gamma)\bh-L(\Gamma_0)\bh\|_{k,\theta}^{C_f}\\
&\lesssim&\|\bh\|_{k+2,\theta}^{D_f}+\sum_{i=0}^k\|\bh\|^{D_f}_{k-i+2,\theta}\|\Gamma-\Gamma_0\|^{C\times D_f}_{i+2,\theta}\\
&\lesssim&\|\bh\|_{k+2,\theta}^{D_f}+\sum_{i=1}^{k-1}\|\bh\|^{D_f}_{k-i+2+1}\|\Gamma-\Gamma_0\|^{C\times D_f}_{i+2+1}+\|\bh\|^{D_f}_{2+1,\theta}\|\Gamma-\Gamma_0\|^{C\times D_f}_{k+1,\theta}\\
&\lesssim&\|\bh\|_{k+2,\theta}^{D_f}+\|\Gamma-\Gamma_0\|^{C\times D_f}_{k+2}\|\bh\|^{D_f}_{2+2}+\|\Gamma-\Gamma_0\|^{C\times D_f}_{k+2,\theta}\|\bh\|^{D_f}_{2+2,\theta}\\
\\
&\lesssim&\|\bh\|_{k+2,\theta}^{D_f}+\|\Gamma-\Gamma_0\|^{C\times D_f}_{k+2,\theta}\|\bh\|^{D_f}_{2+2,\theta}\\
\end{eqnarray*}
hence the claim by using interpolation inequalities.

\end{proof}

We follow the arguments due to Hamilton \cite{Ham-Nas-Mos} to show that the family of inverse maps $L^{-1}$ is tame on a neighborhood of $\Gamma_0=(\gamma_0,0)$. We need three ingredients : the three first steps $k=0,1,2$ (because of claim \ref{cla-tam-est-fam-lin-map}), the tame estimates for the map $L$ given by claim \ref{cla-tam-est-fam-lin-map} and adequate interpolation inequalities. As we said before, we do have interpolation inequalities when the parameter $\theta$ is $0$.  

The first steps $k=0,1,2$ are straightforward, regarding claim \ref{cla-tam-est-fam-lin-map} together with the fact that $L(\Gamma_0)$ is an isomorphism by theorem \ref{iso-weighted-lap-II-bis} : if $\|\Gamma-\Gamma_0\|^{C\times D}_{k_0,\theta}$ is small enough for some large $k_0$, then 
\begin{eqnarray*}
\|\bh\|^{D_f}_{2+2,\theta}\lesssim \|L(\Gamma)\bh\|_{2,\theta}^{C_f},\quad\bh\in D^{\infty}_{0,f^{\alpha},\nabla f}(M,S^2T^*M).
\end{eqnarray*}

\begin{claim}\label{tame-est-inv-map}
For any integer $k\geq 2$,
\begin{eqnarray*}
&&\|\bh\|_{k+2,\theta}^{D_f}\lesssim \| L(\Gamma)(\bh)\|_{k,\theta}^{C_f}+\Sigma_{i=0}^{k-2}\|L(\Gamma)(\bh)\|_{i+2,\theta}^{C_f}\|\Gamma-\Gamma_0\|^{C\times D_f}_{k-i+2,\theta},\\
&&\\
&&(\Gamma,\bh)\in  (C^{\infty}_{con}(M,S^2T^*M)\times D^{\infty}_{0,f^{\alpha},\nabla f}(M,S^2T^*M))\times D^{\infty}_{0,f^{\alpha},\nabla f}(M,S^2T^*M),\\
&&\\
&&\Gamma\in U(\Gamma_0),
\end{eqnarray*}
where the symbol $\lesssim$ means up to a multiplicative positive constant (independent of the variables).
\end{claim}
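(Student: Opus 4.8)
The plan is to prove Claim \ref{tame-est-inv-map} by induction on $k\geq 2$, following the scheme of theorem $3.3.1$ of \cite{Ham-Nas-Mos}. The base cases $k\in\{0,1,2\}$ are exactly the estimates recorded just above: since $L(\Gamma_0)=L_0$ — because $T(\gamma_0)=g_0$, $Q(g_0,g_0)=0$ by the soliton equation (\ref{eq-egs}), and the last Lie-derivative term in the definition of $L$ vanishes at $h=0$ — is an isomorphism at each level by corollary \ref{sec-iso-sch-II-bis}, and $\|\Gamma-\Gamma_0\|^{C\times D_f}_{k_0,\theta}$ can be made arbitrarily small by shrinking $U(\Gamma_0)$, a Neumann-series argument combined with claim \ref{cla-tam-est-fam-lin-map} gives $\|\bh\|^{D_f}_{j+2,\theta}\lesssim\|L(\Gamma)\bh\|^{C_f}_{j,\theta}$ for $j\in\{0,1,2\}$ and all $\Gamma\in U(\Gamma_0)$.

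For the inductive step I would set $H:=L(\Gamma)\bh$ and write $L(\Gamma_0)\bh=H-(L(\Gamma)-L(\Gamma_0))\bh$. Invertibility of $L(\Gamma_0)=L_0$ at level $k$ (corollary \ref{sec-iso-sch-II-bis}) yields
\begin{eqnarray*}
\|\bh\|^{D_f}_{k+2,\theta}\lesssim\|H\|^{C_f}_{k,\theta}+\|(L(\Gamma)-L(\Gamma_0))\bh\|^{C_f}_{k,\theta},
\end{eqnarray*}
and claim \ref{cla-tam-est-fam-lin-map} bounds the last term by $\sum_{i=0}^{k}\|\bh\|^{D_f}_{k-i+2,\theta}\|\Gamma-\Gamma_0\|^{C\times D_f}_{i+2,\theta}$. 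The $i=0$ summand equals $\|\bh\|^{D_f}_{k+2,\theta}\,\|\Gamma-\Gamma_0\|^{C\times D_f}_{2,\theta}$, which is absorbed into the left-hand side after shrinking $U(\Gamma_0)$ so that $\|\Gamma-\Gamma_0\|^{C\times D_f}_{k_0,\theta}\leq\epsilon$ with $k_0\geq 2$. The $i=k$ summand is $\|\bh\|^{D_f}_{2,\theta}\,\|\Gamma-\Gamma_0\|^{C\times D_f}_{k+2,\theta}\lesssim\|H\|^{C_f}_{0,\theta}\,\|\Gamma-\Gamma_0\|^{C\times D_f}_{k+2,\theta}\lesssim\|H\|^{C_f}_{2,\theta}\,\|\Gamma-\Gamma_0\|^{C\times D_f}_{k+2,\theta}$ by the base case, i.e. precisely the $i=0$ term on the target right-hand side. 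For $1\leq i\leq k-1$ the induction hypothesis applies at order $k-i\leq k-1$ (using the base cases when $k-i\in\{0,1\}$) and replaces $\|\bh\|^{D_f}_{k-i+2,\theta}$ by $\|H\|^{C_f}_{k-i,\theta}+\sum_{j}\|H\|^{C_f}_{j+2,\theta}\|\Gamma-\Gamma_0\|^{C\times D_f}_{k-i-j+2,\theta}$; multiplying by $\|\Gamma-\Gamma_0\|^{C\times D_f}_{i+2,\theta}$ and feeding the resulting products of norms into the logarithmic convexity (interpolation) inequalities for the graded norms together with Young's inequality — exactly as in the proof of theorem \ref{Loc-Dif-Ban} and of claim \ref{cla-tam-est-fam-lin-map}, and replacing $\|\cdot\|_{m,\theta}$ by $\|\cdot\|_{m+1}$ wherever $\theta\in(0,1)$ obstructs a clean interpolation (remark \ref{rk-non-int-spa}) — collapses everything into $\|H\|^{C_f}_{k,\theta}$ plus terms $\|H\|^{C_f}_{i+2,\theta}\|\Gamma-\Gamma_0\|^{C\times D_f}_{k-i+2,\theta}$ with $0\leq i\leq k-2$, up to a further small multiple of $\|\bh\|^{D_f}_{k+2,\theta}$ which is again absorbed on the left.

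The main obstacle will be the combinatorial bookkeeping in this last step: one has to verify that every product of norms produced by inserting the induction hypothesis into the $i$-sum can be convexified so that no factor $\|\bh\|^{D_f}$ survives at order strictly above $2$ except with an absorbable small coefficient, and that the $\Gamma$-factors never exceed order $k+2$; since genuine interpolation inequalities are available only for $\theta=0$ (remark \ref{rk-non-int-spa}), one systematically trades $\|\cdot\|_{m,\theta}$ against $\|\cdot\|_{m+1}$, which lowers the degree of the resulting tame estimate but — as already for $\Phi_\alpha$ and for $L$ — is harmless for the Nash--Moser iteration. Once claim \ref{tame-est-inv-map} holds, one last application of interpolation turns it into a bona fide tame estimate $\|L(\Gamma)^{-1}H\|^{D_f}_{k+2,\theta}\lesssim\|H\|^{C_f}_{k+r,\theta}$, uniform for $\Gamma$ in a neighborhood of $\Gamma_0$, which, together with part (1), finishes the proof of theorem \ref{Lin-Iso-Fre}.
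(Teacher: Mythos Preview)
Your base case is fine and matches the paper, but the inductive step has a genuine gap. When you write $\|\bh\|^{D_f}_{k+2,\theta}\leq C_1(k)\|L(\Gamma_0)\bh\|^{C_f}_{k,\theta}$ from corollary~\ref{sec-iso-sch-II-bis} and then feed in claim~\ref{cla-tam-est-fam-lin-map}, the constant in front of the $i=0$ summand $\|\bh\|^{D_f}_{k+2,\theta}\|\Gamma-\Gamma_0\|^{C\times D_f}_{2,\theta}$ is $C_1(k)C_2(k)$, and both of these grow with $k$. Absorbing this term on the left therefore forces $\|\Gamma-\Gamma_0\|^{C\times D_f}_{2,\theta}<1/(2C_1(k)C_2(k))$, so the neighborhood $U(\Gamma_0)$ shrinks as $k\to\infty$ and you do not get a tame estimate on a fixed neighborhood. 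The same issue recurs with the ``further small multiple of $\|\bh\|^{D_f}_{k+2,\theta}$'' you mention later.

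The paper avoids this by \emph{not} perturbing from $L(\Gamma_0)$ at level $k$ in the inductive step. Instead it applies the induction hypothesis at level $k$ to the tensor $v_0^{1/2}\nabla\bh$ (this is Hamilton's device, lemma~3.3.2 of \cite{Ham-Nas-Mos}) and uses the Leibniz-type identity
\[
L(\Gamma)(v_0^{1/2}\nabla\bh)=v_0^{1/2}\nabla\bigl(L(\Gamma)\bh\bigr)-L(v_0^{1/2}\nabla\Gamma)\bh,
\]
where $L(v_0^{1/2}\nabla\Gamma)$ is another second-order operator obeying the same tame bound~(\ref{eq-tam-est-lin-ope}). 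All the $\|\bh\|^{D_f}$ terms that appear are then of order at most $k+2$, so they are controlled by the induction hypothesis and the base case $k=2$ without any top-order absorption; the constants are allowed to grow with $k$ because no smallness is demanded of them. This is precisely why Hamilton's proof of theorem~3.3.1 in \cite{Ham-Nas-Mos} is organized around differentiation and commutation rather than around a Neumann series at each level.
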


\begin{proof}[Proof of claim \ref{tame-est-inv-map}]
We prove claim \ref{tame-est-inv-map} by induction on $k$. \\

By applying the induction assumption to $v_0^{1/2}\nabla \bh$, we get :
\begin{eqnarray*}
\| \bh\|_{k+1+2,\theta}^{D_f}&\lesssim& \| L(\Gamma)(v_0^{1/2}\nabla \bh)\|_{k,\theta}^{C_f}+\| \bh\|_{k+2,\theta}^{D_f}\\
&&+\sum_{i=0}^{k-2}\|L(\Gamma)(v_0^{1/2}\nabla\bh)\|_{i+2,\theta}^{C_f}\|\Gamma-\Gamma_0\|^{C\times D_f}_{k-i+2,\theta}.
\end{eqnarray*}
Now, as explained in lemma $3.3.2$ of \cite{Ham-Nas-Mos}, by the Leibniz rule,
\begin{eqnarray*}
L(\Gamma)(v_0^{1/2}\nabla \bh)=v_0^{1/2}\nabla(L(\Gamma)\bh)-L(v_0^{1/2}\nabla \Gamma)\bh,
\end{eqnarray*}
where $L(v_0^{1/2}\nabla \Gamma)$ denotes some second order differential operator satisfying the same tame estimate (\ref{eq-tam-est-lin-ope}) satisfied by $L(\Gamma)$. Therefore,
\begin{eqnarray*}
\| \bh\|_{k+1+2,\theta}^{D_f}&\lesssim& \| L(\Gamma)\bh\|_{k+1,\theta}^{C_f}+\| L(v_0^{1/2}\nabla\Gamma)\bh\|_{k,\theta}^{C_f}\\
&&+\sum_{i=0}^{k-2}\|L(\Gamma)\bh)\|_{i+2+1,\theta}^{C_f}\|\Gamma-\Gamma_0\|^{C\times D_f}_{k+1-(i+1)+2,\theta}+\| \bh\|_{k+2,\theta}^{D_f}\\
&&+\sum_{i=0}^{k-2}\|L(v_0^{1/2}\nabla\Gamma)\bh)\|_{i+2,\theta}^{C_f}\|\Gamma-\Gamma_0\|^{C\times D_f}_{k-i+2,\theta}\\
&&\\
&\lesssim&\| L(\Gamma)\bh\|_{k+1,\theta}^{C_f}+\left(\| \bh\|_{k+2,\theta}^{D_f}+\|v_0^{1/2}\nabla (\Gamma-\Gamma_0)\|_{k+2,\theta}^{C\times D_f}\|\bh\|^{D_f}_{2+2,\theta}\right)\\
&&+\sum_{i=1}^{(k+1)-2}\|L(\Gamma)\bh)\|_{i+2,\theta}^{C_f}\|\Gamma-\Gamma_0\|^{C\times D_f}_{k+1-i+2,\theta}\\
&&+\sum_{i=0}^{k-2}\left(\| \bh\|_{i+2+2,\theta}^{D_f}+\|v_0^{1/2}\nabla( \Gamma-\Gamma_0)\|_{i+2,\theta}^{C\times D_f}\|\bh\|_{2+2,\theta}^{D_f}\right)\|\Gamma-\Gamma_0\|^{C\times D_f}_{k-i+2,\theta}.\\
\end{eqnarray*}
Using the initial step $k=2$ gives :
\begin{eqnarray*}
\| \bh\|_{k+1+2,\theta}^{D_f}&\lesssim&\| L(\Gamma)\bh\|_{k+1,\theta}^{C_f}+\sum_{i=0}^{(k+1)-2}\|L(\Gamma)\bh)\|_{i+2,\theta}^{C_f}\|\Gamma-\Gamma_0\|^{C\times D_f}_{(k+1)-i+2,\theta}\\
&&+\sum_{i=0}^{k-2}\left(\| \bh\|_{i+2+2,\theta}^{D_f}+\| \Gamma-\Gamma_0\|_{i+3,\theta}^{C\times D_f}\|L(\Gamma)\bh\|^{C_f}_{2,\theta}\right)\|\Gamma-\Gamma_0\|^{C\times D_f}_{k-i+2,\theta}.\\
\end{eqnarray*}

Using the induction assumption again for $i\in\{0,...,k-2\}$ and interpolation inequalities :
\begin{eqnarray*}
\| \bh\|_{k+1+2,\theta}^{D_f}&\lesssim&\| L(\Gamma)\bh\|_{k+1,\theta}^{C_f}+\sum_{i=0}^{(k+1)-2}\|L(\Gamma)\bh)\|_{i+2,\theta}^{C_f}\|\Gamma-\Gamma_0\|^{C\times D_f}_{(k+1)-i+2,\theta}\\
&&+\left(\sum_{i=0}^{k-2}\| \Gamma-\Gamma_0\|_{i+3,\theta}^{C\times D_f}\|\Gamma-\Gamma_0\|^{C\times D_f}_{k-i+2,\theta}\right)\|L(\Gamma)\bh\|^{C_f}_{2,\theta}\\
&&+\sum_{i=0}^{k-2}\| L(\Gamma)(\bh)\|_{i+2,\theta}^{C_f}\|\Gamma-\Gamma_0\|^{C\times D_f}_{k-i+2,\theta}\\
&&+\Sigma_{i=0}^{k-2}\Sigma_{j=0}^{i}\|L(\Gamma)(\bh)\|_{j+2,\theta}^{C_f}\|\Gamma-\Gamma_0\|^{C\times D_f}_{i+2-j+2,\theta}\|\Gamma-\Gamma_0\|^{C\times D_f}_{k-i+2,\theta}\\
&&\\
&\lesssim&\| L(\Gamma)\bh\|_{k+1,\theta}^{C_f}+\sum_{i=0}^{(k+1)-2}\|L(\Gamma)\bh)\|_{i+2,\theta}^{C_f}\|\Gamma-\Gamma_0\|^{C\times D_f}_{(k+1)-i+2,\theta}\\
&&+\left(\sum_{i=0}^{k-2}\| \Gamma-\Gamma_0\|_{i+3,\theta}^{C\times D_f}\|\Gamma-\Gamma_0\|^{C\times D_f}_{k-i+2,\theta}\right)\|L(\Gamma)\bh\|^{C_f}_{2,\theta}\\
&&+\sum_{j=0}^{k-2}\left(\| L(\Gamma)\bh\|^{C_f}_{j+2,\theta}\left(\sum_{i=j}^{k-2}\|\Gamma-\Gamma_0\|^{C\times D_f}_{i+2-j+2,\theta}\|\Gamma-\Gamma_0\|_{k-i+2,\theta}^{C\times D_f}\right)\right)\\
&&\\
&\lesssim&\| L(\Gamma)\bh\|_{k+1,\theta}^{C_f}+\sum_{i=0}^{(k+1)-2}\|L(\Gamma)\bh)\|_{i+2,\theta}^{C_f}\|\Gamma-\Gamma_0\|^{C\times D_f}_{(k+1)-i+2,\theta}\\
&&+\|\Gamma-\Gamma_0\|_{(k+1)+2,\theta}^{C\times D_f}\|L(\Gamma)\bh\|^{C_f}_{2,\theta}+\sum_{j=0}^{k-2}\| L(\Gamma)\bh\|^{C_f}_{j+2,\theta}\|\Gamma-\Gamma_0\|^{C\times D_f}_{k+1-j+2,\theta}
\end{eqnarray*}
for $\Gamma\in U(\Gamma_0)$ where $U(\Gamma_0)$ is a neighborhood of $\Gamma_0$ independent of $k$.

\end{proof}
\end{enumerate}

\end{proof}
\subsection{Deformation of expanders : the Fréchet version}\label{Sec-Def-Exp-Fre}

The main purpose of this section is to prove the following theorem that is the $C^{\infty}$ version of theorem \ref{Loc-Dif-Ban} :

\begin{theo}\label{Loc-Dif-Fre}
Let $(M^n,g_0,\nabla^{g_0} f_0)$ be an asymptotically conical expanding gradient Ricci soliton with positive curvature operator. 

Then, for any $\alpha\in(1/2,1)$, the map 
\begin{eqnarray*}
U(\gamma_0,0)\subset \cat{M}et^{\infty}(X)\times D^{\infty}_{0,f^{\alpha},\nabla f}(M,S^2T^*M)&\stackrel{\Phi_{\alpha}}{\longmapsto}&\cat{M}et^{\infty}(X)\times C^{\infty}_{0,con,f^{\alpha}}(M,S^2T^*M)\\
(\gamma,h)&{\longmapsto}&(\gamma,Q(T(\gamma)+h,T(\gamma)))
\end{eqnarray*}
is well-defined provided $U(\gamma_0,0)$ is a neighborhood of $(\gamma_0,0)$ sufficiently small in  $\cat{M}et^{\infty}(X)\times D^{\infty}_{0,f^{\alpha},\nabla f}(M,S^2T^*M)$. Moreover, $\Phi_{\alpha}$ is a local diffeomorphism at $(\gamma_0,0)$ in the category of tame maps.\\

In particular, for any deformation $\gamma\in \cat{M}et^{\infty}(X)$ close enough to $\gamma_0$, there exists an expanding Ricci soliton with positive curvature operator whose asymptotic cone is $(C(X),dr^2+r^2\gamma,o)$.\\

\end{theo}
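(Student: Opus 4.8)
The plan is to derive Theorem~\ref{Loc-Dif-Fre} by combining the Banach analysis of Theorem~\ref{Loc-Dif-Ban}, the tameness statements of Theorem~\ref{Lin-Iso-Fre}, and Hamilton's inverse function theorem in the tame category (\cite{Ham-Nas-Mos}). Concretely, two facts must be checked. First, that $\Phi_{\alpha}$ is a well-defined smooth \emph{tame} map between the tame Fr\'echet spaces $\cat{M}et^{\infty}(X)\times D^{\infty}_{0,f^{\alpha},\nabla f}(M,S^2T^*M)$ and $\cat{M}et^{\infty}(X)\times C^{\infty}_{0,con,f^{\alpha}}(M,S^2T^*M)$. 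Second, that the differential of $\Phi_{\alpha}$ is invertible on an entire neighborhood of $(\gamma_0,0)$ through a smooth tame family of inverses --- this is the point where, unlike in the Banach case of Theorem~\ref{Loc-Dif-Ban}, invertibility of the differential at the single point $(\gamma_0,0)$ does not suffice. Once both are in place, the Nash--Moser theorem immediately yields that $\Phi_{\alpha}$ is a local diffeomorphism at $(\gamma_0,0)$ in the tame category.

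For the first fact, I would use that the expansion of $Q(T(\gamma)+h,T(\gamma))$ through lemmata~\ref{lin-qua-egs} and~\ref{Fir-App-Egs} --- lines (\ref{lig1-loc})--(\ref{lig5-loc}) of the proof of Theorem~\ref{Loc-Dif-Ban} --- writes every term, outside a compact set, either as a $v_0$-rescaled covariant derivative of $h(\gamma):=T(\gamma)-T(\gamma_0)$, which lies in $C^{\infty}_{0,con}(M,S^2T^*M)$ because $\gamma-\gamma_0\in C^{\infty}(X,S^2T^*X)$, or as a contraction built from $h\in D^{\infty}_{0,f^{\alpha},\nabla f}(M,S^2T^*M)$, from $\Rm(g_0)$ (which decays like $v_0^{-1}$ with all its derivatives, the soliton being asymptotically conical), and from $\nabla^{g_0}f_0$. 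Hence $\Phi_{\alpha}$ lands in $\cat{M}et^{\infty}(X)\times C^{\infty}_{0,con,f^{\alpha}}(M,S^2T^*M)$, and the tameness of the two Fr\'echet spaces is precisely Theorem~\ref{Lin-Iso-Fre}(1). Smoothness of $\Phi_{\alpha}$ is clear, $Q$ being rational in the metric (through $g^{-1}$ and $\Ric$) and polynomial in its covariant derivatives; and the tame estimate (\ref{tame-est-phi}) established in the proof of Theorem~\ref{Loc-Dif-Ban} transfers verbatim to the present spaces, so $\Phi_{\alpha}$ is tame.

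For the second fact, since the first component of $\Phi_{\alpha}$ is the identity, the differential at $(\gamma,h)$ is block triangular,
\begin{eqnarray*}
D_{(\gamma,h)}\Phi_{\alpha}(\bar{\gamma},\bh)=\left(\bar{\gamma},\ K_{(\gamma,h)}(\bar{\gamma})+L(\gamma,h)\bh\right),
\end{eqnarray*}
where $L(\gamma,h)=D^1_{(T(\gamma)+h,T(\gamma))}Q$ is, by lemma~\ref{lin-qua-egs}, exactly the weighted Lichnerowicz-type operator of Theorem~\ref{Lin-Iso-Fre}(2) --- with strictly elliptic principal part $\Delta_{T(\gamma)+h,f_0}$ --- and $K_{(\gamma,h)}(\bar{\gamma})=\left(D^1_{(T(\gamma)+h,T(\gamma))}Q+D^2_{(T(\gamma)+h,T(\gamma))}Q\right)(D_{\gamma}T(\bar{\gamma}))$. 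The map $\bar{\gamma}\mapsto K_{(\gamma,h)}(\bar{\gamma})$ is tame, being the composition of the affine map $D_{\gamma}T$ with the tame multilinear operations controlled by the product and interpolation estimates used in the proof of Theorem~\ref{Loc-Dif-Ban}. By Theorem~\ref{Lin-Iso-Fre}(2) there is a neighborhood $U(\gamma_0,0)$ on which $L(\gamma,h)$ is a smooth invertible tame family with tame inverse; hence $D_{(\gamma,h)}\Phi_{\alpha}$ is invertible there, with inverse $(\bar{\gamma},\bar{k})\mapsto(\bar{\gamma},\ L(\gamma,h)^{-1}(\bar{k}-K_{(\gamma,h)}(\bar{\gamma})))$, again smooth and tame; at $(\gamma_0,0)$ one recovers the computation of Theorem~\ref{Loc-Dif-Ban}, namely $L(\gamma_0,0)=\Delta_{g_0,f_0}+2\Rm(g_0)\ast$. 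The Nash--Moser theorem then applies and gives that $\Phi_{\alpha}$ is a local diffeomorphism in the tame category. The ``in particular'' part follows as in Theorem~\ref{Loc-Dif-Ban}: for $\gamma\in\cat{M}et^{\infty}(X)$ close to $\gamma_0$ one obtains $h$ with $Q(T(\gamma)+h,T(\gamma))=0$, hence an expanding Ricci soliton $g:=T(\gamma)+h$ with $-2\Ric(g)-g+\Li_{V}(g)=0$, $V:=\nabla^{g}f_0+V(g,T(\gamma),f_0)$, asymptotic to $(C(X),dr^2+r^2\gamma,o)$; its curvature operator is positive for $\gamma$ close enough to $\gamma_0$ by the $C^0_{con}$-closeness of the spherical curvatures and the lower bound on the radial curvatures coming from $\div\Rm(g)(\cdot,\cdot,\cdot)=\Rm(g)(\cdot,\cdot,V,\cdot)+\nabla^{g,2}V(g,T(\gamma),f_0)$ together with the estimates of \cite{Der-Asy-Com-Egs} --- here with no loss of derivatives, since $\gamma$ being $C^{\infty}$ makes $g$ smoothly asymptotic to its cone.

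The hard part is structural rather than computational: the spaces of bounded tensors natural to the linear theory of Section~\ref{Sec-Ban-Def-Egs} are not tame, so the whole analysis has to be reproduced in the spaces $C^{k,\theta}_{0,con}$ of tensors vanishing at infinity (Section~\ref{Sec-Sol-C^k_0}), and, above all, the linearized operator has to be inverted with tame estimates \emph{uniformly over a neighborhood} of $(\gamma_0,0)$ rather than at a single point. This uniform-in-parameter tame invertibility --- the content of Theorem~\ref{Lin-Iso-Fre} --- is the heaviest ingredient; granting it, the present proof amounts to assembling these pieces and invoking the Nash--Moser theorem.
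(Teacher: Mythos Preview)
Your proposal is correct and follows essentially the same approach as the paper: you assemble the tameness of the function spaces and of $\Phi_{\alpha}$ from Theorem~\ref{Lin-Iso-Fre}(1) and the tame estimate (\ref{tame-est-phi}), exhibit the block-triangular structure of $D\Phi_{\alpha}$ with $L(\gamma,h)=D^1_{(T(\gamma)+h,T(\gamma))}Q$ being the operator of Theorem~\ref{Lin-Iso-Fre}(2), and then invoke Nash--Moser. If anything, you are more explicit than the paper in writing out the inverse of the differential and in spelling out the positivity-of-curvature argument for the ``in particular'' clause (the paper simply defers this to the corresponding step in Theorem~\ref{Loc-Dif-Ban}).
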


\begin{proof}
The hard work has already been done so that it suffices to apply the Nash-Moser theorem proved by Hamilton \cite{Ham-Nas-Mos}.
Indeed, thanks to theorem \ref{Lin-Iso-Fre}, $$\cat{M}et^{\infty}(X)\times D^{\infty}_{0,f^{\alpha},\nabla f}(M,S^2T^*M),$$ is a tame Fréchet space. By theorem \ref{Loc-Dif-Ban} applied to any positive integer $k$, $\Phi_{\alpha}$ is a well-defined smooth tame map. Finally,
the linearized maps  
\begin{eqnarray*}
D_{(\gamma,h)}\Phi_{\alpha}(\bar{\gamma},\bar{h})&=&(\bar{\gamma},D^1_{(T(\gamma)+h,T(\gamma))}Q(D_{\gamma}T(\bar{\gamma})+\bar{h})+D^2_{(T(\gamma)+h,T(\gamma))}Q(D_{\gamma}T(\bar{\gamma})))\\
&=&(\bar{\gamma},D^1_{(T(\gamma)+h,T(\gamma))}Q(D_{\gamma}T(\bar{\gamma}))+D^2_{(T(\gamma)+h,T(\gamma))}Q(D_{\gamma}T(\bar{\gamma}))+L(\gamma,h)(\bar{h}))\\
\end{eqnarray*}
define smooth invertible tame maps whose inverse are also tame for $(\gamma,h)$ in a neighborhood sufficiently small of $(\gamma_0,0)$ according to theorem \ref{Lin-Iso-Fre}.

This is exactly what we need to use the Nash-Moser theorem : hence $\Phi_{\alpha}$ is a local diffeomorphism at $(\gamma_0,0)$ in the category of tame maps.

\end{proof}

\section{Harnack inequality and Cao-Hamilton entropy}\label{cao-ham-sec}
\subsection{Motivations}

We state and prove a general procedure that contains the Einstein case that enables to remove the DeTurck's term present in the deformation.

\begin{prop}\label{Mot-Ent-Cao-Ham}
Let $(M^n,g,\nabla^gf+V)$ be an expanding Ricci soliton, i.e. such that $$2\Ric(g)+g-\Li_{\nabla f}(g)=\Li_{V}(g).$$
Then, 
\begin{eqnarray}
\Delta_fV+\Ric_f(g)(V)+\nabla(\div_{f}V)=\nabla(\R_g+f-2\Delta f-\arrowvert\nabla f\arrowvert^2),
\end{eqnarray}
where $\Ric_f(g):=\Ric(g)-\nabla^{g,2}f$ is the Bakry-Émery tensor.

In particular, if 
\begin{eqnarray}
 \lim_{+\infty}V=0,\quad \mbox{$\Ric_f(g)$ is negative definite on $M$} \label{con-3-mot}
 \end{eqnarray}
  and if
\begin{eqnarray}
\R_g+f-2\Delta f-\arrowvert\nabla f\arrowvert^2=Cst\quad&\mbox{and}&\quad  \div_{f}V=0,\label{con-1-mot}
\end{eqnarray}
or
\begin{eqnarray}
\R_g+f-2\Delta f-\arrowvert\nabla f\arrowvert^2-\div_fV&=&Cst, \label{con-2-mot}
\end{eqnarray}
then $V\equiv 0$.
\end{prop}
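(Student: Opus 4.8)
The plan is to derive the displayed differential identity by taking the divergence of the soliton equation, and then to run a maximum principle argument on $|V|^2$ to conclude $V\equiv 0$.

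First I would set $W:=\nabla f+V$, so that the hypothesis reads $2\Ric(g)+g=\Li_W(g)$; in other words $(M^n,g,W)$ is an \emph{a priori} non-gradient expanding Ricci soliton. Taking the $g$-divergence of this symmetric $2$-tensor identity, using the contracted second Bianchi identity $\div\Ric(g)=\tfrac12\nabla\R_g$ together with the Ricci commutation formula $\nabla^i\nabla_jW_i=\nabla_j(\div W)+\Ric(g)(W)_j$, one obtains
\[
\Delta W+\nabla(\div W)+\Ric(g)(W)=\nabla\R_g .
\]

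Next I would substitute $W=\nabla f+V$ and reorganise. For the gradient part, the Bochner formula $\Delta(\nabla f)=\nabla(\Delta f)+\Ric(g)(\nabla f)$ gives $\Delta(\nabla f)+\nabla(\Delta f)+\Ric(g)(\nabla f)=2\nabla(\Delta f)+2\Ric(g)(\nabla f)$. For the $V$-part, one passes from the unweighted quantities to the Bakry-Émery ones via $\Delta V=\Delta_f V-\nabla_{\nabla f}V$, $\Ric(g)(V)=\Ric_f(g)(V)+\Hess f(V)$ and $\div V=\div_fV-\langle V,\nabla f\rangle$; a short computation shows that the correction terms assemble exactly into a single contraction, namely
\[
\Delta V+\nabla(\div V)+\Ric(g)(V)=\Delta_f V+\nabla(\div_fV)+\Ric_f(g)(V)-\Li_V(g)(\nabla f,\cdot).
\]
Using the soliton equation a second time in the form $\Li_V(g)=2\Ric(g)+g-2\Hess f$, together with $2\Hess f(\nabla f)=\nabla|\nabla f|^2$, one has $\Li_V(g)(\nabla f,\cdot)=2\Ric(g)(\nabla f)+\nabla f-\nabla|\nabla f|^2$. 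Inserting these three computations into the divergence identity, all $\Ric(g)(\nabla f)$ terms cancel and one is left with precisely
\[
\Delta_f V+\Ric_f(g)(V)+\nabla(\div_fV)=\nabla\bigl(\R_g+f-2\Delta f-|\nabla f|^2\bigr).
\]

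For the second part, both alternatives collapse the right-hand side. Under \eqref{con-1-mot} the right-hand side vanishes and $\div_fV=0$, hence $\Delta_f V+\Ric_f(g)(V)=0$; under \eqref{con-2-mot}, $\R_g+f-2\Delta f-|\nabla f|^2=\mathrm{Cst}+\div_fV$, so the right-hand side equals $\nabla(\div_fV)$ and again $\Delta_f V+\Ric_f(g)(V)=0$. Then I would compute $\Delta_f|V|^2=2\langle\Delta_f V,V\rangle+2|\nabla V|^2=-2\Ric_f(g)(V,V)+2|\nabla V|^2$, which is $\geq 0$ on $M$ and strictly positive wherever $V\neq 0$ since $\Ric_f(g)$ is negative definite. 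As $|V|^2\geq 0$ tends to $0$ at infinity by \eqref{con-3-mot}, its supremum over $M$ is attained at an interior point $x_0$, where $\Delta_f|V|^2(x_0)\leq 0$; comparing with the previous inequality forces $V(x_0)=0$, i.e. $\sup_M|V|^2=0$ and $V\equiv 0$.

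The delicate points are purely bookkeeping: the signs in the Ricci commutation formula, and checking that the lower-order terms produced when switching to the weighted operators $\Delta_f$, $\Ric_f(g)$, $\div_f$ recombine cleanly as the single contraction $\Li_V(g)(\nabla f,\cdot)$ — this is exactly what allows the soliton equation to be used a second time and produces the cancellation of the $\Ric(g)(\nabla f)$ terms. The only analytic ingredient, the maximum principle for $\Delta_f$, is unproblematic here because $|V|^2\to 0$ at infinity, so the supremum is realised at an interior point.
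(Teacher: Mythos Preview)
Your proof is correct and follows essentially the same approach as the paper: take the divergence of the soliton equation and identify the two sides using the contracted Bianchi identity and the Bochner formula. The only organisational difference is that the paper applies the \emph{weighted} divergence $\div_f$ directly to both sides of $2\Ric(g)+g-\Li_{\nabla f}(g)=\Li_V(g)$, which yields $\div_f(\Li_V(g))=\Delta_f V+\Ric_f(g)(V)+\nabla(\div_f V)$ in one step and avoids having to feed the soliton equation back in a second time; your route via the unweighted divergence of $\Li_W(g)$ with $W=\nabla f+V$ is a little longer but reaches the same place, and your bookkeeping is right. The paper actually omits the maximum-principle argument for the ``In particular'' part, so your treatment of $\Delta_f|V|^2\geq 0$ with $|V|^2\to 0$ at infinity and $\Ric_f(g)$ negative definite fills in exactly what is needed.
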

\begin{rk}
Proposition \ref{Mot-Ent-Cao-Ham} ensures that the implicit expanding Ricci soliton obtained in theorems \ref{Loc-Dif-Ban} and \ref{Loc-Dif-Fre} is \textbf{gradient} under several assumptions we discuss now. Condition (\ref{con-1-mot}) asks the function $f$ to be a critical point of the entropy $W^+$ introduced by Feldman, Ilmanen and Ni in \cite{Fel-Ilm-Ni}. Nonetheless, $W^+$ is only defined on compact manifolds and makes no sense (for the moment) on noncompact manifolds. The other condition in line (\ref{con-1-mot}) is a gauge condition. Finally, the first condition in line (\ref{con-3-mot}) is actually satisfied by $V=V(T(\gamma)+h,T(\gamma),f_0)$ with the notations of theorem \ref{Loc-Dif-Ban}, the second condition in line (\ref{con-3-mot}) is satisfied as soon as the deformation is $C^2$ close to the original Ricci expander. To sum it up, the reason why we need to introduce an ad-hoc entropy is essentially due to the lack of a general well-defined entropy on noncompact expanders. It might seem artificial to separate condition (\ref{con-1-mot}) from condition (\ref{con-2-mot}) : the first one deals with the non Einstein case whereas the second one applies to the Einstein one.
\end{rk}
\begin{proof}
Let us compute the weighted divergence of $2\Ric(g)+g-\Li_{\nabla f}(g)$ first :
\begin{eqnarray*}
\div_{f}(2\Ric(g)+g)=\nabla \R_g+2\Ric(g)(\nabla f)+\nabla f,
\end{eqnarray*}
by the trace Bianchi identity. Now,
\begin{eqnarray*}
\div_f(\Li_{\nabla f}(g))&=&\nabla\Delta f+\Delta\nabla f+\Ric(g)(\nabla f)+\Li_{\nabla f}(g)(\nabla f)\\
&=&2(\nabla \Delta f+\Ric(g)(\nabla f))+\nabla(\arrowvert\nabla f\arrowvert^2),
\end{eqnarray*}
because of the Bochner formula for functions. Hence,
\begin{eqnarray*}
\div_f(2\Ric(g)+g-\Li_{\nabla f}(g))=\nabla(\R_g+f-2\Delta f-\arrowvert\nabla f\arrowvert^2).
\end{eqnarray*}
On the other hand,
\begin{eqnarray*}
\div_f(\Li_V(g))&=&\nabla(\div V)+\Delta V+\Ric(g)(V)+\Li_V(g)(\nabla f)\\
&=&\nabla(\div_fV)-\nabla <\nabla f,V>+\Delta_f V+\Ric(g)(V)+<\nabla_{\cdot}V,\nabla f>\\
&=&\nabla(\div_fV)+\Delta_f V+\Ric_f(g)(V).
\end{eqnarray*}

\end{proof}
The main purpose of this section is to prove the following proposition :
\begin{theo}\label{trivial-exp-sol-gra}
Let $(M^n,g,V)$ be the implicit expanding Ricci soliton obtained in theorem \ref{Loc-Dif-Ban} or in theorem \ref{Loc-Dif-Fre}
, then there exists a potential function $f:M^n\rightarrow\mathbb{R}$ such that $(M^n,g,\nabla f)$ is an expanding gradient Ricci soliton.
\end{theo}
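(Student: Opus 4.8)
The plan is to verify the hypotheses of Proposition \ref{Mot-Ent-Cao-Ham} for the implicit expanding Ricci soliton produced in Theorem \ref{Loc-Dif-Ban} (resp.\ Theorem \ref{Loc-Dif-Fre}), with the potential function taken to be $f:=f_0$, the potential of the fixed background soliton $(M^n,g_0,\nabla^{g_0}f_0)$. Indeed, by construction $g=T(\gamma)+h$ solves $Q(g,T(\gamma))=0$, so its soliton vector field decomposes as $\nabla^gf_0+V$ with $V:=V(g,T(\gamma),f_0)$ the DeTurck term, and $V\to0$ at infinity because $h$ lies in a weighted space with positive decay rate; this is the first condition in (\ref{con-3-mot}). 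Since $g-g_0$ is small in $C^2_{con}(M,S^2T^*M)$, the Bakry--\'Emery tensor $\Ric_{f_0}(g)=\Ric(g)-\nabla^{g,2}f_0$ is a small perturbation of $\Ric_{f_0}(g_0)=-g_0/2$ (the soliton equation for $g_0$), hence uniformly negative definite; this is the second condition in (\ref{con-3-mot}). Everything therefore reduces to establishing condition (\ref{con-2-mot}): that the scalar function $\Psi:=\R_g+f_0-2\Delta f_0-|\nabla f_0|^2-\div_{f_0}V$ is constant on $M^n$.

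To produce this constancy I would pass to the associated self-similar solution $g(t):=t\,\psi_t^*g$, $t\in(0,+\infty)$, where $(\psi_t)_t$ is generated by $-(\nabla^gf_0+V)/t$ (a complete vector field, by properness of $f_0$ and the decay of $V$); by the positivity and decay at infinity of the curvature operator of $g$ (Theorem \ref{Loc-Dif-Ban}) together with Shi's estimates, $g(t)$ is a complete Ricci flow with bounded nonnegative curvature operator on each time slice and with geometry at spatial infinity controlled uniformly in $t$. The first route uses Hamilton's matrix Harnack estimate: the matrix Harnack quadratic of $g(t)$ is nonnegative for $t>0$; by self-similarity it can be expressed, at $t=1$ and along the distinguished directions built from the soliton vector field, in terms of $g$, $f_0$ and $V$ alone, and a maximum-principle argument (using that $v$ is proper and $V\to0$) forces it to vanish; Hamilton's rigidity in the equality case, following \cite{Ham-Ete-Ric} and \cite{Ni-Mon-Kah-Ric}, then identifies $g(t)$ with the self-similar flow of an expanding \emph{gradient} Ricci soliton, which is exactly condition (\ref{con-2-mot}). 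The second, independent route uses the entropy of Cao and Hamilton \cite{Cao-Ham-Har}: one shows, following Zhang \cite{Zha-Log}, that it is monotone along $g(t)$ for Ricci flows with nonnegative curvature operator and controlled entropy at infinity; since $g(t)$ is self-similar this entropy is constant, hence stationary, and the equality case of the monotonicity — a trace-Harnack rigidity statement — gives once more that $\Psi$ is constant.

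With $\Psi$ constant, Proposition \ref{Mot-Ent-Cao-Ham} yields $V\equiv0$: the soliton vector field is then $\nabla^gf_0$, so $f:=f_0$ is the desired potential and $(M^n,g,\nabla^gf_0)$ is an expanding gradient Ricci soliton.

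The main obstacle I expect is the noncompactness together with the slow (polynomial) decay of the curvature: Hamilton's matrix and trace Harnack inequalities and the monotonicity of the Cao--Hamilton entropy are classically established on closed manifolds or for ancient solutions of bounded geometry, so transplanting them to this complete noncompact self-similar solution requires uniform control of the geometry and of the entropy integrand at infinity — this is exactly where the asymptotically conical estimates of Theorem \ref{Loc-Dif-Ban} and Zhang's localization techniques come in — together with ruling out boundary terms in the integrations by parts and a careful equality-case analysis that extracts the \emph{constancy} of $\Psi$ rather than merely a one-sided inequality.
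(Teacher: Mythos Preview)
Your framework routes the argument through Proposition \ref{Mot-Ent-Cao-Ham} and aims to show that $\Psi=\R_g+f_0-2\Delta_g f_0-|\nabla^g f_0|^2-\div_{f_0}V$ is constant, concluding $V\equiv 0$ and hence that $f_0$ itself is the potential. This is not what the Harnack or entropy rigidity deliver, and the paper does not use Proposition \ref{Mot-Ent-Cao-Ham} in either proof (the remark following it flags it as motivational only). The equality cases of Hamilton's Harnack and of the Cao--Hamilton entropy produce a gradient structure for \emph{some} potential $f$---not for $f_0$---so your claim that ``Hamilton's rigidity \ldots\ is exactly condition (\ref{con-2-mot})'' is an unjustified leap: identifying $g(t)$ as a gradient soliton flow says nothing about the constancy of a quantity built from the fixed $f_0$ and $V$.

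Concretely, the paper's first proof does not evaluate the Harnack quadratic ``along the distinguished directions built from the soliton vector field'' as you suggest. It takes $Y:=-\Ric^{-1}(\nabla\R)/2$ (well defined by positivity of the curvature operator), observes that $\tau\R_{g(\tau)}$ attains an interior maximum because the curvature decays at infinity, and deduces $Z(Y)=0$ at that spacetime point; the strong maximum principle applied to the evolution inequality for $Z(Y)$ then forces $Z(Y)\equiv 0$, and the rigidity gives $\nabla_iY_j=\Ric_{ij}+g_{ij}/2$, so the $1$-form dual to $Y$ is closed and hence exact on the simply connected $M^n$. The second proof likewise produces a potential from the minimizer of $\mu_{C-H}$ evolved by the forward heat equation: the key inputs are $\mu_{C-H,\infty}(g,1)=\mu_{C-H}(\mathrm{eucl},1)\geq 0$ and $\mu_{C-H}(g,1)<0$ (since $g$ is $C^2$-close to a non-flat nonnegatively curved gradient expander), so a minimizer exists; self-similarity forces $\mu_{C-H}(g(t),t)$ constant and equation (\ref{Zha-Evo-Cao-Ham-Ent}) then yields $\nabla^2 f=\Ric+g/(2t)$ for the $f$ defined by $u=e^{-f}/(4\pi t)^{n/2}$. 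In both cases the theorem follows directly, without passing through Proposition \ref{Mot-Ent-Cao-Ham}, and without ever asserting $V\equiv 0$.
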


We give two proofs of theorem \ref{trivial-exp-sol-gra}.\\

The first proof is straightforward and is a direct application of pointwise estimates due to Chow, Hamilton and Ni [Chap. $6$, Sec. $6$, \cite{Ben}]. For the convenience of the reader, we explain the main steps in our setting in the next section.
Note that Schulze and Simon \cite{Sch-Sim} have generalized these arguments in order to associate to any Riemannian manifold with positive asymptotic volume ratio and bounded nonnegative curvature operator an expanding gradient Ricci soliton. Their procedure is based on a blow-down of the initial manifold together with the corresponding Ricci flow and is therefore more involved. 
 
 \subsection{First proof of theorem \ref{trivial-exp-sol-gra}.}\label{Sec-Fir-Pro-No-Bre}

Let $(M^n,g,V)$ be the implicit expanding Ricci soliton obtained in theorem \ref{Loc-Dif-Ban} or in theorem \ref{Loc-Dif-Fre}
. Then $(M^n,g)$ has positive curvature operator (if the regularity at infinity is high enough) and $M^n$ is simply connected (since $M^n$ is diffeomorphic to $\mathbb{R}^n$). Let $(M^n,g(\tau))_{\tau>0}$ be the associated Ricci flow to $(M^n,g,V)$. Then $g(\tau)=\tau \phi_{\tau-1}^*g$, where $(\phi_{s})_{s\in(-1,+\infty)}$ is the flow generated by $-V/(1+s)$. This flow is well-defined since the vector field $V$ grows linearly in the distance at infinity by its very definition.
Following the proof of theorem $10.46$ of \cite{Ben}, consider the quantity
\begin{eqnarray*}
Z(Y):=\Delta\R+2\arrowvert\Ric\arrowvert^2+2<\nabla\R,Y>+2\Ric(Y,Y)+\frac{\R}{\tau},
\end{eqnarray*}
for any vector field $Y$, where we omit the reference to the metric $g(\tau)$.
As $(M^n,g)$ has positive curvature operator, the vector field $$Y:=-\Ric^{-1}\div (\Ric)=-\Ric^{-1}(\nabla\R)/2,$$ is well defined and a (tedious) computation shows that,
\begin{eqnarray}
\partial_{\tau}Z(Y)&=&\Delta Z(Y)+2\left<\Ric, M+2P(Y)+\Rm(Y,\cdot,\cdot,Y)\right>-\frac{2}{\tau}Z(Y)\label{evo-lin-tra-est-1}\\
&&+2\Ric\left(\nabla Y-\Ric-\frac{g}{2\tau},\nabla Y-\Ric-\frac{g}{2\tau}\right),\label{evo-lin-tra-est-2}
\end{eqnarray}
where $M+2P(Y)+\Rm(Y,\cdot,\cdot,Y)$ is the matrix Harnack quadratic given by 
\begin{eqnarray*}
M&:=&\Delta \Ric-\frac{1}{2}\nabla^2\R+2\Rm\ast\Ric-\Ric\otimes\Ric+\frac{\R}{2\tau}\\
P(Y)_{ij}&:=&\nabla_Y\Ric_{ij}-\nabla_i\Ric_{Yj}=\Cod(\Ric)(Y,i,j),
\end{eqnarray*}
where $\Cod(T)$ stands for the Codazzi tensor associated to a tensor $T$.

By the matrix Harnack estimate, one has
\begin{eqnarray}
\partial_{\tau}Z(Y)&\geq&\Delta Z(Y)-\frac{2}{\tau}Z(Y) \label{evo-lin-tra-est-3}.
\end{eqnarray}

Now, since $(M^n,g(\tau))_{\tau>0}$ is an expanding Ricci soliton and since the curvature goes to zero at infinity ($(M^n,g)$ is asymptotically conical !), we have
\begin{eqnarray*}
\sup_{M^n\times (0,+\infty)}\tau\R_{g(\tau)}=\sup_{M^n}\R_{g(1)}=\R_{g(1)}(p_1),
\end{eqnarray*}
for some $p_1\in M^n$. Therefore,
\begin{eqnarray*}
\partial_{\tau}(\tau\R_{g(\tau)})(p_1,1)=0\quad;\quad\nabla^{g(1)} \R_{g(1)}(p_1)=0.
\end{eqnarray*}
In particular, it implies that $Z(Y)(p_1,1)=0.$ With the help of the strong maximum principle applied to (\ref{evo-lin-tra-est-3}) , $Z(Y)\equiv 0$. Going back to the evolution of $Z(Y)$ given by (\ref{evo-lin-tra-est-1}) and (\ref{evo-lin-tra-est-2}), one has
\begin{eqnarray*}
\nabla_iY_j-\Ric_{ij}-\frac{g_{ij}}{2}=0.
\end{eqnarray*}
In particular, $\nabla_iY_j=\nabla_jY_i$, which implies that the one-form associated to $Y$ is closed on a simply connected manifold hence exact. This finishes the proof.\\

The second proof of theorem \ref{trivial-exp-sol-gra} is more in the spirit of the no breathers theorem due to Perelman \cite{Per-Ent} involving some relevant entropy introduced by Cao and Hamilton defined originally on compact manifolds. This is the purpose of the next sections.
\subsection{Cao-Hamilton's entropy on positively curved Riemannian manifolds}\label{Sec-Sec-Pro-No-Bre}

\subsubsection{Existence of minimizers}
Cao and Hamilton \cite{Cao-Ham-Har} introduced the following entropy on a general Riemannian manifold $(M^n,g)$. We follow here the presentation and the proofs of Zhang \cite{Zha-Log} on Perelman's entropy : most of the time, the arguments are a straightforward adaptation of the ones given in \cite{Zha-Log}, therefore, we will only point out the necessary modifications.

A complete Riemannian manifold $(M^n,g)$ has \textbf{bounded geometry} if 
\begin{eqnarray*}
\inf_{x\in M}\vol B(x,1)\geq v>0\quad;\quad \arrowvert\nabla^k\Rm(g)\arrowvert\leq C_k<+\infty,\quad\forall k\geq 0.
\end{eqnarray*}
There are various definitions of a Riemannian manifold with bounded geometry in the literature that allows more or less bounded covariant derivatives. Regarding our main application in the setting of Ricci flow, Shi's estimates will ensure the boundedness of the covariant derivatives of the curvature operator as soon as the curvature is bounded.

  Define first
\begin{eqnarray*}
\mathcal{W}(g,v):= \int_{M}(4\arrowvert\nabla v\arrowvert^2-3\R_gv^2-v^2\ln v^2) d\mu(g),
\end{eqnarray*}
 The best log-Sobolev constant of $(M^n,g)$ is defined by 
\begin{eqnarray}
\lambda(g):=\inf\left\{\mathcal{W}(g,v)\quad|\quad v\in C_0^{\infty}(M),\quad\|v\|_{L^2}=1\right\}.
\end{eqnarray}

Again, the setting of \cite{Cao-Ham-Har} concerns compact manifolds. Nonetheless, this invariant is well-defined on a non compact Riemannian manifold as soon as it has bounded scalar curvature and satisfies a  Sobolev type inequality that holds, for instance, on manifolds with bounded geometry or with non negative Ricci curvature and positive asymptotic volume ratio : see the proof of theorem $1.1$ of \cite{Zha-Log}.

\begin{defn}
The best Log-Sobolev constant of $(M^n,g)$ at infinity is defined by 
\begin{eqnarray*}
\lambda_{\infty}(g):=\liminf_{r\rightarrow+\infty}\left\{\mathcal{W}(g,v)\quad|\quad v\in C_0^{\infty}(M\setminus B(p,r)),\quad\|v\|_{L^2}=1\right\}.
\end{eqnarray*}
 \end{defn}

Following closely the work of Zhang \cite{Zha-Log}, one can prove the existence of a minimizer as soon as the geometry at infinity is restricted :
\begin{theo}(Zhang)\label{exi-min}
Let $(M^n,g)$ be a complete connected Riemannian manifold with bounded geometry such that $\lambda(g)<\lambda_{\infty}(g).$ Then there exists a positive smooth minimizer $v$ for $\lambda$. Moreover, there exist positive constants $C$ and $c$ such that $$v(x)\leq Ce^{-cd_g(p,x)^2},$$ for any $x\in M$.
\end{theo}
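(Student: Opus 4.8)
The plan is to follow Zhang's treatment of Perelman's $\lambda$-functional in \cite{Zha-Log} essentially verbatim; the only structural inputs are the uniform Sobolev inequality afforded by bounded geometry and the strict gap $\lambda(g)<\lambda_\infty(g)$, so I only indicate the steps and isolate the delicate point. First I would fix a minimizing sequence $(v_i)_i\subset C_0^\infty(M)$ with $\|v_i\|_{L^2(g)}=1$ and $\mathcal{W}(g,v_i)\to\lambda(g)$. Since $(M^n,g)$ has bounded geometry it carries a uniform Sobolev inequality, from which one derives — by the classical optimisation of the exponent — a Euclidean-type logarithmic Sobolev inequality: for $\|u\|_{L^2}=1$ and any $\varepsilon>0$,
\[
\int_M u^2\ln u^2\,d\mu(g)\ \le\ \varepsilon\int_M|\nabla u|^2\,d\mu(g)+C(\varepsilon).
\]
Combined with the two-sided boundedness of $\R_g$ (again from bounded geometry), feeding this into $\mathcal{W}$ shows at once that $\lambda(g)>-\infty$ and that $\int_M|\nabla v_i|^2\,d\mu(g)$ is bounded; hence $(v_i)_i$ is bounded in $W^{1,2}(M)$ and in $L^{2n/(n-2)}(M)$ (for $n\ge3$, the case $n=2$ being analogous). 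Passing to a subsequence, $v_i\rightharpoonup v_\infty$ weakly in $W^{1,2}$, $v_i\to v_\infty$ strongly in $L^2_{loc}$ and a.e.; set $\mu:=1-\|v_\infty\|_{L^2}^2\in[0,1]$ and $w_i:=v_i-v_\infty$, so $w_i\rightharpoonup 0$ and $\|w_i\|_{L^2}^2\to\mu$.

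Next I would rule out escape of mass, i.e. show $\mu=0$, and this is where the gap enters via concentration--compactness. Using the weak convergence, the boundedness of $\R_g$, and — the one genuinely delicate point — a Br\'ezis--Lieb type splitting lemma for the entropy term,
\[
\int_M v_i^2\ln v_i^2\,d\mu(g)=\int_M v_\infty^2\ln v_\infty^2\,d\mu(g)+\int_M w_i^2\ln w_i^2\,d\mu(g)+o(1),
\]
whose proof rests on the uniform $L^{2n/(n-2)}$ bound exactly as in \cite{Zha-Log}, one obtains $\mathcal{W}(g,v_i)=\mathcal{W}(g,v_\infty)+\mathcal{W}(g,w_i)+o(1)$. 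Because $v_i\to v_\infty$ in $L^2_{loc}$ the mass $\mu$ of $(w_i)$ escapes every ball, so after the standard cutoff bookkeeping outside $B(p,R)$ and the scaling identity $\mathcal{W}(g,cu)=c^2\mathcal{W}(g,u)-c^2\|u\|_{L^2}^2\ln c^2$ one gets $\liminf_i\mathcal{W}(g,w_i)\ge\mu\,\lambda_\infty(g)-\mu\ln\mu-o_R(1)$, while the same scaling identity and the definition of $\lambda(g)$ give $\mathcal{W}(g,v_\infty)\ge(1-\mu)\lambda(g)-(1-\mu)\ln(1-\mu)$. Letting $i\to\infty$ then $R\to\infty$ yields
\[
\lambda(g)\ \ge\ (1-\mu)\lambda(g)+\mu\,\lambda_\infty(g)+H(\mu),\qquad H(\mu):=-(1-\mu)\ln(1-\mu)-\mu\ln\mu\ \ge\ 0,
\]
that is $\mu\big(\lambda_\infty(g)-\lambda(g)\big)+H(\mu)\le 0$. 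Since $\lambda_\infty(g)>\lambda(g)$ and $H(\mu)>0$ for $\mu\in(0,1)$, this forces $\mu\in\{0,1\}$, and $\mu=1$ would give $\lambda(g)\ge\lambda_\infty(g)$, which is excluded. Hence $\mu=0$, so $v_i\to v_\infty$ strongly in $L^2$, $\|v_\infty\|_{L^2}=1$; by weak lower semicontinuity of the Dirichlet term and strong $L^2$ (plus a.e.) convergence for the remaining terms, $\mathcal{W}(g,v_\infty)\le\lambda(g)$, so $v_\infty$ is a minimiser.

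Finally I would address regularity, positivity and the Gaussian tail. The minimiser satisfies an Euler--Lagrange equation of the shape $8\Delta v_\infty=-6\R_g v_\infty-2v_\infty\ln v_\infty^2+c\,v_\infty$ for a constant $c$; replacing $v_\infty$ by $|v_\infty|$ we may assume $v_\infty\ge 0$. Since $v_\infty\in L^{2n/(n-2)}$ and $t\ln t^2$ grows slower than any power, a standard elliptic bootstrap (Moser iteration followed by $L^p$-theory) gives $v_\infty\in C^{1,\alpha}_{loc}$; the strong maximum principle applied near a hypothetical zero — where the term $-2v_\infty\ln v_\infty^2$ has the favourable sign — together with $M$ connected and $\|v_\infty\|_{L^2}=1$ then forces $v_\infty>0$ everywhere, after which the equation bootstraps to $v_\infty$ being smooth and positive. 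Interior elliptic estimates on unit balls and $v_\infty\in L^2$ give $\sup_{B(x,1)}v_\infty\to 0$ as $d_g(p,x)\to\infty$, hence $-\ln v_\infty^2\to+\infty$ at infinity and the equation becomes $\Delta v_\infty\ge q\,v_\infty$ with $q(x)\to+\infty$. Combining this with the Laplacian comparison $\Delta d_g(p,\cdot)\le C\big(1+1/d_g(p,\cdot)\big)$, valid because $\Ric(g)$ is bounded below, in a barrier/Agmon argument — comparing $v_\infty$ with $e^{-c\,d_g(p,\cdot)^2}$ on large annuli, as in \cite{Zha-Log} — yields $v_\infty(x)\le C e^{-c\,d_g(p,x)^2}$.

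The main obstacle is the non-local, sign-indefinite entropy term $\int_M v^2\ln v^2\,d\mu(g)$: it must first be tamed by the Sobolev inequality to make $\mathcal{W}$ coercive, and, more subtly, it is exactly the term for which the Br\'ezis--Lieb type decomposition in the no-escape-of-mass step is non-trivial. Once that decomposition is in place the strict gap $\lambda(g)<\lambda_\infty(g)$ closes the compactness argument routinely, and the regularity and decay steps are then standard.
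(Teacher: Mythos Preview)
Your proposal is correct and follows precisely the strategy the paper defers to: the paper does not give its own proof of this theorem but simply states it as an adaptation of Zhang \cite{Zha-Log}, and your outline---coercivity from the uniform Sobolev inequality, concentration--compactness with a Br\'ezis--Lieb splitting of the entropy term to exploit the gap $\lambda(g)<\lambda_\infty(g)$, then elliptic regularity, the strong maximum principle, and an Agmon-type barrier for the Gaussian tail---is exactly Zhang's argument transported to the Cao--Hamilton functional. You have also correctly isolated the only genuinely non-routine point, namely the asymptotic decoupling of $\int v_i^2\ln v_i^2$.
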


\subsubsection{Monotonicity of the Cao-Hamilton entropy}

In the spirit of Perelman's entropy, Cao and Hamilton introduced the following entropy for a Riemannian manifold $(M^n,g)$, that we will denote by $\mathcal{W}_{C-H}$ :
\begin{eqnarray*}
\mathcal{W}_{C-H}(g,v,\tau):=\int_M\left[\tau(4\arrowvert\nabla v\arrowvert^2-3\R_gv^2)-v^2\ln v^2-\frac{n}{2}(\ln 4\pi \tau)v^2-nv^2\right]d\mu(g),
\end{eqnarray*}
for $\tau>0$ and $v\in W^{1,2}(M,\mathbb{R})$.

We gather some important remarks in the following proposition :
\begin{prop}\label{prop-prem-entr}
\begin{enumerate}
\item \textit{Scaling invariance property} : if $c>0$, $\tau>0$ and $v\in W^{1,2}(M,\mathbb{R})$ then $$\mathcal{W}_{C-H}(c^2g,c^{-n/2}v,c^2\tau)=\mathcal{W}_{C-H}(g,v,\tau).$$
\item \textit{Diffeomorphism invariance} : if $\psi\in \Diff(M)$, $\tau>0$ and $v\in W^{1,2}(M,\mathbb{R})$ then $$\mathcal{W}_{C-H}(\psi^*g,\psi^*v,\tau)=\mathcal{W}_{C-H}(g,v,\tau).$$
\item If $v\in W^{1,2}(M,\mathbb{R})$ is such that $\| v\|_{L^2}=1$ then $\mathcal{W}_{C-H}(g,v,1)=\mathcal{W}(g,v)-\frac{n}{2}(\ln 4\pi)-n.$
\end{enumerate}
\end{prop}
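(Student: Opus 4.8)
The plan is to verify each of the three assertions by a direct computation, tracking how the geometric quantities entering the definition of $\mathcal{W}_{C-H}$ transform.

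For part (1), I would first record the elementary scaling rules: under $g\mapsto c^2g$ one has $d\mu(c^2g)=c^n\,d\mu(g)$, $\R_{c^2g}=c^{-2}\R_g$, and $\arrowvert\nabla^{c^2g}w\arrowvert^2_{c^2g}=c^{-2}\arrowvert\nabla^gw\arrowvert^2_g$ for any function $w$. Setting $\tilde v:=c^{-n/2}v$ and $\tilde\tau:=c^2\tau$, each of the five terms in the integrand defining $\mathcal{W}_{C-H}(c^2g,\tilde v,\tilde\tau)$ acquires an overall factor $c^{-n}$ from the tensorial and $v^2$ part, which is then cancelled by the factor $c^n$ coming from $d\mu(c^2g)$ (note that the factors $\tau\cdot c^{-2}$ in the Dirichlet and scalar-curvature terms combine to $\tilde\tau\cdot c^{-2}=\tau$, leaving exactly the corresponding $g$-terms times $c^{-n}$). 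The only subtlety is the appearance of $\ln c$ terms: expanding $\ln\tilde v^2=\ln v^2-n\ln c$ and $\ln(4\pi\tilde\tau)=\ln(4\pi\tau)+2\ln c$, the contribution $+n\ln c\cdot v^2$ coming from $-\tilde v^2\ln\tilde v^2$ is exactly cancelled by the contribution $-n\ln c\cdot v^2$ coming from $-\tfrac{n}{2}\ln(4\pi\tilde\tau)\,\tilde v^2$. Hence the expression is unchanged, which is precisely why the exponent $-n/2$ on $v$ and the factor $c^2$ on $\tau$ are the right normalizations.

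For part (2), I would simply perform the change of variables $x\mapsto\psi(x)$ in the integral, using that the $\psi^*g$-norm of $\nabla\psi^*v$ is the pullback of the $g$-norm of $\nabla v$, that $\R_{\psi^*g}=\R_g\circ\psi$, and that $d\mu(\psi^*g)=\psi^*(d\mu(g))$; diffeomorphism invariance is then immediate term by term.

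For part (3), I would set $\tau=1$ in the definition, observe that the first three terms of the integrand reproduce exactly the integrand of $\mathcal{W}(g,v)$, and that the remaining two terms integrate to $-\tfrac{n}{2}(\ln 4\pi)\|v\|_{L^2}^2-n\|v\|_{L^2}^2=-\tfrac{n}{2}(\ln 4\pi)-n$ under the normalization $\|v\|_{L^2}=1$. None of the steps presents a genuine obstacle; the only point requiring any care is the cancellation of the logarithmic terms in part (1).
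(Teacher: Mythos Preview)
Your proof is correct. The paper states this proposition without proof, treating it as a routine verification; your direct term-by-term computation (with the key cancellation of the $\ln c$ contributions in part~(1)) is exactly the expected argument.
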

Therefore, one can define the corresponding entropies (at infinity) with analogy with the previous section : 
\begin{eqnarray*}
\mu_{C-H}(g,\tau)&:=&\inf\left\{\mathcal{W}_{C-H}(g,v,\tau)\quad|\quad v\in C_0^{\infty}(M),\quad\|v\|_{L^2}=1\right\},\\
\mu_{C-H,\infty}(g,\tau)&:=&\liminf_{r\rightarrow+\infty}\left\{\mathcal{W}_{C-H}(g,v,\tau)\quad|\quad v\in C_0^{\infty}(M\setminus B(p,r)),\quad\|v\|_{L^2}=1\right\}.\\
\end{eqnarray*}
Again, we collect from \cite{Cao-Ham-Har} and \cite{Zha-Log} the results we need :
\begin{theo}\label{theo-cao-ham-zha}
Let $(M^n,g(t))_{t\in(0,T]}$ be a complete non compact Ricci flow with bounded nonnegative curvature operator. Let $[t_1,t_2]\subset(0,T]$. Suppose $v_1$ is a minimizer for the Cao-Hamilton entropy $\mathcal{W}_{C-H}$ at time $t_1$. Let $(u(t))_{t\in[t_1,t_2]}$ be the bounded solution to the following heat equation :
\begin{eqnarray}\label{hea-equ-cao-ham}
\left\{
\begin{array}{rl}
&\partial_tg=-2\Ric\\
&\\
& \partial_tu=\Delta u+\R u\\
&\\
&u(t_1):=v_1^2>0,\\
\end{array}
\right.
\end{eqnarray}
with $\|v_1\|_{L^2}=1. $ Then the following holds :\\
\begin{enumerate}
\item (Cao-Hamilton) Define the pointwise entropy related to $u(t)$ : 
\begin{eqnarray*}
&&P(u):=2\Delta f-\arrowvert\nabla f\arrowvert^2-3\R+\frac{f}{t}-\frac{n}{t},\quad u(t)=:\frac{e^{-f(t)}}{(4\pi t)^{n/2}}.
\end{eqnarray*}
Then 
\begin{eqnarray*}
\partial_t(tP(u))&=&\Delta(tP(u))-2\nabla(tP(u))\cdot\nabla f\\
&&-2t\left\arrowvert\nabla^2f-\Ric-\frac{g}{2t}\right\arrowvert^2-2t\Har,
\end{eqnarray*}
where $$\Har=\Har(g(t),u(t)):=\partial_t\R+\frac{\R}{t}+2\nabla\R\cdot\nabla f+2\Ric(\nabla f,\nabla f)$$ is the Harnack quantity associated to $(g(t),u(t))$. In particular,
\begin{eqnarray*}
\partial_t(tP(u)u)&=&\Delta(tP(u)u)+\R(tP(u)u)-2t\left(\left\arrowvert\nabla^2f-\Ric-\frac{g}{2t}\right\arrowvert^2+\Har\right)u
\end{eqnarray*}
\\
\item (Zhang) If $t\in(t_1,t_2]$,
\begin{eqnarray}\label{Zha-Evo-Cao-Ham-Ent}
\partial_tW_{C-H}(g(t),\sqrt{u(t)},t)=-2t\int_{M^n}\left(\left\arrowvert\nabla^2f-\Ric-\frac{g}{2t}\right\arrowvert^2+\Har\right)ud\mu.
\end{eqnarray}
In particular,
\begin{eqnarray*}
\mu_{C-H}(g(t_2),t_2)\leq\mu_{C-H}(g(t_1),t_1).
\end{eqnarray*}

\end{enumerate}
\end{theo}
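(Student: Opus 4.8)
The plan is to prove the two parts in turn: part (1) is a pointwise Bochner-type identity obtained by a direct (if lengthy) computation along the lines of Perelman's and Ni's entropy estimates, and part (2) is its integrated consequence, the sign of the boundary term being supplied by Hamilton's trace Harnack inequality. Throughout, the technical subtleties are exactly those already handled by Zhang \cite{Zha-Log} in the setting of Perelman's entropy, so I would follow his arguments and point only to the places where the coefficient $3$ in front of $\R$ forces cosmetic changes.

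For part (1), I would first rewrite the equation $\partial_t u = \Delta u + \R u$ in terms of the potential $f$ defined by $u = e^{-f}/(4\pi t)^{n/2}$: a direct computation gives $\partial_t f = \Delta f - |\nabla f|^2 - \R - n/(2t)$. Then I differentiate each of the four pieces of $P(u) = 2\Delta f - |\nabla f|^2 - 3\R + f/t - n/t$ in time, using the Ricci flow commutator for $[\partial_t,\Delta]$ on functions, the evolution $\partial_t\R = \Delta\R + 2|\Ric|^2$, and the Bochner formula for $\Delta|\nabla f|^2$. Collecting terms is entirely routine bookkeeping that mirrors the derivation of the monotonicity of Perelman's $\mathcal{W}$-functional — it is carried out for instance in [Chap. $6$, Sec. $6$, \cite{Ben}] and \cite{Ni-Mon-Kah-Ric} — and the outcome is that the contributions which are neither in divergence form nor manifestly sign-definite reassemble into the drift Laplacian $\Delta(tP(u)) - 2\nabla(tP(u))\cdot\nabla f$, while the remainder organizes into $-2t|\nabla^2 f - \Ric - g/(2t)|^2 - 2t\,\Har$. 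Multiplying through by $u$ and substituting $\partial_t u = \Delta u + \R u$ (using $\nabla u = -u\nabla f$) then yields the conservative form $\partial_t(tP(u)u) = \Delta(tP(u)u) + \R(tP(u)u) - 2t(|\nabla^2 f - \Ric - g/(2t)|^2 + \Har)u$.

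For part (2), the first step is the algebraic identity $\mathcal{W}_{C-H}(g(t),\sqrt{u(t)},t) = \int_M tP(u)\,u\,d\mu(g(t))$: setting $v^2 = u$ one has $4|\nabla v|^2 = |\nabla f|^2 u$ and $v^2\ln v^2 = (-f - \tfrac{n}{2}\ln 4\pi t)u$, and integrating $\int_M 2t\Delta f\cdot u\,d\mu = 2t\int_M|\nabla f|^2 u\,d\mu$ by parts; the $\tfrac{n}{2}(\ln 4\pi t)$ terms cancel and both sides equal $\int_M(t|\nabla f|^2 - 3t\R + f - n)u\,d\mu$. Differentiating this integral in $t$, using $\partial_t(d\mu) = -\R\,d\mu$ together with the conservative identity from part (1), the two $\R$-contributions cancel and one is left with $\partial_t\mathcal{W}_{C-H}(g(t),\sqrt{u(t)},t) = \int_M\Delta(tP(u)u)\,d\mu - 2t\int_M(|\nabla^2 f - \Ric - g/(2t)|^2 + \Har)u\,d\mu$. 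The main obstacle is to justify that $\int_M\Delta(tP(u)u)\,d\mu = 0$ (and likewise that the integrations by parts above are legitimate) on a complete noncompact manifold; this requires a priori Gaussian-type decay estimates for the bounded solution $u$ of (\ref{hea-equ-cao-ham}) and its covariant derivatives, which is precisely where the bounded geometry hypothesis, Shi's estimates, and standard parabolic theory enter, exactly as in \cite{Zha-Log}. This establishes the monotonicity formula (\ref{Zha-Evo-Cao-Ham-Ent}).

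Finally, Hamilton's trace Harnack inequality, valid for a complete Ricci flow with bounded nonnegative curvature operator, gives $\Har(g(t),u(t)) \geq 0$, so the right-hand side of (\ref{Zha-Evo-Cao-Ham-Ent}) is nonpositive and $t\mapsto\mathcal{W}_{C-H}(g(t),\sqrt{u(t)},t)$ is nonincreasing on $[t_1,t_2]$. Since (\ref{hea-equ-cao-ham}) preserves $\int_M u\,d\mu = \int_M v_1^2\,d\mu = 1$, the tensor $\sqrt{u(t_2)}$ is an admissible competitor at time $t_2$, and since $v_1$ minimizes $\mathcal{W}_{C-H}(\cdot,\cdot,t_1)$, this yields the chain $\mu_{C-H}(g(t_2),t_2) \leq \mathcal{W}_{C-H}(g(t_2),\sqrt{u(t_2)},t_2) \leq \mathcal{W}_{C-H}(g(t_1),v_1,t_1) = \mu_{C-H}(g(t_1),t_1)$, which is the desired monotonicity.
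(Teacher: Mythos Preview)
Your proposal is correct and follows essentially the same route as the paper: part (1) is attributed to the Cao--Hamilton computation, and part (2) is obtained by rewriting $\mathcal{W}_{C-H}(g(t),\sqrt{u(t)},t)=\int_M tP(u)u\,d\mu$, differentiating via the conservative identity, and invoking Hamilton's trace Harnack for the sign. The paper is more explicit about the analytic justification you defer to \cite{Zha-Log}: it spells out three steps---Gaussian decay of $u$ from Chau--Tam--Yu heat-kernel bounds, Gaussian decay of $|\nabla u|^2/u$ via the subsolution $Q(u)=e^{Ct}(|\nabla u|^2/u+\R u)$ combined with Zhang's $u_\epsilon=v_1^2+\epsilon$ approximation to bypass the lack of a lower bound on $v_1$, and a cutoff-function limit to kill the $\int_M\Delta(tP(u)u)\,d\mu$ term---but these are exactly the ingredients you point to.
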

\begin{proof}
The first part of theorem \ref{theo-cao-ham-zha} is entirely due to \cite{Cao-Ham-Har}.\\

The second part consists in three steps, as described in \cite{Zha-Log}, that we explain and modify now :
\begin{itemize}
\item \begin{claim}\label{dec-hea-equ}
There exist positive constants $A$ and $a$ such that for any $t\in[t_1,t_2]$ and any $x\in M$,
\begin{eqnarray*}
u(x,t)\leq Ae^{-ad_{g(t_1)}^2(p,x)},
\end{eqnarray*}
for some point $p\in M$.
\end{claim}
First of all, remark that, as the curvature is uniformly bounded on $[t_1,t_2]$, the metrics $(g(t))_{t\in[t_1,t_2]}$ are uniformly equivalent.
Secondly, theorem $5.1$ of \cite{Cha-Tam-Yu} gives the following upper bound for the fundamental solution $G(x,t,y,t_1)$ associated to the operator $\partial_t-\Delta-\R$ for $t_1<t$ :
\begin{eqnarray*}
G(x,t,y,t_1)\leq \frac{C_1}{\sqrt{\vol B(x,\sqrt{t-t_1})\vol B(y,\sqrt{t-t_1})}}e^{-c_1\frac{d^2_{g(t_1)}(x,y)}{t-t_1}},
\end{eqnarray*}
for some positive constants $C_1$ and $c_1$ depending only on a bound of the Ricci curvature on $[t_1,t_2]$ and the dimension $n$.
As $(M^n,g(t))_{t\in[t_1,t_2]}$ is a Ricci flow with uniformly bounded non negative sectional curvature, the Sharafutdinov retraction \cite{Sha-Ret} together with Shi's estimates ensure that $(M^n,g(t))_{t\in[t_1,t_2]}$ has bounded geometry where the constants appearing in the definition are uniform in time. Therefore,
\begin{eqnarray*}
G(x,t,y,t_1)\leq \frac{C_1}{(t-t_1)^{n/2}}e^{-c_1\frac{d^2_{g(t_1)}(x,y)}{t-t_1}},
\end{eqnarray*}
for any $t>t_1$ with possibly different constants $c_1$ and $C_1$.
 
Now, by (the proof) of theorem \ref{exi-min}, any minimizer of the Cao-Hamilton entropy satisfies :
\begin{eqnarray*}
v_1(x)\leq A_1e^{-a_1d_{g(t_1)}^2(p,x)},\quad\forall x\in M,
\end{eqnarray*}
for some positive constants $A_1$ and $a_1$. The claim follows by using the representation formula for the solution $u(x,t)$, i.e. 
\begin{eqnarray*}
u(x,t)=\int_{M^n}G(x,t,y,t_1)v_1^2(y)d\mu(g(t_1)),
\end{eqnarray*}
together with the previous upper bounds both of the fundamental solution and the initial condition.

\item Again, following the notations of \cite{Zha-Log}, denote the integrand of the Cao-Hamilton entropy by 
\begin{eqnarray*}
i(u):=t\left(\frac{\arrowvert\nabla u\arrowvert^2}{u}-3\R u\right)-u\ln u-\ln(4\pi t)u-nu.
\end{eqnarray*}
To ensure that $\mathcal{W}_{C-H}(g(t),\sqrt{u(t)},t)$ is well-defined for any $t\in[t_1,t_2]$, it suffices to show that $\arrowvert\nabla u\arrowvert^2/u$ has the same decay as $u$ given in claim \ref{dec-hea-equ}.

The strategy consists in computing the evolution of $\arrowvert\nabla u\arrowvert^2/u$.
On the one hand, define $v(x,t):=\sqrt{u(x,t)}$ for $(x,t)\in M\times [t_1,t_2]$. Then,
\begin{eqnarray*}
\partial_t\left(\frac{\arrowvert\nabla u\arrowvert^2}{u}\right)&=&4\partial_t\arrowvert\nabla v\arrowvert^2\\
&=&8(<\nabla v,\nabla\partial_tv>+\Ric(\nabla v,\nabla v)),\\
\Delta\left(\frac{\arrowvert\nabla u\arrowvert^2}{u}\right)&=&4\Delta\arrowvert\nabla v\arrowvert^2\\
&=&8(\arrowvert\nabla^2v\arrowvert^2+\Ric(\nabla v,\nabla v)+<\nabla v,\nabla\Delta v>).
\end{eqnarray*}
On the other hand,
\begin{eqnarray*}
(\partial_t-\Delta)(v)&=&(\partial_t-\Delta)(\sqrt{u})=\frac{1}{2}\frac{(\partial_t-\Delta)(u)}{\sqrt{u}}+\frac{1}{4}\frac{\arrowvert\nabla u\arrowvert^2}{u^{3/2}}\\
&=&\frac{\R v}{2}+\frac{\arrowvert\nabla v\arrowvert^2}{v}.
\end{eqnarray*}
Finally, we get 
\begin{eqnarray*}
(\partial_t-\Delta-\R)\left(\frac{\arrowvert\nabla u\arrowvert^2}{u}\right)&=&4\left(v<\nabla v,\nabla \R>+2<\nabla v,\nabla\left(\frac{\arrowvert\nabla v\arrowvert^2}{v}\right)>-2\arrowvert\nabla^2v\arrowvert^2\right)\\
&=&4\left(v<\nabla v,\nabla \R>-2\left\arrowvert\nabla^2v-\frac{\nabla v\otimes\nabla v}{v}\right\arrowvert^2\right).
\end{eqnarray*}
In order to absorb the term $v<\nabla v,\nabla \R>$, one observes that :
\begin{eqnarray*}
(\partial_t-\Delta-\R)(\R u)&=&(\partial_t\R-\Delta\R)u+\R(\partial_tu-\Delta u-\R u)-2<\nabla \R,\nabla u>\\
&=&2\arrowvert\Ric\arrowvert^2u-4v<\nabla v,\nabla \R>.
\end{eqnarray*}
Therefore, using that the Ricci curvature is nonnegative (bounded from below would suffice) :
\begin{eqnarray*}
(\partial_t-\Delta-\R)\left(\frac{\arrowvert\nabla u\arrowvert^2}{u}+\R u\right)\leq2\R (\R u)\leq C\left(\frac{\arrowvert\nabla u\arrowvert^2}{u}+\R u\right),
\end{eqnarray*}
on $M\times(t_1,t_2]$, i.e. 
\begin{eqnarray}
&&(\partial_t-\Delta-\R)Q\leq0,\label{sub-sol-gra-est-ent}\\
&&Q(u):=e^{Ct}\left(\frac{\arrowvert\nabla u\arrowvert^2}{u}+\R u\right),
\end{eqnarray}
on $M^n\times(t_1,t_2]$.
Now, one can argue exactly as in \cite{Zha-Log}. In order to apply the maximum principle to (\ref{sub-sol-gra-est-ent}), we need to control the decay at the initial time and we need to know a priori the growth of the solution at infinity.

Concerning the initial condition, first note that :
\begin{eqnarray*}
Q(u)(t_1)=e^{Ct_1}(4\arrowvert\nabla v_1\arrowvert^2+\R v_1^2)
\end{eqnarray*}

As $v_1$ is a minimizer for the Cao-Hamilton entropy, it satisfies the associated Euler-Lagrange equation :
\begin{eqnarray}\label{Eul-Lag-Equ-Cao-Ham-Ent}
4\Delta v_1+3\R v_1+2v_1\ln v_1+\ln(4\pi t_1)v_1+nv_1+\mu_{C-H}(g(t_1),t_1)v_1=0.
\end{eqnarray}
From this and the Bochner formula, $\arrowvert\nabla v_1\arrowvert^2$ satisfies :
\begin{eqnarray*}
\Delta\arrowvert\nabla v_1\arrowvert^2&\geq&2<\nabla\Delta v_1,\nabla v_1>\\
&\geq&-C(\arrowvert\nabla v_1\arrowvert^2+v_1^2).
\end{eqnarray*}
since $v_1$ is in particular bounded from above by theorem \ref{exi-min}. Again, the nonnegativity of the Ricci curvature can be relaxed to bounded Ricci curvature from below. Then, a Moser's iteration shows that 
\begin{eqnarray*}
\sup_{B(x,1/2)}\arrowvert\nabla v_1\arrowvert^2&\leq& C\int_{B(x,1)}(\arrowvert\nabla v_1\arrowvert^2+v_1^2)d\mu(g(t_1))\\
&\leq& C\int_{B(x,2)}v_1^2d\mu(g(t_1)),
\end{eqnarray*}
where, in the last inequality, we use the Euler-Lagrange equation (\ref{Eul-Lag-Equ-Cao-Ham-Ent}) again.
Because of the decay of the minimizer $v_1$ at infinity given by theorem \ref{exi-min}, we get a quadratic exponential decay for $Q(u)(t_1)$.

Concerning the a priori growth at infinity of $Q(u)$, the difficulty comes from the lack of an appropriate lower bound for the minimizer $v_1$. Nonetheless, we circumvent this by using the same approximation procedure introduced by Zhang. Let $\epsilon>0$ and consider the unique bounded solution $u_{\epsilon}$ to 
\begin{eqnarray}\label{hea-equ-app-gra}
\left\{
\begin{array}{rl}
&\partial_tg=-2\Ric\\
&\\
& \partial_tu_{\epsilon}=\Delta u_{\epsilon}+\R u_{\epsilon}\\
&\\
&u_{\epsilon}(t_1):=v_1^2+\epsilon>0,\\
\end{array}
\right.
\end{eqnarray}
on $[t_1,t_2]\times M$. This solution has the main advantage that it is uniformly bounded from below, moreover, $u_{\epsilon}$ converges to $u$ pointwise as $\epsilon$ goes to $0$.

The quantity $Q(u_{\epsilon})$ that is now bounded from above on $[t_1,t_2]$ satisfies 
\begin{eqnarray*}
(\partial_t-\Delta-\R)(Q(u_{\epsilon}))\leq 0,
\end{eqnarray*}
as in (\ref{sub-sol-gra-est-ent}). By the maximum principle, 
\begin{eqnarray*}
Q(u_{\epsilon})(x,t)\leq\int_{M^n}G(x,t,y,t_1)Q(u_{\epsilon})(y,t_1)d\mu(g(t_1)),
\end{eqnarray*}
on $M\times[t_1,t_2]$.
As for the upper bound of $u$, one has :
\begin{eqnarray*}
Q(u_{\epsilon})(x,t)\leq C_1e^{-c_1d^2_{g(t_1)}(p,x)}+C_1\epsilon,
\end{eqnarray*}
on $M\times[t_1,t_2]$ where $C_1$ and $c_1$ are positive constants independent of $\epsilon$. If $\epsilon$ goes to $0$, then we get an appropriate decay for $\arrowvert\nabla u\arrowvert^2/u$ ensuring the finiteness of the expression $\mathcal{W}_{C-H}(g(t),\sqrt{u(t)},t)$ for $t\in[t_1,t_2]$.
\item Finally, we compute the time derivative of $t\rightarrow\mathcal{W}_{C-H}(g(t),\sqrt{u(t)},t)$ as in \cite{Zha-Log} with the help of the computations performed by Cao and Hamilton. First note that 
\begin{eqnarray*}
tP(u)u&=&t\left(-2\Delta u+\frac{\arrowvert\nabla u\arrowvert^2}{u}-3\R u\right)-u\ln u-\ln(4\pi t)u-nu\\
&=&-2t\Delta u+i(u).
\end{eqnarray*}
Therefore, if $\phi\in C_0^{\infty}(M,\mathbb{R})$,
\begin{eqnarray*}
\partial_t\int_{M}(tP(u)u)\phi d\mu(g(t))&=&\int_{M}(\partial_t-\R)(tP(u)u)\phi d\mu(g(t))\\
&=&\int_{M}\Delta(tP(u)u)\phi d\mu(g(t))\\
&&-2t\int_{M}\left(\left\arrowvert\nabla^2f-\Ric-\frac{g}{2t}\right\arrowvert^2+\Har\right)u\phi d\mu(g(t))\\
&=&\int_{M}(tP(u)u)\Delta\phi d\mu(g(t))\\
&&-2t\int_{M}\left(\left\arrowvert\nabla^2f-\Ric-\frac{g}{2t}\right\arrowvert^2+\Har\right)u\phi d\mu(g(t))\\
&=&-2t\int_{M}(\Delta\Delta \phi)ud\mu(g(t))+\int_{M}(\Delta \phi)i(u)d\mu(g(t))\\
&&-2t\int_{M}\left(\left\arrowvert\nabla^2f-\Ric-\frac{g}{2t}\right\arrowvert^2+\Har\right)u\phi d\mu(g(t)).
\end{eqnarray*}
Now, as $i(u)$ is integrable, by choosing a sequence $(\phi_k)_k$ of functions with compact support approximating the identity as in \cite{Zha-Log}, one ends up with
\begin{eqnarray*}
\mu_{C-H}(g(t_2),t_2)\leq\mathcal{W}_{C-H}(g(t_2),\sqrt{u(t_2)},t_2)\leq\mathcal{W}_{C-H}(g(t_1),\sqrt{u(t_1)},t_1)=\mu_{C-H}(g(t_1,t_1).
\end{eqnarray*}

\end{itemize}

\end{proof}
With theorem \ref{theo-cao-ham-zha} in hand, one can prove that some expanding Ricci solitons are actually gradient :
\begin{coro}\label{no-non-tri-exp-Ric-sol}
Let $(M^n,g(t))_{t\in(0,+\infty)}$ be a non compact non flat expanding Ricci soliton flow with bounded nonnegative curvature operator on compact time intervals. Assume $$\mu_{C-H}(g(t_1),t_1)<\mu_{C-H,\infty}(g(t_1),t_1),$$ for some positive time $t_1$.

Then $(M^n,g(t))_{t\in(0,+\infty)}$ is an expanding \textbf{gradient} Ricci soliton.
\end{coro}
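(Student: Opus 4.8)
The plan is to run Perelman's no breather argument with the Cao--Hamilton entropy $\mathcal{W}_{C-H}$ in place of his $\mathcal{W}$: I will first show that $t\mapsto\mu_{C-H}(g(t),t)$ is \emph{constant} along the soliton flow, and then read off from the monotonicity formula (\ref{Zha-Evo-Cao-Ham-Ent}) and Hamilton's trace Harnack inequality that $\nabla^{g,2}f=\Ric(g)+g/(2t)$, which exhibits a potential function.

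First I would check the constancy of $\mu_{C-H}(g(t),t)$. Writing the expanding soliton flow in the self-similar form $g(t)=t\,\psi_t^{*}g(1)$ with $\psi_1=\Id$ (possible since $V$ grows at most linearly, so its flow is complete on $(0,+\infty)$), the scaling and diffeomorphism invariances of proposition \ref{prop-prem-entr} give
\[
\mu_{C-H}(g(t),t)=\mu_{C-H}(t\,\psi_t^{*}g(1),t)=\mu_{C-H}(\psi_t^{*}g(1),1)=\mu_{C-H}(g(1),1),
\]
and likewise $\mu_{C-H,\infty}(g(t),t)=\mu_{C-H,\infty}(g(1),1)$; in particular the gap $\mu_{C-H}<\mu_{C-H,\infty}$ holds at every time once it holds at $t_1$. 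After undoing the scaling so that the relevant time equals $1$, proposition \ref{prop-prem-entr}(3) turns this gap into $\lambda(g)<\lambda_\infty(g)$ for the corresponding metric, which has bounded geometry by Shi's estimates together with the Sharafutdinov retraction, exactly as recalled in the proof of theorem \ref{theo-cao-ham-zha}. Hence theorem \ref{exi-min} produces a positive smooth minimizer $v_1$ of $\mathcal{W}_{C-H}(g(t_1),\cdot,t_1)$ enjoying Gaussian decay.

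Next I would solve the conjugate heat equation (\ref{hea-equ-cao-ham}) with $u(t_1)=v_1^{2}$ on an interval $[t_1,t_2]$. The decay bound of claim \ref{dec-hea-equ} and the gradient estimate for $\arrowvert\nabla u\arrowvert^{2}/u$ established in the proof of theorem \ref{theo-cao-ham-zha} guarantee that $\mathcal{W}_{C-H}(g(t),\sqrt{u(t)},t)$ is finite and differentiable on $[t_1,t_2]$, and (\ref{Zha-Evo-Cao-Ham-Ent}) shows it is non-increasing. Combined with the constancy of $\mu_{C-H}$ this forces
\[
\mu_{C-H}(g(t_1),t_1)=\mathcal{W}_{C-H}(g(t_1),\sqrt{u(t_1)},t_1)\geq\mathcal{W}_{C-H}(g(t_2),\sqrt{u(t_2)},t_2)\geq\mu_{C-H}(g(t_2),t_2)=\mu_{C-H}(g(t_1),t_1),
\]
so $t\mapsto\mathcal{W}_{C-H}(g(t),\sqrt{u(t)},t)$ is constant and the right-hand side of (\ref{Zha-Evo-Cao-Ham-Ent}) vanishes identically. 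Since $u>0$, since $\arrowvert\nabla^{g,2}f-\Ric-g/(2t)\arrowvert^{2}\geq0$, and since $\Har(g(t),u(t))\geq0$ by Hamilton's trace Harnack inequality (applicable because $(M^{n},g(t))$ has bounded nonnegative curvature operator on $[t_1,t_2]$), I conclude $\nabla^{g,2}f(t)=\Ric(g(t))+g(t)/(2t)$ pointwise on $M^{n}\times[t_1,t_2]$, where $u(t)=e^{-f(t)}/(4\pi t)^{n/2}$.

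Finally, undoing at a fixed $t_\ast\in[t_1,t_2]$ the scaling and the diffeomorphism $\psi_{t_\ast}$ relating $g(t_\ast)$ to $g=g(1)$ converts this identity into $\nabla^{g,2}\tilde f=\Ric(g)+g/2$ for a suitable function $\tilde f$; then $\Li_{\nabla^{g}\tilde f}(g)=2\Ric(g)+g=\Li_{V}(g)$, so $V-\nabla^{g}\tilde f$ is a Killing field and $(M^{n},g,\nabla^{g}\tilde f)$ is an expanding gradient Ricci soliton, whose associated self-similar flow is $(M^{n},g(t))_{t>0}$. The step I expect to be the main obstacle is the one already carried out in theorem \ref{theo-cao-ham-zha}, namely certifying that the entropy evaluated on \emph{this} conjugate heat solution is genuinely finite and differentiable in $t$; a secondary point is bookkeeping of the rescalings so that the gap hypothesis and theorem \ref{exi-min} are invoked at the correct normalization, and checking that the Harnack quantity $\Har$ of Hamilton is precisely the one entering (\ref{Zha-Evo-Cao-Ham-Ent}).
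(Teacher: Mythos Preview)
Your argument is correct and follows the paper's own proof essentially verbatim: existence of a Gaussian-decaying minimizer from the gap hypothesis via theorem \ref{exi-min}, monotonicity of $\mathcal{W}_{C-H}$ along the heat solution from theorem \ref{theo-cao-ham-zha}, constancy of $\mu_{C-H}(g(t),t)$ from the scaling and diffeomorphism invariances of proposition \ref{prop-prem-entr}, and then reading off the gradient structure from the equality case of (\ref{Zha-Evo-Cao-Ham-Ent}). The only differences are cosmetic: you establish constancy of $\mu_{C-H}$ before invoking monotonicity whereas the paper does the reverse, and you spell out the final step (Harnack nonnegativity, pulling back to $t=1$, the Killing-field remark) that the paper compresses into a single sentence. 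One terminological slip: equation (\ref{hea-equ-cao-ham}) is a \emph{forward} heat equation $\partial_t u=\Delta u+\R u$, not the conjugate one.
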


\begin{proof}

According to theorem \ref{exi-min}, there exists a smooth positive minimizer $v_1$ for $\mu_{C-H}(g(t_1),t_1)$ with quadratic exponential decay. Let $(u(t))_t$ be the solution to the heat equation (\ref{hea-equ-cao-ham}). In particular, we get 
\begin{eqnarray*}
W_{C-H}(g(t_2),\sqrt{u(t_2)},t_2)\leq W_{C-H}(g(t_1),\sqrt{u(t_1)},t_1)=\mu_{C-H}(g(t_1),t_1).
\end{eqnarray*}
Now, by the invariance property under scalings and under diffeomorphism given by proposition \ref{prop-prem-entr}, $$\mu_{C-H}(g(t_2),t_2)=\mu_{C-H}\left(\frac{t_2}{t_1}\phi_{t_1,t_2}^*g(t_1),\frac{t_2}{t_1}t_1\right)=\mu_{C-H}(g(t_1),t_1),$$
where $\phi_{t_1,t_2}:=\phi_{t_1-1}^{-1}\circ\phi_{t_2-1}.$
Therefore, the function $t\rightarrow \mu_{C-H}(g(t),t)$ is constant on $[t_1,+\infty)$ which implies the expected result by using the evolution of the Cao-Hamilton entropy given by (\ref{Zha-Evo-Cao-Ham-Ent}).

\end{proof}

Now, we give an alternative proof of theorem \ref{trivial-exp-sol-gra} : 
\begin{proof}[Second proof of  theorem \ref{trivial-exp-sol-gra}]
Let $(M^n,g,V)$ be such an implicit expanding Ricci soliton. As it is asymptotically conical, one can check that 
\begin{eqnarray*}
\mu_{C-H,\infty}(g(t),t)=\mu_{C-H}(\eucl,t)\geq 0,
\end{eqnarray*}
where $\mu_{C-H}(\eucl,t)$ is the Cao-Hamilton entropy of the Euclidean space at a positive time $t$. The fact that $\mu_{C-H}(\eucl,t)\geq 0$ for any positive $t$ comes from the Gross logarithmic Sobolev inequality \cite{Gro-Log-Sob} for the standard Gaussian measure $(2\pi )^{-n/2}e^{-\arrowvert\cdot\arrowvert^2/2}$. On the other hand, this implicit Ricci expander is globally $C^2$ close to a fixed non flat gradient Ricci expander $(M^n,g_0,\nabla^0f_0)$ with non negative (Ricci) curvature. By \cite{Car-Ni}, 
\begin{eqnarray*}
\mu_{C-H}(g_0(1),1)<0.
\end{eqnarray*}
Hence, $\mu_{C-H}(g(1),1)<0$ too. We can now apply corollary \ref{no-non-tri-exp-Ric-sol} to show that $(M^n,g,V)$ carries a structure of  expanding gradient Ricci soliton.

\end{proof}

\subsubsection{Proof of theorem \ref{theo-I}}
Let $(X,g_X)$ be a compact simply connected Riemannian manifold with positive curvature operator. Let $(X,g_X(s))_{s\in[0,+\infty]}$ be a curve of Riemannian metrics with positive curvature operator of constant volume provided by Böhm and Wilking \cite{Boh-Wil} connecting $(X,g_X)$ to $(X,c^2g_{\mathbb{S}^{n-1}})$ where $c^{n-1}=\vol(X,g_X)/\vol(\mathbb{S}^{n-1},g_{\mathbb{S}^{n-1}})$. 

Thanks to theorems \ref{Compactness-II}, \ref{Loc-Dif-Fre} and \ref{trivial-exp-sol-gra}, there exists a curve of expanding gradient Ricci solitons $(M^n,g(s),\nabla^{g(s)}f(s))$ with positive curvature operator asymptotic to $(C(X),dr^2+r^2g_X(s),r\partial_r/2)$ for $s\in[0,+\infty]$. 

Now, if two expanding gradient Ricci solitons $(M_i,g_i,\nabla^{g_i}f_i)_{i=1,2}$ with positive curvature operator are asymptotic to the same cone, one can connect each soliton to a soliton \\$(\bar{M_i},\bar{g_i},\nabla^{\bar{g_i}}\bar{f_i})_{i=1,2}$ with positive curvature operator whose asymptotic cone is the most symmetric one, i.e. $(C(\mathbb{S}^{n-1}),dr^2+c^2g_{\mathbb{S}^{n-1}})$ with $c$ depending only on the volume of the section of the initial cone as before. According to Chodosh \cite{Cho-Exp-Asy-Con}, $(\bar{M_1},\bar{g_1},\nabla^{\bar{g_1}}\bar{f_1})$ and $(\bar{M_2},\bar{g_2},\nabla^{\bar{g_2}}\bar{f_2})$ are isometric to the same corresponding Bryant soliton. By using theorem \ref{Loc-Dif-Fre}, one gets that the two curves of implicit expanding gradient Ricci solitons are actually isometric, hence the uniqueness result.

\appendix

\section{Soliton equations}\label{sol-equ-sec}

The next lemma gathers well-known Ricci soliton identities together with the (static) evolution equations satisfied by the curvature tensor.

Recall first that an expanding gradient Ricci soliton is said \textit{normalized} if $\int_Me^{-f}d\mu_g=(4\pi)^{n/2}$ (whenever it makes sense).

\begin{lemma}\label{id-EGS}
Let $(M^n,g,\nabla f)$ be a normalized expanding gradient Ricci soliton. Then the trace and first order soliton identities are :
\begin{eqnarray}
&&\Delta f = \R_g+\frac{n}{2}, \label{equ:1} \\
&&\nabla \R_g+ 2\Ric(g)(\nabla f)=0, \label{equ:2} \\
&&\arrowvert \nabla f \arrowvert^2+\R_g=f+\mu(g), \label{equ:3}\\
&&\div\Rm(g)(Y,Z,T)=\Rm(g)(Y,Z, \nabla f,T),\label{equ:4}
\end{eqnarray}
for any vector fields $Y$, $Z$, $T$ and where $\mu(g)$ is a constant called the entropy.\\

The evolution equations for the curvature operator, the Ricci tensor and the scalar curvature are :
\begin{eqnarray}
&& \Delta_f \Rm(g)+\Rm(g)+\Rm(g)\ast\Rm(g)=0,\label{equ:5}\\
&&\Delta_f\Ric(g)+\Ric(g)+2\Rm(g)\ast\Ric(g)=0,\label{equ:6}\\
&&\Delta_f\R_g+\R_g+2\arrowvert\Ric(g)\arrowvert^2=0,\label{equ:7}
\end{eqnarray}
where, if $A$ and $B$ are two tensors, $A\ast B$ denotes any linear combination of contractions of the tensorial product of $A$ and $B$.
\end{lemma}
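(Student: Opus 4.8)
The plan is to derive everything from the static equation (\ref{eq-egs}) together with the Bianchi identities: the trace and first-order identities (\ref{equ:1})--(\ref{equ:4}) are obtained directly, while the curvature evolution equations (\ref{equ:5})--(\ref{equ:7}) are obtained by transporting Hamilton's reaction-diffusion equations along the associated Ricci flow $g(\tau)=(1+\tau)\phi_{\tau}^*g$.

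\textbf{First-order identities.} Tracing $\Ric(g)+g/2=\Hess f$ gives at once $\Delta f=\R_g+n/2$, which is (\ref{equ:1}). For (\ref{equ:2}), take the divergence of the static equation: the contracted second Bianchi identity gives $\div\Ric(g)=\tfrac12\nabla\R_g$, while the standard commutation formula $\div\Hess f=\nabla\Delta f+\Ric(g)(\nabla f)$ combined with (\ref{equ:1}) gives $\div\Hess f=\nabla\R_g+\Ric(g)(\nabla f)$; comparing the two yields $\nabla\R_g+2\Ric(g)(\nabla f)=0$. For (\ref{equ:3}), differentiate $u:=\arrowvert\nabla f\arrowvert^2+\R_g-f$: one has $\nabla\arrowvert\nabla f\arrowvert^2=2\Hess f(\nabla f)=2\Ric(g)(\nabla f)+\nabla f$ by the static equation and $\nabla\R_g=-2\Ric(g)(\nabla f)$ by (\ref{equ:2}), so $\nabla u=0$; hence $u$ is constant, and the normalization $\int_M e^{-f}\diff\mu_g=(4\pi)^{n/2}$ pins this constant down to what is denoted $\mu(g)$.

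\textbf{The radial curvature identity (\ref{equ:4}).} Apply the Ricci commutation identity to the one-form $\diff f$, namely $\nabla_i\nabla_j\nabla_kf-\nabla_j\nabla_i\nabla_kf=-\Rm(g)_{ijk\ell}\nabla^\ell f$ in the paper's sign convention. Substituting $\nabla_j\nabla_kf=\Ric(g)_{jk}+\tfrac12 g_{jk}$ from (\ref{eq-egs}) turns the left-hand side into $\nabla_i\Ric(g)_{jk}-\nabla_j\Ric(g)_{ik}$, which by the once-contracted second Bianchi identity is exactly a component of $\div\Rm(g)$; reindexing gives $\div\Rm(g)(Y,Z,T)=\Rm(g)(Y,Z,\nabla f,T)$.

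\textbf{Curvature evolution equations (\ref{equ:5})--(\ref{equ:7}).} Here I would use the associated Ricci flow $g(\tau)=(1+\tau)\phi_{\tau}^*g$. Since the Riemann tensor, the Ricci tensor and the scalar curvature are natural, their $\tau$-derivative at $\tau=0$ equals the contribution of the scaling $g\mapsto(1+\tau)g$ plus $\Li_{-\nabla f}$ of the tensor in question; expanding these Lie derivatives and replacing every occurrence of $\Hess f$ by $\Ric(g)+g/2$ produces, schematically, a transport term $-\nabla_{\nabla f}(\,\cdot\,)$, a linear curvature term, and a quadratic term of type $\Rm(g)\ast\Ric(g)$ (resp. $\Ric(g)\ast\Ric(g)$, resp. $\arrowvert\Ric(g)\arrowvert^2$). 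On the other hand, Hamilton's evolution equations along Ricci flow read $\partial_\tau\Rm(g)=\Delta\Rm(g)+\Rm(g)\ast\Rm(g)+\Rm(g)\ast\Ric(g)$, $\partial_\tau\Ric(g)=\Delta\Ric(g)+2\Rm(g)\ast\Ric(g)-2\Ric(g)\ast\Ric(g)$ and $\partial_\tau\R_g=\Delta\R_g+2\arrowvert\Ric(g)\arrowvert^2$. Equating the two expressions for $\partial_\tau$, the lower-order $\Rm(g)\ast\Ric(g)$ and $\Ric(g)\ast\Ric(g)$ terms coming from the Lie derivatives cancel against those in Hamilton's formulas, and what survives is precisely $\Delta_f\Rm(g)+\Rm(g)+\Rm(g)\ast\Rm(g)=0$, $\Delta_f\Ric(g)+\Ric(g)+2\Rm(g)\ast\Ric(g)=0$ and $\Delta_f\R_g+\R_g+2\arrowvert\Ric(g)\arrowvert^2=0$. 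The main obstacle is exactly this bookkeeping: one must match the contraction patterns and coefficients of the reaction terms in Hamilton's $(0,4)$-Riemann equation against those produced by $\Li_{\nabla f}$. An alternative that trades this for Bianchi-identity bookkeeping is to derive (\ref{equ:5})--(\ref{equ:7}) statically, by differentiating (\ref{equ:4}) once more and feeding in (\ref{equ:1})--(\ref{equ:3}).
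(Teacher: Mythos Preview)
The paper does not actually prove this lemma: its entire proof reads ``See [Chap.~$1$, \cite{Cho-Lu-Ni-I}] for instance.'' Your derivation is correct and is essentially the standard argument one finds in that reference, so there is nothing to compare at the level of strategy.

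One small comment on presentation: in your treatment of (\ref{equ:5})--(\ref{equ:7}) via the associated flow, the scalar and Ricci cases are clean (you can check explicitly, as you suggest, that the $\Ric\!\ast\!\Ric$ terms produced by $\Li_{\nabla f}\Ric$ cancel the $-2\Ric^2$ in Hamilton's Lichnerowicz formula), but for the full Riemann tensor the ``$\Rm(g)\ast\Rm(g)$'' in (\ref{equ:5}) is only asserted up to an unspecified linear combination of contractions anyway, so the bookkeeping you flag as the ``main obstacle'' is in fact not required at the level of precision of the statement. Your static alternative---differentiating (\ref{equ:4}) and using the second Bianchi identity---is equally standard and arguably cleaner here, since it avoids invoking Hamilton's $(0,4)$ evolution formula altogether.
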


\begin{proof}
See [Chap.$1$,\cite{Cho-Lu-Ni-I}] for instance.

\end{proof}

\begin{prop}\label{pot-fct-est}
Let $(M^n,g,\nabla f)$ be an expanding gradient Ricci soliton.
\begin{itemize}
\item If $(M^n,g,\nabla f)$ is non Einstein,
\begin{eqnarray}
&&\Delta_fv=v,\quad v>\arrowvert\nabla v\arrowvert^2.\label{inequ:1}\\
\end{eqnarray}

\item Assume $\Ric(g)\geq 0$ and assume $(M^n,g,\nabla f)$ is normalized. Then $M^n$ is diffeomorphic to $\mathbb{R}^n$ and
\begin{eqnarray}
&&v\geq \frac{n}{2}>0.\label{inequ:2}\\
&&\frac{1}{4}r_p(x)^2+\min_{M}v\leq v(x)\leq\left(\frac{1}{2}r_p(x)+\sqrt{\min_{M}v}\right)^2,\quad \forall x\in M,\label{inequ:3}\\
&&\AVR(g):=\lim_{r\rightarrow+\infty}\frac{\vol B(q,r)}{r^n}>0,\quad\forall q\in M,\label{inequ:avr}\\
&&-C(n,V_0,R_0)\leq\min_{M}f\leq 0\quad;\quad\mu(g)\geq\max_{M}\R_g\geq 0,\label{inequ:ent}
\end{eqnarray}
where $V_0$ is a positive number such that $\AVR(g)\geq V_0$, $R_0$ is such that $\sup_{M}\R_g\leq R_0$ and $p\in M$ is the unique critical point of $v$.\\

\item Assume $\Ric(g)=\textit{O}(r_p^{-2})$ where $r_p$ denotes the distance function to a fixed point $p\in M$. Then the potential function is equivalent to $r_p^2/4$ (up to order $2$).
\end{itemize}
\end{prop}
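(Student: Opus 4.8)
The proposition collects several standard facts, and the plan is to derive each from the soliton identities of Lemma~\ref{id-EGS} together with comparison geometry. For the first bullet I would substitute $v=f+\mu(g)+\tfrac n2$ into \eqref{equ:1} and \eqref{equ:3}: this gives at once $\Delta_fv=\Delta f+|\nabla f|^2=(\R_g+\tfrac n2)+(f+\mu(g)-\R_g)=v$, and likewise $v-|\nabla v|^2=v-|\nabla f|^2=\R_g+\tfrac n2$. Hence $v>|\nabla v|^2$ is exactly the strict scalar curvature bound $\R_g>-\tfrac n2$; I would obtain $\R_g\ge-\tfrac n2$ from Chen's lower estimate for the scalar curvature applied to the self-similar flow on $(-1,+\infty)$ attached to the soliton, and then exclude equality by the strong maximum principle on the evolution equation \eqref{equ:7} — equality at a point would propagate to $\R_g\equiv-\tfrac n2$, forcing $\Ric(g)\equiv0$ and so $\R_g\equiv0$, a contradiction. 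This is precisely why the Einstein soliton $\Ric(g)=-\tfrac g2$ must be excluded.

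Under $\Ric(g)\ge0$, equation \eqref{eq-egs} gives $\Hess f=\Ric(g)+\tfrac g2\ge\tfrac g2$, so $f$ is uniformly geodesically convex, hence proper, bounded below, with a unique critical point $p$ at its minimum where $\Hess f>0$; the gradient flow of $f$ then exhibits $M^n$ as diffeomorphic to $\mathbb R^n$ (as in \cite{Der-Asy-Com-Egs}). Inequality \eqref{inequ:2} is immediate from $v=|\nabla f|^2+\R_g+\tfrac n2\ge\tfrac n2$. For \eqref{inequ:3} I would integrate $|\nabla\sqrt v|^2=\tfrac{|\nabla f|^2}{4v}\le\tfrac14$ from $p$ to get the upper bound, and integrate $(v\circ\gamma)''=\Hess v(\gamma',\gamma')\ge\tfrac12$ twice along a minimizing geodesic $\gamma$ issuing from $p$, using $(v\circ\gamma)'(0)=0$ and $v(\gamma(0))=\min_Mv$, to get the lower bound. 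For \eqref{inequ:avr}, Bishop--Gromov gives the existence and finiteness of the limit; the positivity of $\AVR(g)$ is the one step that requires genuine input beyond the above, and I would quote it from Carrillo--Ni \cite{Car-Ni} (it also appears in \cite{Der-Asy-Com-Egs}).

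For the entropy bounds \eqref{inequ:ent} I would first note, from \eqref{equ:2}, that $\langle\nabla\R_g,\nabla f\rangle=-2\Ric(g)(\nabla f,\nabla f)\le0$, so $\R_g$ is non-increasing along the forward trajectories of $\nabla f$, each of which emanates from $p$; hence $\max_M\R_g=\R_g(p)$, and evaluating \eqref{equ:3} at $p$ gives $\mu(g)=\R_g(p)-\min_Mf$. Using the lower bound $f\ge\tfrac14r_p^2+\min_Mf$ from \eqref{inequ:3}, the normalization and the sharp Bishop--Gromov comparison give
\[
(4\pi)^{n/2}=\int_Me^{-f}\,d\mu_g\le e^{-\min_Mf}\int_Me^{-r_p^2/4}\,d\mu_g\le e^{-\min_Mf}\int_{\mathbb R^n}e^{-|x|^2/4}\,dx=e^{-\min_Mf}(4\pi)^{n/2},
\]
so $\min_Mf\le0$, whence $\mu(g)=\R_g(p)-\min_Mf\ge\R_g(p)=\max_M\R_g\ge0$. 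For the lower bound on $\min_Mf$, combining $\vol B(p,1)\ge\AVR(g)\ge V_0$ with the upper bound in \eqref{inequ:3} (and $\min_Mv=\min_Mf+\mu(g)+\tfrac n2\le R_0+\tfrac n2$, since $\mu(g)\le R_0-\min_Mf$) bounds $\sup_{B(p,1)}f\le\min_Mf+C(n,R_0)$, giving $(4\pi)^{n/2}\ge e^{-\sup_{B(p,1)}f}\vol B(p,1)\ge V_0e^{-\min_Mf-C(n,R_0)}$, i.e. $\min_Mf\ge-C(n,V_0,R_0)$. Finally, if $\Ric(g)=\textit{O}(r_p^{-2})$ then $\Hess v=\Hess f=\tfrac g2+\Ric(g)=\tfrac g2+\textit{O}(r_p^{-2})$, and integrating twice along unit-speed geodesics from the critical point $p$ yields $v=\tfrac14r_p^2+\textit{O}(r_p)$, $|\nabla v|=\tfrac12r_p+\textit{O}(1)$ and $\Hess v\to\tfrac g2$, i.e. the asserted equivalence with $r_p^2/4$ up to order two. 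The only step I do not expect to settle from Lemma~\ref{id-EGS} and elementary comparison geometry is the positivity of $\AVR(g)$, which is the real content of \cite{Car-Ni}; the one delicate bookkeeping point is that the sharp constant $0$ in $\min_Mf\le0$ uses the rigidity case of Bishop--Gromov together with $\int_{\mathbb R^n}e^{-|x|^2/4}\,dx=(4\pi)^{n/2}$.
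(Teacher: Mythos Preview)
The paper does not actually prove this proposition: its entire proof is the sentence ``For a proof, see \cite{Der-Asy-Com-Egs} and the references therein.'' Your proposal therefore goes well beyond what the paper does, supplying a self-contained derivation from the soliton identities of Lemma~\ref{id-EGS}, and the line of argument you sketch is essentially the standard one that appears in the cited reference. The computations for $\Delta_f v=v$, the bounds \eqref{inequ:2}--\eqref{inequ:3} via integration of $|\nabla\sqrt v|\le\tfrac12$ and $\Hess f\ge\tfrac g2$, the identification $\mu(g)=\R_g(p)-\min_Mf$, the Bishop--Gromov argument for $\min_Mf\le0$, and the lower bound on $\min_Mf$ from the normalization and a volume lower bound are all correct as written. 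Your acknowledgement that $\AVR(g)>0$ is the one genuinely non-elementary input (Carrillo--Ni) is exactly right.

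There is one slip in the first bullet. From $\R_g\equiv-\tfrac n2$ and \eqref{equ:7} you get $|\Ric|^2\equiv\tfrac n4$; combined with $\tr\Ric=-\tfrac n2$ and Cauchy--Schwarz this forces $\Ric\equiv-\tfrac g2$, \emph{not} $\Ric\equiv0$. Your sentence ``forcing $\Ric(g)\equiv0$ and so $\R_g\equiv0$, a contradiction'' is therefore wrong as stated, though your very next sentence (``This is precisely why the Einstein soliton $\Ric(g)=-\tfrac g2$ must be excluded'') shows you know the correct conclusion; just delete the erroneous clause. A second minor imprecision: in the third bullet you write ``from the critical point $p$'', but the hypothesis there only fixes an arbitrary base point. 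Either note that the integration works for any $p$ since $(f\circ\gamma)'(0)=\langle\nabla f(p),\gamma'(0)\rangle$ is a bounded constant, or first observe that $\Hess f=\tfrac g2+\textit{O}(r_p^{-2})$ makes $f$ eventually uniformly convex and hence proper, so that a (unique) critical point exists and can be taken as $p$.
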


For a proof, see \cite{Der-Asy-Com-Egs} and the references therein.

\bibliographystyle{alpha.bst}
\bibliography{bib-mod-space-egs}

\end{document}